\theoremstyle{plain}
\newtheorem{thm}{Theorem}[section]
\newtheorem{lem}[thm]{Lemma}
\newtheorem{pro}[thm]{Proposition}
\newtheorem{cor}[thm]{Corollary}
\newtheorem{thmABC}{Theorem}
\newtheorem{corABC}[thmABC]{Corollary}
\theoremstyle{remark}
\theoremstyle{definition} 
\newtheorem{dfn}[thm]{Definition}
\newtheorem{exm}[thm]{Example}
\numberwithin{equation}{section}
\newcommand{\N}{\mathbb{N}}
\newcommand{\R}{\mathbb{R}}
\newcommand{\Q}{\mathbb{Q}}
\newcommand{\Z}{\mathbb{Z}}
\newcommand{\C}{\mathbb{C}}
\newcommand{\Ad}{\textup{Ad}}
\newcommand{\Id}{\mathrm{Id}}
\newcommand{\id}{\mathrm{id}}
\newcommand{\ac}[1]{\overline{#1}^{\, \textup{z}}} 
\renewcommand{\epsilon}{\varepsilon} 
\renewcommand{\phi}{\varphi} 
\renewcommand{\rho}{\varrho} 
\renewcommand{\theta}{\vartheta} 
\DeclareMathOperator{\Aut}{Aut}
\DeclareMathOperator{\Hom}{Hom}
\DeclareMathOperator{\Fitt}{Fitt}
\DeclareMathOperator{\Nil}{Nil}
\DeclareMathOperator{\rank}{rk} 
\DeclareMathOperator{\up}{u}
\DeclareMathOperator{\cdim}{cdim}
\DeclareMathOperator{\GL}{GL}
\DeclareMathOperator{\Rad}{Rad}
\DeclareMathOperator{\Cen}{C}
\DeclareMathOperator{\Zen}{Z}
\DeclareMathOperator{\Der}{Der}
\DeclareMathOperator{\diag}{diag}
\begin{document}
\title[Lattices in solvable Lie groups]{Deformations and rigidity of
  lattices \\ in solvable Lie groups}

\thanks{\textit{Acknowledgements.} The authors thank the Deutsche
  Forschungsgemeinschaft and the London Mathematical Society for
  financial support.}

\author{Oliver Baues} \address{Institut f\"ur Algebra und Geometrie,
  Karlsruher Institut f\"ur Technologie (KIT), 76128 Karlsruhe,
  Germany}
\email{baues@math.uni-karlsruhe.de}

\author{Benjamin Klopsch} \address{Department of Mathematics, Royal
  Holloway, University of London, Egham TW20 0EX, United Kingdom}
\email{Benjamin.Klopsch@rhul.ac.uk} 

\date{\today}

\begin{abstract}
  Let $G$ be a simply connected, solvable Lie group and $\Gamma$ a lattice in $G$.  The deformation space $\mathcal{D}(\Gamma,G)$ is
  the orbit space 
  associated to the action of $\Aut(G)$ on the space $\mathcal{X}(\Gamma,G)$ of all lattice embeddings of $\Gamma$ into~$G$. 
  Our main result generalises the classical rigidity theorems of
  Mal'tsev and Sait\^o for lattices in nilpotent Lie groups and in
  solvable Lie groups of real type.  
   We prove that the deformation space 
  of every Zariski-dense lattice $\Gamma$ in $G$ is finite and Hausdorff, provided that the maximal nilpotent normal subgroup of $G$ is connected. 
  This implies that every lattice in a solvable Lie group virtually embeds as a Zariski-dense lattice 
 with finite deformation space.  We give examples of solvable Lie groups~$G$ which admit Zariski-dense lattices~$\Gamma$ such that $\mathcal{D}(\Gamma,G)$ is countably infinite, and also examples where the maximal nilpotent normal subgroup of $G$ is connected  and 
 simultaneously $G$ has lattices with uncountable deformation space.
\end{abstract}

\keywords{Solvable Lie groups, lattices, rigidity, deformations.}

\subjclass[2010]{22E40,22E25,20F16}

\maketitle

\section{Introduction}

Let $G$ be a simply connected real Lie group.  A lattice $\Gamma$ in
$G$ is \emph{rigid} if every embedding of $\Gamma$ into $G$ as a
lattice extends to an automorphism of the Lie group~$G$.  Landmark
results about rigidity and superrigidity of lattices in the context of
semisimple Lie groups are the Mostow Strong Rigidity Theorem and the
Margulis Superrigidity Theorem.

In the context of solvable Lie groups, a classical theorem of
Mal'tsev--Sait\^o states that, if $G$ is nilpotent \cite{Ma51} or
solvable of \emph{real type} \cite{Sa57}, i.e., if $G$ is solvable and
if the eigenvalues of all the transformations $\Ad_g$, $g \in G$, in
the adjoint action of $G$ are real, then every lattice in $G$ is
rigid.  On the other hand, it is known that generally lattices in
solvable Lie groups can be very far from being rigid.  In~\cite{St94}
Starkov gave important examples of rigid and non-rigid lattices in
solvable Lie groups.  In~\cite{Wi95} Witte proved that, if $G$ is
solvable, then every Zariski-dense lattice $\Gamma$ in $G$ is
superrigid in the following sense: any finite-dimensional
representation $\rho \colon \Gamma \to \GL_n(\R)$ virtually extends to
a representation of~$G$.  He also showed that, if $G$ is solvable and
$\Gamma$ is a Zariski-dense lattice in $G$, then every homomorphic
embedding of $\Gamma$ into $G$ as a lattice extends to a crossed
automorphism of $G$, i.e., a certain type of `twisted' automorphism.

In the present paper we initiate a quantitative description of the
phenomenon of non-rigidity for lattices in Lie groups.    Let
\begin{equation*} 
  \mathcal{X}(\Gamma,G) = \{ \phi \colon \Gamma \hookrightarrow G \mid
  \text{$\phi(\Gamma)$ is a lattice in $G$} \} 
\end{equation*}
be the space of all homomorphic embeddings of $\Gamma$ into $G$ as a
lattice, equipped with the topology of pointwise convergence.  The
space $\mathcal{X}(\Gamma,G)$ and its connected components feature in
classical works of Weil~\cite{We60} and Wang~\cite{Wa63}.  The group
$\Aut(G)$, consisting of all continuous automorphisms of $G$, is a Lie
group and acts continuously on $\mathcal{X}(\Gamma,G)$ from the left
via composition.  We observe that $\Gamma$ is rigid in $G$ if and only
if $\Aut(G)$ acts transitively on~$\mathcal{X}(\Gamma,G)$.  More
generally, the orbit space
$$ 
\mathcal{D}(\Gamma, G) = \Aut(G) \backslash \mathcal{X}(\Gamma,G),
$$ provides a quantitative measure for the degree of non-rigidity of
$\Gamma$ in~$G$.  It can be interpreted as the \emph{deformation
  space} of lattice embeddings of $\Gamma$ into~$G$.  We note that the
quotient space $\mathcal{D}(\Gamma, G)$ also reflects topological
properties of the $\Aut(G)$-orbits in~$\mathcal{X}(\Gamma,G)$.  

\medskip

Let
$G$ be a simply connected, solvable Lie group, and let $\Gamma$ be a
lattice in~$G$.
In general, deformation spaces of the form $\mathcal{D}(\Gamma, G)$
can be uncountable.  In the present article we are
particularly interested in describing principal situations where
$\mathcal{D}(\Gamma, G)$ is finite or countable.  
The group $G$ is said
to be \emph{unipotently connected} if its maximal nilpotent normal
subgroup is connected.  
Our main theorem
shows that Zariski-dense lattices in unipotently connected solvable Lie groups have
finite deformation spaces. 

The following discussion
of our main results is built around three general themes: deformation
spaces of lattices in unipotently connected groups, the
characterisation of strong rigidity, and the topology of deformation
spaces.

\subsection*{Deformation spaces for unipotently connected groups} We prove the following finiteness result for 
deformation spaces: 

\begin{thmABC} \label{thm:MainA} Let $G$ be a simply connected,
  solvable Lie group $G$ which is unipotently connected.  Then for
  every Zariski-dense lattice $\Gamma$ of $G$ the deformation space
  $\mathcal{D}(\Gamma, G)$ is finite. Moreover, its cardinality is
  uniformly bounded above by a constant depending only on the dimension of~$G$.
\end{thmABC}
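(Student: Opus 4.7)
The plan is to reduce, via Witte's extension theorem for lattice embeddings, to a question about the algebraic hull of~$G$, and then to bound the resulting invariant uniformly using classical rigidity of algebraic tori.

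\textbf{Step 1 (Reduction to crossed automorphisms).}
By Witte's theorem recalled in the introduction, every $\phi \in \mathcal{X}(\Gamma,G)$ extends to a crossed automorphism of~$G$. Writing $\Aut_c(G)$ for the group of all such crossed automorphisms, this means $\Aut_c(G)$ acts transitively on $\mathcal{X}(\Gamma,G)$. Fixing a basepoint $\phi_0$ yields an identification
\[
  \mathcal{D}(\Gamma,G) \;\cong\; \Aut(G) \backslash \Aut_c(G) / S,
\]
where $S$ is the stabilizer of $\phi_0$ in $\Aut_c(G)$. Both finiteness and the uniform bound therefore reduce to a corresponding statement about the coset space $\Aut(G) \backslash \Aut_c(G)$, inside which $\Aut(G)$ sits as the obvious subgroup of untwisted crossed automorphisms.

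\textbf{Step 2 (Realisation inside the algebraic hull).}
Let $A_G$ denote the algebraic hull of~$G$; by Zariski-density of~$\Gamma$, $A_G$ coincides with the algebraic hull of~$\Gamma$. The hypothesis that $G$ is unipotently connected forces a semidirect decomposition $A_G = U \rtimes T$ in which $U$ is the connected unipotent radical of~$G$ and $T$ is a split algebraic torus of rank at most~$\dim G$. Crossed automorphisms correspond precisely to algebraic automorphisms of~$A_G$ preserving the set~$G$, while $\Aut(G)$ corresponds to those additionally preserving a distinguished, Lie-theoretically determined torus complement to~$U$ inside~$A_G$. Accordingly, $\Aut(G) \backslash \Aut_c(G)$ is put in bijection with the set of $G$-conjugacy classes of such torus complements.

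\textbf{Step 3 (Uniform bound and main obstacle).}
The set of $G$-conjugacy classes of torus complements inside~$A_G$ injects into the component group of the algebraic automorphism group of~$A_G$, which embeds into $\GL(X^*(T)) \cong \GL_r(\Z)$ for $r = \dim T \leq \dim G$ and moreover preserves the finite weight set of~$T$ on the Lie algebra of~$U$. The classical Minkowski--Jordan bound on the order of finite subgroups of $\GL_r(\Z)$ depends only on~$r$, yielding an explicit bound on~$|\mathcal{D}(\Gamma,G)|$ in terms of $\dim G$. The hard part will be Step~2: identifying $\Aut(G)$ as precisely those crossed automorphisms respecting a canonical torus complement, so that the discrepancy is captured by conjugacy classes of such complements. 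This is exactly where unipotent-connectedness is genuinely used, as it ensures both that $T$ is a split torus (rather than a more general reductive group) and that $T$ acts semisimply on~$U$, permitting a clean comparison between the Lie structure on~$G$ and the algebraic structure on~$A_G$.
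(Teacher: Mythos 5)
Your overall strategy (pass to the algebraic hull, then bound a residual finite invariant via rigidity of tori and Minkowski's theorem on finite subgroups of $\GL_r(\Z)$) is in the right spirit and Step~3's final counting device does match the paper's. But Step~2 contains a claim that is simply false and that, if true, would trivialise the theorem: unipotent connectedness does \emph{not} force the maximal torus $\mathbf{T}$ in $\mathbf{A}_G = \mathbf{U} \rtimes \mathbf{T}$ to be $\R$-split. By Corollary~\ref{cor:charac_real_type}, $\mathbf{T}$ is $\R$-split precisely when $G$ is of real type, in which case Sait\^o's theorem already gives rigidity and $\mathcal{D}(\Gamma,G)$ is a single point. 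The entire content of Theorem~\ref{thm:MainA} lies in the case where $\mathbf{T}$ has a nontrivial compact part $T_{\textup{c}}$ --- see Example~\ref{exm:factorial}, where the groups $G_k$ are unipotently connected, the associated tori contain an $\textup{SO}_2(\R)$ factor, and the deformation space has $(n+1)!$ elements. Relatedly, $U = \Rad_{\textup{u}}(\mathbf{A}_G)$ satisfies $\dim U = \dim G$ and $G \cap U = \up(G) = \Nil(G)$, so $U$ is strictly larger than the nilradical of $G$ in general; it is not "the connected unipotent radical of $G$".

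The second gap is the one you yourself flag: the identification of $\Aut(G)\backslash\Aut_c(G)$ with conjugacy classes of "torus complements to $U$" is not established, and it is the wrong parametrisation. All maximal tori of the real solvable algebraic group $A_G$ are conjugate, so torus complements to $U$ form a single conjugacy class and detect nothing. The objects that actually measure the failure of a lattice embedding $\phi$ to extend to $\Aut(G)$ are the images $\Phi(G)$ of $G$ under the hull-level extensions $\Phi \in \Aut_\R(\mathbf{A})$ --- i.e.\ the \emph{tight Lie subgroups} of $A_G$ (normal complements to $T$ containing $\phi(\Gamma)$), which form the set $\widetilde{\mathcal{G}}_{\Gamma,G}$ of Section~\ref{sect:GGammaG}; there are countably infinitely many tight subgroups in general (Proposition~\ref{pro:description_G_Gamma}), so finiteness cannot follow from a single injection into a finite component group. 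Where unipotent connectedness genuinely enters is in forcing $\Nil(G) = \ac{\Fitt(\Gamma)}_\R$ (Corollary~\ref{cor:nil_and_fitt}), so that an automorphism of $\mathbf{A}$ acting trivially modulo $\mathbf{U}$ and stabilising $\Gamma\,\ac{\Fitt(\Gamma)}$ automatically satisfies $\Phi(G) \subseteq G\,\Nil(G) = G$ (Proposition~\ref{pro:setofPhiG_N} and Proposition~\ref{pro:Hinv_implies_Ginv}); one then needs a separate bound, namely $c(G) = \lvert \Aut(G) : \Aut(G)^1\rvert$, on the fibres of $\mathcal{X}(\Gamma,G) \to \widetilde{\mathcal{G}}_{\Gamma,G}$ (Proposition~\ref{pro:finfibers}). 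Your sketch conflates these two finiteness mechanisms into one, and without the correct role of unipotent connectedness the argument does not close.
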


We remark that, if $G$ is of real type, then $G$ is unipotently
connected and every lattice $\Gamma$ in $G$ is Zariski-dense.  Hence,
Theorem~\ref{thm:MainA} generalises the above mentioned rigidity
theorems of Mal'tsev and Sait\^o.  Furthermore, it yields the following
application, showing that $\Gamma$ is \emph{weakly rigid} in $G$ up to
finite index in~$\Aut(\Gamma)$.

\begin{corABC} \label{cor:MainB} Let $G$ be a simply connected,
  solvable Lie group $G$ which is unipotently connected, and let
  $\Gamma$ be a Zariski-dense lattice of~$G$.  Then the automorphisms
  of $\Gamma$ which extend to automorphisms of $G$ form a finite index
  subgroup of~$\Aut(\Gamma)$. The index of this subgroup is uniformly bounded above by a constant depending only on the rank of~$\Gamma$.

\end{corABC}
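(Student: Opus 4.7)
The plan is to derive Corollary~\ref{cor:MainB} from Theorem~\ref{thm:MainA} by a standard orbit--stabiliser argument on the deformation space, together with the classical observation that the Hirsch rank of a lattice in a simply connected solvable Lie group equals the real dimension of the ambient group.

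First I would set up an auxiliary action. Write $\iota_{0} \colon \Gamma \hookrightarrow G$ for the inclusion. The group $\Aut(\Gamma)$ acts on $\mathcal{X}(\Gamma,G)$ from the right by precomposition, $(\phi,\alpha)\mapsto \phi\circ\alpha$, and this action commutes with the left action of $\Aut(G)$ used to form the quotient $\mathcal{D}(\Gamma,G)$. Consequently $\Aut(\Gamma)$ acts on $\mathcal{D}(\Gamma,G)$.

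The key step is then to identify the stabiliser of $[\iota_{0}]\in\mathcal{D}(\Gamma,G)$. By definition, $\alpha\in\Aut(\Gamma)$ fixes $[\iota_{0}]$ if and only if there exists $\Phi\in\Aut(G)$ with $\iota_{0}\circ\alpha=\Phi\circ\iota_{0}$; equivalently, $\alpha$ extends to a continuous automorphism of $G$. Hence the stabiliser is precisely the subgroup of $\Aut(\Gamma)$ appearing in the statement of the corollary.

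Next I would invoke Theorem~\ref{thm:MainA}: the deformation space $\mathcal{D}(\Gamma,G)$ is finite, with cardinality bounded by a constant $C=C(\dim G)$. The $\Aut(\Gamma)$-orbit of $[\iota_{0}]$ has size at most $|\mathcal{D}(\Gamma,G)|\le C(\dim G)$, so the orbit--stabiliser principle shows that the stabiliser has index at most $C(\dim G)$ in $\Aut(\Gamma)$. Finally, for a lattice in a simply connected solvable Lie group one has the classical equality $\rank(\Gamma)=\dim G$, where $\rank(\Gamma)$ denotes the Hirsch rank; substituting this into the bound yields a constant depending only on $\rank(\Gamma)$, as required.

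I do not expect any serious obstacle here, because Theorem~\ref{thm:MainA} does all the substantive work. The only point that deserves care is the identification of the stabiliser with the set of automorphisms of $\Gamma$ that extend to $G$, which is a direct unravelling of the definitions but is the conceptual heart of the reduction.
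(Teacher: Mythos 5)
Your argument is correct, but it is not the route the paper takes. You deduce the corollary formally from Theorem~\ref{thm:MainA}: the right action of $\Aut(\Gamma)$ on $\mathcal{X}(\Gamma,G)$ by precomposition commutes with the left $\Aut(G)$-action, hence descends to $\mathcal{D}(\Gamma,G)$; the stabiliser of the class of the inclusion is exactly the subgroup of automorphisms that extend to $G$; orbit--stabiliser plus the finiteness of $\mathcal{D}(\Gamma,G)$ gives the index bound, and $\rank(\Gamma)=\dim G$ (Lemma~\ref{lem:criterion_lattice}) converts the constant into one depending only on the rank. All of these steps are sound, and the identification of the stabiliser is indeed just an unravelling of definitions. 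The paper instead argues directly: it exhibits the concrete subgroup $\Aut^\circ(\Gamma)=\Cen_{\Aut(\Gamma)}(\Gamma/\Fitt(\Gamma))$, shows it has finite index in $\Aut(\Gamma)$ (Lemma~\ref{lem:AutNull}, via the embedding of $\Aut(\Gamma)$ into $\Aut_\R(\mathbf{A})$ and the finiteness of the component group), and then proves that every element of $\Aut^\circ(\Gamma)$ extends to $G$, using Lemma~\ref{lem:ext_Aut0} together with the identity $\ac{\Fitt(\Gamma)}_\R=\Nil(G)$ from Corollary~\ref{cor:nil_and_fitt}; this is where unipotent connectedness enters, whereas in your proof it enters only through the hypothesis of Theorem~\ref{thm:MainA}. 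The trade-off: your argument is shorter and purely formal given Theorem~\ref{thm:MainA}, while the paper's argument yields strictly more information, namely an explicit, structurally defined finite-index subgroup of $\Aut(\Gamma)$ all of whose elements extend, rather than just the existence and an upper bound on the index.
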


The constant alluded to in Theorem~\ref{thm:MainA} and
Corollary \ref{cor:MainB} is specified in
Section~\ref{sec:proof_of_A}.  \\

We emphasise that the conclusion of Theorem~\ref{thm:MainA} ceases to
hold if one of the assumptions, unipotent connectedness of the Lie
group $G$ or Zariski-denseness of the lattice $\Gamma$, is dropped.
In Section~\ref{sect:Examples} we construct a Zariski-dense lattice in
a non-unipotently connected Lie group $G$ which has a countably
infinite deformation space.  Moreover, we provide in the same section
examples of non-Zariski-dense lattices in unipotently connected groups
which have uncountable deformation spaces.  A construction of Starkov
shows that there exist rigid lattices which are not Zariski-dense;
see~\cite[Example~6.1]{St94}.

It is worth noting that unipotently connected Lie groups cover a wide
range of lattices.  Recall that every discrete subgroup of a solvable
Lie group is polycyclic; see~\cite{Mo57}.  By a result of Auslander,
every polycyclic group virtually embeds as a Zariski-dense lattice
into a suitable simply connected, solvable Lie group;
see~\cite{Au73,Wi95}.  We show that, in addition, the Lie group can be
taken to be unipotently connected; see
Proposition~\ref{pro:exists_unipotently_connected}.  This refinement
yields the following corollary.

\begin{corABC} \label{cor:toMainA1} Let $\Gamma$ be a polycyclic
  group.  Then $\Gamma$ admits a finite index subgroup $\Delta$ which
  embeds as a Zariski-dense lattice into a simply connected, solvable
  Lie group $H$ such that the deformation space $\mathcal{D}(\Delta,
  H)$ is finite.
\end{corABC}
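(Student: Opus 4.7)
The plan is to derive Corollary~\ref{cor:toMainA1} as an essentially formal consequence of Theorem~\ref{thm:MainA} together with the existence statement Proposition~\ref{pro:exists_unipotently_connected}, the technical refinement of Auslander's embedding theorem singled out in the paragraph preceding the corollary. Concretely, starting from a polycyclic group $\Gamma$, I would first produce a simply connected, solvable and unipotently connected Lie group $H$ together with a finite index subgroup $\Delta \leq \Gamma$ realised as a Zariski-dense lattice in $H$, and then read off finiteness of $\mathcal{D}(\Delta,H)$ directly from Theorem~\ref{thm:MainA}.

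The first step, the existence of $\Delta$ and $H$, is precisely the content of Proposition~\ref{pro:exists_unipotently_connected}. Auslander's original result \cite{Au73,Wi95} supplies some simply connected, solvable Lie group containing a finite index subgroup of $\Gamma$ as a Zariski-dense lattice; the refinement upgrades this to a unipotently connected ambient group. For the present corollary I simply cite that proposition, so that no further construction is needed at this stage.

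Once $\Delta \hookrightarrow H$ is in place, the second step is immediate. The pair $(\Delta, H)$ satisfies every hypothesis of Theorem~\ref{thm:MainA}: the group $H$ is simply connected, solvable and unipotently connected, and $\Delta$ is Zariski-dense in $H$. Therefore $\mathcal{D}(\Delta, H)$ is finite, which is precisely the assertion of the corollary.

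The main obstacle in this strategy is not located in the corollary itself but upstream, in Proposition~\ref{pro:exists_unipotently_connected}. Once one has strengthened Auslander's construction so that the maximal nilpotent normal subgroup of the ambient Lie group is connected, the corollary follows by direct quotation of Theorem~\ref{thm:MainA}. Any genuine work therefore has to be invested in the production of the unipotently connected model $H$ from the given polycyclic group, and not in the deformation-theoretic conclusion.
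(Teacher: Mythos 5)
Your proposal is correct and follows exactly the route the paper takes: Auslander's theorem (upgraded by Proposition~\ref{pro:exists_unipotently_connected} to produce a unipotently connected ambient group $H$ containing a finite index subgroup $\Delta$ as a Zariski-dense lattice) combined with a direct application of Theorem~\ref{thm:MainA}. You also correctly identify that all genuine content lives in the existence of the unipotently connected model, not in the deformation-theoretic conclusion.
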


The next corollary provides a simple structural criterion for a
polycyclic group $\Gamma$ to ensure that the deformation space
associated to any Zariski-dense lattice embedding of $\Gamma$ is
finite.
  
\begin{corABC} \label{cor:toMainA2} Let $\Gamma$ be a polycyclic group
  such that the commutator subgroup $[\Gamma, \Gamma]$ has finite
  index in the Fitting subgroup~$\Fitt(\Gamma)$.  Then for every
  Zariski-dense lattice embedding of $\Gamma$ into a simply connected,
  solvable Lie group $H$ the deformation space $\mathcal{D}(\Gamma,
  H)$ is finite.
\end{corABC}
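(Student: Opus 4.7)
The plan is to reduce Corollary~\ref{cor:toMainA2} to Theorem~\ref{thm:MainA} by showing that any simply connected solvable Lie group $H$ admitting $\Gamma$ as a Zariski-dense lattice is automatically unipotently connected; Theorem~\ref{thm:MainA} then yields the finiteness of $\mathcal{D}(\Gamma,H)$. Write $N = \Nil(H)$ with identity component $N^{0}$.

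The first step is to identify $\Fitt(\Gamma) = \Gamma \cap N$. The inclusion $\Gamma \cap N \subseteq \Fitt(\Gamma)$ is immediate from normality and nilpotency. For the reverse, invoke Zariski-density: the Zariski closure of the nilpotent normal subgroup $\Fitt(\Gamma)$ in the algebraic hull of $H$ is a nilpotent normal algebraic subgroup, whose intersection with $H$ is therefore contained in $N$. Next, by Lie's theorem the commutator subgroup $[H,H]$ of the simply connected solvable Lie group $H$ is closed, connected, and nilpotent, hence $[H,H] \subseteq N^{0}$ and $[\Gamma,\Gamma] \subseteq \Gamma \cap N^{0}$. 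The hypothesis relating $[\Gamma,\Gamma]$ and $\Fitt(\Gamma)$ therefore forces $[\Gamma \cap N : \Gamma \cap N^{0}]$ to be finite.

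To conclude $N = N^{0}$: from the long exact homotopy sequence of the fibration $N^{0} \to H \to H/N^{0}$ together with the vanishing of $\pi_{2}$ of any Lie group, $H/N^{0}$ is simply connected; being also abelian (since $N^{0} \supseteq [H,H]$), it is isomorphic to $\R^{a}$. Consequently the closed discrete subgroup $N/N^{0}$ of $\R^{a}$ is torsion-free. Since $\Gamma \cap N$ is a lattice in $N$ by standard results on lattices in solvable Lie groups, the image $(\Gamma \cap N)/(\Gamma \cap N^{0})$ in the discrete quotient $N/N^{0}$ has finite index; this image is finite by the previous step, whence $N/N^{0}$ is a finite, and hence trivial, subgroup of $\R^{a}$. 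Thus $N = N^{0}$, $H$ is unipotently connected, and Theorem~\ref{thm:MainA} applies.

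The most delicate points are the identification of the Fitting subgroup with $\Gamma \cap N$ and the fact that $\Gamma \cap N$ is itself a lattice in $N$; both should be handled through the algebraic hull machinery invoked elsewhere in the paper. The remaining ingredients---closedness and nilpotency of $[H,H]$ in the simply connected solvable setting, the homotopy sequence computation, and the torsion-freeness of discrete subgroups of $\R^{a}$---are routine.
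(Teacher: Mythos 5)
Your overall strategy---show that \emph{every} simply connected, solvable $H$ containing $\Gamma$ as a Zariski-dense lattice is automatically unipotently connected, then quote Theorem~\ref{thm:MainA}---is exactly the right reduction, and your argument is correct in substance, but only after fixing a notational point that is not cosmetic. In the paper $\Nil(H)$ denotes the maximal \emph{connected} nilpotent normal subgroup, so ``$N=\Nil(H)$ with identity component $N^{0}$'' is self-contradictory; what your argument needs is $N=\up(H)=H\cap\Rad_{\textup{u}}(\mathbf{A}_H)$, the maximal (possibly disconnected) nilpotent normal subgroup, with $N^{0}=\Nil(H)$. Read that way, your first step $\Fitt(\Gamma)=\Gamma\cap N$ is the identity $\Fitt(\Gamma)=\up(\Gamma)$ established at the start of Section~\ref{sect:Fitting}, and your Zariski-closure justification is sound. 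If instead $N$ is read literally as the connected nilradical, the inclusion $\Fitt(\Gamma)\subseteq\Gamma\cap N$ is essentially equivalent to unipotent connectedness and the step is circular, while the Zariski-closure argument only places $\Fitt(\Gamma)$ inside the possibly disconnected group $\up(H)$. The second caveat: the assertion that $\Gamma\cap N$ is a lattice in $N$ is \emph{not} a standard result when $N=\up(H)$; Mostow's theorem gives only that $\Gamma\cap\Nil(H)$ is a lattice in $\Nil(H)$, and what you need is the paper's strengthening, Proposition~\ref{pro:fittislattice}. You do flag both points as the delicate ones to be supplied by the hull machinery, and since Proposition~\ref{pro:fittislattice} precedes the corollary this is legitimate---but it is the real content of the argument rather than a routine citation.

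Granting those inputs, the rest is clean: $[H,H]\subseteq N^{0}$, the hypothesis forces $(\Gamma\cap N)/(\Gamma\cap N^{0})$ to be finite, cocompactness of $\Gamma\cap N$ in $N$ makes its image of finite index in the discrete group $N/N^{0}$, and $N/N^{0}$ embeds in the torsion-free vector group $H/\Nil(H)$, hence is trivial. This is a genuinely different and more hands-on route than the paper's, which deduces the corollary from Proposition~\ref{pro:all_are_uc}: there the equivalence of ``$[\Gamma,\Gamma]$ of finite index in $\Fitt(\Gamma)$'' with ``$\mathcal{G}(\Gamma)=\mathcal{G}^{\textup{uc}}(\Gamma)$'' is obtained from the parametrization of tight Lie subgroups by $\Hom_{\textup{Lie}}(H/\Gamma[H,H],T)$ versus $\Hom_{\textup{Lie}}(H/\Gamma\Nil(H),T)$. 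Your version avoids the $\mathcal{G}(\Gamma)$ formalism entirely and, unlike Proposition~\ref{pro:all_are_uc}, does not need to first exhibit one unipotently connected model in $\mathcal{G}(\Gamma)$; what it buys is directness, at the cost of invoking Proposition~\ref{pro:fittislattice} explicitly.
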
 

Indeed, the condition on $\Gamma$ in Corollary~\ref{cor:toMainA2}
implies that every Zariski-dense lattice embedding of $\Gamma$ is into
a unipotently connected Lie group; see
Proposition~\ref{pro:all_are_uc}.\\ 

The condition of unipotent connectedness will be further motivated and explained in Section~\ref{sec:uconnected}.  The class of unipotently
connected groups coincides with the class of groups (A) defined
in~\cite{St94}; the terminology we employ originates
from~\cite{GrSe94}.  If $\Gamma$ is a Zariski-dense lattice in $G$,
then $G$ is unipotently connected if and only if the dimension of the
nilradical of $G$ is equal to the rank of the Fitting subgroup
of~$\Gamma$; see Corollary~\ref{cor:nil_and_fitt}.  The Fitting
subgroup $\Fitt(\Gamma)$ is the maximal nilpotent normal subgroup
of~$\Gamma$. This illustrates that unipotently connected groups afford
strong `structural' links to their Zariski-dense lattices.

\subsection*{Strong rigidity and the structure set} Let $G$ be a
simply connected, solvable Lie group.  A Zariski-dense lattice
$\Gamma$ in $G$ is called \emph{strongly rigid} if every embedding of
$\Gamma$ as a Zariski-dense lattice into a simply connected, solvable
Lie group $H$ extends to an isomorphism of Lie groups $G \rightarrow
H$.  Zariski-denseness guarantees that such extensions are unique,
whenever they exist.  While rigidity of $\Gamma$ in $G$ is a property
which crucially depends on the ambient Lie group~$G$, strong rigidity
only depends on the group $\Gamma$ itself.  Indeed, we consider the
\emph{structure set} for Zariski-dense embeddings of $\Gamma$ into
simply connected, solvable Lie groups $H$, defined as
$$ 
\mathcal{S}^\textup{Z}(\Gamma) = \{ \phi\colon \Gamma \hookrightarrow
H \mid \phi(\Gamma) \text{ is a Zariski-dense lattice in $H$} \}
\big/ \sim \; , 
$$ where two embeddings $\phi_1 \colon \Gamma \hookrightarrow H_1$
and $\phi_2 \colon \Gamma \hookrightarrow H_2$ are equivalent if
there exists an isomorphism of Lie groups $\psi \colon H_1 \rightarrow H_2$
such that $\psi \circ \phi_1 = \phi_2$.  Clearly, $\Gamma$ is
strongly rigid if and only if $\mathcal{S}^\textup{Z}(\Gamma)$
consists of a single element.

For every simply connected, solvable Lie group $G$ such that $\Gamma$
is a Zariski-dense lattice in $G$, the deformation space
$\mathcal{D}(\Gamma,G)$ embeds naturally into the structure set
$\mathcal{S}^\textup{Z}(\Gamma)$; see
Section~\ref{sect:Structure_set}.  Our principal observation
concerning the structure set $\mathcal{S}^\textup{Z}(\Gamma)$ is the
following.

\begin{thmABC} \label{thm:MainE} Let $\Gamma$ be a Zariski-dense
  lattice in a simply connected, solvable Lie group~$G$.  Then the
  structure set $\mathcal{S}^\textup{Z}(\Gamma)$ is either countably
  infinite or it consists of a single element.  The structure set
  consists of a single element if and only if $\Gamma$ embeds as a
  lattice into a simply connected, solvable Lie group of real type.
\end{thmABC}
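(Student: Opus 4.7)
The plan is to prove the theorem in two stages: the characterisation of when $\mathcal{S}^{\textup{Z}}(\Gamma)$ is a single point, and the dichotomy between one and countably many elements. Throughout I work inside the real algebraic hull $A(\Gamma)$ of $\Gamma$, which is canonically determined by $\Gamma$ and contains (up to isomorphism) every Zariski-dense lattice embedding of $\Gamma$ into a simply connected solvable Lie group.

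For the forward characterisation, assume $\Gamma$ embeds as a lattice into a simply connected solvable Lie group $H_{0}$ of real type. The Mal'tsev--Sait\^o theorem gives $|\mathcal{D}(\Gamma, H_{0})| = 1$. To promote this to $|\mathcal{S}^{\textup{Z}}(\Gamma)| = 1$, I show that every other simply connected solvable Lie group $H$ containing $\Gamma$ as a Zariski-dense lattice is isomorphic to $H_{0}$ via an isomorphism intertwining the two embeddings; this follows because the real type property of $H_{0}$ forces the Cartan part of $A(\Gamma)$ to be entirely $\R$-split, so that inside such a hull the lattice $\Gamma$ determines the ambient Lie group up to conjugation.

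Conversely, suppose $\Gamma$ does not embed as a lattice into any real type group. Then the given Zariski-dense lattice embedding $\Gamma \hookrightarrow G$ has a Cartan part with a non-trivial compact torus $T$ acting non-trivially on the nilradical. I plan to construct an infinite family $\{G_{n}\}_{n \in \Z}$ of pairwise non-equivalent Zariski-dense lattice embeddings by twisting the action of $T$ on the nilradical by integer powers of a suitable character; the integrality of the lattice condition discretises the a priori continuous family of twists to a $\Z$-indexed family, and distinct integers yield pairwise non-isomorphic groups $G_{n}$ and hence pairwise distinct classes in $\mathcal{S}^{\textup{Z}}(\Gamma)$. Countability of the whole structure set then follows from the observation that every class is represented by a closed connected solvable subgroup of the fixed hull $A(\Gamma)$ containing $\Gamma$, with equivalence classes parametrised by countably many discrete invariants, essentially a choice of $\Q$-rational splitting of the Cartan part of the hull.

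The main obstacle will be the infinitude construction: one must check that each twisted group $G_{n}$ genuinely contains $\Gamma$ as a Zariski-dense lattice, that the $G_{n}$ are pairwise non-isomorphic as Lie groups together with the embedding of $\Gamma$, and that no two produce equivalent elements of the structure set. Distinguishing the classes will rely on comparing the eigenvalues of the adjoint action of $\Gamma$ on the twisted Lie algebras; the remaining ingredients, Mal'tsev--Sait\^o rigidity and the algebraic hull formalism, should be available from earlier sections of the paper.
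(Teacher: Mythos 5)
Your overall route coincides with the paper's: one passes to the real algebraic hull $\mathbf{A}_\Gamma$, identifies $\mathcal{S}^\textup{Z}(\Gamma)$ with the set $\mathcal{G}(\Gamma)$ of tight Lie subgroups of $A_\Gamma$ containing $\Gamma$ (Proposition~\ref{pro:real_structureset}), and parametrises that set by Lie homomorphisms from the compact torus $H/\Gamma[H,H]$ into the maximal torus $T$ of the hull, which form a group isomorphic to $\Hom(\Z^d,\Z^t)$ --- a point if $T$ is $\R$-split (equivalently, some and hence every $H\in\mathcal{G}(\Gamma)$ is of real type) and countably infinite otherwise (Propositions~\ref{pro:tight_Lie_subs_relative} and~\ref{pro:description_G_Gamma}, Corollary~\ref{cor:charac_real_type}). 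Your ``twisting by integer powers of a character'' is exactly the passage from $H$ to the graph subgroups $G_\sigma=\{h\sigma(h)\}$ for $\sigma$ in this discrete homomorphism group, and your forward direction (real type $\Rightarrow$ singleton) is the paper's observation that there are no non-trivial homomorphisms from a compact torus into a split torus. The appeal to Mal'tsev--Sait\^o is not needed for this step.

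There is, however, a genuine gap in the one step you flag as the main obstacle: distinguishing the twisted classes ``by comparing the eigenvalues of the adjoint action of $\Gamma$'' cannot work. Write $A=U\rtimes T$ and let $G_\sigma$ be the graph of $\sigma\colon U\to T$. For $\gamma\in\Gamma$ the operator $\Ad(\gamma)$ acts trivially on $\mathrm{Lie}(G_\sigma)/(\mathrm{Lie}(G_\sigma)\cap\mathfrak{u})\hookrightarrow\mathfrak{t}$ and also (unipotently, since $[U,T]\subseteq\ker\sigma$) on $\mathfrak{u}/\ker d\sigma$; hence the eigenvalue multiset of $\Ad(\gamma)$ on $\mathrm{Lie}(G_\sigma)$ equals that on $\ker d\sigma$ padded with $1$'s, which is the eigenvalue multiset on all of $\mathfrak{u}$ --- the same for \emph{every} $\sigma$. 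Concretely, in Example~\ref{exm:factorial} the groups $G_k=V_k\rtimes X_k(\R)$ are pairwise non-isomorphic, yet every lattice element has identical adjoint spectrum in all of them, since $X_k(1)$ is the same matrix for all $k$; the lattice only sees integer times of the twisted one-parameter groups. The correct separating mechanism is already implicit in your countability argument and is what the paper uses: if $\theta\colon G_{\sigma_1}\to G_{\sigma_2}$ were an isomorphism with $\theta\vert_\Gamma=\mathrm{id}_\Gamma$, it would extend to an automorphism of $\mathbf{A}_\Gamma$ fixing the Zariski-dense subgroup $\Gamma$ pointwise, hence equal to the identity (cf.\ Proposition~\ref{pro:ext_unique} and Corollary~\ref{cor:hull_unique}), forcing $G_{\sigma_1}=G_{\sigma_2}$ and $\sigma_1=\sigma_2$. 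Thus distinct tight subgroups of the hull containing $\Gamma$ automatically give distinct elements of the structure set, and no spectral invariant is needed (or available). With that replacement, and a verification that each $G_\sigma$ is closed, simply connected and contains $\Gamma$ as a Zariski-dense lattice (Proposition~\ref{pro:tight_Lie_subs} and Lemma~\ref{lem:Gamma_is_lattice}), your argument goes through.
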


\begin{corABC} \label{cor:at_most_countable}
  Let $\Gamma$ be a Zariski-dense lattice in a simply connected,
  solvable Lie group~$G$.  Then $\mathcal{D}(\Gamma,G)$ is at most
  countably infinite.
\end{corABC}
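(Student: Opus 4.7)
The plan is to deduce this corollary directly from Theorem~\ref{thm:MainE} via the natural map $\mathcal{D}(\Gamma,G) \to \mathcal{S}^\textup{Z}(\Gamma)$ that is announced in the discussion preceding that theorem. First I would make this map precise: each lattice embedding $\phi \colon \Gamma \hookrightarrow G$ determines a class in $\mathcal{S}^\textup{Z}(\Gamma)$, and two embeddings $\phi_1, \phi_2 \in \mathcal{X}(\Gamma, G)$ represent the same class in $\mathcal{S}^\textup{Z}(\Gamma)$ precisely when there is a Lie group isomorphism $\psi \colon G \to G$, i.e.\ an element of $\Aut(G)$, with $\psi \circ \phi_1 = \phi_2$. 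This shows that the assignment $\phi \mapsto [\phi]$ descends to a well-defined injection $\mathcal{D}(\Gamma, G) \hookrightarrow \mathcal{S}^\textup{Z}(\Gamma)$.

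With the embedding in place, the corollary is immediate: Theorem~\ref{thm:MainE} asserts that $\mathcal{S}^\textup{Z}(\Gamma)$ is either a singleton or countably infinite, hence any subset of it, and in particular the image of $\mathcal{D}(\Gamma, G)$, is at most countably infinite. There is no genuine obstacle here; the full weight of the corollary is carried by Theorem~\ref{thm:MainE}, and all that remains is the essentially tautological verification that the $\Aut(G)$-equivalence used to define $\mathcal{D}(\Gamma, G)$ coincides, on $\mathcal{X}(\Gamma, G)$, with the restriction of the coarser equivalence used to define $\mathcal{S}^\textup{Z}(\Gamma)$. This verification I would either make in the body of the proof or defer to the treatment of the structure set in Section~\ref{sect:Structure_set}.
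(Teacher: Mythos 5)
Your proposal is correct and follows the paper's own route: the paper likewise derives the corollary from Theorem~\ref{thm:MainE} together with the natural injection $\mathcal{D}(\Gamma,G)\hookrightarrow\mathcal{S}^\textup{Z}(\Gamma)$ set up in Section~\ref{sect:Structure_set} (Proposition~\ref{pro:real_structureset}). The only point worth making explicit in your ``tautological'' verification is that every $\phi\in\mathcal{X}(\Gamma,G)$ really does determine a class in $\mathcal{S}^\textup{Z}(\Gamma)$, i.e.\ that $\phi(\Gamma)$ is automatically Zariski-dense in $G$; this is Corollary~\ref{cor:all_Zdense}.
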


In particular, Theorem \ref{thm:MainE} shows that $\Gamma$ is strongly
rigid if and only if $\Gamma$ is a lattice in a simply connected,
solvable Lie group of real type.  Combining Theorem \ref{thm:MainA}
and Theorem \ref{thm:MainE}, we prove the following dichotomy result
for Zariski-dense lattices in unipotently connected groups.

\begin{corABC} \label{cor:toMainE1} Let $\Gamma$ be a Zariski-dense
  lattice in a simply connected, solvable Lie group $G$ which is
  unipotently connected.  Then either $\Gamma$ is strongly rigid or
  there exist countably infinitely many pairwise non-isomorphic
  (unipotently connected) simply connected, solvable Lie groups which
  contain $\Gamma$ as a Zariski-dense lattice.
\end{corABC}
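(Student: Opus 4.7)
The plan is to combine Theorems~\ref{thm:MainA} and~\ref{thm:MainE} via the natural projection from the structure set onto isomorphism classes of ambient Lie groups. First, if $\Gamma$ is strongly rigid, the first alternative of the dichotomy holds and there is nothing to prove. So I would assume from here on that $\Gamma$ is not strongly rigid; Theorem~\ref{thm:MainE} then yields at once that the structure set $\mathcal{S}^{\textup{Z}}(\Gamma)$ is countably infinite.

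Next, introduce the forgetful map $\pi\colon \mathcal{S}^{\textup{Z}}(\Gamma) \to \{\text{isomorphism classes of } H\}$ sending each class $[\phi\colon \Gamma \hookrightarrow H]$ to the isomorphism class of~$H$. By the very definition of the structure set, the fiber of $\pi$ over the class of $H$ is the image in $\mathcal{S}^{\textup{Z}}(\Gamma)$ of the Zariski-dense part of the deformation space $\mathcal{D}(\Gamma, H)$. In order to apply Theorem~\ref{thm:MainA} and bound the size of each fiber, I need to know that every Lie group $H$ appearing in $\mathcal{S}^{\textup{Z}}(\Gamma)$ is itself unipotently connected.

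This is the crucial step. By the intrinsic criterion of Corollary~\ref{cor:nil_and_fitt}, unipotent connectedness of $H$ is equivalent to the identity $\dim(\textup{nilradical}(H)) = \rank(\Fitt(\Gamma))$. The right-hand side is an invariant of $\Gamma$ alone. For the left-hand side, $\Gamma$ meets $\textup{nilradical}(H)$ in a lattice, so $\dim(\textup{nilradical}(H)) = \rank(\Gamma \cap \textup{nilradical}(H))$; I would argue that, under Zariski-density, this rank coincides with the dimension of the unipotent radical of the algebraic hull of $\Gamma$ and is therefore independent of the particular Zariski-dense lattice embedding. Since the desired equality holds for the given $G$ by hypothesis, it then holds for every $H$ in $\mathcal{S}^{\textup{Z}}(\Gamma)$, and so every such $H$ is unipotently connected. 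With this in hand, Theorem~\ref{thm:MainA} applies to each such $H$ and yields that every fiber of $\pi$ is finite (indeed, uniformly bounded in terms of $\dim H$, which equals the Hirsch length of $\Gamma$). A countably infinite set with finite fibers must have countably infinite image under $\pi$, producing the desired countably many pairwise non-isomorphic unipotently connected simply connected solvable Lie groups containing $\Gamma$ as a Zariski-dense lattice.

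The main obstacle, I expect, is the invariance assertion in the third paragraph: proving that $\dim(\textup{nilradical}(H))$ is intrinsic to $\Gamma$ across all Zariski-dense lattice embeddings. This is the technical linchpin that allows the unipotent connectedness of $G$ to propagate throughout the structure set, and it presumably rests on the theory of algebraic hulls of polycyclic groups developed earlier in the paper; everything else in the argument is a clean bookkeeping of the fibers of $\pi$ using Theorems~\ref{thm:MainA} and~\ref{thm:MainE}.
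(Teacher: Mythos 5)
There is a genuine gap at what you yourself identify as the linchpin: the claim that $\dim \Nil(H)$ is an invariant of $\Gamma$ across all Zariski-dense lattice embeddings is false, and in fact it is logically equivalent to the very conclusion you want to draw from it. By Corollary~\ref{cor:nil_and_fitt}, the equality $\dim \Nil(H) = \rank \Fitt(\Gamma)$ holds \emph{if and only if} $H$ is unipotently connected, so you cannot use it to prove that every $H$ in the structure set is unipotently connected. What is genuinely invariant is the Zariski-closure of the Fitting subgroup: by Proposition~\ref{pro:fittislattice}, $\Fitt(\Gamma) = \Gamma \cap \up(H)$ is always a lattice in $\up(H)$ and $\ac{\Fitt(\Gamma)} = \ac{\up(H)}$ has dimension $\rank\Fitt(\Gamma)$; but $\Nil(H) = \up(H)_\circ$ can be a proper (lower-dimensional) subgroup of $\ac{\up(H)}$ precisely when $\up(H)$ is disconnected. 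Your intermediate step ``$\rank(\Gamma \cap \Nil(H))$ coincides with the dimension of the unipotent radical of the hull'' fails for the same reason: $\Gamma \cap \Nil(H)$ can have infinite index in $\Fitt(\Gamma)$. That this actually happens is recorded in Proposition~\ref{pro:all_are_uc}: when $G$ is unipotently connected but not of real type, one has $\mathcal{G}(\Gamma) = \mathcal{G}^{\textup{uc}}(\Gamma)$ only under the extra hypothesis that $[\Gamma,\Gamma]$ has finite index in $\Fitt(\Gamma)$; otherwise $\Gamma$ also embeds Zariski-densely into non-unipotently-connected groups. Without knowing all fibers of your map $\pi$ are finite, a countably infinite domain does not force an infinite image (the fibers over the non-unipotently-connected classes are only known to be countable, by Corollary~\ref{cor:at_most_countable}).

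The repair is to restrict attention from the outset to the subset $\mathcal{G}^{\textup{uc}}(\Gamma)$ of unipotently connected members of $\mathcal{G}(\Gamma)$, which is what the paper does. This set is nonempty because $G \in \mathcal{G}^{\textup{uc}}(\Gamma)$, and Proposition~\ref{pro:description_u-connected_G_Gamma} shows it is countably infinite once $G$ is not of real type (which follows from non-strong-rigidity via Theorem~\ref{thm:MainE} and Corollary~\ref{cor:charac_real_type}). Since unipotent connectedness is an isomorphism invariant, the fibers of $\pi$ over classes of unipotently connected groups lie entirely in $\mathcal{G}^{\textup{uc}}(\Gamma)$, and Theorem~\ref{thm:MainA} (via the identification of $\mathcal{D}(\Gamma,H)$ with $\mathcal{G}(\Gamma)_H$) makes each such fiber finite. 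Your fiber-counting bookkeeping then goes through verbatim on this subset and yields infinitely many pairwise non-isomorphic unipotently connected groups, as required.
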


\subsection*{Topology of the deformation space} Let $G$ be a simply
connected, solvable Lie group, and let $\Gamma$ be a lattice in~$G$.
We apply our results to study the structure of the space
$\mathcal{X}(\Gamma,G)$ of lattice embeddings of $\Gamma$ into $G$
and, in particular, the properties of the $\Aut(G)$-action
on~$\mathcal{X}(\Gamma,G)$.  Recall that $\mathcal{X}(\Gamma,G)$ is a
subspace of the space $\Hom(\Gamma,G)$ of all homomorphisms of
$\Gamma$ into $G$, equipped with the topology of pointwise
convergence.  The group $\Aut(G)$, which naturally carries the
structure of a Lie group, acts continuously
on~$\mathcal{X}(\Gamma,G)$, and the deformation space
$$
\mathcal{D}(\Gamma, G) = \Aut(G) \backslash \mathcal{X}(\Gamma,G), 
$$
equipped with the quotient topology, reflects properties of the
$\Aut(G)$-orbits in~$\mathcal{X}(\Gamma,G)$.

Since $\Gamma$ is finitely generated, the space $\Hom(\Gamma,G)$
carries the additional structure of a real algebraic variety; in
particular, it has only finitely many connected components.  By a
celebrated result of Weil~\cite{We60}, for any cocompact lattice
$\Delta$ in a Lie group $H$ the space $\mathcal{X}(\Delta,H)$ is an
open and locally path connected subset of~$\Hom(\Delta,H)$.  In
classical situations, for example, in the case $H =
\mathrm{PSL}(2,\R)$, the space $\mathcal{X}(\Delta,H)$ coincides with
the union of two connected components of $\Hom(\Delta,H)$; see~\cite{FrKl,Go88}.
Moreover, the set $\mathcal{X}(\Delta,H)$ and its quotient, the
Teichm\"uller space, are connected manifolds which can be described by
algebraic equalities and inequalities; see~\cite{Ko97}.

For the simply connected, solvable Lie group $G$ a result of
Wang~\cite{Wa63} implies that the connected components of
$\mathcal{X}(\Gamma,G)$ are manifolds.  However, for a general
solvable Lie group $H$, even if $\Delta$ is a Zariski-dense lattice
in~$H$, the space $\mathcal{X}(\Delta,H)$ can have infinitely many
connected components; see Example~\ref{exm:nicht_u-zushg}.  This shows
that, in general, $\mathcal{X}(\Delta,H)$ cannot be described as a
semi-algebraic subset of~$\Hom(\Delta,H)$.

In contrast to the general picture, Theorem~\ref{thm:MainA} shows that
for a large class of lattices the space $\mathcal{X}(\Gamma,G)$ has
only finitely many connected components.  Indeed, whenever
$\mathcal{D}(\Gamma,G)$ is finite, the space $\mathcal{X}(\Gamma,G)$
has finitely many components, because the real algebraic group
$\Aut(G)$ has only finitely many components.

\begin{corABC} \label{cor:ToMainA3} Let $G$ be a simply connected
  solvable Lie group which is unipotently connected.  Then for every
  Zariski-dense lattice $\Gamma$ of $G$ the space of lattice
  embeddings $\mathcal{X}(\Gamma,G)$ has finitely many connected
  components.
\end{corABC}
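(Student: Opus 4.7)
The approach is to derive the conclusion as a direct consequence of Theorem~\ref{thm:MainA} together with standard structural properties of the automorphism group $\Aut(G)$. The argument is already indicated by the paper in the paragraph immediately preceding the statement.

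First, Theorem~\ref{thm:MainA} provides that $\mathcal{D}(\Gamma,G)$ is finite, so $\mathcal{X}(\Gamma,G)$ splits into finitely many $\Aut(G)$-orbits $\mathcal{O}_1,\dots,\mathcal{O}_k$. Since $\Aut(G)$ carries the structure of a real algebraic group, it has only finitely many connected components. Each orbit $\mathcal{O}_i$, being the continuous image of $\Aut(G)$ under the orbit map $\alpha\mapsto\alpha\circ\phi_i$ at a chosen representative $\phi_i\in\mathcal{O}_i$, therefore has at most $|\pi_0(\Aut(G))|$ connected components.

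The remaining issue is to pass from finitely many components on each orbit to finitely many components on $\mathcal{X}(\Gamma,G)$ as a whole; for this it suffices to know that the orbits are open (equivalently, clopen, since they are finitely many and pairwise disjoint) in $\mathcal{X}(\Gamma,G)$. Two routes are available. One is to invoke the local version of Wang's result \cite{Wa63} mentioned in the introduction, which implies that the $\Aut(G)^\circ$-orbit of any $\phi\in\mathcal{X}(\Gamma,G)$ is a neighborhood of $\phi$, yielding openness of the $\Aut(G)$-orbits. Alternatively, combining the finiteness statement of Theorem~\ref{thm:MainA} with the Hausdorffness of the deformation space recorded in the abstract, $\mathcal{D}(\Gamma,G)$ is finite and Hausdorff, hence discrete, so each orbit is open as the preimage of a point under the continuous quotient map.

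With the orbits clopen, $\mathcal{X}(\Gamma,G)$ is the disjoint union of finitely many clopen subsets each having finitely many components, and thus the total number of components of $\mathcal{X}(\Gamma,G)$ is bounded above by $k\cdot|\pi_0(\Aut(G))|$. I do not foresee a genuine obstacle: the statement is a formal corollary of Theorem~\ref{thm:MainA}, and the only substantive verification is the openness of orbits, which is available either from Wang's local rigidity or from the Hausdorff property already asserted.
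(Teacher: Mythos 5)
Your proposal is correct and follows the same route as the paper: Theorem~\ref{thm:MainA} gives finitely many $\Aut(G)$-orbits, and each orbit, being the continuous image of the real algebraic group $\Aut(G)$ under the orbit map, has at most $\lvert \pi_0(\Aut(G))\rvert$ connected components. The one remark worth making is that the step you single out as ``the remaining issue'' --- openness of the orbits --- is not actually needed. If a space $X$ is covered by finitely many subsets $\mathcal{O}_1,\dots,\mathcal{O}_k$ each having finitely many connected components, then $X$ itself has finitely many connected components: every component of $X$ contains a point, hence contains the (connected) component of that point inside some $\mathcal{O}_i$, so the components of the $\mathcal{O}_i$ surject onto the components of $X$ and $\lvert\pi_0(X)\rvert \le \sum_i \lvert\pi_0(\mathcal{O}_i)\rvert$. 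Openness would be required to get equality or to conclude that each component of $\mathcal{X}(\Gamma,G)$ lies in a single orbit, but not for mere finiteness. That said, the openness itself is true and is established in the paper (Proposition~\ref{pro:Deform_discrete} and Corollary~\ref{cor:defdiscrete}, via Wang's theorem), so both of your suggested routes for it are legitimate --- just superfluous here; the corollary is a purely formal consequence of Theorem~\ref{thm:MainA} and the finiteness of $\pi_0(\Aut(G))$, exactly as the paper indicates.
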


The lattice $\Gamma$ is said to be \emph{locally rigid} in $G$ if the
$\Aut(G)$-orbit of the identity map $\phi_0 = \rm{id}_\Gamma \colon
\Gamma \hookrightarrow G$ is open in~$\mathcal{X}(\Gamma,G)$;
see~\cite[I, Chap.~1, \S 6.1]{OnVi00}.  The lattice $\Gamma$ is
\emph{deformation rigid} in $G$ if the identity component
$\Aut(G)_\circ$ acts transitively on the component of
$\mathcal{X}(\Gamma,G)$ containing~$\phi_0$; see~\cite[\S 7]{St94}.
In principle, $\Gamma$ could be locally rigid without being
deformation rigid.

The real algebraic hull $A_\Gamma$ of $\Gamma$ is a real algebraic
group, associated to $\Gamma$ in a functorial way, which we use to
control the Zariski-dense lattice embeddings of~$\Gamma$. Its
construction is originally due to Mostow~\cite{Mo70}.
In Section~\ref{sect:Structure_set} we establish a one-to-one
correspondence between the structure set
$\mathcal{S}^\textup{Z}(\Gamma)$ and a certain collection
$\mathcal{G}(\Gamma)$ of closed Lie subgroups of~$A_\Gamma$.  As
explained in Section~\ref{sec:Chabauty}, this allows us to transfer
the natural topology on $\mathcal{G}(\Gamma)$ induced by the Chabauty
topology on the set of closed subgroups of~$A_\Gamma$
to~$\mathcal{S}^\textup{Z}(\Gamma)$.  Using Theorem~\ref{thm:MainE}
and the result of Wang~\cite{Wa63}, we prove the following.

\begin{thmABC} \label{thm:MainH} Let $\Gamma$ be a Zariski-dense
  lattice in a simply connected, solvable Lie group~$G$.  Then the
  structure set $\mathcal{S}^\textup{Z}(\Gamma)$ has the discrete
  topology and the embedding of $\mathcal{D}(\Gamma, G)$ into
  $\mathcal{S}^\textup{Z}(\Gamma)$ is continuous.
\end{thmABC}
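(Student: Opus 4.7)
The plan is to work with the identification $\mathcal{S}^\textup{Z}(\Gamma) \leftrightarrow \mathcal{G}(\Gamma)$ established in Section~\ref{sect:Structure_set}, viewing each class in the structure set as a closed subgroup $H \leq A_\Gamma$ with $\Gamma$ as a Zariski-dense cocompact lattice. A useful preliminary is that this identification is bijective on the nose: if $H_1, H_2 \in \mathcal{G}(\Gamma)$ are equivalent, then any Lie group isomorphism $H_1 \to H_2$ that is the identity on $\Gamma$ extends, by the functoriality of the algebraic hull and the Zariski-denseness of $\Gamma$, to the identity on $A_\Gamma$, so $H_1 = H_2$ as subgroups of $A_\Gamma$. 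Under this identification, the topology on $\mathcal{S}^\textup{Z}(\Gamma)$ is simply the subspace Chabauty topology on $\mathcal{G}(\Gamma) \subseteq A_\Gamma$.

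For the continuity of the embedding $\mathcal{D}(\Gamma,G) \hookrightarrow \mathcal{S}^\textup{Z}(\Gamma)$, I would again invoke functoriality of $A_\Gamma$. Each $\phi \in \mathcal{X}(\Gamma,G)$ extends uniquely to an algebraic isomorphism $\phi^\ast \colon A_\Gamma \to A_G$, and the associated element of $\mathcal{G}(\Gamma)$ is $H_\phi := (\phi^\ast)^{-1}(G)$. Since $\Gamma$ is finitely generated, pointwise convergence $\phi_n \to \phi$ in $\mathcal{X}(\Gamma,G)$ forces the algebraic extensions $\phi_n^\ast$ to converge locally uniformly to $\phi^\ast$ on $A_\Gamma$; consequently $H_{\phi_n} \to H_\phi$ in the Chabauty topology. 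Because the map $\phi \mapsto H_\phi$ is constant on $\Aut(G)$-orbits, it descends to a continuous map $\mathcal{D}(\Gamma,G) \to \mathcal{S}^\textup{Z}(\Gamma)$.

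The core of the proof is the discreteness of $\mathcal{S}^\textup{Z}(\Gamma)$, which I would establish by combining Theorem~\ref{thm:MainE} (countability) with Wang's theorem \cite{Wa63}. Assume $H_n \to H$ in Chabauty with $H_n, H \in \mathcal{G}(\Gamma)$; the aim is to conclude $H_n = H$ for $n$ large. All subgroups involved are connected Lie subgroups of $A_\Gamma$ of the common dimension $d = \dim G$, so Chabauty convergence is equivalent to Grassmannian convergence $\mathfrak{h}_n \to \mathfrak{h}$ of the corresponding Lie subalgebras of $\mathfrak{a}_\Gamma$. Exploiting that $\Gamma \leq H_n \cap H$ and transporting the Lie group structures of $H_n$ onto $H$ by means of diffeomorphisms close to the identity arising from the subalgebra convergence, I would produce lattice embeddings $\phi_n \colon \Gamma \to H$ converging pointwise to the canonical inclusion $\mathrm{id}_\Gamma$. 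Wang's theorem guarantees that $\mathcal{X}(\Gamma,H)$ is open in $\Hom(\Gamma,H)$, so $\phi_n \in \mathcal{X}(\Gamma,H)$ for large $n$, and each $\phi_n$ realises the class $[H_n]$ as an image of the deformation space $\mathcal{D}(\Gamma,H)$. Since $\mathcal{S}^\textup{Z}(\Gamma)$ is countable by Theorem~\ref{thm:MainE}, accumulation of the $[H_n]$ at $[H]$ is incompatible with the existence of a continuum of distinct classes, and so $[H_n] = [H]$ eventually.

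The principal obstacle is the middle step of the discreteness argument: converting Chabauty convergence inside $A_\Gamma$ into a sequence of genuine lattice embeddings of $\Gamma$ into the fixed target $H$. This passage requires the structural description of $\mathcal{G}(\Gamma)$ as a family of solvable splittings of $A_\Gamma$ compatible with $\Gamma$, developed in Section~\ref{sect:Structure_set}; once this is in place, Wang's openness of $\mathcal{X}(\Gamma,H)$ and the countability supplied by Theorem~\ref{thm:MainE} close the argument.
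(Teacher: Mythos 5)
Your reduction to $\mathcal{G}(\Gamma)$ and your preliminary observation that the correspondence is bijective on the nose are both correct and match Proposition~\ref{pro:real_structureset}. The core of the argument, however, fails at its last step: countability of $\mathcal{S}^\textup{Z}(\Gamma)$ does not preclude accumulation. A countable subset of $\mathcal{C}(A)$ can perfectly well have Chabauty accumulation points (just as $\{0\}\cup\{1/n \mid n\in\N\}$ is countable but not discrete), so ``countable, hence $[H_n]=[H]$ eventually'' is a non sequitur. The step feeding into it is also not a construction: a diffeomorphism $H_n\to H$ close to the identity is not a group homomorphism, so it does not convert the inclusion $\Gamma\hookrightarrow H_n$ into an element of $\Hom(\Gamma,H)$; worse, $H_n$ and $H$ need not be isomorphic Lie groups at all --- by Theorem~\ref{thm:MainE} and Corollary~\ref{cor:toMainE1} the set $\mathcal{G}(\Gamma)$ typically contains infinitely many isomorphism types --- so the class $[H_n]$ is in general not represented by any point of $\mathcal{X}(\Gamma,H)$, and the Weil/Wang openness of $\mathcal{X}(\Gamma,H)$ in $\Hom(\Gamma,H)$ cannot be brought to bear as you propose.

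What actually isolates $H$ is the splitting $A=H\rtimes T$ with projection $\tau\colon A\to T$, $\ker\tau=H$ (Lemma~\ref{lem:Gtau_properties}), together with the relative parametrisation of Proposition~\ref{pro:tight_Lie_subs_relative}: every $G\in\mathcal{G}(\Gamma)$ is the graph of a Lie homomorphism $\sigma_{G\mid H}\colon H\to T$ vanishing on $\Gamma[H,H]$, and $G=H$ if and only if $\tau(G)=\{1\}$; otherwise $\tau(G)=\sigma_{G\mid H}(H)$ is a nontrivial \emph{connected} subgroup of $T$, which cannot stay inside a fixed identity neighbourhood of $T$ containing no nontrivial subgroups. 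This yields a single Chabauty neighbourhood of $H$ meeting $\mathcal{G}(\Gamma)$ only in $H$ --- no countability, no Grassmannian convergence, no Wang. As for continuity of $\mathcal{D}(\Gamma,G)\hookrightarrow\mathcal{S}^\textup{Z}(\Gamma)$: since the map is injective and the target is discrete, continuity is \emph{equivalent} to discreteness of $\mathcal{D}(\Gamma,G)$, so your asserted (and unproven) locally uniform convergence of the hull extensions would not save any work even if established. The paper proves discreteness of $\mathcal{D}(\Gamma,G)$ directly (Proposition~\ref{pro:Deform_discrete}) using Wang's theorem that $\Aut(\Gamma\Nil(G))_\circ$ parametrises the component of $\id_\Gamma$ in $\mathcal{X}(\Gamma,G)$ and checking that each such automorphism extends to $\Aut(G)$; that is where \cite{Wa63} genuinely enters, not in the structure-set half.
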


In particular, every Zariski-dense lattice in a simply connected,
solvable Lie group is locally rigid.  In fact, from our proof of
Theorem~\ref{thm:MainH} we derive also the following
corollary, for which a proof sketch is provided 
in~\cite[Proposition~7.2 and its Corollary~7.3]{St94}.

\begin{corABC} \label{cor:toMainH1} Let $\Gamma$ be a Zariski-dense
  lattice in a simply connected, solvable Lie group~$G$.  Then $\Gamma$
  is deformation rigid in~$G$.
\end{corABC}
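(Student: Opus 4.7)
Write $\phi_0 = \id_\Gamma \in \mathcal{X}(\Gamma,G)$, let $\mathcal{O} = \Aut(G)\cdot \phi_0$ be its $\Aut(G)$-orbit, and let $C_0 \subseteq \mathcal{X}(\Gamma,G)$ denote the connected component containing $\phi_0$. What must be shown is the equality $\Aut(G)_\circ \cdot \phi_0 = C_0$, and my plan is to extract this from Theorem~\ref{thm:MainH} together with a standard dose of Lie theory.

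The first observation is that the composition
\[
\mathcal{X}(\Gamma,G) \twoheadrightarrow \mathcal{D}(\Gamma,G) \hookrightarrow \mathcal{S}^{\textup{Z}}(\Gamma)
\]
is continuous by Theorem~\ref{thm:MainH} and takes values in a discrete space, while its fibre through $\phi_0$ is precisely $\mathcal{O}$. Hence $\mathcal{O}$ is simultaneously open and closed in $\mathcal{X}(\Gamma,G)$; in particular $C_0 \subseteq \mathcal{O}$, and the connected components of $\mathcal{X}(\Gamma,G)$ that lie in $\mathcal{O}$ are exactly the connected components of $\mathcal{O}$. It therefore suffices to identify the connected component of $\mathcal{O}$ through $\phi_0$ with the single orbit $\Aut(G)_\circ \cdot \phi_0$.

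For the second step I use that, since $G$ is simply connected and solvable, $\Aut(G)$ carries the structure of a real algebraic group, so that $\Aut(G)/\Aut(G)_\circ$ is finite; let $g_1 = 1, g_2, \ldots, g_n$ be coset representatives. Then
\[
\mathcal{O} = \bigcup_{j=1}^n g_j \bigl(\Aut(G)_\circ \cdot \phi_0 \bigr)
\]
is a finite union of continuous images of the connected Lie group $\Aut(G)_\circ$, each of which is therefore connected. To promote this decomposition from a covering by connected pieces to a decomposition into connected components, I argue that the smooth action of $\Aut(G)$ on $\Hom(\Gamma,G)$, realised via evaluation on a finite generating tuple of $\Gamma$, has as orbit the set $\mathcal{O}$, which by the first paragraph is open, and in particular locally closed, in $\Hom(\Gamma,G)$. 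The standard theorem on orbits of smooth Lie group actions then identifies the orbit map $\Aut(G) \to \mathcal{O}$ with the quotient by the closed stabiliser of $\phi_0$, and so declares this map to be open. Therefore $\Aut(G)_\circ \cdot \phi_0$, the image of the open subset $\Aut(G)_\circ \subseteq \Aut(G)$, is open in $\mathcal{O}$; left-translation gives the same for every $g_j (\Aut(G)_\circ \cdot \phi_0)$.

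Each piece of the decomposition is now open and connected, and hence a connected component of $\mathcal{O}$, and by the first paragraph a connected component of $\mathcal{X}(\Gamma,G)$. The one containing $\phi_0$ is $\Aut(G)_\circ \cdot \phi_0$, which must then coincide with $C_0$, and deformation rigidity follows. The delicate ingredient is the openness of the orbit map, for which the local closedness of $\mathcal{O}$ in $\Hom(\Gamma,G)$ is decisive; should the direct Lie-theoretic appeal prove unwieldy in this setting, one can replace it with a tangent-space argument in the spirit of Weil, combining Wang's manifold structure on $C_0$ from \cite{Wa63} with the fact that every $1$-cocycle on $\Gamma$ valued in the adjoint module integrates to a one-parameter family in $\Aut(G)_\circ$, to conclude that $\Aut(G)_\circ \cdot \phi_0$ and $C_0$ have equal tangent space at $\phi_0$ and hence agree.
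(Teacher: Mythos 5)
Your argument is correct, but its second half takes a genuinely different route from the paper's. The first step --- deducing from Theorem~\ref{thm:MainH} that the orbit $\mathcal{O}=\Aut(G)\cdot\phi_0$ is the preimage of a point of a discrete space, hence clopen in $\mathcal{X}(\Gamma,G)$ and therefore contains $C_0$ --- is exactly the content of Corollary~\ref{cor:defdiscrete} and matches the paper. For the passage from $\Aut(G)$-transitivity to $\Aut(G)_\circ$-transitivity, the paper invokes Wang's theorem that the connected components of $\mathcal{X}(\Gamma,G)$ are manifolds, and then the standard fact that the identity component of a Lie group acting transitively on a connected manifold still acts transitively. You instead use the finiteness of $\pi_0(\Aut(G))$ together with openness of the orbit map onto a locally closed orbit of a $\sigma$-compact locally compact group (a Baire-category/Effros argument); this trades Wang's manifold structure for Weil's theorem that $\mathcal{X}(\Gamma,G)$ is open in the real variety $\Hom(\Gamma,G)\subseteq G^k$, which is what makes $\mathcal{O}$ locally closed in $G^k$ and hence locally compact in the subspace topology. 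Both inputs are available in the paper, so either route is legitimate; yours has the mild advantage of not needing $C_0$ to be a manifold, at the cost of a heavier point-set-topological appeal.

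Two points in your write-up should be made explicit. First, the assertion that $\mathcal{O}$ is open ``in $\Hom(\Gamma,G)$'' does not follow from your first paragraph alone, which only gives openness in $\mathcal{X}(\Gamma,G)$; you need Weil's openness theorem (valid here since lattices in solvable Lie groups are cocompact) to conclude that $\mathcal{X}(\Gamma,G)$, and hence $\mathcal{O}$, is open in the closed subset $\Hom(\Gamma,G)$ of $G^k$. Second, ``open and connected, hence a connected component'' is not a valid implication by itself; it becomes one once you observe that the pieces $g_j\bigl(\Aut(G)_\circ\cdot\phi_0\bigr)=\Aut(G)_\circ\cdot(g_j\phi_0)$ are $\Aut(G)_\circ$-orbits (by normality of $\Aut(G)_\circ$) and therefore pairwise disjoint or equal, so that each is also closed in $\mathcal{O}$. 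With these two sentences added, the proof is complete.
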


\medskip

We conclude the section with a brief description of the organisation
of the paper and a summary of the basic notation and terminology
employed.

\medskip

\noindent
\textit{Organisation.} In Section~\ref{sect:Examples} we present a
collection of instructive examples, illustrating our main results.  In
Section~\ref{sec:preliminaries} we summarise important facts about
lattices in solvable Lie groups and introduce tools with which to
study them.  In particular, we recall the algebraic hull construction
for polycyclic groups and solvable Lie groups; after this we discuss
the notion of tight Lie subgroups.  In Section~\ref{sect:Fitting} we
strengthen Mostow's theorem about the intersection of a lattice
$\Gamma$ with the nilradical of its ambient Lie group~$G$.
Section~\ref{sec:uconnected} is concerned with characterisations of
unipotently connected Lie groups.  In Section~\ref{sect:GGamma} we
introduce and describe the space $\mathcal{G}(\Gamma)$ which assists
in the description of Zariski-dense embeddings of a lattice~$\Gamma$.
Section~\ref{sect:GGammaG} is devoted to the space
$\mathcal{G}_{\Gamma,G}$ which gives a fibration for the deformation
space $\mathcal{D}(\Gamma,G)$ of a Zariski-dense lattice $\Gamma$
in~$G$.  Section~\ref{sec:proof_main_results} contains the proofs of
most of our main results, namely all results from
Theorem~\ref{thm:MainA} up to Corollary~\ref{cor:ToMainA3}.  Finally,
in Section~\ref{sec:Chabauty} we discuss the topologies on the
structure set $\mathcal{S}^\textup{Z}(\Gamma)$ and the deformation
space $\mathcal{D}(\Gamma,G)$, proving Theorem~\ref{thm:MainH} and its
Corollary~\ref{cor:toMainH1}.

\smallskip

\noindent
\paragraph{
\textit{Notation and terminology.}
} 
%

\begin{list}{}{\setlength{\labelwidth}{.2cm}
    \setlength{\leftmargin}{.4cm} \setlength{\itemsep}{.1cm}}
\item[Polycyclic groups.] A group $\Gamma$ is \emph{polycyclic} if
  there exists a finite series of subgroups $\Gamma = \Gamma_1
  \trianglerighteq \Gamma_2 \trianglerighteq \ldots \trianglerighteq
  \Gamma_{r+1} = 1$ such that each quotient~$\Gamma_i/\Gamma_{i+1}$ is
  cyclic.  The \emph{rank} $\rank(\Gamma)$ of a polycyclic group
  $\Gamma$ is the number of infinite cyclic factors in such a series,
  which is an invariant of~$\Gamma$. The \emph{Fitting subgroup}
  $\Fitt(\Gamma)$ of $\Gamma$ is the maximal nilpotent normal subgroup
  of~$\Gamma$.  See \cite{Se83} for further results on polycyclic
  groups.
\item[Linear algebraic groups] Let $\mathbf{G}$ be a linear algebraic
  group, defined over a field of characteristic~$0$. If $H$ is a
  subgroup of $\mathbf{G}$, we write $\ac{H}$ for the
  \emph{Zariski-closure} of $H$ in $\mathbf{G}$ and we denote by
  $\up(H)$ the \emph{collection of unipotent elements} in~$H$.  If $H$
  is solvable, $\up(H)$ is a subgroup. The \emph{identity component}
  of $\mathbf{G}$ is denoted by~$\mathbf{G}^\circ$.  The
  \emph{unipotent radical} of~$\mathbf{G}$, i.e., the maximal
  connected unipotent normal subgroup of $\mathbf{G}$, is denoted
  by~$\Rad_\textup{u}(\mathbf{G})$.  If $\mathbf{G}^\circ$ is a
  solvable group, then $\Rad_\textup{u}(\mathbf{G}) = \up(\mathbf{G})=
  \up(\mathbf{G}^\circ)$, and $[\mathbf{G}^\circ,\mathbf{G}^\circ]
  \subseteq \Rad_\textup{u}(\mathbf{G})$.

  The group $\mathbf{G}$ has a \emph{strong unipotent radical} if the
  centraliser of the unipotent radical
  $\Cen_{\mathbf{G}}(\Rad_\textup{u}(\mathbf{G}))$ is contained
  in~$\Rad_\textup{u}(\mathbf{G})$.  If $\mathbf{G}$ is defined over a
  field~$k$, then we denote by $\Aut_k(\mathbf{G})$ the group of
  $k$-defined automorphisms of~$\mathbf{G}$.  If $\mathbf{G}$ has a
  strong unipotent radical and is defined over $k$, then the group
  $\Aut_k(\mathbf{G})$ can be regarded as the group of $k$-points of a
  $k$-defined linear algebraic group; see~\cite{BaGr06}.
  See~\cite{Bo91} for a general reference on linear algebraic groups.

\item[Lie groups and lattices] In the present paper all Lie groups are
  real Lie groups.  Every simply connected Lie group is connected.

  A \emph{Lie subgroup} is an immersed submanifold which inherits a
  Lie group structure from the ambient group.  The \emph{identity
    component} of a Lie group $G$ is denoted by~$G_\circ$.  The
  \emph{nilradical} $\Nil(G)$ of a Lie group $G$ is the maximal
  connected nilpotent normal subgroup of~$G$.

  A real linear algebraic group $A$ is the group of $\R$-points $A=
  \mathbf{A}_{\R}$ of a linear algebraic group $\mathbf{A}$ defined
  over~$\R$.  In this case we write $A^\circ =
  \mathbf{A}^{\!\circ}_{\; \R}$.  Such a group is also a Lie group
  with respect to its natural Euclidean (Hausdorff) topology.  We have
  $A_{\circ} \leq A^\circ$ and $\lvert A : A_\circ \rvert < \infty$;
  see~\cite[Appendix]{Mo57} or, more generally,~\cite{Wh57}.

  A closed subgroup $H$ of a Lie group $G$ is said to be
  \emph{cocompact} (or uniform) if the quotient $G/H$ is compact.  A
  \emph{lattice} in $G$ is a discrete subgroup of finite co-volume,
  i.e.\ a discrete subgroup $\Gamma$ such that the space $G/\Gamma$
  admits a $G$-invariant probability measure.  If $G$ is a solvable
  Lie group, then a subgroup $\Gamma$ is a lattice in $G$ if and only
  if $\Gamma$ is discrete and cocompact.  Let $\mathfrak{g}$ be the
  Lie algebra associated to $G$, and let $\Ad \colon G \rightarrow
  \GL(\mathfrak{g})$ denote the adjoint representation of~$G$.  We say
  that a lattice $\Gamma$ is \emph{Zariski-dense} in $G$ if
  $\ac{\Ad(\Gamma)} = \ac{\Ad(G)}$ in the ambient real algebraic
  group~$\GL(\mathfrak{g})$.
  Every simply connected, nilpotent Lie group $G$ admits naturally the
  structure of a unipotent real linear algebraic group by using
  exponential coordinates.  With respect to this real algebraic
  structure, a subgroup $\Gamma$ is Zariski-dense in $G$ if and only
  if the closure of $\Gamma$ in the Euclidean topology of $G$ is a
  cocompact subgroup.  Every connected Lie subgroup of a unipotent
  real algebraic group is Zariski-closed.  See \cite{Ra72} for a
  general reference on lattices of Lie groups.
\end{list}


\section{Instructive examples} \label{sect:Examples}

The purpose of this section is to present a number of concrete
examples and useful constructions of lattices in solvable Lie groups.
While some of them are certainly well known, others highlight new
insights.  Many examples of rigid and non-rigid lattices in solvable
Lie groups can be found in~\cite{St94}.

In order to describe some explicit Lie groups and lattices we
parametrise $2$-by-$2$ rotation matrices over $\R$ by setting
\begin{equation}\label{equ:rotation_matrix}
  R(t) =
  \begin{pmatrix}
    \cos(2\pi t) & -\sin(2\pi t) \\
    \sin(2\pi y) & \phantom{-}\cos(2\pi t)
  \end{pmatrix},
\end{equation}
and we denote block diagonal matrices with blocks $B_1, \ldots, B_k$,
say, by~$\diag(B_1,\ldots,B_k)$.


\subsection{Uncountable deformation spaces}

We provide a simple example of a non-Zariski-dense lattice $\Gamma$ in
a non-unipotently connected group $G$ such that the deformation space
$\mathcal{D}(\Gamma,G)$ is uncountable.  We also explain that a
construction of Milovanov, discussed in~\cite[Example~2.9]{St94}, yields
an example of a non-Zariski-dense lattice $\Gamma$ in a unipotently
connected group $G$ such that $\mathcal{D}(\Gamma,G)$ is uncountable.
The latter shows that the assumption of Zariski-denseness of $\Gamma$
in Theorem~\ref{thm:MainA} is not redundant.

\begin{exm} \label{exm:E2+}
  Consider the group $\widetilde{E(2)^+}$, the universal cover
  of the group of orientation-preserving isometries of the Euclidean
  plane.  We realise an isomorphic copy of this group as follows:
  $$
  G = V . X(\R)  \cong \R^2 \rtimes \R,
  $$
  where $V = \R + \R i = \C$ and the one-parameter group $X(t)$ acts
  as multiplication by $e^{2\pi t i}$ on~$V$.  Then $\Gamma = (\Z + \Z
  i) . X(\Z) \cong \Z^3$ is a lattice in~$G$ which is not
  Zariski-dense.  We claim that the deformation space
  $\mathcal{D}(\Gamma,G)$ is uncountable.

  \smallskip

  We observe that $\Zen(G) = X(\Z)$ and $\Nil(G) = V$; in particular,
  $G$ is not unipotently connected.  If $\Lambda$ is any lattice in
  the vector group~$V$, then $\Lambda X(\Z)$ is a lattice in $G$ which
  is isomorphic to~$\Gamma$.  Now $V$ is characteristic in~$G$.  Hence
  $\Aut(G)$ acts on the collection of all lattices in~$V$, and
  $\mathcal{D}(\Gamma,G)$ maps onto
  $$
  \Aut(G) \backslash \{ \Lambda \mid \Lambda \text{ a lattice in } V
  \}.
  $$
  Consequently, it suffices to show that the latter space is
  uncountable.  For our purposes it is more convenient to work with
  the identity component $\Aut(G)_\circ$ of $\Aut(G)$; this is
  permissible, because $\Aut(G)_\circ$ has finite index in~$\Aut(G)$.
  Two elements of $V$ are conjugate in $G$ if and only if they have
  the same modulus (as complex numbers).  This shows that any
  automorphism of $G$ acts on $V$ as multiplication by a non-zero
  complex number, possibly followed by complex conjugation.  It is
  easy to write down automorphisms of $G$ which induce this action,
  and we deduce that the action of $\Aut(G)_\circ$ on $V$ is
  equivalent to the action of $\C^*$ on $\C$ by multiplication.  The
  situation is now classical: the space of lattices in $\C$ up to the
  action of $\C^*$ is isomorphic to the upper half plane modulo the
  action of $\textup{SL}_2(\Z)$, hence uncountable.
\end{exm}

\begin{exm} \label{exm:Milovanov} Based on an example of Milovanov and
  Starkov ~\cite[Example~2.9]{St94}, we construct a non-Zariski-dense lattice $\Gamma$ in a
 unipotently connected group $G$ such that $\mathcal{D}(\Gamma,G)$ is
  uncountable.

  \smallskip  

  Let $A = \diag(A_0,A_0) \in \GL_4(\Z)$, where $A_0 \in \GL_2(\Z)$ is
  the companion matrix of the polynomial $f = x^2 - 3x + 1$.  Observe
  that $f$ splits over $\R$ into a product of two distinct linear
  factors $x-\lambda$ and $x-\lambda^{-1}$, say.
  
  We define in $\GL_4(\R)$ the one-parameter subgroup
  $$
  X(t) = \diag(\lambda^t R(t),\lambda^{-t},\lambda^{-t}) \qquad (t \in
  \R) .
  $$
  We view $A$ as an operator on a $4$\nobreakdash-dimensional vector
  space $V$ over $\R$ with basis $v_1, v_2, v_3, v_4$, say.  Thus
  $\Lambda = \oplus_{j=1}^4 \Z v_j$ is an $A$-invariant full
  $\Z$-lattice in~$V$.  Then $V$ decomposes into a direct sum $V =
  V^\lambda \oplus V^{\lambda^{-1}}$ of $A$-invariant $2$-dimensional
  subspaces corresponding to the eigenvalues of~$A$.  Choosing bases
  for these subspaces, we obtain a new basis for $V$ so that the
  action of $A$ on $V$ with respect to this basis is given by~$X(1)$.
  Then $\Gamma = \Lambda \rtimes X(\Z)$ is a lattice of the simply
  connected, solvable Lie group $G = V \rtimes X(\R)$.  Observe that
  the maximal nilpotent normal subgroup of $G$ is equal to $\Nil(G) =
  V$ so that $G$ is unipotently connected.  Furthermore,
   the lattice $\Gamma$ is not Zariski-dense in~$G$.

  In order to show that $\mathcal{D}(\Gamma,G)$ is uncountable we
  argue similarly as in Example~\ref{exm:E2+}.  We consider the subspace of lattice
  embeddings of $\Gamma$ into $G$ which map $X(1)$ to itself.  Such
  embeddings are in one-to-one correspondence with $X(\Z)$-equivariant
  embeddings of $\Lambda$ into~$V$, or in other words with elements of
  the centraliser of $X(\Z)$ in~$\GL_4(\R)$.  This centraliser is
  isomorphic to $\GL_2(\R) \times \GL_2(\R)$, preserving the subspaces
  $V^\lambda$ and $V^{\lambda^{-1}}$.  On the other hand we consider
  lattice embeddings of $\Gamma$ into $G$ mapping $X(1)$ to itself
  which are induced by automorphisms of~$G$.  These correspond to
  embeddings of $\Lambda$ into $V$ which are $X(\R)$-equivariant, or
  in other words to elements of the centraliser of $X(\R)$
  in~$\GL_4(\R)$.  They, too, preserve the subspaces $V^\lambda$
  and~$V^{\lambda^{-1}}$, but form a strictly smaller group isomorphic
  to $\C^* \times \GL_2(\R)$.
  Similarly as in Example~\ref{exm:E2+}, we
  conclude that a subset of $\mathcal{D}(\Gamma,G)$ maps onto
  $\C^* \backslash \GL_2(\R)$.  The latter space is
  homeomorphic to the upper half plane.  Thus $\mathcal{D}(\Gamma,G)$
  is uncountable.
\end{exm}


\subsection{Finite deformation spaces}
In view of Theorem~\ref{thm:MainA} it is natural to try to construct
Zariski-dense lattices in unipotently connected groups yielding
deformation spaces of various finite cardinalities.  We show that
deformation spaces of arbitrarily large sizes can be obtained.

\begin{exm} \label{exm:factorial} Let $n \in \N_0$.  Based on examples
  of Auslander and Starkov, we construct a Zariski-dense lattice
  $\Gamma$ in a simply connected, solvable Lie group $G$ such that the
  corresponding deformation space $\mathcal{D}(\Gamma,G)$ is finite of
  size~$(n+1)!$.
 
  \smallskip

  Let $A \in \GL_4(\Z)$ be the companion matrix of the polynomial $f =
  x^4 - 8x^3 + 10x^2 -8x + 1$.  It is easy to check that $f$ splits
  over $\R$ into a product of two linear factors and one irreducible
  quadratic factor.  Indeed, the eigenvalues of $A$ are
  $$
  \lambda, \quad \lambda^{-1}, \quad \alpha = e^{2\pi i \theta}, \quad
  \overline{\alpha} = e^{-2\pi i \theta},
  $$
  where $\lambda \in \R$ satisfies $\lambda > 1$ and $\theta \in
  (0,1/2)$ is irrational.

  Let $k \in \{0,\ldots,n\}$.  We define in $\GL_4(\R)$ the
  one-parameter subgroup
  $$
  X_k(t) = \diag(\lambda^t,\lambda^{-t}, R(t(\theta + k))
  \qquad (t \in \R),
  $$
  where we use the notation introduced in~\eqref{equ:rotation_matrix}.
  We view $A$ as an operator on a $4$-dimensional vector space $V_k$
  over $\R$ with basis $v_{k,1}, v_{k,2}, v_{k,3}, v_{k,4}$, say.
  Thus $\Lambda_k = \oplus_{j=1}^4 \Z v_{k,j}$ is an $A$-invariant
  full $\Z$-lattice in~$V_k$.  Then $V_k$ decomposes into a direct sum
  $V_k = V_k^\lambda \oplus V_k^{\lambda^{-1}} \oplus
  V_k^{\alpha,\alpha^{-1}}$ of $A$-invariant subspaces corresponding
  to the eigenvalues of~$A$.  Choosing appropriate bases for these
  subspaces, we obtain a new basis for $V_k$ so that the action of $A$
  on $V_k$ with respect to this basis is given by~$X_k(1)$.  Then
  $\Gamma_k = \Lambda_k \rtimes X_k(\Z)$ is a lattice of the simply
  connected, solvable Lie group $G_k = V_k \rtimes X_k(\R)$.
  According to~\cite[Example~4.1]{St94}, the lattice $\Gamma_k$ is
  Zariski-dense and rigid in~$G_k$: every lattice embedding of
  $\Gamma_k$ into $G_k$ extends to an automorphism of~$G_k$.

  Next we define
  $$
  G = \prod_{j=0}^n G_j \qquad \text{and} \qquad \Gamma =
  \prod_{j=0}^n \Gamma_j.
  $$
  Clearly, $\Gamma$ is a Zariski-dense lattice of the simply
  connected, solvable Lie group~$G$.  Whereas the Lie groups $G_0$,
  \ldots, $G_n$ are mutually non-isomorphic, their lattices
  $\Gamma_0$, \ldots, $\Gamma_n$ are all isomorphic to one another.
  Thus we obtain an embedding 
  $$
  \textup{Sym}(n+1) \hookrightarrow \mathcal{X}(\Gamma,G), \quad
  \sigma \mapsto \phi_\sigma
  $$
  as follows.  Viewing the symmetric group $\textup{Sym}(n+1)$ as a
  permutation group of $\{0,\ldots,n\}$, acting from the right, the
  image $\phi_\sigma \in \mathcal{X}(\Gamma,G)$ of $\sigma \in
  \textup{Sym}(n+1)$ is defined as
  $$
  \phi_\sigma \colon \Gamma \hookrightarrow G, \quad (g_j)_{j=0}^n
  \mapsto \left( \iota_{j\sigma,j} (g_{j\sigma})
  \right)_{j=0}^n,
  $$
  where $\iota_{j\sigma,j} \colon \Gamma_{j\sigma} \rightarrow
  \Gamma_j$ denotes, for each $j \in \{0,\ldots,n\}$, the isomorphism
  determined by $\iota_{j\sigma,j}(X_{j\sigma}(1)) = X_j(1)$ and
  $\iota_{j\sigma,j}(v_{j\sigma}^m) = v_j^m$ for $m \in \{1,2,3,4\}$.
  We now claim that
  \begin{equation}\label{equ:AutG}
    \Aut(G) = \prod_{j=0}^n \Aut(G_j)
  \end{equation}
  and that
  \begin{equation}\label{equ:deformSym}
    \mathcal{D}(\Gamma,G) = \{ [\phi_\sigma] \mid \sigma \in
    \textup{Sym}(n+1) \},
  \end{equation}
  implying that the deformation space has size $(n+1)!$, as wanted.

  In order to prove~\eqref{equ:AutG}, consider $\psi \in \Aut(G)$.  We
  note that $V = \prod_{j=0}^n V_j = \Nil(G)$ is mapped to itself
  by~$\psi$.  Furthermore, the factors $V_j$ in this product are
  characterised as the $G$-invariant subgroups of $V$ which are
  maximal with the property that their centralisers in $G$ have
  codimension~$1$.  Thus they are permuted among themselves by~$\psi$.
  Moreover, the action of $G$ on each group $V_j$ by conjugation
  factors through $X_j(\R)$ and thus defines~$G_j$.  As these Lie
  groups are mutually non-isomorphic, it is clear that $\psi$ maps
  each $V_j$ isomorphically onto itself.  Now fix $k \in
  \{0,\ldots,n\}$.  We have $V X_k(\R) = \Cen_G(\oplus_{j \ne k} V_j)$
  and hence $V \psi(X_k(\R)) = V X_k(\R)$.  Since $\psi(X_k(\R))$ and
  $\psi(X_j(\R))$ are to commute for any $j \in \{0,\ldots,n\}$, we
  conclude that $\psi(X_k(\R)) \subseteq V_k X_k(\R) = G_k$ and hence
  $\psi(G_k) = G_k$.  This finishes the proof of~\eqref{equ:AutG}.

  It remains to justify the inclusion~`$\subseteq$'
  in~\eqref{equ:deformSym}.  Let $\phi \colon \Gamma \hookrightarrow
  G$ be a lattice embedding, and observe that $\phi(\Gamma)$ is also
  Zariski-dense in~$G$; see Corollary~\ref{cor:all_Zdense}.  Set
  $\Lambda = \prod_{j=0}^n \Lambda_j$, the Fitting subgroup
  of~$\Gamma$.  By a theorem of Mostow~\cite[\S5]{Mo57}, the
  intersection $\phi(\Gamma) \cap V$ is a lattice of $V \cong
  \R^{4(n+1)}$ and thus $\phi(\Gamma) V/V$ is a lattice of $G/V \cong
  \R^n$.  Since $V$ is the maximal nilpotent normal subgroup of $G$,
  we must have $\phi(\Gamma) \cap V = \phi(\Lambda)$.  Fix $k \in
  \{0,\ldots,n\}$.  The Zariski-closure $\ac{\phi(\Lambda_k)}$ of
  $\phi(\Lambda_k)$ in $V$ has dimension~$4$, and since the
  centraliser of $\Lambda_k$ in $\Gamma$ has co-rank~$1$, the
  centraliser of $\ac{\phi(\Lambda_k)}$ in $G$ has co-dimension~$1$.
  As observed earlier, this implies that $\ac{\phi(\Lambda_k)} =
  V_{k\sigma}$, where $\sigma \in \textup{Sym}(n+1)$ is a permutation
  of $\{0,\ldots,n\}$.  Arguing similarly as before, we deduce from
  $\Lambda X_k(\Z) = \Cen_\Gamma(\oplus_{j \ne k} \Lambda_j)$ that
  $$
  \ac{V \phi(X_k(\Z))} = \Cen_G(\oplus_{j \ne k} \ac{\phi(\Lambda_j)})
  = \Cen_G(\oplus_{j \ne k} V_{j\sigma}) = V X_{k\sigma}(\R).
  $$
  Since $\phi(X_k(\Z))$ and $\phi(X_j(\Z))$ are to commute for any $j
  \in \{0,\ldots,n\}$, we conclude that $\phi(X_k(\Z)) \subseteq
  V_{k\sigma} X_{k\sigma}(\R) = G_{k\sigma}$, and hence
  $\phi(\Gamma_k)$ is a lattice in~$G_{k\sigma}$.  Since
  $\phi(\Gamma_k) \cong \Gamma_{k\sigma}$ and since $\Gamma_{k\sigma}
  = \phi_{\sigma^{-1}}(\Gamma_k)$ is rigid in $G_{k\sigma}$ we find
  $\gamma_{k\sigma} \in \Aut(G_{k\sigma})$ such that $\gamma_{k\sigma}
  \circ \phi \vert_{\Gamma_k} = \phi_{\sigma^{-1}} \vert_{\Gamma_k}$.
  In view of~\eqref{equ:AutG}, this proves~\eqref{equ:deformSym}.
\end{exm}


\subsection{Countably infinite deformation spaces}
Corollary~\ref{cor:at_most_countable} states that the deformation
space of a Zariski-dense lattice in a simply connected, solvable Lie
group $G$ is at most countably infinite.  We construct a Zariski-dense
lattice in a non-unipotently connected group which has an infinite
deformation space.  The example illustrates that the assumption of
unipotent connectedness of $G$ in Theorem~\ref{thm:MainA} is not
redundant.

\begin{lem} \label{lem:polynomials} There are infinitely many
  polynomials $f \in \Z[x]$ which factorise over $\C$ as
  $$
  f = (x - \alpha) (x - \overline{\alpha}) (x - \alpha^{-1}) ( x -
  \overline{\alpha}^{-1}),
  $$
  where $\alpha = \lambda e^{2\pi i \theta}$ with $\lambda \in
  \R_{>1}$ and $\theta \in (0,1/2)$ irrational.  We can further
  arrange that the polynomials are irreducible over~$\Q$.
\end{lem}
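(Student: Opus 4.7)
The plan is to exhibit an explicit infinite family of reciprocal degree-four integer polynomials with the required root configuration and then verify irreducibility over $\Q$ and irrationality of the angle $\theta$ for all but finitely many members. Concretely, for each integer $n \ge 3$ I consider
\[
  f_n(x) = x^4 - x^3 + n x^2 - x + 1 \in \Z[x].
\]
As $f_n$ is reciprocal, the substitution $y = x + x^{-1}$ reduces factoring $f_n$ over $\C$ to first solving $g_n(y) = y^2 - y + (n-2) = 0$ and then, for each root $y$, solving $x^2 - y x + 1 = 0$. The discriminant of $g_n$ equals $9 - 4n$, which is negative for $n \ge 3$, so $g_n$ has two complex-conjugate non-real roots $y_n, \bar y_n$. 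The associated pair of quadratics yields a factorisation $f_n = (x - \alpha_n)(x - \alpha_n^{-1})(x - \bar\alpha_n)(x - \bar\alpha_n^{-1})$ over $\C$, and $y_n \notin \R$ forces $\alpha_n^{-1} \ne \bar\alpha_n$, equivalently $\lvert \alpha_n\rvert \ne 1$. After interchanging $\alpha_n$ with its inverse if needed, I write $\alpha_n = \lambda_n\, e^{2 \pi i \theta_n}$ with $\lambda_n > 1$ and $\theta_n \in (0, 1/2)$, giving the required root structure.

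Irreducibility of $f_n$ over $\Q$ is checked by a routine case-analysis. Rational roots must divide the constant term, and $f_n(\pm 1) \ne 0$ for $n \notin \{0,-4\}$. A factorisation of $f_n$ into two monic integer quadratics forces their constant terms to be both $+1$ or both $-1$, and matching coefficients reduces each case to the integer solvability of a quadratic whose discriminant is $9-4n$ or $-7-4n$, both negative for $n \ge 3$. Hence $f_n$ is irreducible over $\Q$ for every $n \ge 3$.

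The substantive step is to show that $\theta_n$ is irrational for infinitely many $n$. The key observation is that $\alpha_n/\bar\alpha_n = e^{4 \pi i \theta_n}$ lies in the Galois closure $\widetilde{K}_n$ of $\Q(\alpha_n)$ over $\Q$, and $[\widetilde{K}_n : \Q] \le 24$ because the Galois group embeds into the symmetric group on the four roots of~$f_n$. If $\theta_n = p/q$ were rational, then $e^{4\pi i \theta_n}$ would be a root of unity of some order $m$ with $\varphi(m) \le 24$, bounding $m$ by an absolute constant and hence forcing any rational $\theta_n$ into a finite subset $R \subset [0, 1/2]$. On the other hand, from $\alpha_n + \alpha_n^{-1} = y_n$ one reads off $\mathrm{Re}(y_n) = 1/2$ and $\mathrm{Im}(y_n) = \sqrt{4n-9}/2 \to \infty$, which forces $\lambda_n \to \infty$ and $\theta_n \to 1/4$. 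Finally, the value $\theta_n = 1/4$ is itself excluded by a short direct check: it would make $\alpha_n$ purely imaginary, forcing $\mathrm{Re}(y_n) = 0$ and contradicting $\mathrm{Re}(y_n) = 1/2$. Since $R$ is finite and $1/4$ is the only possible accumulation point of $R$ available to $\theta_n$, the sequence $\theta_n$ can lie in $R$ for only finitely many $n$, so $\theta_n$ is irrational for all sufficiently large $n$. I expect this last irrationality step, combining the Galois-closure degree bound with the explicit asymptotics of the family, to be the main obstacle; the factorisation set-up and the irreducibility verification are routine.
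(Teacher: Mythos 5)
Your proof is correct and follows essentially the same route as the paper's: an explicit family of palindromic quartics $x^4-ax^3+bx^2-ax+1$ (yours is the sub-family $a=1$, $b=n$), with the irrationality of $\theta$ extracted from the fact that $e^{4\pi i\theta}=\alpha\overline{\alpha}^{-1}$ lies in a number field of bounded degree and hence can be a root of unity of only boundedly many orders. The only differences are in the bookkeeping: you prove irreducibility via the resolvent quadratic $y^2-y+(n-2)$ rather than by reduction mod~$2$, and you rule out the finitely many admissible rational angles by the asymptotics $\theta_n\to 1/4$ with $\theta_n\neq 1/4$, where the paper instead notes that for fixed $a$ the map $b\mapsto\theta$ is injective.
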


\begin{proof}
  We consider the collection $\mathcal{F} \subseteq \Z[x]$ of all
  polynomials
  $$ 
  x^4 - a x^3 + b x^2 - ax + 1
  $$
  with $a,b \in \N$ such that $a,b \equiv_2 1$ and $b >
  \max\{a^2/2,2a+2\}$.

  Since $x^4 + x^3 + x^2 + x + 1$ is irreducible over the field
  $\mathbb{F}_2$ of cardinality~$2$, we conclude that all polynomials
  in $\mathcal{F}$ are irreducible over $\Z$, and hence over~$\Q$.
  
  Now consider $f = x^4 - a x^3 + b x^2 - ax + 1 \in \mathcal{F}$.
  Two short computations show that
  \begin{itemize}
  \item[(i)] $f$ has no real roots, because $b > a^2/2$.
  \item[(ii)] $f$ has no complex roots of modulus~$1$, because $b >
    2a+2$.
  \end{itemize}
  Hence, $f = (x - \alpha) (x - \overline{\alpha}) (x - \alpha^{-1}) (
  x - \overline{\alpha}^{-1} )$, where $\alpha = \lambda e^{2\pi i
    \theta}$ with $\lambda \in \R_{>1}$ and $\theta \in (0,1/2)$.
  From $a = \alpha + \overline{\alpha} + \alpha^{-1} +
  \overline{\alpha}^{-1} = (\lambda + \lambda^{-1}) 2
  \cos(2\pi\theta)$ we see that $a$ and $\theta$ determine $\lambda$,
  and hence~$b$.  Consequently, if we fix $a$, then different values
  for $b$ correspond to different values for~$\theta$.  On the other
  hand, the degree of the field $\mathbb{Q}(\alpha,\overline{\alpha})$
  over $\mathbb{Q}$ is uniformly bounded by $8$ and thus
  $\mathbb{Q}(\alpha,\overline{\alpha})$ contains roots of unity only
  up to a certain, uniformly bounded order.  Since $e^{4\pi i \theta}
  = \alpha \overline{\alpha}^{-1} \in
  \mathbb{Q}(\alpha,\overline{\alpha})$, this means that, for fixed
  $a$, for almost all $b$, the resulting value for $\theta$ must be
  irrational.

  Hence, for fixed $a$, for almost all $b$ the polynomial $f = x^4 - a
  x^3 + b x^2 - ax + 1 \in \mathcal{F}$ has the required properties
\end{proof}

\begin{exm} \label{exm:nicht_u-zushg} We construct a Zariski-dense
  lattice $\Gamma$ in a simply connected, solvable Lie group $G$ such
  that $\Gamma / \Fitt(\Gamma)$ is not torsion-free.  Consequently,
  the Lie group $G$ is not unipotently connected; see
  Lemma~\ref{lem:fitt_quo_tor_free}.  We go on to show that the
  deformation space $\mathcal{D}(\Gamma,G)$ is infinite.
  
  \smallskip
  
  Let $A \in \GL_4(\Z)$ with complex eigenvalues
  $$
  \alpha = \lambda e^{2\pi i \theta}, \quad \overline{\alpha} =
  \lambda e^{-2\pi i \theta}, \quad \overline{\alpha}^{-1} =
  \lambda^{-1} e^{2\pi i \theta}, \quad \alpha^{-1} = \lambda^{-1}
  e^{-2\pi i \theta},
  $$
  where $\lambda \in \R$ satisfies $\lambda > 1$ and $\theta \in
  (0,1/2)$ is irrational.  For instance, one can take the companion
  matrix of one of the polynomials in Lemma~\ref{lem:polynomials}.

  Alternatively, one can take for $A$ the companion matrix of the
  concrete polynomial $f = x^4 + 22x^3 + 150x^2 + 22x + 1$.  One
  checks that $A = B^3$ for $B \in \GL_4(\R)$ with characteristic
  polynomial
  $$
  x^4 + 4x^3 + 3x^2 -2x + 1 = (x+1)^4 - 3(x+1)^2 + 3.
  $$
  This polynomial appears in a construction of Wilking; see
  \cite[Example~2.1]{Wi00}.  He showed that the eigenvalues of $B$,
  which one computes easily, have angular component $2\pi i \theta$
  with irrational~$\theta$.  Thus $A$ has the desired property.

  We define in $\GL_5(\R)$ the one-parameter subgroups
  \begin{align*}
    X(t) & = \diag(\lambda^t R(t\theta),\lambda^{-t} R(-t \theta),1)
    && (t \in \R), \\
    Y(t) & = \diag(2^t R(t/2),2^t R(-t/2),2^t) && (t \in \R),
  \end{align*}
  where we employ the notation introduced in
  \eqref{equ:rotation_matrix}.  Clearly, $X(\R)$ and $Y(\R)$ commute
  with one another.

  We view $A$ as an operator on a $4$-dimensional vector space $V$
  over $\R$ with basis $v_1, v_2, v_3, v_4$, say.  Thus $\Lambda =
  \oplus_{i=1}^4 \Z v_i$ is an $A$-invariant full $\Z$-lattice
  in~$V$. Then $V$ decomposes into a direct sum $V = V_{\alpha,
    \overline{\alpha}} \oplus V_{\alpha^{-1}, \overline{\alpha}^{-1}}$
  of $A$-invariant planes corresponding to the eigenvalue pairs
  $\alpha, \overline{\alpha}$ and $\alpha^{-1},
  \overline{\alpha}^{-1}$.  Choosing appropriate bases for $V_{\alpha,
    \overline{\alpha}}$ and $V_{\alpha^{-1}, \overline{\alpha}^{-1}}$,
  we obtain a new basis $e_1,e_2,e_3,e_4$ so that, if the abelian
  group $V$ is embedded into $\GL_5(\R)$ via
  $$
  \eta \colon V \rightarrow \GL_5(\R), \qquad \sum_{i=1}^4 x_i e_i
  \mapsto
  \begin{pmatrix}
    \Id & \underline{x} \\
    0 & 1
  \end{pmatrix},
  $$
  where $\underline{x} = (x_1,x_2,x_3,x_4)^\text{t}$, then the
  original action of $A$ on $V$ is isomorphic to that of $X(1)$ on
  $\eta(V)$ by conjugation in~$\GL_5(\R)$.

  Now we consider the simply connected, solvable Lie group
  $$
  G = \eta(V) . X(\R) Y(\R) \cong \R^4 \rtimes (\R \times \R)
  $$ 
  and its lattice
  $$
  \Gamma = \eta(\Lambda) . X(\Z) Y(\Z) \cong \Z^4 \rtimes (\Z \times
  \Z).
  $$
  It is easily seen that $\Zen(G) = Y(2\Z)$ and that $V Y(2\Z)$ is the
  maximal nilpotent normal subgroup of~$G$.  Thus $\Nil(G) = V$ and
  $G$ is not unipotently connected.  Furthermore, we have
  $$
  \Fitt(\Gamma) = \eta(\Lambda) . Y(2\Z) \cong \Z^5 \qquad \text{and}
  \qquad \Gamma/\Fitt(\Gamma) \cong \Z \times \Z/2\Z.
  $$

  Next we prove that $\Gamma$ is Zariski-dense in~$G$.  For this it
  suffices to show that $\Gamma$ and $G$ have the same Zariski-closure
  in~$\GL_5(\R)$.  Clearly, $\eta(V)$ is Zariski-closed and
  $\ac{\eta(\Lambda)} = \ac{\eta(V)} = \eta(V)$.  The group $X(\R)$ is
  contained in the $2$-dimensional real algebraic torus
  $$
  T = T_\text{s} \times T_\text{c},
  $$
  where 
  \begin{align*}
    T_\text{s} & = \{ \diag(r, r, r^{-1},
    r^{-1},1) \mid r \in \R^* \} \cong \GL_1(\R), \\
    T_\text{c} & = \{ \diag(R(r),R(-r),1) \mid r \in \R \} \cong
    \textup{SO}_2(\R).
  \end{align*}
  From the eigenvalues of the elements in $X(\Z) \subseteq X(\R)$ we
  see that $X(\Z)$ is neither contained in a split real algebraic
  torus nor in a compact real algebraic torus.  This means that the
  algebraic closure of $X(\Z)$ is at least $2$-dimensional and hence
  $\ac{X(\Z)} = \ac{X(\R)} = T$.  Finally, we note that the group
  $Y(\R)$ is contained in the $2$-dimensional real algebraic torus
  $$
  S = S_\text{s} \times S_\text{c},
  $$
  where
  $$
  S_\text{s} = \{ \diag(r,r,r,r,r) \mid r \in \R^* \} \cong \GL_1(\R)
  \quad \text{and} \quad S_\text{c} = T_\text{c}.
  $$
  Again from the eigenvalues of elements in $Y(\R)$ it follows that
  $\ac{Y(\R)} = S$, and a small computation shows that 
  $$
  \ac{Y(\Z)} = S_\text{s} \times \langle \diag(-1,-1,-1,-1,1) \rangle.
  $$
  Altogether we see that 
  $$
  \ac{\Gamma} = \eta(V) (T_\text{s} \times S_\text{s} \times
  T_\text{c}) = \ac{G}.
  $$

  It remains to be shown that $\mathcal{D}(\Gamma,G)$ is infinite, and
  hence countably infinite by Corollary~\ref{cor:at_most_countable}.
  The automorphism group $\Aut(G)$ acts on $G/\Nil(G) = G/V$ as a
  finite group of automorphisms; see Lemma \ref{lem:autoinaut1}.
  On the other hand
  $$
  \Gamma_m = \eta(\Lambda) . X(\Z) Y((2m+1)\Z), \quad m \in \N,
  $$
  gives a countably infinite family of lattices in $G$, which are
  distinct modulo $V$ and each of which is isomorphic to~$\Gamma$.
  Thus $\mathcal{D}(\Gamma,G)$ is infinite.
\end{exm}


\subsection{Unipotently connected groups and their lattices}

The construction in Example~\ref{exm:nicht_u-zushg} also provides
interesting examples of lattices in unipotently connected groups.  We
comment on this matter just after
Lemma~\ref{lem:fitt_quo_tor_free}.

\section{Algebraic hulls, density properties of lattices and tight Lie
  subgroups} \label{sec:preliminaries} In this section we develop
important facts about lattices in solvable Lie groups.  In particular,
we introduce the algebraic hull construction, our central tool in the
study of such lattices.  Most of the material is implicit in the works
of Mostow, in particular \cite{Mo54,Mo57,Mo70}, and Auslander
~\cite{Au73}; see also \cite{Ra72} and~\cite{St94}.  In the last part
of the section we define and parametrise tight Lie subgroups of a
solvable real algebraic group.

\subsection{Algebraic hulls} One of our key tools is the algebraic
hull construction for polycyclic groups and solvable Lie groups which
is originally due to Mostow \cite{Mo70}.  In the context of polycyclic
groups, the algebraic hull can be regarded as a generalisation of the
Mal'tsev completion of a finitely generated, torsion-free nilpotent
group.  The construction is related to the notion of semisimple
splittings, which originates in the works of Mal'tsev, Wang and
Auslander on solvmanifolds.  We recall some of the key features of the
algebraic hull construction.  For further details see
\cite[Chap.~IV]{Ra72}, \cite[App.~A]{Ba04} and
\cite[Chap.~1]{Ba08}.

\subsubsection{Polycyclic groups} Let $\Gamma$ be a polycyclic group.
It is known that $\Cen_\Gamma(\Fitt(\Gamma)) \subseteq \Fitt(\Gamma)$;
see~\cite[\S{}2B]{Se83}.  Moreover, the following conditions are
equivalent:
\begin{itemize}
\item $\Fitt(\Gamma)$ is torsion-free;
\item besides the trivial group, $\Gamma$ has no finite normal
  subgroups.
\end{itemize}
Suppose that one of these conditions is satisfied.  Then there exist a
$\Q$-defined linear algebraic group $\mathbf{A}$ and an embedding
$\iota \colon \Gamma \hookrightarrow \mathbf{A}$ such that
$\iota(\Gamma) \subseteq \mathbf{A}_\Q$ and
\begin{itemize}
\item[(i)] $\iota(\Gamma)$ is Zariski-dense in $\mathbf{A}$,
\item[(ii)] $\mathbf{A}$ has a strong unipotent radical, i.e.,
  $\Cen_{\mathbf{A}}(\Rad_\textup{u}(\mathbf{A})) \subseteq
  \Rad_\textup{u}(\mathbf{A})$,
\item[(iii)] $\dim \Rad_\textup{u}(\mathbf{A}) = \rank \Gamma$.
\end{itemize}
Moreover, the construction $\iota \colon \Gamma \hookrightarrow
\mathbf{A}$ is uniquely determined up to $\Q$-isomorphism of linear
algebraic groups; see Corollary~\ref{cor:hull_unique}.  We thus refer
to $\mathbf{A}$, together with a possibly implicit embedding $\iota$,
as the \emph{algebraic hull} of~$\Gamma$.  In dealing with several
groups at the same time, it will be convenient to denote the algebraic
hull of $\Gamma$ by $\mathbf{A}_{\Gamma}$ and to assume that $\iota$
is simply the inclusion map: $\Gamma \subseteq \mathbf{A}_\Gamma$.

Now let $k$ be a field of characteristic~$0$.  Then a
\emph{$k$-defined algebraic hull} for $\Gamma$ is a $k$-defined linear
algebraic group $\mathbf{A}$ together with an embedding $\iota \colon
\Gamma \hookrightarrow \mathbf{A}$ such that $\iota(\Gamma) \subseteq
\mathbf{A}_k$ and satisfying conditions (i), (ii), (iii) above.  The
group of $k$-points $A = \mathbf{A}_k$, together with the embedding
$\iota \colon \Gamma \hookrightarrow A$, is called a $k$-algebraic
hull of~$\Gamma$.  In the special case $k = \R$, we call $A$ the
\emph{real algebraic hull} of~$\Gamma$.  If $k$ is an algebraic number
field with ring of integers $\mathcal{O}$, then the $k$-defined
algebraic hull $\mathbf{A}$ of $\Gamma$ has the additional property
that
\begin{itemize}
\item[(iv)] for any $k$-defined representation $\rho \colon \mathbf{A}
  \rightarrow \GL_n(\C)$, $n \in \N$, the pre-image $\iota^{-1}
  \rho^{-1}(\GL_n(\mathcal{O}))$ has finite index in~$\Gamma$.
\end{itemize}

We remark that the algebraic hull $\mathbf{A}_\Gamma$ of a polycyclic
group $\Gamma$ need not be connected, even if $\Gamma$ is
poly-(infinite cyclic).  For instance, if $\Gamma \cong \Z \rtimes \Z$
is the fundamental group of the Klein bottle, then
the real algebraic hull of $\Gamma$ is isomorphic to $\R^2 \rtimes
\{1,-1\}$.

The key property of the algebraic hull construction is recorded in the
following lemma.

\begin{lem}[Extension Lemma] \label{lem:extension} Let $\Gamma$ be a
  polycyclic group. Let $k$ be a field of characteristic $0$, and
  suppose that $\mathbf{A}$ is a $k$-defined algebraic hull
  for~$\Gamma$.  Let $\mathbf{B}$ be a linear algebraic $k$-group
  which has a strong unipotent radical.  Then every homomorphism $\phi
  \colon \Gamma \rightarrow \mathbf{B}$ with Zariski-dense image
  $\phi(\Gamma) \subseteq \mathbf{B}$ extends uniquely to a
  $k$-defined morphism of algebraic groups $\Phi \colon \mathbf{A}
  \rightarrow \mathbf{B}$.
\end{lem}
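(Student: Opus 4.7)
The plan is to construct $\Phi$ by the standard graph argument, then invoke the uniqueness of the $k$-defined algebraic hull (Corollary~\ref{cor:hull_unique}) to identify the projection from the graph closure with an isomorphism.

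Uniqueness of the extension is immediate: since $\iota(\Gamma)$ is Zariski-dense in $\mathbf{A}$ and a morphism of algebraic groups is determined by its values on a Zariski-dense subset, any two $k$-defined extensions of $\phi$ must agree on $\mathbf{A}$.

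For existence, I would consider the graph
\[
H = \{ (\iota(\gamma), \phi(\gamma)) \mid \gamma \in \Gamma \} \subseteq \mathbf{A} \times \mathbf{B}
\]
and form its Zariski-closure $\overline{H}$ in $\mathbf{A} \times \mathbf{B}$, which is a $k$-defined closed subgroup. The two coordinate projections $\pi_1 \colon \overline{H} \to \mathbf{A}$ and $\pi_2 \colon \overline{H} \to \mathbf{B}$ are $k$-defined morphisms, surjective precisely because $\iota(\Gamma)$ is Zariski-dense in $\mathbf{A}$ and $\phi(\Gamma)$ is Zariski-dense in $\mathbf{B}$. Once $\pi_1|_{\overline{H}}$ is known to be an isomorphism, the composite $\Phi := \pi_2 \circ (\pi_1|_{\overline{H}})^{-1} \colon \mathbf{A} \to \mathbf{B}$ is the desired $k$-defined morphism extending $\phi$.

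My strategy for showing that $\pi_1|_{\overline{H}}$ is an isomorphism is to verify that $\overline{H}$, together with the embedding $\gamma \mapsto (\iota(\gamma),\phi(\gamma))$, is itself a $k$-defined algebraic hull of $\Gamma$ satisfying conditions (i)--(iii), and then apply Corollary~\ref{cor:hull_unique}. Condition (i), Zariski-density of $H$ in $\overline{H}$, holds by construction. For the strong unipotent radical condition (ii), the key observation is that $U := \Rad_\textup{u}(\mathbf{A}) \times \Rad_\textup{u}(\mathbf{B})$ is the unipotent radical of $\mathbf{A} \times \mathbf{B}$, that the intersection $\overline{H} \cap U$ is unipotent and therefore (in characteristic zero) connected, and hence $\overline{H} \cap U = \Rad_\textup{u}(\overline{H})$. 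A short argument using the reductivity of $\overline{H}/\Rad_\textup{u}(\overline{H})$ shows that $\pi_i$ sends $\Rad_\textup{u}(\overline{H})$ onto $\Rad_\textup{u}(\mathbf{A})$ and $\Rad_\textup{u}(\mathbf{B})$ respectively. An element $(a,b) \in \overline{H}$ centralising $\Rad_\textup{u}(\overline{H})$ then centralises, in each coordinate, the full unipotent radical of the target, and by the strong unipotent radical hypotheses on $\mathbf{A}$ and $\mathbf{B}$, both $a$ and $b$ are unipotent, placing $(a,b) \in \overline{H} \cap U = \Rad_\textup{u}(\overline{H})$. The dimensional equality (iii), $\dim \Rad_\textup{u}(\overline{H}) = \rank \Gamma$, follows from the general fact that a Zariski-dense embedding of a polycyclic group $\Gamma$ with torsion-free Fitting subgroup into a $k$-algebraic group with strong unipotent radical forces $\dim \Rad_\textup{u}$ to equal $\rank \Gamma$.

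The main obstacle will be the verification of the strong unipotent radical condition for $\overline{H}$, in particular the identification $\Rad_\textup{u}(\overline{H}) = \overline{H} \cap U$ and the surjectivity of the restrictions $\pi_i|_{\Rad_\textup{u}(\overline{H})}$. Once these are established, Corollary~\ref{cor:hull_unique} yields a unique $k$-isomorphism between $\mathbf{A}$ and $\overline{H}$ compatible with the two embeddings of $\Gamma$; by Zariski-density this isomorphism is forced to be the inverse of $\pi_1|_{\overline{H}}$, and the extension $\Phi = \pi_2 \circ (\pi_1|_{\overline{H}})^{-1}$ is then the unique $k$-defined morphism extending $\phi$.
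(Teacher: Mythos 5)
The paper gives no proof of this lemma, only references to \cite{Ba08} and \cite{Ra72}; your graph-closure strategy is precisely the standard argument behind those references, and most of your intermediate steps (the identification $\Rad_\textup{u}(\overline{H}) = \overline{H}\cap U$, the surjectivity of the $\pi_i$ on unipotent radicals, and the verification that $\overline{H}$ has a strong unipotent radical) are sound. But there are two genuine gaps. First, invoking Corollary~\ref{cor:hull_unique} is circular within this paper: that corollary is itself deduced from Lemma~\ref{lem:extension}. This is repairable without the corollary: once (ii) and (iii) are known for $\overline{H}$, the kernel $\ker\pi_1 = \overline{H}\cap(\{1\}\times\mathbf{B})$ meets $\Rad_\textup{u}(\overline{H})$ in a finite, hence trivial, unipotent group (by the dimension count against $\Rad_\textup{u}(\mathbf{A})$), therefore commutes with $\Rad_\textup{u}(\overline{H})$ (two normal subgroups with trivial intersection), hence lies in $\Rad_\textup{u}(\overline{H})$ by the strong unipotent radical property, hence is trivial. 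This is the same argument that closes the proof of Proposition~\ref{pro:hull_incl}.

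Second, and more seriously, the ``general fact'' you use for condition (iii) is false as stated: a Zariski-dense embedding of a polycyclic group with torsion-free Fitting subgroup into a $k$-group with strong unipotent radical does \emph{not} force $\dim\Rad_\textup{u}$ to equal $\rank\Gamma$. For instance $\Z^2\to\mathbb{G}_a$, $(m,n)\mapsto m+n\sqrt{2}$, is a Zariski-dense embedding over $\R$ into a unipotent group (which trivially has a strong unipotent radical) of dimension $1\neq 2=\rank\Z^2$; this is exactly why (iii) is an independent axiom in the definition of the hull. What your argument actually needs is Mostow's inequality quoted in the proof of Lemma~\ref{lem:urank} (\cite[Lemma~4.36]{Ra72}): for any linear representation $\rho$ of a polycyclic group, $\dim\up(\ac{\rho(\Gamma)})\leq\rank\Gamma$. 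Applied to $\gamma\mapsto(\iota(\gamma),\phi(\gamma))$ this gives $\dim\Rad_\textup{u}(\overline{H})\leq\rank\Gamma$, while the surjection $\pi_1(\Rad_\textup{u}(\overline{H}))=\Rad_\textup{u}(\mathbf{A})$ that you already established gives $\dim\Rad_\textup{u}(\overline{H})\geq\dim\Rad_\textup{u}(\mathbf{A})=\rank\Gamma$; together these yield (iii). With these two repairs the proof goes through. (A minor point: for $\overline{H}$, and hence $\Phi$, to be $k$-defined one should assume, as all applications in the paper do, that $\phi(\Gamma)\subseteq\mathbf{B}_k$.)
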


\begin{proof}
  A variant of this lemma is proved in \cite[Proposition~1.4]{Ba08};
  compare also \cite[Lemma~4.41]{Ra72}.
\end{proof}

\begin{cor} \label{cor:hull_unique} Let $\Gamma$ be a polycyclic
  group.  Let $k$ be a field of characteristic $0$, and suppose that
  $\mathbf{A}$, $\mathbf{B}$ are $k$-defined algebraic hulls
  for~$\Gamma$.  Then the identity map on $\Gamma$ extends to a
  $k$-defined isomorphism of algebraic groups $\Phi \colon \mathbf{A}
  \rightarrow \mathbf{B}$.
\end{cor}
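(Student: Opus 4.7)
The plan is to apply the Extension Lemma (Lemma~\ref{lem:extension}) twice and then extract the fact that the two resulting morphisms are mutually inverse from its uniqueness clause. Write $\iota_A \colon \Gamma \hookrightarrow \mathbf{A}$ and $\iota_B \colon \Gamma \hookrightarrow \mathbf{B}$ for the two embeddings that are part of the algebraic hull data. By axiom~(i) both $\iota_A(\Gamma)$ and $\iota_B(\Gamma)$ are Zariski-dense in their respective ambient groups, and by axiom~(ii) both $\mathbf{A}$ and $\mathbf{B}$ carry a strong unipotent radical. Thus the hypotheses of Lemma~\ref{lem:extension} are met, regardless of which of the two hulls is taken as the source and which as the target.

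First, I would apply Lemma~\ref{lem:extension} to the group homomorphism $\iota_B \circ \iota_A^{-1} \colon \iota_A(\Gamma) \to \mathbf{B}$, whose image $\iota_B(\Gamma)$ is Zariski-dense in $\mathbf{B}$. This produces a unique $k$-defined morphism of algebraic groups $\Phi \colon \mathbf{A} \to \mathbf{B}$ with $\Phi \circ \iota_A = \iota_B$. Symmetrically, applying the lemma to $\iota_A \circ \iota_B^{-1} \colon \iota_B(\Gamma) \to \mathbf{A}$ yields a unique $k$-defined morphism $\Psi \colon \mathbf{B} \to \mathbf{A}$ with $\Psi \circ \iota_B = \iota_A$.

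The compositions $\Psi \circ \Phi \colon \mathbf{A} \to \mathbf{A}$ and $\Phi \circ \Psi \colon \mathbf{B} \to \mathbf{B}$ are $k$-defined morphisms of algebraic groups that restrict, via $\iota_A$ and $\iota_B$ respectively, to the identity on the Zariski-dense subgroups $\iota_A(\Gamma)$ and $\iota_B(\Gamma)$. Since $\mathrm{id}_{\mathbf{A}}$ and $\mathrm{id}_{\mathbf{B}}$ are obvious $k$-defined extensions of these identity maps, and since both $\mathbf{A}$ and $\mathbf{B}$ have strong unipotent radical, the uniqueness assertion of Lemma~\ref{lem:extension} (applied with target $\mathbf{A}$ and with target $\mathbf{B}$) forces $\Psi \circ \Phi = \mathrm{id}_{\mathbf{A}}$ and $\Phi \circ \Psi = \mathrm{id}_{\mathbf{B}}$. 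Hence $\Phi$ is a $k$-defined isomorphism extending the identity on $\Gamma$, as required.

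The reduction to Lemma~\ref{lem:extension} is purely formal, so I do not foresee a genuine obstacle here; every delicate point (characteristic zero, rigidity of the unipotent radical under strong-radical hypotheses, etc.) has already been absorbed into the proof of Lemma~\ref{lem:extension} itself. The only step worth double-checking is that the uniqueness clause of that lemma indeed covers morphisms between two algebraic hulls, but this is immediate because any algebraic hull qualifies as an admissible target with strong unipotent radical.
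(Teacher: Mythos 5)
Your proposal is correct and follows essentially the same route as the paper: apply the Extension Lemma in both directions and then use Zariski-density of $\Gamma$ (equivalently, the uniqueness clause of the lemma) to see that the two compositions are the respective identity maps. The only cosmetic difference is that you phrase the final step via the uniqueness assertion of Lemma~\ref{lem:extension} rather than invoking Zariski-density directly, but these are the same fact.
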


\begin{proof}
  By Lemma~\ref{lem:extension}, the identity map on $\Gamma$ extends
  to $k$-defined morphisms of algebraic groups $\Phi \colon \mathbf{A}
  \rightarrow \mathbf{B}$ and $\Psi \colon \mathbf{B} \rightarrow
  \mathbf{A}$.  Note that $\Psi \circ \Phi \colon \mathbf{A}
  \rightarrow \mathbf{A}$ restricts to the identity on~$\Gamma$.
  Since $\Gamma$ is Zariski-dense in~$\mathbf{A}$, we conclude that
  $\Psi \circ \Phi = \id_\mathbf{A}$.  Similarly, one shows that $\Phi
  \circ \Psi = \id_\mathbf{B}$.  Hence $\Phi$ and $\Psi$ are mutual
  inverses of each other, and $\Phi \colon \mathbf{A} \rightarrow
  \mathbf{B}$ is an isomorphism, as wanted.
\end{proof}

\subsubsection{Simply connected, solvable Lie
  groups} \label{sect:liealghull} Likewise, every simply connected,
solvable Lie group $G$ admits an algebraic hull: there exist an
$\R$-defined linear algebraic group $\mathbf{A}$ and an injective Lie
homomorphism $\iota \colon G \hookrightarrow \mathbf{A}$ such that
$\iota(G) \subseteq \mathbf{A}_\R$ and
\begin{itemize}
\item[(i)'] $\iota(G)$ is Zariski-dense in $\mathbf{A}$,
\item[(ii)'] $\mathbf{A}$ has a strong unipotent radical, i.e.,
  $\Cen_{\mathbf{A}}(\Rad_\textup{u}(\mathbf{A})) \subseteq
  \Rad_\textup{u}(\mathbf{A})$,
\item[(iii)'] $\dim \Rad_\textup{u}(\mathbf{A}) = \dim G$.
\end{itemize}
Again, the construction $\iota \colon G \hookrightarrow \mathbf{A}$ is
uniquely determined up to $\R$-isomorphism of linear algebraic groups.
We thus refer to $\mathbf{A}$, together with a possibly implicit
embedding $\iota$, as the \emph{algebraic hull} of~$G$.  It will be
convenient to denote the algebraic hull of $G$ by $\mathbf{A}_G$ and
to assume that $\iota$ is simply the inclusion map: $G \subseteq
\mathbf{A}_G$.

We observe that $\mathbf{A}_G$ is a connected solvable group.  A result
similar to Lemma~\ref{lem:extension} holds for the algebraic hull
$\mathbf{A}_G$ of~$G$.  The real algebraic group $A = \mathbf{A}_{\R}$,
together with the embedding $\iota \colon G \hookrightarrow A$, is
called the \emph{real algebraic hull} of $G$, and we denote it by~$A_G$.

The following result (see \cite[Proposition~2.3]{Ba04}) states, in
particular, that $G$ is a closed Lie subgroup in its real algebraic
hull~$A_G$.

\begin{pro}\label{pro:basics_real_hull}
  Let $G$ be a simply connected, solvable Lie group, and let $A = A_G$
  be its real algebraic hull.  Let $U$ denote the maximal unipotent
  normal subgroup of $A$, and let $T$ be a maximal reductive subgroup
  of~$A$.  Then $A = U \rtimes T$, and we denote by $\Upsilon \colon A
  \rightarrow U$, $g = ut \mapsto u$ the algebraic projection
  associated to the choice of~$T$.  Then $G$ is a closed normal
  subgroup of $A$ and $A = G \rtimes T$.  Moreover, $\Upsilon$
  restricts to a diffeomorphism $\Upsilon \vert_G \colon G \rightarrow
  U$.
\end{pro}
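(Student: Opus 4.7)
The plan is to establish the four assertions in the order stated: the Chevalley--Mostow decomposition $A = U \rtimes T$; the closedness and normality of $G$ in $A$; the splitting $A = G \rtimes T$; and finally the diffeomorphism property of $\Upsilon \vert_G$.

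For the first assertion I would invoke the standard Chevalley--Mostow splitting of a connected solvable linear algebraic group over a field of characteristic zero: $\mathbf{A} = \mathbf{U} \rtimes \mathbf{T}$ with $\mathbf{U} = \Rad_\textup{u}(\mathbf{A})$ and $\mathbf{T}$ a maximal reductive subgroup, whence $A = U \rtimes T$ on passing to $\R$-points. For the remaining structural results the central step is to prove that $G \cap T = \{1\}$. Once this is in hand, the multiplication map $m \colon G \times T \to A$ is a smooth injective map between real manifolds of equal dimension, as
\[
\dim G + \dim T = \dim U + \dim T = \dim A
\]
by the defining property $\dim \Rad_\textup{u}(A) = \dim G$ of the algebraic hull; invariance of domain then upgrades $m$ to a local diffeomorphism onto an open subset $\Omega \subseteq A$. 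I would argue that $\Omega = A$ by combining Zariski-density of $G$ in $\mathbf{A}$ with the right-$T$-invariance of $\Omega$, thereby obtaining $A = GT$, the closedness of $G$ as the preimage of $\{1\}$ under the smooth $T$-coordinate projection, and the normality of $G$ as the kernel of the group homomorphism $A \to T$ coming from $G \rtimes T \to T$.

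The main obstacle I expect is the verification that $G \cap T = \{1\}$. My approach would be to exploit the ``semisimple shadow'' homomorphism $\sigma \colon G \to T$ that accompanies the algebraic hull construction, together with the strong unipotent radical condition $\Cen_A(U) \subseteq U$: any $s \in G \cap T$ is semisimple in $A$ and centralises the Zariski-dense subgroup $\sigma(G) \subseteq T$; a careful analysis of the shadow map, together with the centraliser condition, should force $s$ into $\Cen_A(U) \cap T \subseteq U \cap T = \{1\}$.

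Assuming $G \cap T = \{1\}$ and the resulting splitting $A = G \rtimes T$, the diffeomorphism claim about $\Upsilon \vert_G$ falls out. The map $\Upsilon \vert_G$ is smooth; it is injective because $\Upsilon(g_1) = \Upsilon(g_2)$ forces $g_2^{-1} g_1 \in G \cap T = \{1\}$; and it is surjective because any $u \in U \subseteq A = GT$ factors as $u = g t$ with $g \in G$ and $t \in T$, whence $\Upsilon(g) = u$ by uniqueness of the $U \rtimes T$-decomposition. Equality of dimensions together with invariance of domain then finishes the argument.
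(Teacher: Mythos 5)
Your outline correctly isolates where the difficulty sits, but the two load-bearing steps are not actually carried out, and the mechanism you propose for the first of them is not the right one. Concerning $G \cap T = \{1\}$: an element $s \in G \cap T$ has no reason whatsoever to centralise $U$, so the strong unipotent radical condition cannot be brought to bear as you suggest, and the remark that $s$ centralises $\sigma(G) \subseteq T$ is vacuous since $T$ is abelian. The true mechanism is topological rather than algebraic: once $G$ is known to be normal in $A$, the image $V = \Upsilon(G)$ is a (connected) subgroup of $U$, hence Zariski-closed in $U$; Zariski-density of $G$ in $\mathbf{A}$ together with $\dim G = \dim U$ forces $V = U$; then $\Upsilon\vert_G \colon G \rightarrow U$ is a covering map with fibre $G \cap T$, and simple connectedness of $U$ makes it a diffeomorphism, which simultaneously gives $G \cap T = \{1\}$. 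This is precisely the argument carried out in the proof of Proposition~\ref{pro:tight_Lie_subs} in the paper (the paper itself does not reprove the present statement but cites \cite[Proposition~2.3]{Ba04}, where the same circle of ideas is used).

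Second, your derivation of normality is circular: you obtain it from the homomorphism $A = G \rtimes T \rightarrow T$, but the $T$-coordinate of a product equals the product of the $T$-coordinates only if $T$ already normalises $G$. Normality must come first, and it does so cheaply from Zariski-density alone: $[G,G]$ is a connected Lie subgroup of the unipotent group $U$, hence Zariski-closed, so $[A,A] = [G,G] \subseteq G$ and $G \trianglelefteq A$ (this is Lemma~\ref{lem:Gnormal}). Relatedly, your step $\Omega = GT = A$ ``by Zariski-density and right-$T$-invariance'' is not justified as stated: an open, right-$T$-invariant subset containing a Zariski-dense subgroup need not be the whole group, and the clean argument again passes through normality (so that $\Upsilon(G)$ is a subgroup of $U$, respectively so that all $G$-orbits on $U$ are translates of one another and hence all open). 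Reordering the proof as: normality via Lemma~\ref{lem:Gnormal}; then $\Upsilon(G) = U$; then the covering/simple-connectedness argument yielding $G \cap T = \{1\}$ and the diffeomorphism; then $A = G \rtimes T$, with closedness of $G$ read off from its description as the graph of the continuous map $U \rightarrow T$, $u \mapsto \Upsilon\vert_G^{-1}(u)u^{-1}$ --- repairs the argument; your final paragraph is fine once these inputs are in place.
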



\subsection{Density properties of lattices }
Let $\Gamma$ be a lattice in a simply connected, solvable Lie
group~$G$. In this section we consider the Zariski-closure of $\Gamma$
in the algebraic hull of~$G$. Subsequently, we present several
applications to Zariski-dense lattices, which are more closely linked
to their ambient Lie groups than general lattices.

\subsubsection{General lattices}
We start with a simple criterion for recognising lattices in simply
connected, solvable Lie groups.

\begin{lem} \label{lem:criterion_lattice} Let $\Gamma$ be a discrete
  subgroup of a simply connected, solvable Lie group~$G$.  Then
  $\Gamma$ is poly-(infinite cyclic) and $\rank \Gamma \leq \dim G$.
  Moreover, $\Gamma$ is a lattice in $G$ if and only if $\rank \Gamma
  = \dim G$.
\end{lem}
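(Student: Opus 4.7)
The plan is to break the lemma into its three assertions---polycyclicity, the rank bound, and the lattice characterisation---and tackle them in turn, leaning on the algebraic hull framework already set up together with two classical inputs: that simply connected solvable Lie groups are diffeomorphic to Euclidean space, and Mostow's theorem that discrete subgroups of solvable Lie groups are polycyclic.

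First I would establish that $\Gamma$ is poly-(infinite cyclic). Since $G$ is simply connected and solvable, a classical structure result (Iwasawa--Mal'tsev) gives that $G$ is diffeomorphic to $\R^{\dim G}$; in particular $G$ is torsion-free, and hence so is $\Gamma$. Combined with the theorem of Mostow~\cite{Mo57} that every discrete subgroup of a solvable Lie group is polycyclic, this forces $\Gamma$ to be torsion-free polycyclic, equivalently poly-(infinite cyclic).

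For the bound $\rank \Gamma \leq \dim G$ the plan is to use a cohomological dimension argument. The discrete subgroup $\Gamma$ acts freely and properly on the contractible manifold $G$ by left translation, so the quotient $G/\Gamma$ is an aspherical manifold of dimension $n = \dim G$, which gives $\mathrm{cd}(\Gamma) \leq n$. For a torsion-free polycyclic group the cohomological dimension coincides with the Hirsch length, which is exactly our $\rank \Gamma$ (a classical result of Gruenberg and Bieri). Hence $\rank \Gamma \leq n = \dim G$. As an alternative consistent with the algebraic hull viewpoint of this section, one could instead compare the hulls $\mathbf{A}_\Gamma$ and $\mathbf{A}_G$, using $\dim \Rad_\textup{u}(\mathbf{A}_\Gamma) = \rank \Gamma$ and $\dim \Rad_\textup{u}(\mathbf{A}_G) = \dim G$ together with a morphism obtained from a variant of the Extension Lemma applied to the Zariski closure of $\Gamma$ in~$\mathbf{A}_G$.

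For the equivalence $\rank \Gamma = \dim G \iff \Gamma$ is a lattice, the forward implication is painless: by the terminology section, a lattice $\Gamma$ in the solvable Lie group $G$ is cocompact, so $G/\Gamma$ is a closed aspherical $n$-manifold, which forces $\mathrm{cd}(\Gamma) = n$ and therefore $\rank \Gamma = n$. The converse is the delicate direction: given $\rank \Gamma = \dim G$, I would invoke the classical theorem of Mostow~\cite{Mo54} (extending Mal'tsev's nilpotent case) stating that a discrete subgroup of maximal Hirsch rank in a simply connected solvable Lie group is automatically cocompact. An intrinsic alternative is to use that torsion-free polycyclic groups of Hirsch length $n$ are Poincar\'e duality groups of dimension $n$: then the aspherical $n$-manifold $G/\Gamma$ has non-vanishing top compactly supported cohomology and must be compact.

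The main obstacle is precisely this converse half of the lattice characterisation. All of the other assertions reduce to immediate applications of standard results, whereas passing from the purely algebraic condition $\rank \Gamma = \dim G$ to cocompactness of $G/\Gamma$ requires either Mostow's structural theorem for lattices in solvable Lie groups or the non-trivial Poincar\'e duality property of polycyclic groups, both of which lie just outside the algebraic-hull toolkit developed earlier in this section.
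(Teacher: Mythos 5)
Your treatment of the rank bound and of the lattice criterion is essentially the paper's own argument: $G/\Gamma$ is an aspherical manifold of dimension $n=\dim G$, so $\cdim\Gamma\le n$; $\cdim\Gamma=\rank\Gamma$ for torsion-free polycyclic groups by Gruenberg; and the equality case $\cdim\Gamma=n$ if and only if $G/\Gamma$ is compact is precisely the criterion from \cite[\S VIII.9.4]{Br94} that the paper invokes (your Poincar\'e-duality variant is the same argument unwound, except that the relevant vanishing statement for a noncompact aspherical $n$-manifold concerns ordinary cohomology with local coefficients in top degree, not compactly supported cohomology).

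There is, however, a genuine gap in your first step. The asserted equivalence ``torsion-free polycyclic $\Longleftrightarrow$ poly-(infinite cyclic)'' is false. Every nontrivial poly-(infinite cyclic) group surjects onto $\Z$, but there exist torsion-free polycyclic groups with finite abelianisation: the Hantzsche--Wendt group, the fundamental group of the flat $3$-manifold with holonomy $(\Z/2\Z)^{2}$, is torsion-free, contains $\Z^{3}$ with index $4$, and has $H_{1}\cong \Z/4\Z\oplus\Z/4\Z$, hence admits no surjection onto $\Z$ and is not poly-(infinite cyclic). So ``Mostow $+$ torsion-freeness of $G$'' does not deliver the first assertion of the lemma; you must use how $\Gamma$ sits inside $G$, not merely its abstract isomorphism type. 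The paper does this by noting that $\Gamma\cap\Nil(G)$ is a discrete, hence finitely generated torsion-free nilpotent (so poly-(infinite cyclic)) subgroup, while $\Gamma/(\Gamma\cap\Nil(G))$ embeds into the vector group $G/\Nil(G)$ and is therefore finitely generated, torsion-free abelian; splicing the two series yields poly-(infinite cyclic). (Incidentally, this is also the reason groups such as the Hantzsche--Wendt group cannot occur as discrete subgroups of simply connected solvable Lie groups, since $\Gamma/\Fitt(\Gamma)$ would have to be torsion-free here.) The rest of your proof only uses ``torsion-free polycyclic'' to get $\cdim=\rank$, so the gap is confined to this first assertion --- but that assertion is part of the statement and needs its own argument.
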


\begin{proof}
  The group $\Gamma \cap \Nil(G)$ is discrete in $\Nil(G)$ and
  therefore a finitely generated, torsion-free, nilpotent
  group. Moreover, $\Gamma/(\Gamma \cap \Nil(G))$ embeds into the
  vector group $G/\Nil(G)$ and is a finitely generated, torsion-free,
  abelian group.  Thus $\Gamma$ is poly-(infinite cyclic).  The proof
  of \cite[Proposition~3.7]{Ra72} shows that $\rank \Gamma \leq \dim
  G$ and that equality holds if $\Gamma$ is a lattice in~$G$.
  Conversely, suppose that $\rank \Gamma = \dim G$.  We need to show
  that $\Gamma$ is cocompact in~$G$.  For this it is enough to prove
  that the cohomological dimension $\cdim \Gamma$ of $\Gamma$ is equal
  to $\dim G$; see~\cite[\S{}VIII.9.4]{Br94}.  This is true, because
  $\cdim \Gamma = \rank \Gamma$ by \cite[\S8.8]{Gr70}.
\end{proof}

Now let $\Gamma$ be a lattice in a simply connected, solvable Lie
group~$G$.  Then the inclusion $G \subseteq \mathbf{A}_{G}$ of $G$
into its algebraic hull $\mathbf{A}_{G}$ restricts to an inclusion
$\Gamma \subseteq \mathbf{A}_{G}$.

\begin{lem} \label{lem:udense} Let $\Gamma$ be a lattice in a simply
  connected, solvable Lie group~$G$.  Then $\Gamma$ is Zariski-dense
  in the unipotent part of $\mathbf{A}_{G}$:
  $$
  \up(\ac{\Gamma}) = \up( \mathbf{A}_{G}) =
  \Rad_\textup{u}(\mathbf{A}_G),
  $$
  where $\ac{\Gamma}$ denotes the Zariski-closure of $\Gamma$
  in~$\mathbf{A}_G$.
\end{lem}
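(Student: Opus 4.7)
\smallskip

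\noindent\textbf{Plan.} I plan to prove the two displayed equalities separately. The identity $\up(\mathbf{A}_G) = \Rad_{\textup{u}}(\mathbf{A}_G) = U$ is a standard feature of connected solvable algebraic groups: the set of unipotent elements forms a closed normal subgroup that coincides with the unipotent radical. Combined with $\ac{\Gamma} \subseteq \mathbf{A}_G$, this gives the trivial inclusion $\up(\ac{\Gamma}) \subseteq U$. The substance of the lemma is therefore the reverse inclusion $U \subseteq \ac{\Gamma}$, which I would establish in two stages: first capturing the nilradical via Mostow--Mal'tsev, then extending to all of $U$ by reducing to an abelian quotient.

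For the first stage I would verify that $\Nil(G) \subseteq U$. Since $G$ is normal in $\mathbf{A}_G$ by Proposition~\ref{pro:basics_real_hull} and $\Nil(G)$ is characteristic in $G$, its Lie algebra $\mathfrak{n}$ is a nilpotent ideal of $\mathfrak{a} = \mathrm{Lie}(\mathbf{A}_G)$. Taking the splitting $\mathfrak{a} = \mathfrak{u} \oplus \mathfrak{t}$ coming from Proposition~\ref{pro:basics_real_hull} and writing any $X \in \mathfrak{n}$ as $X_u + X_t$, the nilpotence of $\mathrm{ad}(X)|_{\mathfrak u}$ together with the semisimplicity of $\mathrm{ad}(X_t)|_{\mathfrak u}$ forces $X_t \in \Cen_{\mathfrak a}(\mathfrak u) \cap \mathfrak{t} = 0$, invoking the Lie-algebra version $\Cen_{\mathfrak a}(\mathfrak u) \subseteq \mathfrak u$ of the strong unipotent radical property. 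Hence $\mathfrak n \subseteq \mathfrak u$, so $\Nil(G) \subseteq U$. Now Mostow's theorem (strengthened in Section~\ref{sect:Fitting}) gives that $\Gamma_N := \Gamma \cap \Nil(G)$ is a lattice in $\Nil(G)$, and by Mal'tsev's classical theorem a lattice in a simply connected nilpotent Lie group is Zariski-dense in it. Thus $\Nil(G) \subseteq \ac{\Gamma}$.

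For the second stage let $\pi \colon \mathbf{A}_G \to \bar{\mathbf{A}} := \mathbf{A}_G/\Nil(G)$ be the quotient map. Then $\bar{\mathbf{A}}$ is a solvable algebraic group with unipotent radical $\bar U := U/\Nil(G)$ and reductive complement isomorphic to $T$. Since $[\mathfrak g, \mathfrak g] \subseteq \mathfrak n$ for solvable $\mathfrak g$, the image $\bar G := G/\Nil(G)$ is simply connected abelian, and being Zariski-dense in $\bar{\mathbf{A}}$, it forces $\bar{\mathbf{A}}$ itself to be abelian via the standard observation that the commutator morphism vanishes once it vanishes on a Zariski-dense subset. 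Consequently $\bar{\mathbf{A}} = \bar U \times T$ is a direct product of algebraic groups, and the projection $\bar\Upsilon \colon \bar{\mathbf{A}} \to \bar U$ is a homomorphism. Its restriction to $\bar G$ has bijective differential: from $\bar{\mathbf{A}} = \bar G \cdot T$ one gets $\bar{\mathfrak a} = \bar{\mathfrak g} + \mathfrak{t}$, and $\dim \bar G = \dim \bar U$ yields $\bar{\mathfrak g} \cap \mathfrak{t} = 0$. Since $\bar G \cong \R^k$ and $\bar U \cong \R^k$ are both simply connected, the resulting covering map is a global Lie isomorphism $\bar\Upsilon|_{\bar G} \colon \bar G \xrightarrow{\sim} \bar U$.

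Finally, $\bar\Gamma := \pi(\Gamma)$ is a lattice in $\bar G$ by the standard lattice descent. In the abelian $\bar{\mathbf{A}}$ the Jordan decomposition coincides with the direct-product decomposition, so for each $\bar\gamma \in \bar\Gamma$ the image $\bar\Upsilon(\bar\gamma) = \bar\gamma_u$ lies in the Zariski closure of the cyclic subgroup generated by $\bar\gamma$, and hence in $\ac{\bar\Gamma}$. Since $\bar\Upsilon|_{\bar G}$ is a Lie isomorphism, $\bar\Upsilon(\bar\Gamma)$ is a lattice in $\bar U \cong \R^k$, hence Zariski-dense by Mal'tsev, giving $\bar U \subseteq \ac{\bar\Gamma}$. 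Because $\pi(\ac{\Gamma})$ is an algebraic subgroup of $\bar{\mathbf{A}}$ containing $\bar\Gamma$, one has $\pi(\ac{\Gamma}) = \ac{\bar\Gamma}$, so $\pi(\ac{\Gamma}) \supseteq \bar U$; combined with $\Nil(G) \subseteq \ac{\Gamma}$ from the first stage this yields $U \subseteq \ac{\Gamma}$, completing the proof. I expect the main technical hurdles to be the Lie-algebra verification $\mathfrak n \subseteq \mathfrak u$ using the strong unipotent radical hypothesis, and the careful justification that the differential isomorphism $\bar{\mathfrak g} \xrightarrow{\sim} \bar{\mathfrak u}$ lifts through simple connectedness to the global Lie-group isomorphism $\bar G \xrightarrow{\sim} \bar U$.
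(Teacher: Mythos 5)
Your proof is correct, but it takes a genuinely different route from the paper's. The paper's argument is topological: since only the trivial inclusion $\up(\ac{\Gamma}) \subseteq \Rad_\textup{u}(\mathbf{A}_G)$ is clear and $\dim \Rad_\textup{u}(\mathbf{A}_G) = \dim G$, it suffices to show $\dim \up(\ac{\Gamma}) = \dim G$; this is done by restricting the simply transitive action of $G$ on $U$ (from Proposition~\ref{pro:basics_real_hull}) to a properly discontinuous, free $\Gamma$-action on the contractible group $\up(\ac{\Gamma})_\R$, so that $\Gamma \backslash \up(\ac{\Gamma})_\R$ and $G/\Gamma$ are both compact Eilenberg--MacLane spaces of type $K(\Gamma,1)$ and hence have equal dimension. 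Your argument instead produces the containment $U \subseteq \ac{\Gamma}$ directly and purely algebraically, by splitting off the nilradical (Mostow plus Mal'tsev give $\Nil(G) \subseteq \ac{\Gamma}$) and then working in the abelian quotient $\mathbf{A}_G/\ac{\Nil(G)} = \bar{U} \times T$, where the Jordan decomposition places $\bar{\Upsilon}(\bar{\gamma})$ inside $\ac{\bar{\Gamma}}$ and the isomorphism $\bar{G} \cong \bar{U}_\R$ turns $\bar{\Upsilon}(\bar{\Gamma})$ into a Zariski-dense lattice of $\bar{U}$. What the paper's route buys is brevity and uniformity (one cohomological-dimension count, reused verbatim in Lemma~\ref{lem:urank}); what yours buys is independence from algebraic topology and a more explicit picture of where $\ac{\Gamma}$ sits, at the cost of more bookkeeping. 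The only thin spot is your verification that $\mathfrak{n} \subseteq \mathfrak{u}$: since $\mathrm{ad}(X_u)$ and $\mathrm{ad}(X_t)$ need not commute, $\mathrm{ad}(X_t)$ is not literally the semisimple part of the nilpotent operator $\mathrm{ad}(X)$; you should either triangularise via Lie's theorem and compare diagonals, or argue at the group level that the torus part of the Zariski closure of $\Nil(G)$ is a normal, hence central, torus and so lies in $\Cen_{\mathbf{A}}(\Rad_\textup{u}(\mathbf{A})) \subseteq \Rad_\textup{u}(\mathbf{A})$, forcing it to be trivial. This is a repairable detail, not a gap.
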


\begin{proof}
  Since $\mathbf{A}_{G}$ is solvable, $\up( \mathbf{A}_{G}) =
  \Rad_\textup{u}(\mathbf{A}_G)$.  Clearly, $\up(\ac{\Gamma})$ is
  contained in~$\up( \mathbf{A}_{G})$. One of the properties of the
  algebraic hull is that $\dim \up( \mathbf{A}_{G}) = \dim G$.
  Therefore, to show that $\up(\ac{\Gamma}) = \up( \mathbf{A}_{G})$ it
  suffices to show that $\dim \up(\ac{\Gamma}) = \dim G$.

  Writing $A = \mathbf{A}_\R$ for the real algebraic hull of $G$, we
  have $A = U \rtimes T$, where $U = \up(A)$ is the maximal unipotent
  normal subgroup of $A$ and $T$ is a maximal reductive subgroup. The
  semidirect product decomposition $A = U \rtimes T$ induces a natural
  action $\alpha$ of $A$ on $U$:
  \begin{equation} \label{equ:action} \alpha(g) \cdot v = u \, ^{t\!}v
    = u t v t^{-1} \quad \text{for $g = ut \in U \rtimes T$ and $v \in
      U$.}
  \end{equation}
  By Proposition~\ref{pro:basics_real_hull}, we have $A = G \rtimes T$
  so that the action $\alpha$ in \eqref{equ:action} restricts to a
  simply transitive action of $G$ on~$U$.  Since $\Gamma \subseteq
  \ac{\Gamma}_{\R}$, the $\Gamma$-orbit $\alpha(\Gamma) \cdot 1$ is
  contained in $\up(\ac{\Gamma})_{\R}$, and, in fact, the group
  $\up(\ac{\Gamma})_{\R}$ is invariant under the action
  of~$\alpha(\Gamma)$.  Since $\Gamma$ acts properly discontinuously
  and freely on the contractible Lie group $\up(\ac{\Gamma})_{\R}$,
  the quotient $\Gamma \backslash \up(\ac{\Gamma})_{\R}$ is a compact
  manifold and an Eilenberg--MacLane space of type~$K(\Gamma,1)$.
  Since also $G/\Gamma$ is a compact Eilenberg--MacLane space of type
  $K(\Gamma,1)$, we deduce that $\dim \up(\ac{\Gamma}) = \dim G$.
\end{proof}

We remark that, with some extra work, the conclusion in
Lemma~\ref{lem:udense} can be strengthened: $\Gamma$ is Zariski-dense
in the real split part of~$\mathbf{A}_G$.  Under weaker assumptions,
we can formulate the following result.

\begin{lem} \label{lem:urank} Let $\Gamma$ be a discrete subgroup of a
  simply connected, solvable Lie group~$G$.  Then $\dim
  \up(\ac{\Gamma}) = \rank \Gamma$, where $\ac{\Gamma}$ denotes the
  Zariski-closure of $\Gamma$ in~$\mathbf{A}_G$.
\end{lem}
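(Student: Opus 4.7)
My plan is to reduce the statement to the lattice case already handled by Lemma~\ref{lem:udense}, by realising $\Gamma$ as a lattice in a closed simply connected solvable Lie subgroup of $G$ (its syndetic hull), and then transferring the resulting unipotent-dimension computation from the algebraic hull of that subgroup back into $\mathbf{A}_G$ via the universal property of the algebraic hull.

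For the lower bound $\rank \Gamma \leq \dim \up(\ac{\Gamma})$, I would directly adapt the cohomological-dimension argument from the proof of Lemma~\ref{lem:udense}. Since $\Gamma \subseteq G$, the simply transitive action $\alpha$ of $G$ on $U = \Rad_\textup{u}(\mathbf{A}_G)$ provided by Proposition~\ref{pro:basics_real_hull} restricts to a free, properly discontinuous action of $\Gamma$ on the contractible real manifold $\up(\ac{\Gamma})_\R$; the $\alpha(\Gamma)$-invariance of $\up(\ac{\Gamma})_\R$ follows exactly as in Lemma~\ref{lem:udense}, using that $\Gamma \subseteq \ac{\Gamma}_\R$. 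Consequently $\cdim \Gamma \leq \dim \up(\ac{\Gamma})$, and the equality $\cdim \Gamma = \rank \Gamma$ from \cite{Gr70} (already invoked in Lemma~\ref{lem:criterion_lattice}) yields the inequality.

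For the reverse inequality I would appeal to the classical syndetic-hull construction for discrete subgroups of simply connected solvable Lie groups (see \cite{Ra72}): $\Gamma$ is a lattice in a uniquely determined closed, connected, simply connected solvable Lie subgroup $H \leq G$. By Lemma~\ref{lem:criterion_lattice}, $\dim H = \rank \Gamma$; and applying Lemma~\ref{lem:udense} to the lattice $\Gamma \subseteq H$ gives $\dim \up(\ac{\Gamma}^{\mathbf{A}_H}) = \dim H$. The inclusion $H \hookrightarrow G \hookrightarrow \mathbf{A}_G$ then extends, via the Lie-group analogue of the Extension Lemma (i.e.\ the universal property of the algebraic hull of a simply connected solvable Lie group, as developed in \cite{Ba04, Ba08}), to an $\R$-defined morphism $\Psi \colon \mathbf{A}_H \to \mathbf{A}_G$ that restricts to the identity on $\Gamma$. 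Hence $\ac{\Gamma}^{\mathbf{A}_G} \subseteq \Psi(\ac{\Gamma}^{\mathbf{A}_H})$ and in particular $\up(\ac{\Gamma}^{\mathbf{A}_G}) \subseteq \Psi(\up(\ac{\Gamma}^{\mathbf{A}_H}))$, whose dimension is bounded by $\dim H = \rank \Gamma$.

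The principal obstacle is the justification of $\Psi$ and of its dimension-preserving behaviour on unipotent radicals. The polycyclic version of the Extension Lemma stated as Lemma~\ref{lem:extension} requires Zariski-density of the image, which typically fails here since $H$ need not be Zariski-dense in $\mathbf{A}_G$; one instead uses the Lie-group version, where this hypothesis is replaced by the structural requirement that the source be a simply connected solvable Lie group equipped with its algebraic hull, so that the strong unipotent radical of the target $\mathbf{A}_G$ alone suffices. A supplementary argument---using that $\Psi$ is the identity on $\Gamma$ and that $\Gamma$ is Zariski-dense in $\up(\mathbf{A}_H)$ by Lemma~\ref{lem:udense}---then shows that $\Psi$ is injective on $\up(\mathbf{A}_H)$, so the asserted dimension bound is tight and the two inequalities combine to the claimed equality $\dim \up(\ac{\Gamma}) = \rank \Gamma$.
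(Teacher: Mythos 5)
Your lower bound is exactly the paper's: the free, properly discontinuous action of $\Gamma$ on the contractible group $\up(\ac{\Gamma})_\R$, extracted from the last part of the proof of Lemma~\ref{lem:udense}, gives $\cdim\Gamma\leq\dim\up(\ac{\Gamma})$, and $\cdim\Gamma=\rank\Gamma$ by \cite{Gr70}. The gap is in your upper bound. The ``classical syndetic-hull construction'' you invoke is not a theorem: a discrete subgroup of a simply connected, solvable Lie group need not be a lattice in \emph{any} connected Lie subgroup of that group. A counterexample sits inside the paper's own Example~\ref{exm:E2+}: in $G=V.X(\R)\cong\widetilde{E(2)^+}$ take $\Gamma=\langle v\,X(1)\rangle$ with $v\in V\setminus\{0\}$. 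This is an infinite cyclic discrete subgroup, so by Lemma~\ref{lem:criterion_lattice} a syndetic hull would have to be a one-parameter subgroup. But $(v\,X(1))^n=(nv)\,X(n)$, and every one-parameter subgroup of $G$ whose $X(\R)$-coordinate reaches $X(n)$, $n\in\Z\setminus\{0\}$, does so at the point $X(n)$ itself (the $V$-component of $\exp$ there is a multiple of $e^{2\pi i n}-1=0$). Hence neither $\Gamma$ nor any finite-index subgroup of it lies in a one-dimensional connected subgroup of $G$, the group $H$ of your argument does not exist, and the reduction to Lemma~\ref{lem:udense} collapses. (The one-dimensional connected group containing this $\Gamma$ as a lattice does exist inside $A_G=U\rtimes T$ -- it is a line in $U$ -- but it is not contained in $G$; this is precisely the pathology of non-unipotently connected groups that the lemma must accommodate.)

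What the paper uses in place of the syndetic hull is a lemma of Mostow, \cite[Lemma~4.36]{Ra72}: for a polycyclic group $\Gamma$ and \emph{any} linear representation $\rho\colon\Gamma\to\GL_n(\R)$ one has $\dim\up(\ac{\rho(\Gamma)})_\R\leq\rank\Gamma$. Applied to the inclusion $\Gamma\subseteq A_G$ this gives the upper bound with no hypothesis on how $\Gamma$ sits inside $G$, and combined with your (correct) cohomological lower bound it finishes the proof. As a minor further point, even where your transfer argument makes sense, the claimed injectivity of $\Psi$ on the unipotent radical is not needed: that direction only ever produces the inequality $\dim\up(\ac{\Gamma})\leq\rank\Gamma$, and equality must in any case come from the cohomological half.
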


\begin{proof} The group $\Gamma$ is polycyclic.  Hence a lemma of
  Mostow (see \cite[Lemma~4.36]{Ra72}) shows that $\dim
  \up(\ac{\rho(\Gamma)})_{\R} \leq \rank \Gamma$ for any linear
  representation $\rho \colon \Gamma \rightarrow \GL_n(\R)$.  Applying
  this to the inclusion $\Gamma \subseteq A_G$ of $\Gamma$ into the
  real algebraic hull $A_G$ of $G$, we deduce that $\dim
  \up(\ac{\Gamma})_{\R} \leq \rank \Gamma$.  The (last part of the)
  proof of Lemma~\ref{lem:udense} shows that $\Gamma$ acts properly
  discontinuously and freely on the contractible Lie
  group~$\up(\ac{\Gamma})_{\R}$.  Therefore we have $\dim
  \up(\ac{\Gamma})_{\R} \geq \cdim \Gamma $, where $\cdim \Gamma$
  denotes the cohomological dimension of~$\Gamma$.  By
  \cite[\S8.8]{Gr70} we have $\cdim \Gamma = \rank \Gamma$.  Thus
  $\dim \up(\ac{\Gamma}) = \dim \up(\ac{\Gamma})_{\R} = \rank \Gamma$.
\end{proof}

\begin{pro} \label{pro:hull_incl} Let $\Gamma$ be a lattice in a
  simply connected, solvable Lie group~$G$.  Let $\mathbf{A}_\Gamma$
  be an $\R$-defined algebraic hull of $\Gamma$, and let
  $\mathbf{A}_G$ be an algebraic hull of~$G$.  Then the inclusion
  $\phi \colon \Gamma \hookrightarrow G \subseteq \mathbf{A}_G$
  extends uniquely to an $\R$-defined embedding of algebraic groups
  $\Phi \colon \mathbf{A}_{\Gamma} \hookrightarrow \mathbf{A}_{G}$,
  which restricts to an isomorphism of unipotent radicals.
\end{pro}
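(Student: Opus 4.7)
The plan is to exhibit the Zariski closure $\ac{\Gamma}$ of $\Gamma$ formed inside $\mathbf{A}_G$ as an $\R$-defined algebraic hull of $\Gamma$ in its own right, and then to invoke the uniqueness of the algebraic hull furnished by Corollary~\ref{cor:hull_unique}. Once this is done, the identity map on $\Gamma$ will extend to an $\R$-defined isomorphism $\mathbf{A}_\Gamma \to \ac{\Gamma}$, which composed with the inclusion $\ac{\Gamma} \hookrightarrow \mathbf{A}_G$ produces the desired embedding $\Phi$. For uniqueness, any $\R$-defined morphism of algebraic groups extending $\phi$ must take values in the Zariski closure of $\Gamma$ in $\mathbf{A}_G$, i.e.\ in $\ac{\Gamma}$; so the Extension Lemma (Lemma~\ref{lem:extension}), applied with target $\ac{\Gamma}$, will supply the uniqueness statement.

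To verify the three defining properties of an $\R$-algebraic hull for $\ac{\Gamma}$, I would proceed as follows. Zariski-denseness of $\Gamma$ in $\ac{\Gamma}$ is tautological, and $\ac{\Gamma}$ is $\R$-defined because $\mathbf{A}_G$ is $\R$-defined and $\Gamma \subseteq \mathbf{A}_G(\R)$. The crucial structural input is Lemma~\ref{lem:udense}, which yields
\[
\Rad_\textup{u}(\ac{\Gamma}) \, = \, \up(\ac{\Gamma}) \, = \, \Rad_\textup{u}(\mathbf{A}_G).
\]
From this I deduce the two remaining conditions. First, since $\mathbf{A}_G$ has a strong unipotent radical, any $x \in \ac{\Gamma}$ centralising $\Rad_\textup{u}(\ac{\Gamma})$ also centralises $\Rad_\textup{u}(\mathbf{A}_G)$ inside $\mathbf{A}_G$, and therefore lies in $\Rad_\textup{u}(\mathbf{A}_G) = \Rad_\textup{u}(\ac{\Gamma})$; hence $\ac{\Gamma}$ itself has a strong unipotent radical. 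Second, combining the defining identity $\dim \Rad_\textup{u}(\mathbf{A}_G) = \dim G$ with the equality $\rank \Gamma = \dim G$ supplied by Lemma~\ref{lem:criterion_lattice}, I obtain $\dim \Rad_\textup{u}(\ac{\Gamma}) = \rank \Gamma$, which is the last required property.

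It remains to observe that the resulting embedding $\Phi$ restricts to an isomorphism of unipotent radicals: since $\Phi$ is an isomorphism onto $\ac{\Gamma}$ and sends unipotent elements to unipotent elements, it carries $\Rad_\textup{u}(\mathbf{A}_\Gamma)$ isomorphically onto $\Rad_\textup{u}(\ac{\Gamma})$, which coincides with $\Rad_\textup{u}(\mathbf{A}_G)$ by the displayed identity above. The one genuinely non-trivial ingredient in this plan is the unipotent denseness result Lemma~\ref{lem:udense}, which is what allows the transfer of the strong unipotent radical property from $\mathbf{A}_G$ to the closed subgroup $\ac{\Gamma}$; a priori, a closed subgroup of a group with strong unipotent radical need not inherit that property, and without Lemma~\ref{lem:udense} the identification of $\ac{\Gamma}$ as an algebraic hull would break down.
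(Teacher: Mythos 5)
Your proposal is correct and follows essentially the same route as the paper: both form the Zariski closure $\ac{\Gamma}$ inside $\mathbf{A}_G$, use Lemma~\ref{lem:udense} to identify its unipotent radical with $\Rad_\textup{u}(\mathbf{A}_G)$ (hence deduce a strong unipotent radical), and combine the Extension Lemma with the dimension count $\dim\Rad_\textup{u}(\mathbf{A}_\Gamma)=\rank\Gamma=\dim G=\dim\Rad_\textup{u}(\ac{\Gamma})$. The only cosmetic difference is that you obtain injectivity by verifying that $\ac{\Gamma}$ is itself an $\R$-defined algebraic hull and invoking Corollary~\ref{cor:hull_unique}, whereas the paper argues directly that the kernel of the extended morphism meets $\Rad_\textup{u}(\mathbf{A}_\Gamma)$ trivially, hence centralises it, and is therefore trivial by the strong unipotent radical property of $\mathbf{A}_\Gamma$.
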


\begin{proof} Considering $\Gamma$ as a subgroup of $\mathbf{A}_G$,
  put $\mathbf{H} = \ac{\Gamma}$, the algebraic closure of $\Gamma$
  in~$\mathbf{A}_G$.  Since $\Gamma \subseteq (\mathbf{A}_G)_\R$, the
  group $\mathbf{H}$ is defined over~$\R$.  By Lemma~\ref{lem:udense},
  the group $\mathbf{H}$ has unipotent radical
  $\Rad_\textup{u}(\mathbf{H}) = \Rad_\textup{u}(\mathbf{A}_{G})$.  In
  particular, $\mathbf{H}$ has a strong unipotent radical.  Therefore,
  by Lemma~\ref{lem:extension}, the inclusion $\Gamma \subseteq
  \mathbf{H}$ extends to a surjective $\R$-defined morphism $\Psi
  \colon \mathbf{A}_{\Gamma} \rightarrow \mathbf{H}$ of algebraic
  groups.  Observe that $\dim \Rad_\textup{u}( \mathbf{A}_{\Gamma}) =
  \rank \Gamma = \dim G = \dim \Rad_\textup{u}(\mathbf{H})$.  Hence
  composing $\Psi$ with the inclusion $\mathbf{H} \subseteq
  \mathbf{A}_G$ yields a morphism $\Phi \colon \mathbf{A}_{\Gamma}
  \rightarrow \mathbf{A}_{G}$ which restricts to an isomorphism
  $\Rad_\textup{u}( \mathbf{A}_{\Gamma}) \rightarrow \Rad_\textup{u}(
  \mathbf{H}) = \Rad_\textup{u}( \mathbf{A}_G)$ of the unipotent
  radicals.  The kernel $\mathbf{K}$ of $\Phi$ intersects
  $\Rad_\textup{u}(\mathbf{A}_{\Gamma})$ trivially and hence
  centralises~$\Rad_\textup{u}(\mathbf{A}_{\Gamma})$.  Since
  $\mathbf{A}_{\Gamma}$ has a strong unipotent radical, we conclude
  that $\mathbf{K}$ is trivial.
\end{proof}

From $\rank \Gamma = \dim G$ we deduce that $\mathbf{A}_G$ is an
algebraic hull for $\Gamma$ if and only if $\Gamma$ is Zariski-dense
in~$\mathbf{A}_G$.  Of course, in general this need not be the case.
For instance, Example~\ref{exm:E2+} illustrates that the non-abelian
Lie group $\widetilde{E(2)^+}$ admits a host of abelian lattices.

\subsubsection{Zariski-dense lattices} We continue our discussion with
several applications to Zariski-dense lattices.

\begin{lem} \label{lem:commutators} Let $\Gamma$ be a Zariski-dense
  lattice in a simply connected, solvable Lie group~$G$. Then
  \begin{enumerate}
  \item $[\Gamma,\Gamma]$ is a lattice in the Lie group $[G,G]$;
  \item $\Gamma \cap [G,G]$ is a lattice in the Lie group $[G,G]$;
  \item The subgroup $\Gamma [G,G]$ is closed in~$G$.
  \end{enumerate}
\end{lem}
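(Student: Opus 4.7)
The plan is to embed $G$ into its real algebraic hull $A = A_G$ from Section~\ref{sect:liealghull}, writing $A = G \rtimes T$ with $T$ a real algebraic torus, and then to use Zariski-density to pin down $[\Gamma, \Gamma]$ inside the algebraic commutator $\mathbf{D} = [\mathbf{A}_G, \mathbf{A}_G]$. The main content lies in part~(1); parts~(2) and~(3) will then follow by elementary quotient arguments.

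First I would verify that Zariski-density of $\Gamma$ in $G$ upgrades to Zariski-density of $\Gamma$ in $\mathbf{A}_G$, and then deduce $\ac{[\Gamma,\Gamma]} = \mathbf{D}$ from the standard fact that, inside a connected algebraic group, the ordinary commutator subgroup of a Zariski-dense subgroup is Zariski-dense in the algebraic commutator subgroup. Because $T$ is a torus and hence abelian, the quotient $A/G \cong T$ is abelian, so $[A,A] \subseteq G$; in particular $\mathbf{D}(\R) \subseteq G$, and $\mathbf{D}$ is a connected unipotent algebraic group.

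The crux is the identification $[G,G] = \mathbf{D}(\R)$. On one hand, $[\Gamma,\Gamma]$ is a discrete subgroup of the simply connected nilpotent Lie group $\mathbf{D}(\R)$ whose Zariski-closure equals $\mathbf{D}$; by the characterisation of Zariski-density in unipotent real algebraic groups recorded at the end of the Notation paragraph, the Euclidean closure of $[\Gamma,\Gamma]$ is cocompact in $\mathbf{D}(\R)$, and since $[\Gamma,\Gamma]$ is discrete it equals its own closure and is thus already a lattice in $\mathbf{D}(\R)$. On the other hand, $[G,G]$ is a closed connected (indeed simply connected and nilpotent) Lie subgroup of $G$ because $G$ is simply connected and solvable, and it lies in $\mathbf{D}(\R)$ because $G$ is Zariski-dense in the connected group $\mathbf{A}_G$. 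Since $[G,G]$ contains the lattice $[\Gamma,\Gamma]$ of the connected Lie group $\mathbf{D}(\R)$, dimension counting forces $[G,G] = \mathbf{D}(\R)$, proving~(1).

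For (2), the sandwich $[\Gamma,\Gamma] \leq \Gamma \cap [G,G] \leq \Gamma$ shows that $\Gamma \cap [G,G]$ is discrete, and that $[G,G]/(\Gamma \cap [G,G])$ is a continuous image of the compact space $[G,G]/[\Gamma,\Gamma]$, hence compact. For (3), the inclusion $[G,G] \hookrightarrow G$ induces a continuous injection $(\Gamma \cap [G,G]) \backslash [G,G] \to \Gamma \backslash G$ with image $\Gamma \backslash \Gamma [G,G]$; the source is compact by~(2), so the image is compact and hence closed in the Hausdorff space $\Gamma \backslash G$, whence $\Gamma [G,G]$ is closed in~$G$. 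The main technical hurdle I anticipate is the Zariski-density transfer from the $\Ad$-based definition in the Notation paragraph to the algebraic hull setting; once that is in hand, everything else is a routine application of the unipotent-lattice dictionary together with the semidirect product structure $A = G \rtimes T$.
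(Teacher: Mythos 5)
Your proof is correct and follows essentially the same route as the paper: pass to the algebraic hull, identify $[G,G]$ with the real points of $[\mathbf{A}_G,\mathbf{A}_G]$ inside the unipotent radical, use Zariski-density of $[\Gamma,\Gamma]$ there to get cocompactness, and deduce (3) from compactness of the image in the Hausdorff quotient $G/\Gamma$. The only cosmetic difference is that you obtain $[G,G]=\mathbf{D}(\R)$ by a dimension count against the lattice $[\Gamma,\Gamma]$, whereas the paper notes directly that $[G,G]$, being a connected Lie subgroup of a unipotent real algebraic group, is Zariski-closed and hence equals $[A_G,A_G]$.
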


\begin{proof} 
  Let $U$ be the maximal unipotent subgroup of~$A_\Gamma = A_G$.
  Since $[G,G] \leq U$, it is a Zariski-closed subgroup of the real
  algebraic group~$A_\Gamma$.  Therefore, $[G,G] = [A_\Gamma,
  A_\Gamma]$, and each of the discrete subgroups $[\Gamma, \Gamma]$
  and $\Gamma \cap [G,G]$ is Zariski-dense in the latter.
  Consequently, $[\Gamma,\Gamma]$ and $\Gamma \cap [G,G]$ are
  (cocompact) lattices in~$[G,G]$.  It particular, it follows that the
  image of $[G,G]$ in the Hausdorff space $G/\Gamma$ is closed.
  Therefore, $\Gamma [G,G]$ is closed in~$G$.
\end{proof}

Next we characterise Zariski-dense lattices in terms of algebraic
hulls.
  
\begin{pro} \label{pro:Zdense} Let $\Gamma$ be a lattice in a simply
  connected, solvable Lie group~$G$.  Then $\Gamma$ is Zariski-dense
  in $G$ if and only if\/ $\Gamma$ is Zariski-dense subgroup
  of~$\mathbf{A}_G$.
\end{pro}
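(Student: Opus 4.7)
The plan is to construct a canonical algebraic morphism $\widetilde{\Ad} \colon \mathbf{A}_G \to \GL(\mathfrak{g})$ extending the adjoint representation $\Ad_G$ of $G$, and then to compare the two notions of Zariski-density by transporting them through $\widetilde{\Ad}$. Since, by Proposition~\ref{pro:basics_real_hull}, the Lie group $G$ sits as a closed normal subgroup of $A_G$, its Lie algebra $\mathfrak{g}$ is an ideal in the Lie algebra of $\mathbf{A}_G$, and so the adjoint action of $\mathbf{A}_G$ on its own Lie algebra restricts to an $\R$-defined morphism of algebraic groups $\widetilde{\Ad} \colon \mathbf{A}_G \to \GL(\mathfrak{g})$ which agrees with $\Ad_G$ on $G$. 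Using that $G$ is Zariski-dense in $\mathbf{A}_G$ (a defining property of the algebraic hull) and that images of algebraic group morphisms are Zariski-closed, one gets
\[
\widetilde{\Ad}(\mathbf{A}_G)=\widetilde{\Ad}(\ac{G})=\ac{\Ad(G)} \quad\text{in } \GL(\mathfrak{g}),
\]
and, by the same token, $\widetilde{\Ad}(\ac{\Gamma}) = \ac{\Ad(\Gamma)}$, where $\ac{\Gamma}$ now denotes the Zariski-closure of $\Gamma$ inside $\mathbf{A}_G$.

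Next I would pin down the kernel $\mathbf{K} = \ker \widetilde{\Ad}$. Because $G$ is connected, $\mathbf{K} = \Cen_{\mathbf{A}_G}(G)$, and since $G$ is Zariski-dense in $\mathbf{A}_G$ the centraliser condition propagates to give $\Cen_{\mathbf{A}_G}(G) = \Zen(\mathbf{A}_G)$. The strong unipotent radical property of $\mathbf{A}_G$ then forces $\Zen(\mathbf{A}_G) \subseteq \Cen_{\mathbf{A}_G}(\Rad_{\textup{u}}(\mathbf{A}_G)) \subseteq \Rad_{\textup{u}}(\mathbf{A}_G)$, so $\mathbf{K}$ is contained in the unipotent radical $U$ of $\mathbf{A}_G$.

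With these tools in place both implications follow quickly. Assume first that $\Gamma$ is Zariski-dense in $\mathbf{A}_G$, so $\ac{\Gamma} = \mathbf{A}_G$; then $\ac{\Ad(\Gamma)} = \widetilde{\Ad}(\ac{\Gamma}) = \widetilde{\Ad}(\mathbf{A}_G) = \ac{\Ad(G)}$, proving that $\Gamma$ is Zariski-dense in $G$. Conversely, assume $\ac{\Ad(\Gamma)} = \ac{\Ad(G)}$; applying $\widetilde{\Ad}$ gives $\widetilde{\Ad}(\ac{\Gamma}) = \widetilde{\Ad}(\mathbf{A}_G)$, whence $\ac{\Gamma} \cdot \mathbf{K} = \mathbf{A}_G$. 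By Lemma~\ref{lem:udense} the unipotent part of $\ac{\Gamma}$ is the whole of $U$, so $\mathbf{K} \subseteq U \subseteq \ac{\Gamma}$ and hence $\ac{\Gamma} = \mathbf{A}_G$, as required. The only real point of care in the argument is the correct identification of $\mathbf{K}$, which relies crucially on the strong unipotent radical axiom of the algebraic hull and on the Zariski-density of $G$ in $\mathbf{A}_G$; everything else is formal manipulation with the extension $\widetilde{\Ad}$.
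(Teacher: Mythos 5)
Your argument is correct and follows essentially the same route as the paper: both reduce Zariski-density of $\Gamma$ in $G$ to the statement $\ac{\Gamma}\,\Zen(\mathbf{A}_G)=\mathbf{A}_G$ (you via the explicit extension $\widetilde{\Ad}$ with kernel $\Zen(\mathbf{A}_G)$, the paper via the identifications $\Ad(G)\cong G/\Zen(G)$ and $\Ad(\mathbf{A}_G)\cong\mathbf{A}_G/\Zen(\mathbf{A}_G)$), and then both conclude with the identical key step $\Zen(\mathbf{A}_G)\subseteq\Rad_{\textup{u}}(\mathbf{A}_G)\subseteq\ac{\Gamma}$ from the strong unipotent radical property and Lemma~\ref{lem:udense}.
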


\begin{proof}
  Write $\mathbf{A} = \mathbf{A}_G$.  Since $G$ is simply connected,
  we have $\Ad(G) \cong G/\Zen(G)$.  Since we are working in
  characteristic $0$, we have $\Ad(\mathbf{A}) \cong
  \mathbf{A}/\Zen(\mathbf{A})$.  By the algebraic hull construction,
  $G$ is Zariski-dense in~$\mathbf{A}$.  Thus $\Zen(G) = G \cap
  \Zen(\mathbf{A})$ and $\Ad(G)$ embeds as a Zariski-dense subgroup $G
  \Zen(\mathbf{A}) / \Zen(\mathbf{A})$ into
  $\mathbf{A}/\Zen(\mathbf{A})$.

  Thus $\Gamma$ is Zariski-dense in $G$ if and only if $\ac{\Gamma}
  \Zen(\mathbf{A}) = \mathbf{A}$.  Hence to prove our claim it
  suffices to show that $\Zen(\mathbf{A}) \subseteq \ac{\Gamma}$.
  Since $\mathbf{A}$ has a strong unipotent radical, we have
  $\Zen(\mathbf{A}) \subseteq \Rad_\textup{u}(\mathbf{A})$.  Thus by
  Lemma~\ref{lem:udense} we have $\Zen(\mathbf{A}) \subseteq
  \ac{\Gamma}$.
\end{proof}

The next result shows, in particular, that extensions of automorphisms
are unique.

\begin{pro} \label{pro:ext_unique} Let $\Gamma$ be a Zariski-dense
  lattice in a simply connected, solvable Lie group $G$, and let $\phi
  \colon \Gamma \rightarrow G$.  If there exists $\Phi \in \Aut(G)$
  which extends $\phi$, then $\Phi$ is unique.
\end{pro}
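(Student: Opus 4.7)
The plan is to reduce the uniqueness assertion to the Zariski-density of $\Gamma$ inside the algebraic hull $\mathbf{A}_G$, after passing to a difference automorphism that fixes $\Gamma$ pointwise. So suppose $\Phi_1, \Phi_2 \in \Aut(G)$ both extend the same $\phi$, and set $\Psi = \Phi_1 \circ \Phi_2^{-1} \in \Aut(G)$. Then $\Psi$ restricts to the identity on $\Gamma$, and our task is to show $\Psi = \id_G$.

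The next step is to promote $\Psi$ to an automorphism of $\mathbf{A}_G$. The inclusion $\Psi \colon G \hookrightarrow G \subseteq \mathbf{A}_G$ is a homomorphism whose image is Zariski-dense in $\mathbf{A}_G$, and $\mathbf{A}_G$ has a strong unipotent radical by construction. Thus the Lie-group analogue of the Extension Lemma (Lemma~\ref{lem:extension}, cf.\ the remark in Section~\ref{sect:liealghull}) supplies a unique $\R$-defined endomorphism $\tilde\Psi \colon \mathbf{A}_G \to \mathbf{A}_G$ extending $\Psi$. Applying the same lemma to $\Psi^{-1}$ and invoking the uniqueness of the extension (composing the two extensions gives an extension of $\id_G$, which must be $\id_{\mathbf{A}_G}$ by Zariski-density of $G$), we see that $\tilde\Psi \in \Aut(\mathbf{A}_G)$.

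Finally, invoke Proposition~\ref{pro:Zdense}: because $\Gamma$ is Zariski-dense in $G$, it is also Zariski-dense in the algebraic hull $\mathbf{A}_G$. Since $\tilde\Psi$ fixes $\Gamma$ pointwise and is a morphism of algebraic varieties, it must fix the Zariski-closure of $\Gamma$ pointwise, i.e., $\tilde\Psi = \id_{\mathbf{A}_G}$, and so $\Psi = \id_G$, giving $\Phi_1 = \Phi_2$. The only subtle point is checking that the extension procedure produces a bona fide automorphism of $\mathbf{A}_G$ (not merely a morphism), which is immediate once the extensions of $\Psi$ and $\Psi^{-1}$ are both available and their compositions are pinned down by the uniqueness clause of the Extension Lemma.
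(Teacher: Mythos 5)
Your argument is correct and essentially reproduces the paper's proof: both pass to the algebraic hull via the Lie-group version of the Extension Lemma and conclude from the Zariski-density of the lattice in $\mathbf{A}_G$ (Proposition~\ref{pro:Zdense}), the only difference being that you compare a single ``difference'' automorphism with the identity instead of comparing the two extensions directly. One cosmetic point: $\Psi=\Phi_1\circ\Phi_2^{-1}$ fixes $\phi(\Gamma)$ pointwise rather than $\Gamma$, but $\phi(\Gamma)=\Phi_1(\Gamma)$ is likewise Zariski-dense (or simply take $\Phi_2^{-1}\circ\Phi_1$ instead), so nothing is lost.
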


\begin{proof}
  Assume that $\Phi_1 , \Phi_2 \colon G \rightarrow G$ are Lie
  automorphisms of $G$ such that $\Phi_1 \vert_\Gamma = \phi = \Phi_2
  \vert_\Gamma$.  Then by the Lie group version of
  Lemma~\ref{lem:extension} the maps $\Phi_1, \Phi_2$ extend to
  algebraic automorphisms $\tilde \Phi_1, \tilde \Phi_2$ of
  $\mathbf{A}_{G}$ which coincide on the Zariski-dense
  subgroup~$\Gamma$.  It follows that $\Phi_1 = \Phi_2$.
\end{proof}

If $\Gamma$ is a Zariski-dense lattice in a simply connected, solvable
Lie group $G$, then $\Gamma$ is a torsion-free polycyclic group, and
the next result shows that $\mathbf{A}_G$ (respectively $A_G$) also
constitutes an algebraic hull $\mathbf{A}_\Gamma$ (respectively real
algebraic hull $A_\Gamma$) of~$\Gamma$.

\begin{cor} \label{cor:AGamisAG} Let $\Gamma$ be a Zariski-dense
  lattice in a simply connected, solvable Lie group~$G$.  Then the
  following hold.
  \begin{enumerate}
  \item The inclusion $\Gamma \subseteq G$ extends uniquely to an
    $\R$-defined isomorphism of algebraic hulls $\mathbf{A}_{\Gamma}
    \rightarrow \mathbf{A}_{G}$.
  \item The inclusion $\Gamma \subseteq \mathbf{A}_{\Gamma}$ extends
    uniquely to an inclusion homomorphism of Lie groups $G
    \hookrightarrow (\mathbf{A}_{\Gamma})_\R$ and the image
    of $G$ is closed in~$(\mathbf{A}_\Gamma)_\R$.
\end{enumerate}
\end{cor}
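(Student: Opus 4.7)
The plan is to establish (1) by upgrading the embedding from Proposition~\ref{pro:hull_incl} to an isomorphism using the Zariski-density hypothesis, and then to derive (2) by transporting the canonical inclusion $G \subseteq A_G$ of Proposition~\ref{pro:basics_real_hull} through the isomorphism produced in~(1).

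For part~(1), I would start from Proposition~\ref{pro:hull_incl}, which supplies a unique $\R$-defined embedding $\Phi \colon \mathbf{A}_\Gamma \hookrightarrow \mathbf{A}_G$ extending the composite $\Gamma \hookrightarrow G \subseteq \mathbf{A}_G$ and restricting to an isomorphism of unipotent radicals. To upgrade $\Phi$ to an isomorphism, I would invoke Proposition~\ref{pro:Zdense}: the Zariski-density of $\Gamma$ in $G$ gives Zariski-density of $\Gamma$ in $\mathbf{A}_G$. The image $\Phi(\mathbf{A}_\Gamma)$ is then a Zariski-closed subgroup of $\mathbf{A}_G$ containing the Zariski-dense subgroup $\Phi(\Gamma) = \Gamma$, forcing $\Phi(\mathbf{A}_\Gamma) = \mathbf{A}_G$. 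Uniqueness of $\Phi$ is inherited from Proposition~\ref{pro:hull_incl}.

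For part~(2), I would define the desired Lie group inclusion as the composition $G \hookrightarrow A_G \to (\mathbf{A}_\Gamma)_\R$, where the first map comes from Proposition~\ref{pro:basics_real_hull} and the second is the restriction to real points of $\Phi^{-1}$ from~(1). This clearly extends $\Gamma \subseteq \mathbf{A}_\Gamma$; closedness of the image follows because $G$ is closed in $A_G$ (Proposition~\ref{pro:basics_real_hull}) and the restriction of $\Phi^{-1}$ to real points is a homeomorphism in the Hausdorff topology. For uniqueness, I would take any Lie group inclusion $\iota \colon G \hookrightarrow (\mathbf{A}_\Gamma)_\R$ extending $\Gamma \subseteq \mathbf{A}_\Gamma$; since $\iota(G) \supseteq \Gamma$ is Zariski-dense in $\mathbf{A}_\Gamma$, the Lie group analogue of Lemma~\ref{lem:extension} mentioned in Section~\ref{sect:liealghull} extends $\iota$ to an $\R$-defined morphism $\tilde\iota \colon \mathbf{A}_G \to \mathbf{A}_\Gamma$. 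Both $\tilde\iota$ and $\Phi^{-1}$ restrict to the same inclusion on $\Gamma$, and Zariski-density of $\Gamma$ in $\mathbf{A}_G$ yields $\tilde\iota = \Phi^{-1}$; restricting back to $G$ gives $\iota$ equal to the composition defined above.

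The main obstacle I anticipate is the clean invocation of the Lie group analogue of Lemma~\ref{lem:extension} used in the uniqueness argument for~(2): this result is only alluded to in Section~\ref{sect:liealghull}, so some care is needed to verify that its hypotheses are met in the present setting. Once that is in place, the proof is essentially a formal combination of Propositions~\ref{pro:hull_incl}, \ref{pro:Zdense}, and~\ref{pro:basics_real_hull}.
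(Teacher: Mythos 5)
Your proof is correct and follows essentially the same route as the paper: Proposition~\ref{pro:hull_incl} plus Zariski-density (via Proposition~\ref{pro:Zdense}) for surjectivity in~(1), and transporting the closed inclusion $G \subseteq A_G$ from Proposition~\ref{pro:basics_real_hull} through $\Phi^{-1}$ for~(2). The only detail the paper makes explicit that you gloss over is why the inverse of the bijection $\Phi$ is again an $\R$-defined morphism: the paper observes that $\mathbf{A}_G$ is itself an $\R$-defined algebraic hull of $\Gamma$ and invokes Corollary~\ref{cor:hull_unique}, whereas you implicitly rely on the (true, characteristic-zero) fact that a bijective $\R$-defined morphism of algebraic groups is an $\R$-isomorphism.
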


\begin{proof} (1) By Proposition~\ref{pro:hull_incl}, the inclusion
  $\Gamma \subseteq G \subseteq \mathbf{A}_G$ extends uniquely to an
  $\R$-defined embedding $\Phi \colon \mathbf{A}_{\Gamma}
  \hookrightarrow \mathbf{A}_{G}$ of algebraic groups.  Since $\Gamma$
  is Zariski-dense, $\Phi(\mathbf{A}_\Gamma) = \mathbf{A}_{G}$, and
  $\Phi$ is a bijection.  Note that $\mathbf{A}_{G}$ is an
  $\R$-defined algebraic hull for~$\Gamma$.  Thus
  Corollary~\ref{cor:hull_unique} implies that $\Phi^{-1}$ is an
  $\R$-defined morphism, showing that $\Phi$ is an $\R$-defined
  isomorphism of algebraic hulls $\mathbf{A}_{\Gamma} \rightarrow
  \mathbf{A}_{G}$.

  (2) This is a direct consequence of (1) and
  Proposition~\ref{pro:basics_real_hull}.
\end{proof}

Let $\Gamma$ be a lattice in a simply connected, solvable Lie
group~$G$.  Recall that $\mathcal{X}(\Gamma,G)$ denotes the space of
all lattice embeddings of $\Gamma$ into~$G$.  We now consider,
temporarily, the following subspace
$$
\mathcal{X}^\textup{Z}(\Gamma,G) = \{ \phi \colon \Gamma
\hookrightarrow G \mid \text{$\phi(\Gamma)$ is a Zariski-dense lattice
  in $G$} \}.
$$
The next corollary shows that, if $\Gamma$ can be embedded as a
Zariski-dense lattice into $G$ at all, then
$\mathcal{X}^\textup{Z}(\Gamma,G) = \mathcal{X}(\Gamma,G)$.

\begin{cor} \label{cor:all_Zdense} Let $\phi_1, \phi_2 \colon \Gamma
  \hookrightarrow G$ be embeddings of $\Gamma$ as a lattice into a
  simply connected, solvable Lie group~$G$.  If $\phi_{1}(\Gamma)$ is
  Zariski-dense in~$G$, then $\phi_{2}(\Gamma)$ is also Zariski-dense
  in~$G$.
\end{cor}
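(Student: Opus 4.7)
The plan is to reduce the question to Zariski-density inside the algebraic hull $\mathbf{A}_G$, using Proposition~\ref{pro:Zdense}, and then to upgrade the embedding $\phi_{2} \colon \Gamma \hookrightarrow G \subseteq \mathbf{A}_{G}$ to a surjective morphism of algebraic hulls via Proposition~\ref{pro:hull_incl}. The key mechanism is the uniqueness of algebraic hulls, which fixes the ambient dimension, together with the connectedness of $\mathbf{A}_G$, which turns an embedding of equal dimension into an isomorphism.

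More concretely, I would first invoke Proposition~\ref{pro:Zdense} to observe that $\phi_2(\Gamma)$ is Zariski-dense in $G$ if and only if $\phi_2(\Gamma)$ is Zariski-dense in~$\mathbf{A}_G$; the entire argument can therefore take place inside $\mathbf{A}_G$. Next, applying Corollary~\ref{cor:AGamisAG}(1) to the Zariski-dense embedding $\phi_1$, I would identify $\mathbf{A}_G$ together with $\phi_1 \colon \Gamma \hookrightarrow \mathbf{A}_G$ as an $\R$-defined algebraic hull of~$\Gamma$. By Corollary~\ref{cor:hull_unique}, this yields in particular the dimension equality $\dim \mathbf{A}_{\Gamma} = \dim \mathbf{A}_{G}$.

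I would then apply Proposition~\ref{pro:hull_incl} to the lattice embedding $\phi_2 \colon \Gamma \hookrightarrow G \subseteq \mathbf{A}_G$ to obtain an $\R$-defined embedding of algebraic groups $\Phi_2 \colon \mathbf{A}_{\Gamma} \hookrightarrow \mathbf{A}_{G}$ extending~$\phi_2$. Since $\Gamma$ is Zariski-dense in its own algebraic hull $\mathbf{A}_{\Gamma}$, the image $\phi_2(\Gamma) = \Phi_2(\Gamma)$ is Zariski-dense in the $\R$-defined algebraic subgroup $\Phi_2(\mathbf{A}_{\Gamma}) \subseteq \mathbf{A}_{G}$, so
\[
\ac{\phi_2(\Gamma)} \;=\; \Phi_2(\mathbf{A}_\Gamma).
\]
The remaining step, which I would single out as the main (though mild) obstacle, is to show that $\Phi_2$ is surjective. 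By the previous paragraph $\dim \Phi_2(\mathbf{A}_\Gamma) = \dim \mathbf{A}_\Gamma = \dim \mathbf{A}_G$, so the identity component of $\Phi_2(\mathbf{A}_\Gamma)$ is a connected algebraic subgroup of $\mathbf{A}_G$ having the same dimension as the connected group $\mathbf{A}_G$ (cf.\ Section~\ref{sect:liealghull}); thus $\Phi_2(\mathbf{A}_\Gamma)^\circ = \mathbf{A}_G$, and hence $\Phi_2(\mathbf{A}_\Gamma) = \mathbf{A}_G$. Combining, $\phi_2(\Gamma)$ is Zariski-dense in $\mathbf{A}_G$, and the initial reduction gives the conclusion.
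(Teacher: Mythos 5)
Your proposal is correct and follows essentially the same route as the paper's proof: extend $\phi_1$ and $\phi_2$ to the level of algebraic hulls via Corollary~\ref{cor:AGamisAG} and Proposition~\ref{pro:hull_incl}, deduce $\dim \mathbf{A}_\Gamma = \dim \mathbf{A}_G$ from the isomorphism induced by $\phi_1$, conclude surjectivity of $\Phi_2$ by the dimension count (using connectedness of $\mathbf{A}_G$), and finish with Proposition~\ref{pro:Zdense}. The only difference is cosmetic — you apply Proposition~\ref{pro:Zdense} at the start rather than the end and spell out the surjectivity step the paper leaves implicit.
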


\begin{proof} Suppose that $\phi_{1}(\Gamma)$ is Zariski-dense.  Let
  $\Phi_1, \Phi_2 \colon \mathbf{A}_\Gamma \hookrightarrow
  \mathbf{A}_G$ denote the extensions of $\phi_1, \phi_2$ to the level
  of algebraic hulls, provided by Proposition~\ref{pro:hull_incl}.  By
  Corollary~\ref{cor:AGamisAG}, $\Phi_{1} \colon \mathbf{A}_\Gamma
  \rightarrow \mathbf{A}_G$ is an isomorphism.  In particular, $\dim
  \mathbf{A}_\Gamma = \dim \mathbf{A}_G$, and this shows that the
  embedding $\Phi_{2} \colon \mathbf{A}_\Gamma \rightarrow
  \mathbf{A}_G$ is surjective.  Since $\Gamma$ is Zariski-dense
  in~$\mathbf{A}_\Gamma$, the group $\phi_2(\Gamma) =
  \Phi_{2}(\Gamma)$ is Zariski-dense in~$\mathbf{A}_{G}$. The claim
  follows from Proposition \ref{pro:Zdense}.
\end{proof}

The following corollary will be used repeatedly.

\begin{cor} Let $\Gamma$ be a Zariski-dense lattice in a simply
  connected, solvable Lie group~$G$.  Suppose that $\phi_1,\phi_2 \in
  \mathcal{X}(\Gamma,G)$.  Then there exists $\Psi \in
  \Aut_\R(\mathbf{A}_{G})$ such that $\phi_{2} = \Psi \circ \phi_{1}$.
\end{cor}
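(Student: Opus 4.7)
The plan is to reduce the statement to the preceding results about algebraic hulls, in particular Proposition \ref{pro:hull_incl} and Corollary \ref{cor:AGamisAG}, which say that Zariski-dense lattice embeddings into $G$ extend canonically to isomorphisms of real algebraic hulls. The strategy has two stages: first lift each $\phi_i$ to an $\R$-defined automorphism of $\mathbf{A}_G$, then compose them.

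First I would apply Corollary \ref{cor:all_Zdense} to the identity embedding $\id_\Gamma$ and to each $\phi_i$: since $\Gamma$ itself is Zariski-dense in $G$, both lattice embeddings $\phi_1(\Gamma)$ and $\phi_2(\Gamma)$ are Zariski-dense in~$G$. This is the key point needed to invoke the algebraic-hull machinery on each~$\phi_i$.

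Next, viewing $\phi_i \colon \Gamma \hookrightarrow G \subseteq \mathbf{A}_G$ as a homomorphism into the algebraic hull, Proposition \ref{pro:hull_incl} provides an $\R$-defined embedding of algebraic groups $\Phi_i \colon \mathbf{A}_\Gamma \hookrightarrow \mathbf{A}_G$ extending $\phi_i$. Applying Corollary \ref{cor:AGamisAG}(1) to the Zariski-dense lattice $\phi_i(\Gamma) \subseteq G$ (transported via the group isomorphism $\phi_i \colon \Gamma \to \phi_i(\Gamma)$ and the uniqueness of algebraic hulls from Corollary \ref{cor:hull_unique}), this embedding is in fact an $\R$-defined isomorphism $\Phi_i \colon \mathbf{A}_\Gamma \to \mathbf{A}_G$.

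Finally I would set $\Psi = \Phi_2 \circ \Phi_1^{-1} \in \Aut_\R(\mathbf{A}_G)$. Since $\Phi_i|_\Gamma = \phi_i$, restricting the identity $\Psi \circ \Phi_1 = \Phi_2$ to $\Gamma$ yields $\Psi \circ \phi_1 = \phi_2$, as required. There is no real obstacle here since the groundwork has been laid in the previous propositions; the only minor care is to make sure, when applying Corollary \ref{cor:AGamisAG} to $\phi_i$, that one translates the Zariski-dense lattice $\phi_i(\Gamma)$ back through $\phi_i$ so as to get an isomorphism of the hull of $\Gamma$ (rather than of $\phi_i(\Gamma)$) with $\mathbf{A}_G$, which is exactly what the uniqueness part of the algebraic hull construction guarantees.
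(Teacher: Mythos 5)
Your proposal is correct and follows essentially the same route as the paper: extend each $\phi_i$ to an $\R$-defined isomorphism $\Phi_i \colon \mathbf{A}_\Gamma \to \mathbf{A}_G$ via Proposition~\ref{pro:hull_incl} and Corollary~\ref{cor:AGamisAG} (using Corollary~\ref{cor:all_Zdense} to see that each $\phi_i(\Gamma)$ is Zariski-dense), and set $\Psi = \Phi_2 \circ \Phi_1^{-1}$. Your extra care about transporting Zariski-denseness through $\phi_i$ is a sound, if slightly more explicit, version of what the paper leaves implicit.
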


\begin{proof} Let $\Phi_1, \Phi_2 \colon \mathbf{A}_\Gamma \rightarrow
  \mathbf{A}_G$ denote the extensions of $\phi_1, \phi_2$ to the level
  of algebraic hulls; see Corollary~\ref{cor:AGamisAG}.  Since
  $\Phi_1$ and $\Phi_2$ are $\R$-defined isomorphisms of linear
  algebraic groups, and we may put $\Psi = \Phi_{2} \circ
  \Phi_{1}^{-1}$.
\end{proof}


\subsection{Tight Lie subgroups} \label{sec:tight_subgps} Throughout
this section let $\mathbf{A}$ be an $\R$-defined connected, solvable
linear algebraic group.  We write $\mathbf{A} = \mathbf{U} \rtimes
\mathbf{T}$, where $\mathbf{U} = \Rad_\textup{u}(\mathbf{A})$ and
$\mathbf{T}$ is a maximal $\R$-defined torus.  We remark that, if
$\mathbf{A}$ has a strong unipotent radical, then $\mathbf{U}$ is the
maximal nilpotent normal subgroup of~$\mathbf{A}$.  Let $A =
\mathbf{A}_{\R}$ denote the associated real algebraic group, and write
$A = U \rtimes T$, where $U = \mathbf{U}_\R$ is the maximal unipotent
normal subgroup of $A$ and $T = \mathbf{T}_\R$ is a maximal reductive
subgroup.  We observe that $[A,A] \subseteq U$.

\begin{dfn} \label{def:tight_subgroup} A Lie subgroup $G$ of $A$ is
  said to be \emph{tight} in $A$ if it is a connected, closed subgroup
  of $A$ and satisfies the following conditions: (i) $G$ is normal in
  $A$, (ii) $\dim G = \dim U$ and (iii) $G$ is Zariski-dense in the
  unipotent part of~$\mathbf{A}$, i.e., $\up(\ac{G}) = \mathbf{U}$,
  where $\ac{G}$ denotes the Zariski-closure in~$\mathbf{A}$.
\end{dfn}

We show that the set of tight Lie subgroups of $A$ can be
parametrised by Lie homomorphisms $\sigma \colon U \rightarrow T$
which are constant on $T$-orbits in $U$, i.e., which satisfy $[U,T]
\subseteq \ker \sigma$. (A similar result appears in
\cite[Theorem~5.3]{GrSe94}.)  As $T$ is abelian, $[U,U] \subseteq \ker
\sigma$, and hence the last condition is equivalent to $[A,A]
\subseteq \ker \sigma$.  For any such $\sigma$, we define
\begin{equation}\label{equ:Gtau}
  G_\sigma = \{ u \, \sigma(u) \mid u \in U \} 
\end{equation} 
and we observe that $\ker \sigma \subseteq G_\sigma$.

\begin{pro} \label{pro:tight_Lie_subs} For every Lie homomorphism
  $\sigma \colon U \rightarrow T$ such that $[U,T] \subseteq \ker
  \sigma$ the group $G_\sigma $, as defined in \eqref{equ:Gtau}, is a
  simply connected, tight Lie subgroup of~$A$.  Moreover, every tight
  Lie subgroup $G$ of $A$ is of this form.
\end{pro}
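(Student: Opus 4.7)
The plan is to establish both directions of the correspondence. A key preliminary observation is that the hypothesis $[U,T]\subseteq\ker\sigma$ is equivalent to the $T$-invariance identity $\sigma({}^tu)=\sigma(u)$ for all $t\in T$, $u\in U$; this follows by applying $\sigma$ to the commutator $u\cdot{}^{t}u^{-1}$ and using that $\sigma$ is a group homomorphism. With this in hand, I would verify the four tightness axioms for $G_\sigma$. Closure under products is a direct computation
\[
(u_1\sigma(u_1))(u_2\sigma(u_2))=w\sigma(w),\quad w=u_1\cdot{}^{\sigma(u_1)}u_2,
\]
exploiting $T$-invariance of $\sigma$ and commutativity of $T$; inverses are parallel. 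The smooth map $\psi\colon U\to A$, $u\mapsto u\sigma(u)$, is an injection whose left inverse is the restriction of the variety projection $\pi_U\colon A\to U$, $ut\mapsto u$. Hence $G_\sigma$ is a closed embedded Lie subgroup of $A$ diffeomorphic to $U$, so connected, simply connected, and of dimension $\dim U$. Normality follows from the analogous calculation of $(vt)(u\sigma(u))(vt)^{-1}$, whose $U$-component $w'$ satisfies $\sigma(w')=\sigma(u)$.

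The Zariski-density condition $\up(\ac{G_\sigma})=\mathbf{U}$ is the main obstacle. I plan to approach it by setting $\mathbf{S}=\ac{\sigma(U)}\leq\mathbf{T}$: then $G_\sigma\subseteq\mathbf{U}\rtimes\mathbf{S}$, so $\ac{G_\sigma}\subseteq\mathbf{U}\rtimes\mathbf{S}$ and $\pi_T(\ac{G_\sigma})=\mathbf{S}$. The density claim is therefore equivalent to $\ac{G_\sigma}=\mathbf{U}\rtimes\mathbf{S}$. To establish this, I would use that $G_\sigma$ is a simply connected solvable Lie group of dimension $\dim U$ and apply the Extension Lemma~\ref{lem:extension} to the inclusion $G_\sigma\hookrightarrow\ac{G_\sigma}$, comparing the unipotent radicals of the algebraic hull $\mathbf{A}_{G_\sigma}$ (of dimension $\dim G_\sigma=\dim U$) and of $\ac{G_\sigma}$; a dimension count combined with $\up(\ac{G_\sigma})\subseteq\mathbf{U}$ then yields the equality. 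The technical subtlety is ensuring that $\ac{G_\sigma}$ has a strong unipotent radical so that the extension lemma applies, which may require first passing to an intermediate quotient.

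For the converse, let $G$ be a tight Lie subgroup of $A$. The decisive step is to show that $\pi_U\vert_G\colon G\to U$ is a bijection. Injectivity amounts to $T\cap G=\{1\}$: for $s\in T\cap G$, normality of $G$ forces $[U,s]\subseteq G\cap U$, and combining $\up(\ac{G})=\mathbf{U}$, the decomposition $\ac{G}=\mathbf{U}\rtimes\pi_T(\ac{G})$, and $\dim G=\dim U$ yields a contradiction unless $s=1$; surjectivity then follows by dimension comparison. Once $\pi_U\vert_G$ is bijective, define $\sigma\colon U\to T$ by requiring $u\sigma(u)\in G$ for each $u\in U$; $\sigma$ is smooth, and the subgroup property of $G$ translates (via the multiplication formula) into $\sigma$ being a Lie homomorphism, while normality translates into $\sigma({}^tu)=\sigma(u)$, equivalent to $[U,T]\subseteq\ker\sigma$. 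By construction $G=G_\sigma$.
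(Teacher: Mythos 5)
Your overall architecture is reasonable, and your converse direction is essentially the paper's: project to $U$, show $\pi_U\vert_G$ is bijective, read off $\sigma$, and translate the subgroup and normality properties of $G$ into the homomorphism and $T$-invariance properties of $\sigma$. The forward direction's algebraic computations (closure, inverses, normality, the diffeomorphism $U\to G_\sigma$) are also fine. But there is a genuine gap at the heart of the forward direction: the verification of condition (iii), $\up(\ac{G_\sigma})=\mathbf{U}$. Your proposed route via the algebraic hull of $G_\sigma$ and the Extension Lemma does not close. First, as you concede, $\ac{G_\sigma}$ need not have a strong unipotent radical, and "passing to an intermediate quotient" is left unexplained. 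Second, and more seriously, even granting a surjective morphism $\Phi\colon\mathbf{A}_{G_\sigma}\to\ac{G_\sigma}$, the equality $\dim\Rad_\textup{u}(\mathbf{A}_{G_\sigma})=\dim U$ only yields $\dim\up(\ac{G_\sigma})\geq\dim U$ if you already know that $\ker\Phi$ meets $\Rad_\textup{u}(\mathbf{A}_{G_\sigma})$ trivially; the standard way to see that (as in Proposition~\ref{pro:hull_incl}) is to already know that the unipotent radical of the target has dimension $\dim U$ --- which is exactly what you are trying to prove. The paper avoids all of this with a short direct argument: since $[A,A]\subseteq\ker\sigma\subseteq G_\sigma$, one has $[\mathbf{A},\mathbf{A}]\subseteq\ac{G_\sigma}$, so one may work in $\mathbf{A}/[\mathbf{A},\mathbf{A}]$, which is the \emph{direct} product of (the image of) $\mathbf{U}$ with $\mathbf{T}$; there $u\,\sigma(u)$ is literally the Jordan decomposition of an element of $G_\sigma$, and a Zariski-closed subgroup contains the Jordan components of its elements, whence $\mathbf{U}\subseteq\ac{G_\sigma}$. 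You should replace the hull argument by this (or an equally direct) computation.

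There is a second, smaller gap in the converse. Your argument for injectivity, ``for $s\in T\cap G$, normality forces $[U,s]\subseteq G\cap U$ \dots\ a contradiction unless $s=1$,'' does not rule out a nontrivial \emph{finite} subgroup of $T$ contained in $G$ (an involution in $T$ acting trivially on $U$ passes every test you list). What actually kills $G\cap T$ is simple connectedness: once $\pi_U(G)=U$ is known, the map $G\to U$ is a covering with fibre $G\cap T$ over the simply connected $U$, so the fibre is trivial. Note also that surjectivity of $\pi_U\vert_G$ does not ``follow by dimension comparison'': since $\pi_U$ is not a homomorphism, you need normality of $G$ in $A$ to see that $\pi_U(G)$ is a subgroup of $U$ at all, and then connectedness (forcing it to be Zariski-closed in $U$) together with condition (iii) to conclude $\pi_U(G)=U$.
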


\begin{proof}
  Let $\sigma \colon U \rightarrow T$ be a Lie homomorphism such that
  $[U,T] \subseteq \ker \sigma$, and put $G = G_\sigma$.  Then $[A,A]
  \subseteq \ker \sigma \subseteq G$, and hence $G$ is a normal
  subgroup of~$A$.  We observe that $U \rightarrow G$, $u \mapsto u
  \sigma(u)$ is a diffeomorphism.  This shows that $\dim G = \dim U$ and
  also that $G$ is simply connected, because being a unipotent real
  algebraic group, $U$ is simply connected.  It remains to check that
  $\up(\ac{G}) = \mathbf{U}$.  Since $[A,A] \subseteq \ker \sigma$, we
  have $[\mathbf{A},\mathbf{A}] = \ac{[A,A]} \subseteq \ac{G}$.
  Consequently, we may assume that $\mathbf{A} = \mathbf{U} \times
  \mathbf{T}$.  But then $u \sigma(u)$ is the Jordan decomposition for
  any element of $G$, parametrised by $u \in U$, and consequently
  $\mathbf{U} = \ac{U} \subseteq \ac{G}$.

  Conversely, suppose that $G$ is a tight Lie subgroup of~$A$.  For
  each $g \in G$, we write $g = u_g t_g$ with $u_g \in U$ and $t_g \in
  T$.  Let $V = \{ u_g \mid g \in G \}$.  Since $G$ is connected, so
  is its continuous image~$V$.  The fact that $G$ is normal in $A$
  implies that $V$ forms a subgroup of~$U$.  Being a connected Lie
  subgroup of a unipotent group, $V$ is Zariski-closed in~$U$.
  Consequently, we have $V = \up(\ac{G})_\R = U$.  This shows that
  every $u \in U$ is of the form $u_{g}$, for some~$g \in G$.

  Observe that $G$ acts on $U$ via $u_g t_g \cdot v = u_g \,
  ^{t_g\!}v$.  The $G$-orbit of $1$ in this action is equal to $V = U$
  and thus $G/(G \cap T) \cong U$.  Since $\dim G = \dim U$, the map $G
  \rightarrow U$, $g \mapsto u_g$ is a covering map.  But $U$ is
  simply connected, hence it is a diffeomorphism.  We can thus define
  a smooth map $\sigma \colon U \rightarrow T$ by defining $\sigma(u_{g})
  = t_{g}$.  Since $G$ is a normal subgroup of $A$, a short
  computation reveals that $\sigma$ is both constant on $T$-orbits in
  $U$ and a homomorphism.
\end{proof}

\begin{cor} \label{cor:tightissc}
  Every tight Lie subgroup of $A$ is simply connected.
\end{cor}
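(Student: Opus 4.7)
The plan is to deduce the corollary directly from Proposition~\ref{pro:tight_Lie_subs}, which has already done the essential work. By the second assertion of that proposition, any tight Lie subgroup $G$ of $A$ can be written as $G = G_\sigma$ for some Lie homomorphism $\sigma \colon U \rightarrow T$ satisfying $[U,T] \subseteq \ker \sigma$. By the first assertion of the same proposition, every such $G_\sigma$ is simply connected.

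For a self-contained presentation I would simply point out the key ingredient used in the proof of Proposition~\ref{pro:tight_Lie_subs}: the map $U \rightarrow G_\sigma$, $u \mapsto u\sigma(u)$, is a diffeomorphism. Since $U = \Rad_\textup{u}(\mathbf{A})_\R$ is a unipotent real algebraic group and therefore diffeomorphic (via the exponential map) to its Lie algebra, $U$ is simply connected. Hence $G_\sigma$ is simply connected as well, and the corollary follows.

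There is really no obstacle here; the corollary is a direct extraction of information already contained in the proof of the preceding proposition, and the only point worth recording is that one needs nothing beyond the parametrisation $G = G_\sigma$ together with the simple connectedness of~$U$.
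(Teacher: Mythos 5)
Your proposal is correct and follows exactly the paper's route: the corollary is read off from Proposition~\ref{pro:tight_Lie_subs}, whose proof already notes that $u \mapsto u\sigma(u)$ is a diffeomorphism from the simply connected group $U$ onto $G_\sigma$, and that every tight Lie subgroup is of this form. Nothing further is needed.
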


Next we prove a relative version of
Proposition~\ref{pro:tight_Lie_subs}.

\begin{pro} \label{pro:tight_Lie_subs_relative} Suppose that $H$ is a
  tight Lie subgroup of~$A$.  Then the collection of tight Lie
  subgroups of $A$ is in one-to-one correspondence with the space
  $\Hom_{\textup{Lie}}(H/[H,H],T)$ of Lie homomorphisms.  Moreover, if
  $G$ corresponds to $\overline{\sigma_{G \mid H}}$, where $\sigma_{G
    \mid H} \in \Hom_{\textup{Lie}}(H,T)$ with $[H,H] \subseteq \ker
  \sigma_{G \mid H}$, then $G \cap H = \ker \sigma_{G \mid H}$.
\end{pro}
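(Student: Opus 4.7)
The plan is to leverage Proposition~\ref{pro:tight_Lie_subs}: I write $H = G_{\sigma_H}$ and, for any tight Lie subgroup $G$ of $A$, $G = G_{\sigma_G}$, with $\sigma_H, \sigma_G \colon U \to T$ Lie homomorphisms trivial on $[U,T]$. The bijection $\psi_H \colon U \to H$, $u \mapsto u\sigma_H(u)$ (arising from that parametrization) is the bridge between $U$ and $H$, and I would define the correspondence by setting
$$
\sigma_{G \mid H} \colon H \to T, \qquad u\sigma_H(u) \mapsto \sigma_G(u) \sigma_H(u)^{-1},
$$
and then passing to the induced map $\overline{\sigma_{G\mid H}}$ on $H/[H,H]$.

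The first step is to verify that $\sigma_{G\mid H}$ is a Lie group homomorphism. The key input is the multiplication rule $h_1 h_2 = \psi_H(u_1 v)$ with $v = \sigma_H(u_1) u_2 \sigma_H(u_1)^{-1}$, read off from the proof of Proposition~\ref{pro:tight_Lie_subs}; combined with the $T$-invariance of both $\sigma_G$ and $\sigma_H$ (so that $\sigma_G(v) = \sigma_G(u_2)$ and $\sigma_H(v) = \sigma_H(u_2)$) and the commutativity of $T$, a short computation yields $\sigma_{G \mid H}(h_1 h_2) = \sigma_{G \mid H}(h_1) \sigma_{G \mid H}(h_2)$. Since $T$ is abelian, $[H,H] \subseteq \ker \sigma_{G \mid H}$ is automatic, giving an element of $\Hom_{\textup{Lie}}(H/[H,H],T)$. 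The asserted identity $G \cap H = \ker \sigma_{G \mid H}$ follows immediately from the uniqueness of the decomposition $A = U \rtimes T$: an element $h = u\sigma_H(u)$ belongs to $G_{\sigma_G}$ precisely when $\sigma_G(u) = \sigma_H(u)$, which is exactly $\sigma_{G \mid H}(h) = 1$.

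Injectivity of the correspondence $G \mapsto \overline{\sigma_{G \mid H}}$ is then clear from the inversion formula $\sigma_G(u) = \sigma_{G \mid H}(\psi_H(u))\sigma_H(u)$, which recovers $\sigma_G$ and hence $G$ by Proposition~\ref{pro:tight_Lie_subs}. For surjectivity, given $\tau \in \Hom_{\textup{Lie}}(H,T)$ with $[H,H] \subseteq \ker \tau$, I would set $\sigma_G(u) := \tau(u\sigma_H(u))\sigma_H(u)$ and take $G := G_{\sigma_G}$; by construction $\sigma_{G \mid H} = \tau$. What then remains is to verify that $\sigma_G$ is a Lie homomorphism $U \to T$ with $[U,T] \subseteq \ker \sigma_G$, so that Proposition~\ref{pro:tight_Lie_subs} applies and $G$ is indeed tight. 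Both properties reduce, via the $H$-multiplication law and the homomorphism property of $\tau$, to showing that $\tau$ annihilates certain elements of $H$ that originate as $A$-commutators, specifically of the form $[\sigma_H(u_1), u_2]$ (for the multiplicativity of $\sigma_G$) and $[t, u]$ (for the $T$-invariance of $\sigma_G$).

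The main obstacle is this final step, identifying the relevant commutators as elements of $[H,H]$. Since $[A,A]$ need not be contained in $[H,H]$ in general, one must use the normality of $H$ in $A$ together with the explicit form $H = G_{\sigma_H}$ to rewrite each such $A$-commutator as a product of commutators of elements of $H$; the interplay of the conditions $[U,T] \subseteq \ker \sigma_H$, the $T$-invariance of $\sigma_H$, and the commutativity of $T$ should be exactly what is needed to push the computation through.
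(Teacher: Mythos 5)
Your forward direction is sound and matches the paper's (omitted) ``short computation'': the definition $\sigma_{G\mid H}(u\sigma_H(u))=\sigma_G(u)\sigma_H(u)^{-1}$, the verification of multiplicativity via $\psi_H(u_1)\psi_H(u_2)=\psi_H\bigl(u_1\,{}^{\sigma_H(u_1)}u_2\bigr)$ together with the $T$-invariance of $\sigma_G$ and $\sigma_H$, the identity $G\cap H=\ker\sigma_{G\mid H}$, and injectivity via the inversion formula are all correct. The problem is the surjectivity step, and you have located it precisely: you must show that every $\tau\in\Hom_{\textup{Lie}}(H,T)$ with $[H,H]\subseteq\ker\tau$ automatically annihilates the commutators $[t,u]$ with $t\in T$, $u\in U$. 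These do lie in $H$ --- indeed $[t,u]=[t,\psi_H(u)]\in[T,H]\subseteq H$ by normality --- but your hope of rewriting them as products of commutators of elements of $H$ cannot be realised, because $[T,H]$ is not contained in $[H,H]$ for a general tight $H$. Concretely, take $\mathbf{A}=\mathbf{U}\rtimes\mathbf{T}$ with $\mathbf{U}$ a two-dimensional vector group and $\mathbf{T}=\textup{SO}_2$ acting by rotations. Then $[U,T]=U$, so by Proposition~\ref{pro:tight_Lie_subs} the only tight Lie subgroup is $H=U$ itself; it is abelian, so $[H,H]=1$, yet $\Hom_{\textup{Lie}}(H/[H,H],T)=\Hom_{\textup{Lie}}(\R^2,\textup{SO}_2(\R))$ is two-dimensional. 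For a nonzero $\tau$ the set $\{h\tau(h)\mid h\in H\}$ is not even a subgroup, and the asserted bijection fails.

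So the gap is genuine and is not closable by a cleverer commutator manipulation; what is missing is an extra hypothesis. If $H$ is in addition Zariski-dense in $\mathbf{A}$ (which is automatic in every application in the paper, since there $H$ contains the Zariski-dense group $\Gamma$), then Lemma~\ref{lem:Gnormal} gives $[A,A]=[H,H]$, hence $[t,u]\in[A,A]\subseteq[H,H]\subseteq\ker\tau$, and your computation of both the multiplicativity of $\sigma_G(u)=\tau(\psi_H(u))\sigma_H(u)$ and the inclusion $[U,T]\subseteq\ker\sigma_G$ goes through verbatim. You should either add this hypothesis and invoke Lemma~\ref{lem:Gnormal}, or replace $[H,H]$ by the subgroup of $H$ generated by $[H,H]$ and $[T,U]$ (equivalently, by $[A,A]$) in the statement. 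The paper's own proof hides the same issue behind the phrase ``a short computation shows that $G$ is a tight Lie subgroup of $A$''; your write-up at least makes the difficulty visible, but as it stands the decisive step is unproved and, for an arbitrary tight $H$, unprovable.
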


\begin{proof}
  According to Proposition~\ref{pro:tight_Lie_subs}, every tight Lie
  subgroup $G$ of $A$ is simply connected and gives rise to a Lie
  homomorphism $\sigma_G \colon U \rightarrow T$ with $[A,A] \subseteq
  \ker \sigma_G$.  In particular, this is the case for~$H$.

  The required bijection is defined as follows.  Let $G$ be a tight
  Lie subgroup of~$A$.  We define a map $\sigma_{G \mid H} \colon H
  \rightarrow T$ as follows.  Let $h \in H$.  Then $h = u \sigma_H(u)$
  for a unique $u \in U$.  Set $\sigma_{G \mid H}(h) =
  \sigma_H(u)^{-1} \sigma_G(u)$.  A short computation, using that
  $\sigma_H$ and $\sigma_G$ are constant on $T$-orbits, shows that the
  map $\sigma_{G \mid H} \colon H \rightarrow T$ is a Lie
  homomorphism.  Since $T$ is abelian, we have $[H,H] \subseteq \ker
  \sigma_{G \mid H}$, and clearly $G \cap H = \ker \sigma_{G \mid H}$.

  Conversely, starting from $\sigma \in \Hom_{\textup{Lie}}(H,T)$ with
  $[H,H] \subseteq \ker \sigma$ we define $G = \{ h \sigma(h) \mid h
  \in H \}$.  A short computation shows that $G$ is a  tight
  Lie subgroup of $A$, and that the resulting map is the desired
  inverse.
\end{proof}

Recall that a simply connected, solvable Lie group $G$ is called
unipotently connected if its maximal nilpotent normal subgroup is
connected; see Section~\ref{sec:uconnected}.
 
\begin{lem} \label{lem:Gtau_properties} Let $\sigma \colon U
  \rightarrow T$ be a Lie homomorphism such that $[U,T] \subseteq \ker
  \sigma$.  Then $G = G_\sigma$, as defined in \eqref{equ:Gtau}, has
  the following properties:
  \begin{enumerate}
  \item $A = G \rtimes T$;
  \item $\ker \sigma = G \cap U $;
  \item $G$ is Zariski-dense in $\mathbf{A}$ if and only if the group
    $\sigma(U)$ is Zariski-dense in~$\mathbf{T}$.
  \end{enumerate}
  Assuming that $G = G_\sigma$ is Zariski-dense in $\mathbf{A}$ and that
  $\mathbf{A}$ has a strong unipotent radical, the group also
  satisfies:
  \begin{enumerate}
  \item[(4)] $\Nil(G) = (\ker \sigma)_\circ$;
  \item[(5)] $G$ is unipotently connected if and only if $\ker \sigma$ is
    connected.
  \end{enumerate}
\end{lem}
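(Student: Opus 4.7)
The plan is to verify items (1)--(5) in order, using the explicit product decomposition $A = U \rtimes T$ together with the parametrisation $G_\sigma = \{u\sigma(u) \mid u \in U\}$. Items (1)--(3) are direct computations, while (4) and (5) require passing to Zariski closures and crucially invoke the strong unipotent radical hypothesis on~$\mathbf{A}$.

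For (1), writing $ut \in U \rtimes T$ as $(u\sigma(u))(\sigma(u)^{-1}t)$ yields $A = G_\sigma T$, and $G_\sigma \cap T = 1$ since $u\sigma(u) \in T$ forces $u \in U \cap T = 1$; normality of $G_\sigma$ in $A$ has already been recorded in Proposition~\ref{pro:tight_Lie_subs}. For (2), an element of the form $u\sigma(u)$ lies in $U$ exactly when $\sigma(u) = 1$, so $G_\sigma \cap U = \ker\sigma$. For (3), consider the algebraic projection $\pi \colon \mathbf{A} \to \mathbf{T}$ along~$\mathbf{U}$. A standard argument (using that $\pi^{-1}(\ac{\sigma(U)})$ is Zariski-closed and contains $G_\sigma$) gives $\pi(\ac{G_\sigma}) = \ac{\sigma(U)}$, while Proposition~\ref{pro:tight_Lie_subs} already provides $\mathbf{U} \subseteq \ac{G_\sigma}$. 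Hence $\ac{G_\sigma} = \mathbf{A}$ if and only if $\pi(\ac{G_\sigma}) = \mathbf{T}$, that is, if and only if $\sigma(U)$ is Zariski-dense in~$\mathbf{T}$.

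For (4), assume $G$ is Zariski-dense in $\mathbf{A}$ and that $\mathbf{A}$ has a strong unipotent radical. Because $\sigma$ is constant on $T$-orbits in $U$, the subgroup $\ker\sigma \subseteq U$ is $T$-invariant and hence normal in $A$; in particular $(\ker\sigma)_\circ$ is a connected nilpotent normal Lie subgroup of $G$, giving $(\ker\sigma)_\circ \subseteq \Nil(G)$. For the reverse inclusion, I take $\mathbf{N} = \ac{\Nil(G)}$ in $\mathbf{A}$: it is normalised by $\ac{G} = \mathbf{A}$ and remains nilpotent, since the commutator identities defining nilpotency survive Zariski closure. In a nilpotent algebraic group over a field of characteristic~$0$ the semisimple elements form a central characteristic subgroup $\mathbf{N}_s$, which is therefore normal in~$\mathbf{A}$. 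A short commutator computation shows $[\mathbf{U}, \mathbf{N}_s] \subseteq \mathbf{U} \cap \mathbf{N}_s = 1$, so $\mathbf{N}_s \subseteq \Cen_{\mathbf{A}}(\mathbf{U})$; the strong unipotent radical hypothesis then forces $\mathbf{N}_s \subseteq \mathbf{U} \cap \mathbf{N}_s = 1$. Hence $\mathbf{N} \subseteq \mathbf{U}$, which yields $\Nil(G) \subseteq G \cap U = \ker\sigma$, and by connectedness $\Nil(G) \subseteq (\ker\sigma)_\circ$.

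For (5), the same Zariski-closure argument applied to the entire Fitting subgroup $F = \Fitt(G)$ gives $\ac{F} \subseteq \mathbf{U}$ and hence $F \subseteq \ker\sigma$; together with the opposite inclusion $\ker\sigma \subseteq F$ (which follows since $\ker\sigma$ is a nilpotent normal subgroup of $G$), this shows $F = \ker\sigma$. Since by (4) we already have $F_\circ = \Nil(G) = (\ker\sigma)_\circ$, the group $G$ is unipotently connected---that is, $F$ is connected---if and only if $\ker\sigma$ is connected. The central technical point in the whole argument is this Zariski-closure step: one must pass from an abstract nilpotent normal subgroup of $G$ to a nilpotent normal algebraic subgroup of~$\mathbf{A}$, and then eliminate the potential semisimple part via the strong unipotent radical; the remaining steps are essentially formal manipulations within the semidirect product $A = U \rtimes T$.
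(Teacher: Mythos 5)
Your proof is correct and follows essentially the same route as the paper: items (1)--(3) are the same direct computations in $A=U\rtimes T$ using Proposition~\ref{pro:tight_Lie_subs}, and (4)--(5) rest on the fact that every nilpotent normal subgroup of $G$ lands in $U$, so that $\ker\sigma=G\cap U$ is the maximal nilpotent normal subgroup of $G$. The only difference is that the paper simply invokes the standing remark that $\mathbf{U}$ is the maximal nilpotent normal subgroup of $\mathbf{A}$ when $\mathbf{A}$ has a strong unipotent radical, whereas you re-derive this via the Jordan decomposition of $\ac{\Nil(G)}$ and the centraliser condition -- a correct unpacking of the same cited fact.
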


\begin{proof}
  By Proposition~\ref{pro:tight_Lie_subs}, $G$ is a simply connected,
  tight Lie subgroup of~$A$.  In particular, we have $G
  \trianglelefteq A$.  Clearly, $G \cap T = \{1\}$, $GT = UT = A$ and
  hence $A = G \rtimes T$.  This proves (1).

  Since $G$ is tight in $A$, we have $\mathbf{U} \subseteq \ac{G}$.
  Hence $G$ is Zariski-dense in $\mathbf{A}$ if and only if
  $G\mathbf{U}/\mathbf{U}$ is Zariski-dense in
  $\mathbf{A}/\mathbf{U}$.  The latter is equivalent to $\sigma(U)$
  being Zariski-dense in~$\mathbf{T}$.  This proves (3).

  Put $K = \ker \sigma$.  Clearly, $K = G \cap U$, justifying (2).
  Now suppose that $G$ is Zariski-dense in $\mathbf{A}$ and that
  $\mathbf{A}$ has a strong unipotent radical.  Then $\mathbf{U}$ is
  the maximal nilpotent normal subgroup of $\mathbf{A}$ and
  consequently $K = G \cap U$ is the maximal nilpotent normal subgroup
  of~$G$.  This implies $\Nil(G) = K_\circ$, viz.\ (4), and shows that
  $G$ is unipotently connected if and only if $K$ is connected,
  viz.~(5).
\end{proof}

\begin{lem}\label{lem:Gnormal}
  Suppose that $G$ is a connected Lie group which is contained and
  Zariski-dense in~$A$.  Then $[A,A]$ is contained in~$\Nil(G)$.  In
  particular, $G$ is normal in~$A$.
\end{lem}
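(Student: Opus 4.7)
The plan is to prove the equality $[G,G] = [A,A]$, from which both assertions of the lemma follow quickly. Since $[A,A]$ is contained in the unipotent radical $U$, so is $[G,G]$; being the derived subgroup of the connected Lie group $G$, it is moreover a connected analytic subgroup of $A$. Invoking the fact recalled in the notation section that every connected Lie subgroup of a unipotent real algebraic group is Zariski-closed, I would conclude that $\ac{[G,G]} = [G,G]$.

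Next I would exploit the Zariski-density of $G$ in $A$ to establish the inclusion $[A,A] \subseteq [G,G]$. The group $\ac{[G,G]}$ is normalised by $G$ (because $[G,G]$ is), and by Zariski-density it is therefore a normal Zariski-closed subgroup of~$A$. Consider the quotient morphism $\pi \colon A \to A/\ac{[G,G]}$. The image $\pi(G)$ is abelian and Zariski-dense in the target, which forces $A/\ac{[G,G]}$ itself to be abelian; hence $[A,A] \subseteq \ac{[G,G]} = [G,G]$. Combined with the trivial reverse inclusion $[G,G] \subseteq [A,A]$, this gives the desired equality $[G,G] = [A,A]$.

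To finish, I would observe that $[G,G]$ is connected, normal in~$G$, and contained in the nilpotent group $U$, hence nilpotent; therefore $[G,G] \subseteq \Nil(G)$ and consequently $[A,A] \subseteq \Nil(G)$. For the concluding normality statement, since $G$ contains $[A,A]$, it corresponds to a subgroup of the abelian quotient $A/[A,A]$, and any such subgroup is automatically normal, so $G$ is normal in~$A$. The only mildly delicate point in the whole argument is the identification of the abstract commutator subgroup $[G,G]$ with a connected analytic subgroup of $A$ and the subsequent application of the Zariski-closedness property inside~$U$; both are standard facts about connected Lie groups and unipotent real algebraic groups, so I do not expect a genuine obstacle.
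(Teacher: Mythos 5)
Your proof is correct and follows essentially the same route as the paper: both arguments hinge on showing that $[G,G]$ is a connected, Zariski-closed subgroup of the unipotent radical and then using the Zariski-density of $G$ to conclude $[A,A]=\ac{[G,G]}=[G,G]$, after which normality of $G$ is immediate. The only difference is cosmetic — the paper quotes $[G,G]\subseteq\Nil(G)$ up front as a standard fact about connected solvable Lie groups, whereas you recover it at the end from the inclusion $[G,G]\subseteq U$ — so there is nothing further to add.
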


\begin{proof} The commutator subgroup $[G,G]$ is connected and
  contained in~$\Nil(G)$. In particular, it is unipotent, and
  therefore Zariski-closed in~$A$.  Since $G$ is Zariski-dense in $A$,
  it follows that $[A,A] = [G,G] \subseteq \Nil(G)$.
\end{proof}
 
\begin{lem} \label{lem:desG} Let $G$ be a simply connected, solvable
  Lie group, and suppose that $A = A_G$ is the real algebraic hull
  of~$G$.  Then $G = G_\sigma$, as defined in \eqref{equ:Gtau}, for a
  suitable Lie homomorphism $\sigma \colon U \rightarrow T$ such that
  $[U,T] \subseteq \ker \sigma$.
\end{lem}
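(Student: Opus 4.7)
The plan is to verify that $G$ is a tight Lie subgroup of $A$ in the sense of Definition~\ref{def:tight_subgroup}, and then invoke the parametrisation of tight Lie subgroups established in Proposition~\ref{pro:tight_Lie_subs}, which immediately provides the required Lie homomorphism $\sigma \colon U \rightarrow T$.

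To confirm tightness I would check the three defining conditions in turn. Connectedness of $G$ is given (indeed $G$ is simply connected by hypothesis), while Proposition~\ref{pro:basics_real_hull} tells us that $G$ is a closed normal subgroup of $A$, taking care of condition~(i). For the dimension condition~(ii), I would use either property~(iii)' of the algebraic hull, $\dim \Rad_\textup{u}(\mathbf{A}_G) = \dim G$, or equivalently the fact from Proposition~\ref{pro:basics_real_hull} that $\Upsilon\vert_G \colon G \rightarrow U$ is a diffeomorphism, both of which force $\dim G = \dim U$. For the density condition~(iii), property~(i)' of the algebraic hull asserts that $G$ is Zariski-dense in $\mathbf{A}$, so $\ac{G} = \mathbf{A}$ and therefore $\up(\ac{G}) = \up(\mathbf{A}) = \mathbf{U}$.

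Having verified that $G$ is a tight Lie subgroup of $A$, the second assertion of Proposition~\ref{pro:tight_Lie_subs} produces a Lie homomorphism $\sigma \colon U \rightarrow T$ satisfying $[U,T] \subseteq \ker \sigma$ and such that $G = G_\sigma$ in the sense of \eqref{equ:Gtau}, which is exactly the desired conclusion. I do not anticipate a substantive obstacle here: the content is purely the translation between the two points of view on $G$ inside $A$, namely as the complement in $A = G \rtimes T$ from Proposition~\ref{pro:basics_real_hull} and as the graph $\{u\,\sigma(u) \mid u \in U\}$ inside the alternative decomposition $A = U \rtimes T$, a translation carried out in general in Proposition~\ref{pro:tight_Lie_subs}.
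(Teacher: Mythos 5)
Your proposal is correct and follows essentially the same route as the paper: verify that $G$ is a tight Lie subgroup of $A$ and then apply Proposition~\ref{pro:tight_Lie_subs}. The only cosmetic difference is that you cite Proposition~\ref{pro:basics_real_hull} for normality and closedness where the paper invokes Lemma~\ref{lem:Gnormal}; both references are valid.
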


\begin{proof} By Proposition~\ref{pro:tight_Lie_subs}, it suffices to
  show that $G$ is tight in~$A$.  By Lemma~\ref{lem:Gnormal}, the
  group $G$ is normal in~$A$.  By the definition of the algebraic
  hull, we have $\dim G = \dim U$, and because $G$ is Zariski-dense in
  $\mathbf{A}$, we surely have $\up(\ac{G}) = \mathbf{U}$.
\end{proof}

%
%


\section{The Fitting subgroup and Mostow's
  theorem} \label{sect:Fitting}

Let $\Gamma$ be a lattice in a simply connected, solvable Lie
group~$G$.  Consider the induced inclusion of real algebraic hulls
$A_\Gamma \subseteq A_G$.  Let $U$ be the unipotent radical of
$A_\Gamma$, which by Proposition \ref{pro:hull_incl} is also the
unipotent radical of~$A_G$. Then the closure $F= \ac{\Fitt(\Gamma)}$
of the Fitting subgroup of $\Gamma$ in $A_\Gamma$ is a nilpotent
normal subgroup of~$A_\Gamma$.  Since $A_\Gamma$ has a strong
unipotent radical, we have $F \leq U$.  In particular, $\Fitt(\Gamma)$
is a subgroup of $G \cap U= \up(G)$. Therefore we have
$$
\Fitt(\Gamma) = \up(\Gamma).
$$
Let $N = \Nil(G)$ denote the nilradical of~$G$. Note that $N =
\up(G)_{\circ}$ is the identity component of~$\up(G)$.  Mostow's
theorem~\cite[\S5]{Mo57}, asserts that $\Gamma \cap N$ is a lattice
in~$N$.  Since $\Gamma \cap N$ is a nilpotent normal subgroup of
$\Gamma$, it is contained in~$\Fitt(\Gamma)$.  In the following we
strengthen Mostow's theorem: Proposition~\ref{pro:fittislattice} below
shows that $\Fitt(\Gamma)$ is a lattice in the closed
subgroup~$\up(G)$.

We fix some further notation.  Let $T$ be a maximal reductive subgroup
of~$A_G$.  Then $A_G = U \rtimes T$ is an $\R$-defined splitting of
real algebraic groups.  Let $\pi \colon A_G \rightarrow T$ be the
projection homomorphism associated to this splitting.  The real
algebraic torus $T$ admits a direct decomposition $T = T_\text{s}
\times T_\text{c}$, where $T_\text{s}$ is a subtorus split over the
reals and $T_\text{c}$ is the maximal compact subgroup of~$T$.  In
particular, $T_\text{s}$ is a simply connected group.  Based on the
decomposition of $T$, we write $\pi = \pi_\text{s} \times
\pi_\text{c}$ with homomorphisms $\pi_\text{s}\colon A_G \rightarrow
T_\text{s}$ and $\pi_\text{c}\colon A_G \rightarrow T_\text{c}$.  

\begin{lem} \label{lem:splitimage} With the notation introduced above,
  the following hold:
  \begin{enumerate}
  \item $\pi(\Gamma)$ is discrete in $T$, and $\pi_\text{s}(\Gamma)$ is
    discrete in $T_\text{s}$,
  \item $\pi(G)$ is a closed subgroup of $T$,
  \item $\pi(\Gamma)$ is a lattice in $\pi(G)$, and $\pi_\text{s}(\Gamma)$
    is a lattice in~$\pi_\text{s}(G)$.
  \end{enumerate}
\end{lem}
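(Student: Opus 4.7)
The plan is to establish the three assertions in the order (1), (2), (3), relying on the strengthened Mostow theorem (Proposition~\ref{pro:fittislattice}) just proved, standard quotient-lattice results, and an arithmetic discreteness step whose technicalities constitute the main obstacle.

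First I will set up the basic picture. Since $\pi \colon A_G \to T$ has kernel $U$, one has $\ker(\pi|_G) = G \cap U = \up(G)$ and $\ker(\pi|_\Gamma) = \Fitt(\Gamma)$. Proposition~\ref{pro:fittislattice} tells us $\Fitt(\Gamma)$ is a lattice in the closed normal subgroup $\up(G) \trianglelefteq G$; applying \cite[Thm.~1.13]{Ra72} then yields that $\Gamma \up(G)$ is closed in $G$ and $\Gamma/\Fitt(\Gamma)$ embeds as a cocompact lattice in the abelian Lie group $G/\up(G)$. Consequently $\pi|_G$ factors through an injective continuous homomorphism $\bar\pi \colon G/\up(G) \hookrightarrow T$ with image $\pi(G)$, under which $\pi(\Gamma) = \bar\pi(\Gamma/\Fitt(\Gamma))$ is cocompact in $\pi(G)$ with respect to the intrinsic (quotient) topology.

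For (1) the key input is algebraic integrality. Via Proposition~\ref{pro:hull_incl} together with property (iv) of the $\Q$-defined algebraic hull $\mathbf{A}_\Gamma$, some finite-index subgroup $\Gamma_0 \leq \Gamma$ acts on $\mathfrak{u}$ through $\GL_n(\Z)$ by $\Ad$, so the eigenvalues of $\Ad(\gamma)$ on $\mathfrak{u}$ for $\gamma \in \Gamma_0$ are algebraic units. Since $\pi(\gamma)$ is $U$-conjugate to the semisimple part of $\gamma$ in $A_G$, these are equally the eigenvalues of $\pi(\gamma)$ acting on $\mathfrak{u}$. On the $T_\text{s}$-factor the eigenvalues of $\pi_\text{s}(\gamma)$ are the absolute values of these units; Dirichlet's unit theorem applied in the number field they generate forces their logarithms to form a discrete subgroup of a Euclidean space, so $\pi_\text{s}(\Gamma_0)$, and hence $\pi_\text{s}(\Gamma)$, is discrete in $T_\text{s}$. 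On the other hand, if $\gamma \in \Gamma_0$ satisfies $\pi_\text{s}(\gamma) = 1$, then all $\mathfrak{u}$-eigenvalues of $\pi(\gamma)$ are algebraic integers of modulus $1$, hence roots of unity by Kronecker's theorem; thus $\gamma^N$ acts trivially on $\mathfrak{u}$ for some $N$, and by the strong unipotent radical property of $\mathbf{A}_G$ one has $\gamma^N \in U \cap \Gamma = \Fitt(\Gamma)$. So $\{\pi(\gamma) \in \pi(\Gamma_0) : \pi_\text{s}(\gamma) = 1\}$ embeds into the finite torsion subgroup of $\Gamma/\Fitt(\Gamma)$ and is finite; combined with the discreteness of $\pi_\text{s}(\Gamma_0)$ in $T_\text{s}$, this yields discreteness of $\pi(\Gamma_0)$, and therefore of $\pi(\Gamma)$, in $T$.

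Finally, for (2) and (3), the cocompactness of $\pi(\Gamma)$ in $\pi(G)$ from the first paragraph furnishes a compact set $K \subseteq \pi(G)$ (for the intrinsic topology) with $\pi(G) = \pi(\Gamma) K$; since $\bar\pi$ is continuous, $K$ is also compact as a subset of $T$. Combining compactness of $K$ with the discreteness of $\pi(\Gamma)$ in $T$ from paragraph two, a standard argument --- a convergent sequence $\pi(\gamma_n) k_n$ in $T$ has, after extraction, $k_n \to k \in K$ and therefore $\pi(\gamma_n)$ converging in the discrete set $\pi(\Gamma)$, hence eventually constant --- shows that $\pi(G) = \pi(\Gamma) K$ is closed in $T$, proving (2). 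Closedness identifies the subspace topology on $\pi(G) \subseteq T$ with the intrinsic quotient topology, so $\pi(\Gamma)$ is a lattice in $\pi(G)$; the corresponding statement for $\pi_\text{s}$ follows by the same topological reasoning applied to $\pi_\text{s}$, using the discreteness of $\pi_\text{s}(\Gamma)$ already obtained in paragraph two. The main obstacle throughout is the arithmetic discreteness step in paragraph two, bridging the integrality of the $\Gamma$-action on $\mathfrak{u}$ with the geometry of $T$ via Dirichlet's and Kronecker's theorems.
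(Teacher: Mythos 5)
Your treatment of (2) and (3) follows the same basic mechanism as the paper's: push the compactness of $G/\Gamma$ forward to get cocompactness of $\pi(\Gamma)$ in $\pi(G)$, and combine this with discreteness of $\pi(\Gamma)$ in $T$ to get closedness and the lattice property. The one genuine logical problem is your opening appeal to Proposition~\ref{pro:fittislattice} as ``just proved'': in the paper that proposition comes \emph{after} this lemma, and its proof explicitly invokes parts (1) and (3) of it, so your first paragraph is circular as written. Fortunately the appeal is inessential. The only output you extract from it is a compact set $K$ with $\pi(G)=\pi(\Gamma)K$, and this follows at once from cocompactness of $\Gamma$ in $G$ (take $K=\pi(C)$ for a compact $C\subseteq G$ with $G=\Gamma C$), which is exactly how the paper proceeds, mapping the compact space $G/\Gamma$ continuously onto $\pi(G)/\pi(\Gamma)$ inside the Hausdorff space $T/\pi(\Gamma)$. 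With that substitution, your arguments for (2) and (3) are sound.

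For (1) you take a genuinely different and more hands-on route. The paper passes to the $\Q$-defined splitting $\mathbf{A}_\Gamma=\mathbf{U}\rtimes\mathbf{S}$ and uses property (iv) of the hull only to conclude that $\rho(\Gamma)$ meets the arithmetic, hence discrete, group $\mathbf{S}(\Z)$ in a finite-index subgroup; discreteness of $\pi(\Gamma)$ and then of $\pi_\text{s}(\Gamma)$ is immediate. You instead unwind the integrality into eigenvalue data of $\Ad(\Gamma_0)$ on the Lie algebra of $U$ and reprove discreteness by hand. This is essentially Mostow's original argument and it does work --- the strong unipotent radical makes $T$ act faithfully, so passing from eigenvalues to $\pi_\text{s}(\gamma)$ is legitimate, and your Kronecker step showing that $\ker\bigl(\pi_\text{s}|_{\pi(\Gamma_0)}\bigr)$ is finite torsion is correct --- but the Dirichlet step is stated too loosely: Dirichlet's unit theorem concerns a fixed number field, whereas your eigenvalues generate fields varying with $\gamma$. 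The clean version is that $\pi_\text{s}(\gamma_n)\to 1$ bounds all eigenvalues of $\Ad(\gamma_n)$, so only finitely many integral characteristic polynomials occur, so $\pi_\text{s}(\gamma_n)$ takes finitely many values near $1$ and is eventually trivial. What the paper's route buys is precisely that the arithmetic group $\mathbf{S}(\Z)$ absorbs all of this number theory in one line; what your route buys is independence from the explicit $\Q$-torus decomposition, at the cost of redoing the unit-theoretic bookkeeping.
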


\begin{proof}
  The $\Q$-defined algebraic group $\mathbf{A}_\Gamma$ admits a
  semidirect decomposition $\mathbf{A}_\Gamma = \mathbf{U} \rtimes
  \mathbf{S}$ over $\Q$, where $\mathbf{U} =
  \Rad_\textup{u}(\mathbf{A}_\Gamma)$ denotes the unipotent radical
  and $\mathbf{S}$ is a $\Q$-defined maximal algebraic torus.  The
  projection $\rho \colon \mathbf{A}_\Gamma \rightarrow \mathbf{S}$
  associated to this decomposition is a $\mathbb{Q}$-defined
  homomorphism of algebraic groups.  Fix a $\Q$-defined embedding of
  $\mathbf{S}$ into a general linear group~$\GL_n$.  Then the
  arithmetic subgroup $\mathbf{S}(\Z) = \mathbf{S} \cap \GL_n(\Z)$ is
  a discrete subgroup of $S = \mathbf{S}_\R$.
  By the additional property (iv) of the algebraic hull
  $\mathbf{A}_{\Gamma}$, the group $\Gamma \cap
  \rho^{-1}(\mathbf{S}(\Z))$ is of finite index in~$\Gamma$.
  Consequently, $\rho(\Gamma)$ is discrete in~$S$.  Since
  $\mathbf{A}_\Gamma \leq \mathbf{A}_G$, the real algebraic torus $S$
  identifies with a subtorus of $T$ and $\rho(\Gamma)$
  with~$\pi(\Gamma)$.  Therefore, $\pi(\Gamma)$ is discrete in~$T$.
  Furthermore, since $T_\text{c}$ is compact, $\pi_\text{s}(\Gamma)$,
  i.e., the image of $\pi(\Gamma)$ in $T_\text{s}$, is discrete
  in~$T_\text{s}$.  Thus (1) holds.

  Being the continuous image of the compact space $G/\Gamma$ in the
  Hausdorff space $T/\pi(\Gamma)$, the group $\pi(G)/\pi(\Gamma)$ is
  closed in~$T/\pi(\Gamma)$.  Hence its pre-image $\pi(G)$ under the
  map $T \rightarrow T/\pi(\Gamma)$ is closed in $T$, and (2) holds.

  Since $\pi(G)/ \pi(\Gamma)$ is the continuous image of the compact
  space $G/\Gamma$, the group $\pi(\Gamma)$ is cocompact in~$\pi(G)$.
  Similarly, $\pi_\text{s}(\Gamma)$ is cocompact in~$\pi_\text{s}(G)$.  In
  view of (1), we deduce that (3) holds.
\end{proof}

We continue to use the notation set up before Lemma
\ref{lem:splitimage}.  By Lemma \ref{lem:desG}, there exists a Lie
homomorphism $\sigma \colon U \rightarrow T$ such that every $g \in G$
can be expressed as $g= u \sigma(u)$, in accordance with the
decomposition $A_G = U \rtimes T$. We observe that $\pi(g) =
\sigma(u)$ for all $g \in G$.  Similarly as for $\pi$, we may
decompose $\sigma = \sigma_\text{s} \times \sigma_\text{c}$ into Lie
homomorphisms $\sigma_\text{s}\colon U \rightarrow T_\text{s}$ and
$\sigma_\text{c}\colon U \rightarrow T_\text{c}$.

\begin{lem} \label{lem:u(G)_sigma} With the notation introduced above,
  we have 
  $$
  \ac{\up(G)} = \ker \sigma_\text{s},
  $$
  where $\ac{\up(G)}$ denotes the Zariski-closure of $\up(G)$
  in~$A_G$.  

  Furthermore, $\ac{\up(G)}$ is a simply connected, nilpotent Lie
  group.
\end{lem}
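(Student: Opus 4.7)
The plan is to first identify $u(G)$ with $\ker\sigma$ using the description $G = G_\sigma$ from Lemma~\ref{lem:desG}, and then to establish the equality of its Zariski closure with $\ker\sigma_s$ by combining a cocompactness argument with Mal'tsev rigidity. Since $G = \{u\sigma(u) : u \in U\}$, we have $u(G) = G \cap U = \ker\sigma$. Moreover, because $U$ is connected, $\sigma_s(U)$ lies in the identity component of $T_s$, which is a simply connected vector group; an easy long-exact-sequence argument then shows that $\ker\sigma_s$ is a connected closed Lie subgroup of the unipotent group $U$, and hence Zariski-closed. Consequently $\ac{\ker\sigma} \subseteq \ker\sigma_s$, and only the reverse inclusion remains.

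For the reverse inclusion I invoke the lattice hypothesis via Lemma~\ref{lem:splitimage}(2), which asserts that $\pi(G) = \sigma(U)$ is closed in $T$. Intersecting with the closed subgroup $\{1\} \times T_c \subseteq T_s \times T_c$ gives
\[
  \sigma(U) \cap T_c \;=\; \{1\} \times \sigma_c(\ker\sigma_s),
\]
so $\sigma_c(\ker\sigma_s)$ is a closed, connected subgroup of the compact torus $T_c$, i.e., a compact subtorus. The resulting short exact sequence
\[
  1 \longrightarrow \ker\sigma \longrightarrow \ker\sigma_s \longrightarrow \sigma_c(\ker\sigma_s) \longrightarrow 1
\]
exhibits $\ker\sigma$ as a cocompact subgroup of the simply connected nilpotent Lie group $\ker\sigma_s$.

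To conclude, I pass to the identity component $K_0 = (\ker\sigma)_\circ$; this is normal in $U$ (and a fortiori in $\ker\sigma_s$) as the identity component of the normal subgroup $\ker\sigma$. Then $\ker\sigma_s / K_0$ is a simply connected unipotent Lie group, and the image of $\ker\sigma$ in this quotient is a \emph{discrete} cocompact subgroup, hence a lattice. By the classical theorem of Mal'tsev on lattices in simply connected nilpotent Lie groups, such a lattice is Zariski-dense in the ambient unipotent algebraic group. Pulling back along the quotient morphism and using $K_0 \subseteq \ker\sigma \subseteq \ac{\ker\sigma}$, we obtain $\ac{\ker\sigma} = \ker\sigma_s$. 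The last assertion of the lemma is then automatic, since any connected closed Lie subgroup of the simply connected nilpotent Lie group $U$ is itself simply connected and nilpotent.

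The main obstacle I anticipate is establishing the closedness of $\sigma_c(\ker\sigma_s)$ in $T_c$: without it, this image could be a dense irrational one-parameter subgroup (as the torus examples earlier in the paper illustrate), $\ker\sigma$ would fail to be cocompact in $\ker\sigma_s$, and Mal'tsev rigidity would not apply. Lemma~\ref{lem:splitimage}(2), and with it the assumption that $\Gamma$ is a lattice in $G$, is therefore essential at exactly this step.
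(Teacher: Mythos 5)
Your proof is correct and follows essentially the same route as the paper: identify $\up(G)=G\cap U=\ker\sigma$, note that $\ker\sigma_\text{s}$ is connected (hence simply connected and Zariski-closed in $U$), and use Lemma~\ref{lem:splitimage}(2) to show $\pi(G)\cap T_\text{c}\cong\ker\sigma_\text{s}/\up(G)$ is compact, so that $\up(G)$ is cocompact in $\ker\sigma_\text{s}$. The only (harmless) divergence is at the very end, where you re-derive the fact that a cocompact closed subgroup of a unipotent real algebraic group is Zariski-dense by passing to the quotient by $(\ker\sigma)_\circ$ and invoking Mal'tsev, whereas the paper simply cites this fact as stated in its preliminaries.
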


\begin{proof}
  Clearly, $\up(G) = U \cap G = \ker \sigma$.  Since
  $\sigma_\text{s}\colon U \rightarrow T_\text{s}$ is a Lie
  homomorphism of simply connected groups, $\ker \sigma_\text{s}$ is a
  connected subgroup of $U$, thus it is simply connected and
  Zariski-closed.  Observe further that
  $$ 
  \ker \sigma_\text{s}/ \up(G) \cong \sigma_\text{c}(\ker
  \sigma_\text{s}) = \pi(G) \cap T_\text{c} \; .
  $$ 
  By (2) of Lemma~\ref{lem:splitimage}, this implies that $ \ker
  \sigma_\text{s}/ \up(G)$ is compact.  Being a cocompact subgroup of the
  unipotent real algebraic group $\ker \sigma_\text{s}$, the group $\up(G)$
  is Zariski-dense in~$\ker \sigma_\text{s}$.
\end{proof}

\begin{pro} \label{pro:fittislattice} Let $\Gamma$ be a lattice in a
  simply connected, solvable Lie group~$G$.  Then, taking
  Zariski-closures in $A_\Gamma$, the following hold:
  \begin{enumerate}
  \item $\Fitt(\Gamma) =\Gamma \cap \ac{\up(G)}$ is a lattice in
    $\ac{\up(G)}$,
  \item $\ac{\Fitt(\Gamma)} = \ac{\up(G)}$,
  \item $\Fitt(\Gamma)$ is a lattice in~${\up(G)}$.
  \end{enumerate}
\end{pro}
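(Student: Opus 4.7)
The plan is to deduce the three statements in sequence, with (1) being the main work, then (2) and (3) following from dimensional comparisons and the compactness of $\ac{\up(G)}/\up(G)$.

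For (1), I would first identify $\Fitt(\Gamma) = \Gamma \cap \ac{\up(G)}$. The inclusion of unipotent radicals gives $\ac{\up(G)} \subseteq U$, and the paragraph preceding the proposition establishes $\Fitt(\Gamma) = \up(\Gamma) = \Gamma \cap U$; combining these yields $\Fitt(\Gamma) = \Gamma \cap U = \Gamma \cap \ac{\up(G)}$, since any element of $\Gamma \cap \ac{\up(G)}$ lies in $U$ and hence is unipotent. To prove that $\Fitt(\Gamma)$ is a lattice in $\ac{\up(G)}$, I would appeal to Lemma~\ref{lem:u(G)_sigma}: $\ac{\up(G)} = \ker \sigma_{\textup{s}}$ is a simply connected nilpotent Lie group, so by Lemma~\ref{lem:criterion_lattice} it suffices to show that $\Fitt(\Gamma)$ is discrete in $\ac{\up(G)}$ with $\rank \Fitt(\Gamma) = \dim \ac{\up(G)}$.

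Discreteness is immediate: $\Gamma$ is discrete in $G$, and $G$ is closed in $A_G$ by Proposition~\ref{pro:basics_real_hull}, so $\Fitt(\Gamma)$ is discrete in the subspace $\ac{\up(G)} \subseteq A_G$. For the rank, the exact sequence
\begin{equation*}
1 \longrightarrow \Fitt(\Gamma) \longrightarrow \Gamma \xrightarrow{\pi} \pi(\Gamma) \longrightarrow 1
\end{equation*}
has kernel $\Gamma \cap U = \Fitt(\Gamma)$, and rank is additive on such extensions of polycyclic groups. By Lemma~\ref{lem:splitimage}, $\pi_{\textup{s}}(\Gamma)$ is a lattice in $\pi_{\textup{s}}(G) = \sigma_{\textup{s}}(U)$, which is a closed connected subgroup of the simply connected vector group $T_{\textup{s}}$ and thus isomorphic to $\R^{\dim \sigma_{\textup{s}}(U)}$; hence $\rank \pi_{\textup{s}}(\Gamma) = \dim \sigma_{\textup{s}}(U)$. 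Since $\pi(\Gamma) \cap T_{\textup{c}}$ is discrete in the compact torus $T_{\textup{c}}$ and hence finite, we have $\rank \pi(\Gamma) = \rank \pi_{\textup{s}}(\Gamma) = \dim \sigma_{\textup{s}}(U)$. Combining with $\rank \Gamma = \dim G = \dim U$ from Lemma~\ref{lem:criterion_lattice} gives $\rank \Fitt(\Gamma) = \dim U - \dim \sigma_{\textup{s}}(U) = \dim \ker \sigma_{\textup{s}} = \dim \ac{\up(G)}$, as required.

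Statement (2) is a direct consequence: by Lemma~\ref{lem:urank}, $\dim \up(\ac{\Fitt(\Gamma)}) = \rank \Fitt(\Gamma) = \dim \ac{\up(G)}$, and since $\ac{\Fitt(\Gamma)} \subseteq U$ is unipotent and contained in the connected group $\ac{\up(G)}$ of the same dimension, equality holds. For (3), the proof of Lemma~\ref{lem:u(G)_sigma} already shows $\ac{\up(G)}/\up(G) \cong \pi(G) \cap T_{\textup{c}}$ is compact, so $\up(G)$ is a closed cocompact subgroup of $\ac{\up(G)}$. Since $\Fitt(\Gamma)$ is a lattice in $\ac{\up(G)}$ and is contained in the closed subgroup $\up(G)$, the quotient $\up(G)/\Fitt(\Gamma)$ is closed in the compact space $\ac{\up(G)}/\Fitt(\Gamma)$, hence compact, giving (3).

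The main obstacle is the rank bookkeeping in step~(1): one must pair the Lie-theoretic description of $\pi(G) = \sigma(U)$ with the splitting $\sigma = \sigma_{\textup{s}} \times \sigma_{\textup{c}}$ and use simple-connectedness of $T_{\textup{s}}$ to identify $\sigma_{\textup{s}}(U)$ with a vector group. Without this, one cannot translate the discreteness/cocompactness data from Lemma~\ref{lem:splitimage} into the precise rank equality $\rank \Fitt(\Gamma) = \dim \ker \sigma_{\textup{s}}$ that drives the entire proposition.
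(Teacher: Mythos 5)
Your proof is correct and follows the paper's strategy in all essentials: identify $\Fitt(\Gamma)=\up(\Gamma)=\Gamma\cap U$, reduce everything to the rank equality $\rank\Fitt(\Gamma)=\dim\ac{\up(G)}$ using Lemma~\ref{lem:splitimage} and Lemma~\ref{lem:u(G)_sigma}, conclude (1) and (2) via Lemma~\ref{lem:criterion_lattice} and the simple connectedness of $\ac{\up(G)}=\ker\sigma_\text{s}$, and get (3) from the cocompactness of $\up(G)$ in $\ac{\up(G)}$. The one place where you diverge is the rank bookkeeping: the paper first invokes Raghunathan's Theorem~1.13 (using closedness of $\Gamma(\ker\pi_\text{s}\cap G)$ from Lemma~\ref{lem:splitimage}) to show that $\ker\pi_\text{s}\cap\Gamma$ is a lattice in $\ker\pi_\text{s}\cap G$, hence has rank $\dim\ker\sigma_\text{s}$, and then observes that $\Fitt(\Gamma)$ has finite index in $\ker\pi_\text{s}\cap\Gamma$ because $\pi(\Gamma)\cap T_\text{c}$ is finite; you instead use additivity of the Hirsch length along $1\to\Fitt(\Gamma)\to\Gamma\to\pi(\Gamma)\to 1$ and compute $\rank\pi(\Gamma)=\dim\sigma_\text{s}(U)$ directly from Lemma~\ref{lem:splitimage}(3), identifying $\pi_\text{s}(G)=\sigma_\text{s}(U)$ with a vector subgroup of $T_\text{s}$. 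Both routes are valid and rest on the same inputs; yours trades the appeal to Raghunathan's theorem for the (equally standard) additivity of rank, while the paper's version packages the finite-index observation more explicitly. Your derivation of (2) via Lemma~\ref{lem:urank} is also slightly different from the paper's (which gets Zariski-density directly from cocompactness in a unipotent group), but it is equally sound.
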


\begin{proof}
  By (3) of Lemma~\ref{lem:splitimage}, the group $\Gamma (\ker
  \pi_\text{s} \cap G)$ is closed in~$G$.  Hence, by
  \cite[Theorem~1.13]{Ra72}, the group $\ker \pi_\text{s} \cap \Gamma$
  is a lattice in the simply connected group $\ker \pi_\text{s} \cap
  G$.  For $g=u \sigma(u) \in G$ we have $g \in \ker \pi_\text{s}$ if
  and only if $u \in \ker \sigma_\text{s}$.  Thus we deduce from
  Lemma~\ref{lem:criterion_lattice} and Lemma~\ref{lem:u(G)_sigma}
  that
  $$
  \rank (\ker \pi_\text{s} \cap \Gamma) = \dim (\ker \pi_\text{s} \cap
  G) = \dim (\ker (\sigma_\text{s})) = \dim (\ac{\up(G)}).
  $$
  Observe further that $\pi(\ker \pi_\text{s} \cap {\Gamma}) = \pi(\Gamma)
  \cap T_\text{c}$ is finite, since $\pi(\Gamma)$ is discrete in $T$, by
  Lemma \ref{lem:splitimage}.  Therefore, $\Fitt(\Gamma) = \up(\Gamma)
  = \ker \pi \cap \Gamma$ is of finite index in $\ker \pi_\text{s} \cap
  {\Gamma}$. This implies that 
  $$
  \rank \Fitt(\Gamma) = \rank (\ker \pi_\text{s} \cap {\Gamma}) = \dim
  (\ac{\up(G)}).
  $$
  As $\Fitt(\Gamma) = \up(\Gamma) \subseteq \ac{\up(G)}$, the Fitting
  subgroup $\Fitt(\Gamma)$ is a discrete subgroup of the simply
  connected, nilpotent group $\ac{\up(G)}$; see
  Lemma~\ref{lem:u(G)_sigma}.  Hence Lemma~\ref{lem:criterion_lattice}
  shows that $\Fitt(\Gamma)$ is a lattice in $\ac{\up(G)}$, giving (1)
  and~(2).  The proof of Lemma~\ref{lem:u(G)_sigma} showed that
  $\up(G)$ is cocompact in $\ac{\up(G)}$, implying~(3).
\end{proof} 


\section{Unipotently connected groups}
\label{sec:uconnected}

Let $G$ be a simply connected, solvable Lie group with algebraic
hull~$\mathbf{A}_G$.  As $\mathbf{A}_G$ is solvable, we have
$\Rad_\textup{u}(\mathbf{A}_G) = \up(\mathbf{A}_G)$.

\begin{dfn}
  The Lie group $G$ is called \emph{unipotently connected}, if the
  closed subgroup $\up(G) = G \cap \Rad_\textup{u}(\mathbf{A}_G)$ of
  $G$ is connected.
\end{dfn}

Since $ \mathbf{A}_G$ has a strong unipotent radical, $\up(G)$ is in
fact the maximal nilpotent normal subgroup (the discrete nilradical)
of~$G$.  Therefore, equivalently, $G$ is unipotently connected if and
only if $\up(G) = \Nil(G)$.  This shows that the class of unipotently
connected groups coincides with the class of groups (A) introduced by
Starkov; see~\cite[Proposition~1.12]{St94}.  Groups of type (A) are
defined in terms the eigenvalues of the adjoint representation.

\smallskip

The following lemma shows that the group theoretic structure of
lattices in unipotently connected groups is slightly restricted.

\begin{lem} \label{lem:fitt_quo_tor_free}
  Let $G$ be a simply connected, solvable Lie group which is
  unipotently connected.  Then for every lattice $\Gamma$ in $G$ the
  Fitting quotient $\Gamma/\Fitt(\Gamma)$ is torsion-free.
\end{lem}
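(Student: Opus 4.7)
My plan is to identify $\Gamma/\Fitt(\Gamma)$ with a discrete subgroup of the vector group $G/\Nil(G)$, which forces it to be free abelian.

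First, I would unpack the hypothesis of unipotent connectedness: by the discussion following the definition, $\up(G) = \Nil(G)$, so these two closed subgroups of $G$ coincide. Combining this with the identity $\Fitt(\Gamma) = \up(\Gamma) = \Gamma \cap U$ (where $U = \Rad_\textup{u}(\mathbf{A}_G)$) from the start of Section~\ref{sect:Fitting}, I would conclude that
\[
\Fitt(\Gamma) \;=\; \Gamma \cap \Nil(G).
\]

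Next, I would verify that the natural injection
\[
\Gamma/\Fitt(\Gamma) \;=\; \Gamma/(\Gamma\cap \Nil(G)) \;\hookrightarrow\; G/\Nil(G)
\]
lands in $G/\Nil(G)$ as a \emph{discrete} subgroup. By Mostow's theorem (or, equivalently, by Proposition~\ref{pro:fittislattice}), $\Fitt(\Gamma)$ is a lattice in $\Nil(G)$, so $\Nil(G)/\Fitt(\Gamma)$ is compact. The image of $\Nil(G)$ in the Hausdorff space $G/\Gamma$ is therefore compact, hence closed, which shows that $\Gamma\,\Nil(G)$ is closed in $G$. Thus $\Gamma\,\Nil(G)/\Nil(G)$ is a closed countable, and hence discrete, subgroup of $G/\Nil(G)$.

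Finally, I would identify $G/\Nil(G)$ as a vector group. Since $\mathbf{A}_G$ is solvable we have $[G,G]\subseteq [\mathbf{A}_G,\mathbf{A}_G]\subseteq U$, and unipotent connectedness gives $G\cap U=\Nil(G)$; therefore $[G,G]\subseteq \Nil(G)$ and $G/\Nil(G)$ is abelian. Being a quotient of the simply connected solvable Lie group $G$ by a closed connected normal subgroup, it is also simply connected and thus isomorphic to some $\R^n$. Every discrete subgroup of $\R^n$ is free abelian, and in particular torsion-free, so the plan concludes that $\Gamma/\Fitt(\Gamma)$ is torsion-free.

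The argument is essentially a chaining together of earlier structural facts; the only slightly delicate point is ensuring that $\Gamma/\Fitt(\Gamma)$ actually embeds discretely in $G/\Nil(G)$, which is where the lattice property of $\Fitt(\Gamma)$ in $\Nil(G)$ (made available by unipotent connectedness together with Proposition~\ref{pro:fittislattice}) is used in an essential way. Without unipotent connectedness, $\Fitt(\Gamma)$ would only be a lattice in $\up(G) = \ac{\up(G)}\cap G$, and the quotient $\Gamma/\Fitt(\Gamma)$ could acquire torsion from the component group $\up(G)/\Nil(G)$, as illustrated by Example~\ref{exm:nicht_u-zushg}.
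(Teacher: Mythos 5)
Your argument is correct and takes essentially the same route as the paper: unipotent connectedness gives $\up(G)=\Nil(G)$, hence $\Fitt(\Gamma)=\up(\Gamma)=\Gamma\cap\Nil(G)$, and $\Gamma/\Fitt(\Gamma)$ embeds into the vector group $G/\Nil(G)$, which is torsion-free. The entire discreteness discussion is superfluous for this conclusion --- every subgroup of $\R^n$, discrete or not, is torsion-free, so the embedding alone finishes the proof, which is exactly how the paper argues.
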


\begin{proof} 
  In fact, $\Fitt(\Gamma) = \Gamma \cap \up(G)$, see the beginning of
  Section~\ref{sect:Fitting}.  Since $G$ is unipotently connected,
  $\Fitt(\Gamma) = \Gamma \cap \Nil(G)$, and therefore
  $\Gamma/\Fitt(\Gamma)$ embeds into the abelian vector group $V=
  G/\Nil(G)$.
\end{proof}

This provides a genuine restriction.  For instance, using the notation
of Example~\ref{exm:E2+}, the lattice $\Gamma = (\Z + \Z
i).X(\Z[\tfrac{1}{2}])$ of the simply connected group $G = V . X(\R)
\cong \widetilde{E(2)^+}$ has Fitting quotient $\Gamma/\Fitt(\Gamma)
\cong \Z / 2\Z$.  We also remark that as a side product of the
construction given in Example~\ref{exm:nicht_u-zushg} we obtain the
following.  In the notation used there, the Zariski-dense lattice
$\Delta = \eta(\Lambda) X(\Z) Y(2\Z)$ of the Lie group $G$ is not
unipotently connected, despite $\Delta/\Fitt(\Delta)$ being
torsion-free.  Such examples can also be constructed in an easier
fashion, starting from the example of Auslander used in
Example~\ref{exm:factorial}.

\begin{pro} \label{pro:exists_unipotently_connected} Let $\Gamma$ be a
  lattice in a simply connected, solvable Lie group~$G$.  Then
  $\Gamma$ has a finite index subgroup which embeds as a Zariski-dense
  lattice into a simply connected, solvable Lie group $H$ which, in
  addition, is unipotently connected.
\end{pro}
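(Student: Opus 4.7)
The plan is to pass to a finite index subgroup $\Delta \le \Gamma$ and construct $H$ as a tight Lie subgroup of the real algebraic hull, obtained by stripping away the compact part of a maximal torus. First, by the Auslander--Witte theorem (see~\cite{Au73,Wi95}), after replacing $\Gamma$ by a finite index subgroup and $G$ by an appropriate ambient simply connected, solvable Lie group, we may assume that $\Gamma$ is Zariski-dense in~$G$, in which case $A_\Gamma = A_G$ by Corollary~\ref{cor:AGamisAG}. Write $A := A_G = U \rtimes T$ with $T = T_{\textup{s}} \times T_{\textup{c}}$ the decomposition of a maximal $\R$-defined torus into split and compact parts. By Lemma~\ref{lem:desG}, $G = G_{\sigma_0}$ for a Lie homomorphism $\sigma_0 = \sigma_{0,\textup{s}} \times \sigma_{0,\textup{c}} \colon U \to T_{\textup{s}} \times T_{\textup{c}}$ satisfying $[U,T] \subseteq \ker \sigma_0$.

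By Lemma~\ref{lem:splitimage}(1), $\pi_{\textup{c}}(\Gamma)$ is discrete in the compact torus~$T_{\textup{c}}$ and therefore finite. Set $\Delta := \ker(\pi_{\textup{c}}\vert_\Gamma)$, a finite index normal subgroup of~$\Gamma$, and note that $\sigma_{0,\textup{c}}(u_\delta) = \pi_{\textup{c}}(\delta) = 1$ for every $\delta \in \Delta$. Define $\sigma := \sigma_{0,\textup{s}}$ and $H := G_\sigma$, which by Proposition~\ref{pro:tight_Lie_subs} is a simply connected, solvable Lie subgroup of~$A$. The identity $\delta = u_\delta \sigma_0(u_\delta) = u_\delta \sigma(u_\delta)$ shows $\Delta \subseteq H$.

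To see that $\Delta$ is a Zariski-dense lattice in~$H$: the diffeomorphism $U \to H$, $u \mapsto u\sigma(u)$, transports the discrete set $\{u_\delta \mid \delta \in \Delta\}$ to~$\Delta$, so $\Delta$ is discrete in~$H$; combined with $\rank \Delta = \dim U = \dim H$, Lemma~\ref{lem:criterion_lattice} implies that $\Delta$ is a lattice in~$H$. The image $\sigma(U) = \sigma_{0,\textup{s}}(U)$ is the projection to $T_{\textup{s}}$ of the Zariski-dense subset $\sigma_0(U) \subseteq T$, hence Zariski-dense in~$T_{\textup{s}}$; by Lemma~\ref{lem:Gtau_properties}(3) the algebraic hull of $H$ is $U \rtimes T_{\textup{s}}$, and a parallel argument identifies this with the algebraic hull of~$\Delta$. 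Zariski-denseness of $\Delta$ in~$H$ then follows from Proposition~\ref{pro:Zdense}.

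The main obstacle is to verify that $H$ is unipotently connected, which by Lemma~\ref{lem:Gtau_properties}(5) reduces to the connectedness of $\ker \sigma_{0,\textup{s}}$; this is precisely what the elimination of the compact torus buys. Because $\sigma_{0,\textup{s}}$ is a continuous homomorphism from the connected nilpotent group~$U$ into the simply connected abelian group~$T_{\textup{s}}^\circ \cong \R^r$, it factors through the vector group $U/[U,U]$ as a linear map, whose kernel is a vector subspace~$K$. Since $[U,U]$ is a connected subgroup of~$U$, the preimage of~$K$ in~$U$, namely $\ker \sigma_{0,\textup{s}}$, is connected, completing the construction.
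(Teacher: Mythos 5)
Your construction breaks down at the very first substantive step. You claim that Lemma~\ref{lem:splitimage}(1) gives that $\pi_{\textup{c}}(\Gamma)$ is discrete in $T_{\textup{c}}$, hence finite. The lemma asserts only that $\pi(\Gamma)$ is discrete in $T$ and that $\pi_{\textup{s}}(\Gamma)$ is discrete in $T_{\textup{s}}$; a discrete subgroup of $T_{\textup{s}} \times T_{\textup{c}}$ can perfectly well have dense image in the compact factor (compare $\Z(1,\theta) \subseteq \R \times \R/\Z$ with $\theta$ irrational), and this is in fact the generic situation here. For instance, for the lattice $\Gamma_k = \Lambda_k \rtimes X_k(\Z)$ of Example~\ref{exm:factorial}, the image $\pi_{\textup{c}}(\Gamma_k)$ is the dense subgroup $\{R(n(\theta+k)) : n \in \Z\}$ of $\textup{SO}_2(\R)$. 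Consequently $\Delta = \ker(\pi_{\textup{c}}\vert_\Gamma)$ has \emph{infinite} index in $\Gamma$ in general, and the whole argument collapses. There is also a quick internal sanity check that rules out any fix along these lines: after your reduction, $\Gamma$ is Zariski-dense in the \emph{connected} group $\mathbf{A}$, so every finite-index subgroup $\Delta$ is again Zariski-dense in $\mathbf{A}$ and therefore cannot be contained in the proper Zariski-closed subgroup $U \rtimes T_{\textup{s}} \supseteq H$ unless $T_{\textup{c}}$ is trivial. Equivalently: your $H$ has algebraic hull $\mathbf{U}\rtimes\mathbf{T}_{\textup{s}}$ with $\R$-split torus, so by Corollary~\ref{cor:charac_real_type} it is of real type; if the construction worked, every polycyclic group would virtually be a lattice in a group of real type, contradicting Theorem~\ref{thm:MainE} together with the examples of Section~\ref{sect:Examples}.

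The correct route (the paper defers to Auslander's construction, \cite[Chap.~III \S 7]{Au73}, and to \cite{GrSe94,Ba08}) does not discard the compact torus. One passes to a finite-index subgroup $\Delta$ with $\Delta/\Fitt(\Delta)$ torsion-free and $\Delta \le (A_\Delta)_\circ$, and then modifies $\sigma$ only so as to kill the finitely generated discrete quotient $\up(G)/\Nil(G)$ — i.e., one chooses a new tight subgroup $H = G_{\sigma'}$ with $\sigma'$ agreeing with $\sigma$ up to a correction that makes $\ker\sigma'$ connected, while still mapping $U$ onto a Zariski-dense subgroup of the full torus $\mathbf{T}$. The compact part of $\mathbf{T}$ must survive into the hull of $H$; only the disconnectedness of $\up(G)$ is removed.
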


\begin{proof}
  In fact, $\Gamma$ has a finite index subgroup $\Delta$ such that
  $\Delta / \Fitt(\Delta)$ is torsion-free and $\Delta \leq
  (A_\Delta)_{\circ}$ is contained in the identity component of its
  real algebraic hull.  The claim follows by a simple construction
  which is exhibited in \cite[Chap.~III \S 7, p.~250--251]{Au73}.  See
  also \cite[Proposition~4.1]{GrSe94}, or \cite[Chap.~1,
  Proposition~1.15]{Ba08} for special cases.
\end{proof}

From Proposition~\ref{pro:fittislattice} we derive the following
corollary.

\begin{cor} \label{cor:nil_and_fitt} Let $G$ be a simply connected,
  solvable Lie group, and let $\Gamma$ be a lattice in~$G$. Then the
  following conditions are equivalent:
  \begin{enumerate}
  \item $G$ is unipotently connected,
  \item $\rank \Fitt(\Gamma) = \dim \Nil(G)$,
  \item $\ac{\Fitt(\Gamma)} = \Nil(G)$, where the Zariski-closure is
    taken in $A_\Gamma$,
  \item $\Fitt(\Gamma)$ is a lattice in~$\Nil(G)$.
  \end{enumerate}
\end{cor}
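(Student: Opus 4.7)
The key input is Proposition~\ref{pro:fittislattice}: $\Fitt(\Gamma)$ is a lattice in the closed subgroup $\up(G)$ of $G$, whose identity component is $\Nil(G)$, and $\ac{\Fitt(\Gamma)} = \ac{\up(G)}$ in~$A_\Gamma$. By definition, $G$ is unipotently connected precisely when $\up(G) = \Nil(G)$, so the task reduces to showing that each of (2), (3), (4) is equivalent to this single equality. All four implications will then be packaged in a short cycle.

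I would begin with two preparatory observations. First, $\Nil(G)$ is normal in $A_G$, because it is characteristic in $G$ and $G$ is normal in~$A_G$, and $\Nil(G)$ is Zariski-closed in $A_G$ as a connected Lie subgroup of the unipotent algebraic group~$\Rad_\textup{u}(A_G)$. Second, the quotient $\up(G)/\Nil(G)$ is torsion-free: since $\up(G) \subseteq \Rad_\textup{u}(A_G)$ and the exponential of the simply connected nilpotent Lie group $\Rad_\textup{u}(A_G)$ is a diffeomorphism, any $g \in \up(G)$ with $g^n \in \Nil(G) = \exp(\mathfrak{n})$ satisfies $n \log g \in \mathfrak{n}$ and hence $g \in \Nil(G)$. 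Finally, since $\Nil(G)$ is the identity component of the Lie group $\up(G)$, the fact that $\Fitt(\Gamma)$ is a lattice in $\up(G)$ forces $\Fitt(\Gamma) \cap \Nil(G)$ to be a cocompact lattice in $\Nil(G)$ and $\Fitt(\Gamma)\Nil(G)$ to have finite index in $\up(G)$; this yields the rank formula
\[
  \rank \Fitt(\Gamma) \;=\; \dim \Nil(G) + \rank\bigl(\up(G)/\Nil(G)\bigr).
\]

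With these tools in place, the equivalences fall out quickly. For (1)~$\Leftrightarrow$~(2), the rank formula shows that $\rank \Fitt(\Gamma) = \dim \Nil(G)$ if and only if $\up(G)/\Nil(G)$ has rank zero, and by torsion-freeness this is equivalent to $\up(G) = \Nil(G)$. For (1)~$\Leftrightarrow$~(3): if $\up(G) = \Nil(G)$, then Proposition~\ref{pro:fittislattice}(2) combined with $\Nil(G)$ being Zariski-closed gives $\ac{\Fitt(\Gamma)} = \ac{\Nil(G)} = \Nil(G)$; conversely, $\ac{\Fitt(\Gamma)} = \Nil(G)$ together with Proposition~\ref{pro:fittislattice}(2) gives $\ac{\up(G)} = \Nil(G)$, forcing $\up(G) \subseteq \Nil(G) \subseteq \up(G)$. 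For (1)~$\Leftrightarrow$~(4): if $\up(G) = \Nil(G)$ then Proposition~\ref{pro:fittislattice}(3) is exactly~(4); conversely, (4) gives $\Fitt(\Gamma) \subseteq \Nil(G)$ cocompactly, and combining with cocompactness of $\Fitt(\Gamma)$ in $\up(G)$ makes $\up(G)/\Nil(G)$ compact, hence finite, hence (by torsion-freeness) trivial.

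The main point requiring care is the torsion-freeness of $\up(G)/\Nil(G)$, because this is what converts ``rank zero'' or ``finite'' into ``trivial'' in both (2)~$\Rightarrow$~(1) and (4)~$\Rightarrow$~(1). Establishing this reduces to the standard fact that a simply connected nilpotent Lie group has no nontrivial torsion as an abstract group; the only structural ingredient it relies on is that $\Nil(G)$ is a normal (and Zariski-closed) subgroup of the ambient unipotent radical, which is the observation I would record first.
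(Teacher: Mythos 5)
Your argument is correct and follows essentially the same route as the paper: both rest on Proposition~\ref{pro:fittislattice} together with the rank formula $\rank \Fitt(\Gamma) = \dim \Nil(G) + \rank\bigl(\up(G)/\Nil(G)\bigr)$ and the torsion-freeness of $\up(G)/\Nil(G)$. The only cosmetic differences are that the paper arranges the equivalences as $(1)\Leftrightarrow(2)$ followed by the cycle $(1)\Rightarrow(3)\Rightarrow(4)\Rightarrow(2)$ rather than proving each pair $(1)\Leftrightarrow(k)$ separately, and it obtains torsion-freeness by noting that $\up(G)/\up(G)_{\circ}$ is a discrete subgroup of the vector group $G/\Nil(G)$ rather than via the exponential map of~$\Rad_\textup{u}(A_G)$.
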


\begin{proof}
  By definition, the group $G$ is unipotently connected if and only if
  $\up(G) = \Nil(G) (= \up(G)_{\circ})$. Observe that $\up(G)/
  \up(G)_{\circ}$ is a discrete subgroup of the vector group
  $G/\Nil(G)$, since $\up(G)$ is a closed subgroup of~$G$. Therefore
  $\up(G)/ \up(G)_{\circ}$ is finitely generated (and abelian) of rank
  $\dim \ac{\up(G)} - \dim \up(G)_{\circ}$.  By
  Proposition~\ref{pro:fittislattice}, the group $\Fitt(\Gamma)$ is a
  lattice in $\up(G)$, which implies that
  \begin{align*}
    \rank \Fitt(\Gamma)& = \dim \up(G)_{\circ} + \rank (\up(G)/
    \up(G)_{\circ}) \\
    & = \dim \Nil(G) + \rank (\up(G)/ \up(G)_{\circ}).
  \end{align*}
  Since $\rank (\up(G)/ \up(G)_{\circ}) = 0$ if and only if $\up(G) =
  \up(G)_{\circ}$, this shows that (1) and (2) are equivalent.
  
  Writing $F = \ac{\Fitt(\Gamma)}$, part (2) of
  Proposition~\ref{pro:fittislattice} states that $F = \ac{\up(G)}$.
  Therefore, if $G$ is unipotently connected, $F = \up(G) = \Nil(G)$.
  Hence, (1) implies~(3).  Proposition~\ref{pro:fittislattice} shows
  that (3) implies~(4).  Finally, Lemma~\ref{lem:criterion_lattice}
  yields that (4) implies~(2).
  \end{proof}


\section{Description of the space $\mathcal{G}(\Gamma)$}
\label{sect:GGamma} Throughout this section, let $\Gamma$ be a
torsion-free polycyclic group and suppose that its algebraic hull
$\mathbf{A} = \mathbf{A}_\Gamma$ is connected.  Write $\mathbf{A} =
\mathbf{U} \rtimes \mathbf{T}$, where $\mathbf{U} =
\Rad_\textup{u}(\mathbf{A})$ and $\mathbf{T}$ is a maximal
$\R$-defined torus.  Put $A = \mathbf{A}_\R$, $U = \mathbf{U}_\R$ and
$T = \mathbf{T}_\R$.  Then $A = U \rtimes T$.

Recalling from Section~\ref{sec:tight_subgps}, in particular
Definition~\ref{def:tight_subgroup}, the notion of a tight subgroup,
we investigate the set
\begin{equation} \label{eq:GGam} 
  \mathcal{G}(\Gamma) = \{ G \mid
  \text{$G$ a tight Lie subgroup of $A$ with $\Gamma \subseteq G$} \}.
\end{equation}
Our interest in $\mathcal{G}(\Gamma)$ is founded on
Proposition~\ref{pro:real_structureset}, which will show that
$\mathcal{G}(\Gamma)$ captures all possible embeddings of $\Gamma$ as
a Zariski-dense lattice into simply connected, solvable Lie groups.
The following discussion, in particular \eqref{equ:GGam_descrip},
already provides an indication of the relevance
of~$\mathcal{G}(\Gamma)$.

We remark that any connected Lie subgroup $G$ of $A$ containing
$\Gamma$ is Zariski-dense in $A$, because $\Gamma$ is Zariski-dense in
$A$, and it is normal in $A$ by Lemma~\ref{lem:Gnormal}. Since $\dim U
= \rank \Gamma$, this shows that
\begin{multline*}
  \mathcal{G}(\Gamma) = \{ G \mid \text{$G$ a connected, closed Lie
    subgroup of $A$} \\
  \text{such that $\Gamma \subseteq G$ and $\dim G = \rank \Gamma$}
  \}.
\end{multline*}
Throughout this section we suppose that $\mathcal{G}(\Gamma)$ is not
empty.

\begin{lem} \label{lem:Gamma_is_lattice} Let $G \in
  \mathcal{G}(\Gamma)$.  Then $\mathbf{A}$ is an algebraic hull of
  $G$, and $\Gamma$ is a Zariski-dense lattice in~$G$.
\end{lem}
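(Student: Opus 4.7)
The plan is to verify the two assertions by direct appeal to the definitions, using the tightness of $G$ together with the defining properties of the algebraic hull $\mathbf{A} = \mathbf{A}_\Gamma$. First I would record that $G$ is a simply connected, solvable Lie group: simple connectivity is furnished by Corollary~\ref{cor:tightissc}, and solvability is automatic from $G \subseteq A$ and the solvability of $\mathbf{A}$.

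For the first assertion, I would check the three defining properties (i)'--(iii)' of an algebraic hull of $G$ for the inclusion map $G \hookrightarrow \mathbf{A}$. Zariski-density of $G$ in $\mathbf{A}$ is immediate from $\Gamma \subseteq G$ and the Zariski-density of $\Gamma$ in $\mathbf{A}$ (which is part of the algebraic hull hypothesis for $\Gamma$). The strong unipotent radical property of $\mathbf{A}$ is inherited directly from the hypothesis that $\mathbf{A} = \mathbf{A}_\Gamma$. Finally, the dimension identity $\dim \Rad_\textup{u}(\mathbf{A}) = \dim G$ follows from the tightness condition $\dim G = \dim U$ together with $\dim U = \dim \Rad_\textup{u}(\mathbf{A})$.

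For the second assertion, I would apply Lemma~\ref{lem:criterion_lattice} to show that $\Gamma$ is a lattice in $G$. The inputs needed are discreteness of $\Gamma$ in $G$ and the rank--dimension equality $\rank \Gamma = \dim G$. Discreteness reduces to discreteness of $\Gamma$ in the ambient real algebraic group $A$, which is a standard feature of the algebraic hull construction for polycyclic groups (in the presence of a $\Q$-structure, this is built in via property (iv) of Section~\ref{sec:preliminaries}). The rank--dimension equality holds because $\dim G = \dim U = \rank \Gamma$, where the last equality is part of the definition of $\mathbf{A}_\Gamma$. Once $\Gamma$ is known to be a lattice in $G$, Zariski-density of $\Gamma$ in $G$ follows from Proposition~\ref{pro:Zdense} combined with the first assertion: since $\mathbf{A}_G \cong \mathbf{A}$ and $\Gamma$ is Zariski-dense in $\mathbf{A}$, the lattice $\Gamma$ is Zariski-dense in~$G$.

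The argument is essentially a catalogue of verifications, and I do not foresee any serious obstacle. The most delicate step is ensuring that $\Gamma$ is discrete in $A$, but this is standard for algebraic hulls of polycyclic groups; everything else is routine.
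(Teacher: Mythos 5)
Your proposal is correct and follows essentially the same route as the paper: deduce that $\Gamma$ is a lattice in $G$ from Lemma~\ref{lem:criterion_lattice} together with $\rank\Gamma = \dim U = \dim G$, identify $\mathbf{A}$ as an algebraic hull of $G$ via the Zariski-density of $G$ inherited from $\Gamma$, and conclude Zariski-density of $\Gamma$ in $G$ from Proposition~\ref{pro:Zdense}. The paper's proof is merely terser, taking the discreteness of $\Gamma$ in $G$ and the hull properties for granted where you spell them out.
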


\begin{proof}
  Clearly, $\Gamma$ is a discrete subgroup of~$G$.  From
  Lemma~\ref{lem:criterion_lattice} and $\rank \Gamma = \dim G$ we
  deduce that $\Gamma$ is a lattice in~$G$.  Since $\Gamma$ is
  Zariski-dense in $\mathbf{A}$, so is~$G$.  Hence $\mathbf{A} =
  \mathbf{A}_G$ and $\Gamma$ is Zariski-dense in $G$ by
  Proposition~\ref{pro:Zdense}.
\end{proof}

Lemma~\ref{lem:Gamma_is_lattice} and Corollary~\ref{cor:tightissc}
show that
\begin{multline}
\label{equ:GGam_descrip}
\mathcal{G}(\Gamma) = \{ G \mid \text{$G$ a simply connected,
  solvable Lie subgroup} \\
  \text{of $A$ such that $\Gamma$ is a Zariski-dense lattice in $G$}
  \}.
\end{multline}

\begin{pro} \label{pro:description_G_Gamma}
  Let $H \in \mathcal{G}(\Gamma)$.  Then there is a bijection
  $$
  \mathcal{G}(\Gamma) \rightarrow
  \Hom_{\textup{Lie}}(H/\Gamma[H,H],T).
  $$
  In particular, $\mathcal{G}(\Gamma)$ is either equal to $\{ H \}$ or
  it is countably infinite.
\end{pro}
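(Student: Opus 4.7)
The plan is to combine Proposition \ref{pro:tight_Lie_subs_relative}, which parametrises tight Lie subgroups of $A$ relative to $H$, with Lemma \ref{lem:commutators}, which controls commutator subgroups of Zariski-dense lattices, in order to realise $\mathcal{G}(\Gamma)$ as the discrete space of Lie homomorphisms from a compact torus into~$T$.

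First, I would apply Proposition \ref{pro:tight_Lie_subs_relative} with the distinguished tight subgroup~$H$, which yields a bijection between tight Lie subgroups $G$ of $A$ and Lie homomorphisms $\sigma_{G \mid H}$ in $\Hom_{\textup{Lie}}(H/[H,H], T)$, under which $G \cap H = \ker \sigma_{G \mid H}$. Since $\Gamma \subseteq H$, the additional condition $\Gamma \subseteq G$ reduces to $\Gamma \subseteq \ker \sigma_{G \mid H}$, so $\sigma_{G \mid H}$ must factor through $H/\Gamma[H,H]$. To interpret the latter as a Lie quotient I need $\Gamma[H,H]$ to be closed in~$H$: Lemma \ref{lem:Gamma_is_lattice} guarantees that $\Gamma$ is a Zariski-dense lattice in~$H$, whereupon Lemma \ref{lem:commutators}(3) applies.

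Next I would identify $H/\Gamma[H,H]$ as a compact torus. The quotient $H/[H,H]$ is a simply connected, abelian Lie group, hence a real vector space of dimension $n = \dim H - \dim [H,H]$. By Lemma \ref{lem:commutators}(2), the subgroup $\Gamma \cap [H,H]$ is a lattice in~$[H,H]$, and Lemma \ref{lem:criterion_lattice} yields $\rank \Gamma = \dim H$; therefore $\Gamma/(\Gamma \cap [H,H])$ has rank~$n$. Since $\Gamma[H,H]$ is closed in~$H$, the image of $\Gamma$ in $H/[H,H]$ is a discrete subgroup of rank $n$ in a vector space of dimension~$n$, hence a lattice. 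Consequently $H/\Gamma[H,H]$ is isomorphic to a compact torus of dimension~$n$.

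Finally, recall that $T = T_{\textup{s}} \times T_{\textup{c}}$ with $T_{\textup{s}}$ simply connected (hence containing no nontrivial compact subgroup) and $T_{\textup{c}}$ compact. Any continuous homomorphism from a compact connected group into $T$ therefore lands in the identity component $(T_{\textup{c}})_{\circ}$, which is a connected compact abelian Lie group isomorphic to some $(\mathbb{R}/\Z)^b$. Hence $\Hom_{\textup{Lie}}(H/\Gamma[H,H], T)$ is identified with the space of integer matrices $\textup{Mat}_{b \times n}(\Z)$, which is a singleton precisely when $n = 0$ or $b = 0$, and is countably infinite otherwise. The main technical step is the compact-torus identification in the previous paragraph: without full-rank discreteness of $\Gamma[H,H]/[H,H]$ inside $H/[H,H]$, the relevant space of Lie homomorphisms could a priori be a positive-dimensional Lie group and thus uncountable, and this is precisely where Zariski-denseness of $\Gamma$ in $H$ enters the argument through Lemma \ref{lem:commutators}.
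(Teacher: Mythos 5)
Your proposal is correct and follows essentially the same route as the paper: both rest on Proposition~\ref{pro:tight_Lie_subs_relative} to reduce $\mathcal{G}(\Gamma)$ to the homomorphisms vanishing on $\Gamma[H,H]$, invoke Lemma~\ref{lem:commutators} for closedness of $\Gamma[H,H]$, identify $H/\Gamma[H,H]$ as a compact torus, and conclude via $\Hom(\Z^{n},\Z^{b})$. The only (harmless) variation is that you obtain compactness of $H/\Gamma[H,H]$ from the full-rank lattice $\Gamma[H,H]/[H,H]$ in the vector group $H/[H,H]$, whereas the paper gets it directly as a continuous image of the compact space $H/\Gamma$.
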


\begin{proof}
  Clearly, we may assume that $H$ is not trivial.
  Proposition~\ref{pro:tight_Lie_subs_relative} implies that there is
  a bijection from $\mathcal{G}(\Gamma)$ onto
  $\Hom_{\textup{Lie}}(H/\Gamma[H,H],T)$, because $\Gamma \subseteq G
  \cap H \subseteq \ker \sigma_{G \mid H}$ for every $G \in
  \mathcal{G}(\Gamma)$.  Now $H/\Gamma[H,H]$ is a compact torus. In
  fact, $H/\Gamma[H,H]$ is Hausdorff, since Zariski-denseness of
  $\Gamma$ implies that $\Gamma [H,H]$ is closed in $H$; see
  Lemma~\ref{lem:commutators}. Since $H/\Gamma$ is compact, so
  is~$H/\Gamma[H,H]$.  Moreover, the latter quotient has dimension
  $d$, where $d = \dim H - \dim [H,H] > 0$. Let $T_\text{c}$ be the
  maximal compact subtorus of $T$, and $t= \dim T_\text{c}$. Then,
  since the group $\Hom_{\textup{Lie}}(H/\Gamma[H,H],T)$ is isomorphic
  to $\Hom(\Z^d,\Z^t)$ it is either a point ($t=0$) or countably
  infinite.
\end{proof}

\begin{cor}\label{cor:charac_real_type}
  Let $H \in \mathcal{G}(\Gamma)$.  Then the Lie group $H$ is of real
  type if and only if $\mathcal{G}(\Gamma) = \{ H \}$.
\end{cor}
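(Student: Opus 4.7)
My plan is to reduce the corollary to a torus-theoretic condition via the proof of Proposition~\ref{pro:description_G_Gamma}, and then to verify a real-type characterisation by a weight analysis in the algebraic hull.

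First, I would revisit the proof of Proposition~\ref{pro:description_G_Gamma} to pin down exactly when $\mathcal{G}(\Gamma) = \{H\}$.  Since $H \in \mathcal{G}(\Gamma)$ is a non-trivial connected solvable Lie group, $[H,H] \subsetneq H$ and the compact abelian Lie group $H/\Gamma[H,H]$ has positive dimension $d$.  A Lie homomorphism from such a compact $d$-torus into $T = T_\text{s} \times T_\text{c}$ must factor through $T_\text{c}$, so $\Hom_{\textup{Lie}}(H/\Gamma[H,H],T) \cong \Hom(\Z^d,\Z^t)$ with $t = \dim T_\text{c}$.  This is a singleton precisely when $t = 0$, i.e.\ precisely when the maximal torus $T$ of $\mathbf{A} = \mathbf{A}_H$ is $\R$-split.

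It remains to prove that $H$ is of real type if and only if $T$ is $\R$-split.  The forward direction is straightforward: if $T = T_\text{s}$, then every weight $\chi$ of $T$ on $\mathfrak{a} = \textup{Lie}(\mathbf{A})$ is real-valued on $T$.  For $h = u \, \sigma(u) \in H$ (using Lemma~\ref{lem:desG}), the eigenvalues of $\Ad_h$ on $\mathfrak{a}$ are the values $\chi(\sigma(u))$ as $\chi$ ranges over the weights of $T$ on $\mathfrak{a}$, and they are all real; restricting to the $\Ad_h$-invariant subspace $\mathfrak{h} = \textup{Lie}(H) \subseteq \mathfrak{a}$ preserves this, so $H$ is of real type.

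For the converse, suppose $T_\text{c}$ is non-trivial.  Since $\mathbf{A}$ has a strong unipotent radical, $\Cen_A(U) \subseteq U$, while $T_\text{c} \cap U = 1$; hence $T_\text{c}$ acts non-trivially on $U$, and some weight $\chi$ of $T$ on $\mathfrak{u}$ is non-trivial on $T_\text{c}$.  Such $\chi$ differs from its Galois conjugate $\bar\chi$, so $\{t \in T : \chi(t) \in \R\}$ is a proper $\R$-defined algebraic subgroup of $T$ with non-empty Zariski-open complement.  By Lemma~\ref{lem:Gtau_properties}(3), $\sigma(U) = \pi(H)$ is Zariski-dense in $T$, so I can find $h = u \, \sigma(u) \in H$ with $\chi(\sigma(u)) \notin \R$.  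Because $H$ is normal in $A = H \rtimes T$, the $\Ad_h$-action on the quotient $\mathfrak{a}/\mathfrak{h} \cong \mathfrak{t}$ is trivial; hence any non-real eigenvalue of $\Ad_h$ on $\mathfrak{a}$ must already be an eigenvalue on $\mathfrak{h}$, and $H$ fails to be of real type.

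The main obstacle will be the clean identification of the eigenvalues of $\Ad_h$ with the character values $\chi(\sigma(u))$: this rests on the fact that, in the solvable real algebraic group $A = U \rtimes T$, the semisimple part of $h = u \, \sigma(u)$ in its Jordan decomposition is $A$-conjugate to $\sigma(u) \in T$, so $\Ad_h$ and $\Ad_{\sigma(u)}$ share the same eigenvalues on $\mathfrak{a}$.  I would state this carefully before invoking it, as it underpins both directions of the equivalence.
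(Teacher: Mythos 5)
Your proof is correct, and its first half is exactly the paper's: invoke Proposition~\ref{pro:description_G_Gamma} to identify $\mathcal{G}(\Gamma)=\{H\}$ with $\Hom(\Z^d,\Z^t)$ being a singleton, note $d>0$ for non-trivial $H$, and conclude that the condition is $t=\dim T_{\text{c}}=0$, i.e.\ that $\mathbf{T}$ is $\R$-split. Where you diverge is the remaining equivalence ``$H$ of real type $\Leftrightarrow$ $\mathbf{T}$ is $\R$-split'': the paper disposes of this in two sentences by \emph{observing} that $H$ is of real type iff the Zariski closure of its adjoint image --- which coincides with $\Ad(\mathbf{A})$, as in the proof of Proposition~\ref{pro:Zdense} --- contains an $\R$-split maximal torus, and leaves that characterisation unproved. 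You instead establish the equivalence from scratch: the Jordan-decomposition argument identifying the eigenvalues of $\Ad_h$ with the character values $\chi(\sigma(u))$ (the semisimple part of $h=u\,\sigma(u)$ being $\mathbf{U}$-conjugate to $\sigma(u)$, since the projection $\mathbf{A}\to\mathbf{A}/\mathbf{U}\cong\mathbf{T}$ kills unipotent parts) is sound, as is the converse, which correctly combines the strong unipotent radical (to get a weight non-trivial on the compact part, hence $\chi\neq\bar\chi$), the Zariski-density of $\sigma(U)$ in $\mathbf{T}$ from Lemma~\ref{lem:Gtau_properties}(3), and the triviality of $\Ad_h$ on $\mathfrak{a}/\mathfrak{h}$ to force the non-real eigenvalue down to $\mathfrak{h}$. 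Your route is longer but self-contained and makes explicit exactly which structural inputs (strong unipotent radical, Zariski-density of $H$ in $\mathbf{A}$) the real-type characterisation rests on; the only point to phrase carefully is that ``$T_{\text{c}}$ non-trivial'' must mean $\dim T_{\text{c}}>0$, i.e.\ a positive-dimensional compact subtorus, since that is the actual negation of $\R$-splitness and is what your weight argument needs.
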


\begin{proof}
  According to Proposition~\ref{pro:description_G_Gamma}, we have
  $\mathcal{G}(\Gamma) = \{ H \}$ if and only if the compact part
  $T_\text{c}$ of the connected torus $T$ is trivial.  This is the
  case if and only if the algebraic group $\mathbf{T}$ is $\R$-split.
  Observe that $H$ is of real type if and only if there is a maximal
  torus in the Zariski-closure of its adjoint image which is
  $\R$-split.  Since this Zariski-closure coincides with the adjoint
  image of $\mathbf{A}$ (compare also the proof of
  Proposition~\ref{pro:Zdense}), this is the case if and only if
  $\mathbf{T}$ is $\R$-split.
\end{proof}

We fix some further notation.  Let $\mathbf{F} = \ac{\Fitt(\Gamma)}$
denote the Zariski-closure of $\Fitt(\Gamma)$ in $\mathbf{A} =
\mathbf{A}_\Gamma$, and let $F = \mathbf{F}_\R = A \cap \mathbf{F}$.
Since $\mathbf{F}$ is a nilpotent normal subgroup of $\mathbf{A}$ and
since $\mathbf{A}$ has a strong unipotent radical, we have $\mathbf{F}
\subseteq \mathbf{U}$ and accordingly $F \subseteq U$.

\begin{pro} \label{pro:nil=nil}
  Let $G,H \in \mathcal{G}(\Gamma)$, and suppose that $G$ is
  unipotently connected.  Then $\Nil(H) \subseteq \Nil(G)$.
\end{pro}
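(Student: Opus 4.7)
The strategy is to invoke the structural results established in Sections \ref{sect:Fitting} and \ref{sec:uconnected} and to exploit the fact that, by Lemma \ref{lem:Gamma_is_lattice}, the algebraic hull $\mathbf{A} = \mathbf{A}_\Gamma$ simultaneously serves as the algebraic hull of both $G$ and $H$, so that all Zariski-closures take place in a single ambient group.

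First, I would pin down $\Nil(G)$ inside $\mathbf{A}$. Since $G \in \mathcal{G}(\Gamma)$, by Lemma \ref{lem:Gamma_is_lattice} the group $\Gamma$ is a Zariski-dense lattice in $G$ and $\mathbf{A}$ is an algebraic hull of $G$. The hypothesis that $G$ is unipotently connected, combined with the equivalence (1)$\Leftrightarrow$(3) of Corollary \ref{cor:nil_and_fitt} applied to the pair $(\Gamma,G)$, then yields
\[
\mathbf{F} \; = \; \ac{\Fitt(\Gamma)} \; = \; \Nil(G),
\]
where the Zariski-closure is taken in $\mathbf{A}$.

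Next I would bound $\Nil(H)$. Applying Proposition \ref{pro:fittislattice}, parts (2) and (3), to the pair $(\Gamma,H)$ gives $\ac{\up(H)} = \ac{\Fitt(\Gamma)} = \mathbf{F}$, and in particular
\[
\up(H) \; \subseteq \; \mathbf{F}.
\]
Recalling from the opening of Section \ref{sect:Fitting} that $\Nil(H)$ is the identity component of $\up(H)$, we immediately obtain $\Nil(H) \subseteq \up(H) \subseteq \mathbf{F} = \Nil(G)$, which is the desired inclusion.

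There is essentially no obstacle beyond verifying that the two applications of the preceding machinery take place in the same ambient algebraic group; this is exactly the content of Lemma \ref{lem:Gamma_is_lattice}, which legitimises the identifications $\mathbf{A}_\Gamma = \mathbf{A}_G = \mathbf{A}_H = \mathbf{A}$ used implicitly throughout. No further computation with the cocycle $\sigma \colon U \to T$ from Section \ref{sec:tight_subgps} is needed, since the unipotent-connectedness of $G$ already forces $\mathbf{F}$ to coincide with the connected Lie group $\Nil(G)$ on the nose.
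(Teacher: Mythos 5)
Your proof is correct and follows essentially the same route as the paper: the paper's one-line argument likewise combines Proposition~\ref{pro:fittislattice} and Corollary~\ref{cor:nil_and_fitt} to obtain the chain $\Nil(H) \subseteq \up(H) \subseteq F = \Nil(G)$. Your write-up merely makes explicit the identification of the algebraic hulls via Lemma~\ref{lem:Gamma_is_lattice} and the fact that $\Nil(H) = \up(H)_\circ$, both of which the paper leaves implicit.
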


\begin{proof}
  From Proposition~\ref{pro:fittislattice} and
  Corollary~\ref{cor:nil_and_fitt} we deduce that $\Nil(H) \subseteq
  \up(H) \subseteq F = \Nil(G)$.
\end{proof}

Next we adapt the description in
Proposition~\ref{pro:description_G_Gamma} to unipotently connected
groups.  For this purpose we consider the subset
$$
\mathcal{G}^\text{uc}(\Gamma) = \{ G \in \mathcal{G}(\Gamma) \mid
\text{$G$ unipotently connected} \}
$$
of all unipotently connected groups in~$\mathcal{G}(\Gamma)$. 

We assume in the following that $\mathcal{G}^\textup{uc}(\Gamma) \not =
\varnothing$.  This condition puts further restrictions on $\Gamma$,
but can always be met by passing to a finite index subgroup; see
Proposition~\ref{pro:exists_unipotently_connected}.

\begin{pro} \label{pro:description_u-connected_G_Gamma} Let $H \in
  \mathcal{G}^\textup{uc}(\Gamma)$, and set $N = \Nil(H)$.  Then the
  bijection $\mathcal{G}(\Gamma) \rightarrow
  \Hom_{\textup{Lie}}(H/\Gamma [H,H],T)$ established in
  Proposition~\ref{pro:description_G_Gamma} induces a bijection
  $$
  \mathcal{G}^\textup{uc}(\Gamma) \rightarrow
  \Hom_{\textup{Lie}}(H/\Gamma N,T).
  $$
  In particular, if  $H$ is not of real type then
  $\mathcal{G}^\textup{uc}(\Gamma)$ is countably infinite.
\end{pro}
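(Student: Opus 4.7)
The strategy is to identify inside the parametrisation $\mathcal{G}(\Gamma) \leftrightarrow \Hom_{\textup{Lie}}(H/\Gamma[H,H],T)$ from Proposition~\ref{pro:description_G_Gamma} exactly which $\sigma \in \Hom_{\textup{Lie}}(H/\Gamma[H,H],T)$ correspond to unipotently connected groups, and then to show that these are precisely the Lie homomorphisms vanishing on the image of $N$. Since $N \supseteq [H,H]$, any homomorphism factoring through $H/\Gamma N$ automatically factors through $H/\Gamma[H,H]$, so one only needs to characterise the vanishing of $\sigma_{G\mid H}$ on~$N$.

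First I would set $F = \ac{\Fitt(\Gamma)}$ inside $\mathbf{A}$. Because $H$ is unipotently connected, Corollary~\ref{cor:nil_and_fitt} gives $N = \Nil(H) = F$. For an arbitrary $G \in \mathcal{G}(\Gamma)$, Proposition~\ref{pro:fittislattice}(2) yields $\ac{\up(G)} = F$; thus $\up(G) \subseteq F = N$. Now, for a given $G \in \mathcal{G}(\Gamma)$, recall from Proposition~\ref{pro:tight_Lie_subs_relative} that $G \cap H = \ker \sigma_{G \mid H}$. So $N \subseteq \ker \sigma_{G\mid H}$ is equivalent to $N \subseteq G$. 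I would then prove the two directions of
$$G \text{ unipotently connected} \iff N \subseteq G.$$
The forward direction follows immediately from Proposition~\ref{pro:nil=nil}, which gives $N = \Nil(H) \subseteq \Nil(G) \subseteq G$. For the converse, suppose $N \subseteq G$. Since $F = \ac{\Fitt(\Gamma)}$ is normal in $\mathbf{A}$ (being the Zariski-closure of a normal subgroup of the Zariski-dense group~$\Gamma$), $N = F$ is a connected, nilpotent, normal subgroup of~$G$ and thus lies in~$\Nil(G) \subseteq \up(G)$. Combined with $\up(G) \subseteq F = N$ this forces $\up(G) = \Nil(G) = N$, so $\up(G)$ is connected, i.e.\ $G$ is unipotently connected.

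This identification gives the claimed bijection $\mathcal{G}^\textup{uc}(\Gamma) \rightarrow \Hom_{\textup{Lie}}(H/\Gamma N, T)$. For the final assertion, I would analyse the target. By Mostow's theorem $\Gamma \cap N$ is a lattice in $N$, and consequently $\Gamma N/N$ is a lattice in the vector group $H/N$, so $H/\Gamma N$ is a compact real torus of dimension $d = \dim H - \dim N$. The torus $T$ splits as $T_\textup{s} \times T_\textup{c}$; since the split factor $T_\textup{s}$ admits only the trivial Lie homomorphism from any compact group,
$$\Hom_{\textup{Lie}}(H/\Gamma N, T) \cong \Hom_{\textup{Lie}}((\R/\Z)^d, T_\textup{c}),$$
which is a finitely generated free abelian group of rank $d \cdot \dim T_\textup{c}$. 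If $H$ is not of real type, then $d > 0$ (else $H = N$ is nilpotent and hence of real type), and Corollary~\ref{cor:charac_real_type} together with its proof shows $\dim T_\textup{c} > 0$; hence the set is countably infinite.

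The main obstacle is the reverse implication in the characterisation of unipotent connectedness, but the sandwich $N \subseteq \up(G) \subseteq F = N$ makes this transparent once Proposition~\ref{pro:fittislattice}(2) and Corollary~\ref{cor:nil_and_fitt} are brought into play; the rest is bookkeeping about Lie homomorphisms into real tori.
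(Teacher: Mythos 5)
Your proposal is correct and follows essentially the same route as the paper: the paper's (very terse) proof likewise reduces everything to the observation that $G \in \mathcal{G}(\Gamma)$ is unipotently connected if and only if $\Nil(G) = F = \Nil(H)$ (Corollary~\ref{cor:nil_and_fitt}), which is equivalent to $\sigma_{G\mid H}$ vanishing on $N = F$, and then repeats the counting argument of Proposition~\ref{pro:description_G_Gamma} with $[H,H]$ replaced by~$N$. Your sandwich $N \subseteq \Nil(G) \subseteq \up(G) \subseteq F = N$ and the verification that $H/\Gamma N$ is a compact torus simply make explicit the details the paper leaves to the reader.
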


\begin{proof}
  The proof is similar to the proof of
  Proposition~\ref{pro:description_G_Gamma}: one simply replaces
  $[H,H]$ by~$N$.  This is justified by the observation that $G \in
  \mathcal{G}(\Gamma)$ is unipotently connected if and only if
  $\Nil(G) = F = \Nil(H)$, by Corollary~\ref{cor:nil_and_fitt}.  The
  latter is the case if and only if $\sigma_{G \mid H}\colon H
  \rightarrow T$ vanishes on $F= \Nil(H)$.
\end{proof}

If $H$ is of real type then, by the above, $ \mathcal{G}(\Gamma) =
\mathcal{G}^\textup{uc}(\Gamma) =\{ H \}$.  Otherwise we have the
following result.

\begin{pro} \label{pro:all_are_uc} Let $H \in
  \mathcal{G}^\textup{uc}(\Gamma)$, and assume $H$ is not of real
  type. Then the following assertions are equivalent:
 \begin{enumerate}
 \item $\mathcal{G}(\Gamma) = \mathcal{G}^\textup{uc}(\Gamma)$,
 \item $\Nil(H) = [H,H]$,
 \item $[\Gamma,\Gamma]$ has finite index in~$\Fitt(\Gamma)$.
 \end{enumerate}
\end{pro}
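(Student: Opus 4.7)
The plan is to establish the two equivalences (2)$\Leftrightarrow$(3) and (1)$\Leftrightarrow$(2) separately. For the first, I would compare ranks and dimensions of the lattice inclusions $\Fitt(\Gamma)\subseteq\Nil(H)$ and $[\Gamma,\Gamma]\subseteq[H,H]$; for the second, I would exploit the Hom-space parameterisations from Propositions~\ref{pro:description_G_Gamma} and~\ref{pro:description_u-connected_G_Gamma} together with Pontryagin duality, using the hypothesis that $H$ is not of real type.

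For (2)$\Leftrightarrow$(3), set $N=\Nil(H)$ for brevity. Since $H$ is unipotently connected, Corollary~\ref{cor:nil_and_fitt} shows that $\Fitt(\Gamma)$ is a lattice in $N$, so $\rank\Fitt(\Gamma)=\dim N$. Since $\Gamma$ is Zariski-dense in $H$ by Lemma~\ref{lem:Gamma_is_lattice}, Lemma~\ref{lem:commutators} shows that $[\Gamma,\Gamma]$ is a lattice in $[H,H]$, so $\rank[\Gamma,\Gamma]=\dim[H,H]$. Because $[H,H]\subseteq N$ are both connected, equality of their dimensions is equivalent to $[H,H]=N$; because $[\Gamma,\Gamma]\subseteq\Fitt(\Gamma)$, equality of their ranks is equivalent to the finite-index condition. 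Matching these two equivalences yields (2)$\Leftrightarrow$(3).

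The implication (2)$\Rightarrow$(1) is immediate: $[H,H]=N$ forces $\Gamma[H,H]=\Gamma N$, so the parameterising Hom-spaces of Propositions~\ref{pro:description_G_Gamma} and~\ref{pro:description_u-connected_G_Gamma} literally coincide. For (1)$\Rightarrow$(2), the bijections of those propositions identify the inclusion $\mathcal{G}^{\textup{uc}}(\Gamma)\subseteq\mathcal{G}(\Gamma)$ with the restriction map $\Hom_{\textup{Lie}}(H/\Gamma N,T)\hookrightarrow\Hom_{\textup{Lie}}(H/\Gamma[H,H],T)$, so (1) amounts to saying that every Lie homomorphism $H/\Gamma[H,H]\to T$ annihilates the subgroup $\Gamma N/\Gamma[H,H]$. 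Since $H/\Gamma[H,H]$ is a compact torus (see the proof of Proposition~\ref{pro:description_G_Gamma}), every such homomorphism factors through the maximal compact subtorus $T_{\textup{c}}\subseteq T$; by Corollary~\ref{cor:charac_real_type}, the assumption that $H$ is not of real type ensures $T_{\textup{c}}\neq\{1\}$, so $T_{\textup{c}}$ contains a circle and Pontryagin duality for the compact abelian Lie group $H/\Gamma[H,H]$ provides enough characters to separate points. This forces $\Gamma N=\Gamma[H,H]$ in~$H$.

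To close the argument, I would pass from the set equality $\Gamma N=\Gamma[H,H]$ to $N=[H,H]$ as follows: given $n\in N$, write $n=\gamma h$ with $\gamma\in\Gamma$ and $h\in[H,H]$; then $\gamma=nh^{-1}\in N$, so $\gamma\in\Gamma\cap N\subseteq\Fitt(\Gamma)$ and $N=\Fitt(\Gamma)\cdot[H,H]$ as sets. The quotient $N/[H,H]$ is a connected abelian Lie group (note $[N,N]\subseteq[H,H]$) that is set-theoretically covered by the image of the countable group $\Fitt(\Gamma)$, hence is itself countable; since any non-trivial connected Lie group is uncountable, $N/[H,H]$ is trivial, i.e.\ $N=[H,H]$. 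The main subtlety in the plan is the Pontryagin-duality step in (1)$\Rightarrow$(2), which is precisely where the non-real-type hypothesis enters.
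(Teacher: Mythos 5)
Your proof is correct and follows essentially the same route as the paper: (2)$\Leftrightarrow$(3) via the rank/dimension comparison coming from Lemma~\ref{lem:commutators}, Corollary~\ref{cor:nil_and_fitt} and Lemma~\ref{lem:criterion_lattice}, and (1)$\Leftrightarrow$(2) via the Hom-space parameterisations of Propositions~\ref{pro:description_G_Gamma} and~\ref{pro:description_u-connected_G_Gamma} together with the non-triviality of $T_{\textup{c}}$. Your write-up is in fact more explicit than the paper's at two points it leaves implicit, namely the character-separation argument and the passage from $\Gamma\Nil(H)=\Gamma[H,H]$ to $\Nil(H)=[H,H]$.
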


\begin{proof}
  The assumption that $H$ is not of real type implies that $T_\text{c}$ is
  not trivial.  Hence $\Hom_{\textup{Lie}}(H/\Gamma [H,H],T) =
  \Hom_{\textup{Lie}}(H/\Gamma \Nil(H),T)$ if and only if $\Nil(H) =
  [H,H]$.  Thus Propositions~\ref{pro:description_G_Gamma} and
  \ref{pro:description_u-connected_G_Gamma} show that (1) and (2) are
  equivalent.

  By Lemma~\ref{lem:commutators}, the group $[\Gamma,\Gamma]$ is a
  lattice in~$[H,H]$, and, by Corollary~\ref{cor:nil_and_fitt}, the
  group $\Fitt(\Gamma)$ is a lattice in~$\Nil(H)$.  Moreover, $[H,H]$
  and $\Nil(H)$ are simply connected.  Since $[\Gamma,\Gamma]
  \subseteq \Fitt(\Gamma)$ and $[H,H] \subseteq \Nil(H)$,
  Lemma~\ref{lem:criterion_lattice} shows that (2) is equivalent
  to~(3).
\end{proof}

We obtain an interesting class of polycyclic groups which admit
Zariski-dense embeddings exclusively into simply connected, solvable
Lie groups which are unipotently connected.

\begin{cor} \label{cor:alluc} Let $\Delta$ be a torsion-free
  polycyclic group satisfying the following conditions:
  \begin{enumerate}
  \item $\Delta \subseteq (A_\Delta)_{\circ}$, i.e., $\Delta$ is
    contained in the identity component of its real algebraic hull,
   \item $\Delta/\Fitt(\Delta)$ is torsion-free,
   \item $[\Delta, \Delta]$ is of finite index in~$\Fitt(\Delta)$.
  \end{enumerate}
  Then $\mathcal{G}(\Delta) = \mathcal{G}^\textup{uc}(\Delta) \neq
  \varnothing$.
\end{cor}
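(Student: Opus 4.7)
The plan is to establish the two halves of the conclusion separately: first that $\mathcal{G}^\textup{uc}(\Delta) \neq \varnothing$, and then that every element of $\mathcal{G}(\Delta)$ is unipotently connected. Before starting, I would note that condition~(1) together with Zariski-density of $\Delta$ in $\mathbf{A}_\Delta$ forces $\mathbf{A}_\Delta$ to be connected, since $(A_\Delta)_\circ \subseteq \mathbf{A}_\Delta^{\!\circ}_{\;\R}$ is Zariski-dense in $\mathbf{A}_\Delta^\circ$. Hence the general setup of Section~\ref{sect:GGamma} is in force for~$\Delta$.

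For the nonemptiness assertion, I would appeal directly to Proposition~\ref{pro:exists_unipotently_connected}. The role of conditions~(1) and~(2) in our hypothesis is precisely to supply the data that the proof of that proposition produces by passing to a finite index subgroup; when $\Delta$ itself already satisfies (1) and~(2), the cited Auslander-style construction applies directly to~$\Delta$ and yields a simply connected, solvable, unipotently connected Lie group $H$ containing $\Delta$ as a Zariski-dense lattice. By Corollary~\ref{cor:AGamisAG}(2), such an $H$ embeds as a closed Lie subgroup of the real algebraic hull $A_\Delta$, and then the description~\eqref{equ:GGam_descrip} together with the definition of $\mathcal{G}^\textup{uc}(\Delta)$ places $H$ in~$\mathcal{G}^\textup{uc}(\Delta)$.

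To conclude $\mathcal{G}(\Delta) = \mathcal{G}^\textup{uc}(\Delta)$, I would then apply the dichotomy supplied by Corollary~\ref{cor:charac_real_type} and Proposition~\ref{pro:all_are_uc} to the element $H$ just produced. If $H$ is of real type, Corollary~\ref{cor:charac_real_type} gives $\mathcal{G}(\Delta) = \{H\}$, and the desired equality is immediate. Otherwise, hypothesis~(3) of Proposition~\ref{pro:all_are_uc} is verbatim condition~(3) of our corollary, so the implication (3)$\Rightarrow$(1) of that proposition furnishes $\mathcal{G}(\Delta) = \mathcal{G}^\textup{uc}(\Delta)$, completing the argument.

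I expect the only nontrivial obstacle to be the first step: namely, verifying that Proposition~\ref{pro:exists_unipotently_connected} really does deliver an element of $\mathcal{G}^\textup{uc}(\Delta)$ without any further passage to a finite index subgroup, once conditions~(1) and~(2) are in hand. This amounts to inspecting the Auslander construction referenced there and confirming that its only inputs are exactly the torsion-freeness of $\Delta/\Fitt(\Delta)$ and the inclusion $\Delta \subseteq (A_\Delta)_\circ$. Granted this, the second step is essentially a direct invocation of results already established in Section~\ref{sect:GGamma}, and no further computation is required.
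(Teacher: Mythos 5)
Your proposal is correct and follows essentially the same route as the paper: the paper's proof likewise observes that the Auslander construction invoked in the proof of Proposition~\ref{pro:exists_unipotently_connected} applies directly to $\Delta$ under hypotheses (1) and (2), giving $\mathcal{G}^\textup{uc}(\Delta)\neq\varnothing$, and then deduces the equality $\mathcal{G}(\Delta)=\mathcal{G}^\textup{uc}(\Delta)$ from Proposition~\ref{pro:all_are_uc} via hypothesis (3). Your explicit handling of the real-type case and of the connectedness of $\mathbf{A}_\Delta$ merely spells out details the paper leaves implicit.
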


\begin{proof} 
  The proof of Proposition~\ref{pro:exists_unipotently_connected},
  shows that $\mathcal{G}^\textup{uc}(\Delta) \neq \varnothing$, and
  the claim follows from Proposition~\ref{pro:all_are_uc}.
\end{proof}

Recall that the group of Lie automorphisms $\Aut(H)$ of a simply
connected Lie group $H$ is isomorphic to the real algebraic group
$\Aut(\mathfrak{h})$, where $\mathfrak{h}$ is the real Lie algebra
associated to~$H$. Let $H$ be simply connected solvable. Then every automorphism $\phi \in \Aut(H)$ extends uniquely to an $\R$-defined automorphism $\Phi \in
\Aut_\R(\mathbf{A}_H)$ of the $\R$-defined algebraic
group~$\mathbf{A}_H$. 

Define 
$$ \Aut(H)^1 =  \, \ker \,  \left( \,   \Aut(H) \longrightarrow \Aut(H/\Nil(H))\, \right)  \; . $$

\begin{pro}\label{pro:Hinv_implies_Ginv}
  Let $G, H \in \mathcal{G}(\Gamma)$, and suppose that $G$ is
  unipotently connected.  Let $\phi \in \Aut(H)^1$ and let $\Phi
  \in \Aut_\R(\mathbf{A})$ be its extension.  Then
     $\Phi(G) = G$.
 \end{pro}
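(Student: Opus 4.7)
The plan is to promote the condition that $\phi$ acts trivially on $H/\Nil(H)$ to the statement that the induced automorphism of a suitable algebraic quotient is the identity, and then to use Proposition~\ref{pro:nil=nil} to identify the resulting coset displacement $\Phi(g) g^{-1}$ as an element of~$G$.

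Let $\mathbf{N}$ denote the Zariski closure of $\Nil(H)$ in $\mathbf{A}$. Since $\Nil(H)$ is a connected Lie subgroup of the unipotent real algebraic group $\mathbf{U}_\R = U$, it is already Zariski-closed there (as recorded in the Notation paragraph). Hence $\mathbf{N}$ is $\R$-defined, contained in $\mathbf{U}$, and satisfies $\mathbf{N} \cap A = \Nil(H)$; in particular $H \cap \mathbf{N} = \Nil(H)$. Because $\Nil(H)$ is characteristic in $H$ and $H$ is normal in $A$ by Lemma~\ref{lem:Gnormal}, the subgroup $\Nil(H)$ is normal in $A$. Since the normalizer of $\mathbf{N}$ is Zariski-closed and contains the Zariski-dense subgroup $A$, it coincides with $\mathbf{A}$, so $\mathbf{N}$ is normal in $\mathbf{A}$. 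Consequently, $H/\Nil(H)$ injects into $\mathbf{A}/\mathbf{N}$ with Zariski-dense image.

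Since $\phi$ preserves the characteristic subgroup $\Nil(H)$, the extension $\Phi$ preserves $\mathbf{N}$ and induces an $\R$-defined automorphism $\bar\Phi$ of $\mathbf{A}/\mathbf{N}$. The hypothesis $\phi \in \Aut(H)^1$ means that $\bar\Phi$ fixes every element of $H\mathbf{N}/\mathbf{N}$; by Zariski density, $\bar\Phi = \id$. It follows that $\Phi(a) a^{-1} \in \mathbf{N}$ for every $a \in \mathbf{A}$, and in particular $\Phi(g) g^{-1} \in \mathbf{N} \cap A = \Nil(H)$ for every $g \in G$. As $G$ is unipotently connected, Proposition~\ref{pro:nil=nil} provides $\Nil(H) \subseteq \Nil(G) \subseteq G$, whence $\Phi(g) \in G$. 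This proves $\Phi(G) \subseteq G$; applying the same reasoning to $\phi^{-1} \in \Aut(H)^1$, whose extension is $\Phi^{-1}$, yields $\Phi^{-1}(G) \subseteq G$ and hence the desired equality $\Phi(G) = G$.

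The point requiring most care is the set-up of $\mathbf{N}$: verifying that it is $\R$-defined, normal in $\mathbf{A}$, and meets $A$ precisely in $\Nil(H)$. Once these properties are secured, the argument is essentially Zariski density combined with Proposition~\ref{pro:nil=nil}, which is where the unipotent connectedness of $G$ enters decisively.
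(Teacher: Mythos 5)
Your proof is correct and follows essentially the same route as the paper: both arguments use Zariski density to show that $\Phi$ induces the identity modulo (the closure of) $\Nil(H)$, and then invoke the inclusion $\Nil(H)\subseteq\Nil(G)$ (Proposition~\ref{pro:nil=nil}, resting on unipotent connectedness of $G$) to conclude $\Phi(G)\subseteq G$. Your care in setting up $\mathbf{N}=\ac{\Nil(H)}$ and the final symmetric step with $\phi^{-1}$ are fine refinements of the same argument.
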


\begin{proof}
   Since $H$ and $G$ are normal in $A$, their nilpotent radicals
  $\Nil(H)$ and $\Nil(G)$ are normal in~$A$.  Note that we have
  $\Nil(H) \subseteq \Nil(G) = F$ by Corollary~\ref{cor:nil_and_fitt},
  where $F$ is the Zariski-closure of $\Fitt(\Gamma)$ in~$A$.
  Clearly, $\Phi(\Nil(H)) = \Nil(H)$ and, since $H$ is Zariski-dense
  in~$A$, we deduce that the induced automorphism $\Phi
  \vert_{A/\Nil(H)}$ satisfies $\Phi \vert_{A/\Nil(H)} =
  \id_{A/\Nil(H)}$.  Furthermore, $\Nil(H) \subseteq F$ implies that
  $\Phi(F) = F$ and that the induced automorphism $\Phi \vert_{A/F}$
  satisfies $\Phi \vert_{A/F} = \id_{A/F}$.  Thus we have
  $\Phi(G) \subseteq G F = G$. 
\end{proof}

\begin{lem} Let $H$ be a simply connected solvable Lie group. 
Then $\Aut(H)^\circ \leq  \Aut(H)^1$. \label{lem:autoinaut1}
\end{lem}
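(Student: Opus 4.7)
The plan is to transfer the problem from the Lie group $H$ to its Lie algebra $\mathfrak{h}$ and reduce it to a standard property of derivations of solvable Lie algebras. Since $H$ is simply connected, the differential at the identity identifies $\Aut(H)$ with the group $\Aut(\mathfrak{h})$ of Lie algebra automorphisms; under this identification, the identity component $\Aut(H)^\circ$ is the connected Lie subgroup whose Lie algebra is $\Der(\mathfrak{h})$. Because $H/\Nil(H)$ is a simply connected abelian Lie group, the exponential map identifies it canonically with $\mathfrak{h}/\Nil(\mathfrak{h})$, and the homomorphism $\Aut(H) \to \Aut(H/\Nil(H))$ differentiates to the natural action of $\Der(\mathfrak{h})$ on $\mathfrak{h}/\Nil(\mathfrak{h})$. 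Since $\Aut(H)^\circ$ is connected, it acts trivially on $H/\Nil(H)$ as soon as $\Der(\mathfrak{h})$ annihilates $\mathfrak{h}/\Nil(\mathfrak{h})$.

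The key step is therefore to establish that $D(\mathfrak{h}) \subseteq \Nil(\mathfrak{h})$ for every derivation $D$ of $\mathfrak{h}$. I plan to form the semidirect product $\tilde{\mathfrak{h}} = \mathfrak{h} \rtimes \R D$, in which $\mathfrak{h}$ is an ideal of codimension one with abelian quotient; hence $\tilde{\mathfrak{h}}$ is again solvable. By Lie's theorem, the derived subalgebra of a solvable Lie algebra in characteristic zero is nilpotent, so
\[
[D,\mathfrak{h}] \;\subseteq\; [\tilde{\mathfrak{h}},\tilde{\mathfrak{h}}] \;\subseteq\; \Nil(\tilde{\mathfrak{h}}).
\]
The intersection $\Nil(\tilde{\mathfrak{h}}) \cap \mathfrak{h}$ is an ideal of $\tilde{\mathfrak{h}}$ contained in $\mathfrak{h}$, hence is a nilpotent ideal of $\mathfrak{h}$, and therefore lies in $\Nil(\mathfrak{h})$. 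Combining these inclusions yields $D(\mathfrak{h}) = [D,\mathfrak{h}] \subseteq \Nil(\mathfrak{h})$, as desired.

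No serious obstacle is expected. The only point requiring a small amount of care is the standard translation between the infinitesimal and group-level actions: since $\Aut(H)^\circ$ is connected and the target $\Aut(H/\Nil(H)) \subseteq \GL(\mathfrak{h}/\Nil(\mathfrak{h}))$ is linear, triviality of the action of the Lie algebra $\Der(\mathfrak{h})$ on $\mathfrak{h}/\Nil(\mathfrak{h})$ is equivalent to triviality of the action of $\Aut(H)^\circ$ on $H/\Nil(H)$. This is precisely the statement $\Aut(H)^\circ \leq \Aut(H)^1$.
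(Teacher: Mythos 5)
Your proof is correct and follows essentially the same route as the paper: identify the Lie algebra of $\Aut(H)^\circ$ with $\Der(\mathfrak{h})$, reduce to the statement that every derivation of the solvable Lie algebra $\mathfrak{h}$ maps it into its nilradical, and conclude by connectedness. The only difference is that the paper simply cites this fact about derivations (Jacobson, \S II.7), whereas you supply the standard proof via the semidirect product $\mathfrak{h}\rtimes\R D$ and the nilpotency of the derived algebra of a solvable Lie algebra in characteristic zero; that argument is sound.
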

\begin{proof}
 The Lie algebra corresponding to $\Aut(H)^\circ$ is isomorphic to
  the derivation algebra $\Der(\mathfrak{h})$ of the Lie algebra
  $\mathfrak{h}$ associated to~$H$.  It is known that the solvable Lie
  algebra $\mathfrak{h}$ is mapped into its nilradical by every
  derivation of $\mathfrak{h}$; see \cite[\S II.7]{Ja79}.  Thus
  $\Aut(H)^\circ$ acts trivially on~$H/\Nil(H)$.  
\end{proof}

Under the assumptions of
Proposition~\ref{pro:Hinv_implies_Ginv} we
obtain a natural homomorphism $\Aut(H)^\circ \rightarrow
\Aut(G)^\circ$. Since $G$ is Zariski-dense this homomorphism is
injective. The following corollary records that the groups contained
in $\mathcal{G}_{\textup{uc}}(\Gamma)$ are very similar.

\begin{cor} \label{cor:AHo_eq_AGo} Let $G, H \in
  \mathcal{G}_{\textup{uc}}(\Gamma)$.  Then there exists a natural
  isomorphism $\Aut(H)^\circ \rightarrow \Aut(G)^\circ$.
\end{cor}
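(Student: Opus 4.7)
The plan is to realise both $\Aut(H)^\circ$ and $\Aut(G)^\circ$ as subgroups of $\Aut_\R(\mathbf{A})$ via unique extension to the common algebraic hull, and to obtain the isomorphism by exchanging restriction to $H$ for restriction to $G$. By Corollary \ref{cor:AGamisAG}, applied to the Zariski-dense lattice $\Gamma$ in both $G$ and $H$, the real algebraic hulls coincide: $\mathbf{A} := \mathbf{A}_\Gamma = \mathbf{A}_G = \mathbf{A}_H$. Moreover, each Lie automorphism of $G$ (respectively $H$) extends uniquely to an $\R$-defined algebraic automorphism of $\mathbf{A}$ (the Lie-group analogue of Lemma \ref{lem:extension}, together with Proposition \ref{pro:ext_unique}). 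This extension map identifies $\Aut(H)$ with the closed subgroup of $\Aut_\R(\mathbf{A})$ stabilising $H$, and similarly for $\Aut(G)$; in particular it is continuous.

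To construct $\alpha \colon \Aut(H)^\circ \to \Aut(G)^\circ$, let $\phi \in \Aut(H)^\circ$ and let $\Phi \in \Aut_\R(\mathbf{A})$ be its canonical extension. By Lemma \ref{lem:autoinaut1}, $\phi \in \Aut(H)^1$; since $G$ is unipotently connected, Proposition \ref{pro:Hinv_implies_Ginv} gives $\Phi(G) = G$, so that $\Phi|_G \in \Aut(G)$. I set $\alpha(\phi) := \Phi|_G$; this is a group homomorphism because extension is functorial and restriction is multiplicative, and it is continuous because it factors as $\phi \mapsto \Phi \mapsto \Phi|_G$ through the extension map into $\Aut_\R(\mathbf{A})$ followed by restriction to $G$. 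Since $\Aut(H)^\circ$ is connected, $\alpha$ carries it into $\Aut(G)^\circ$. Entirely symmetrically, using that $H$ is also unipotently connected, one obtains a continuous homomorphism $\beta \colon \Aut(G)^\circ \to \Aut(H)^\circ$, $\psi \mapsto \Psi|_H$.

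It remains to verify that $\alpha$ and $\beta$ are mutually inverse. For $\phi \in \Aut(H)^\circ$ with canonical extension $\Phi$, the algebraic automorphism $\Phi \in \Aut_\R(\mathbf{A})$ is an $\R$-defined extension of $\Phi|_G = \alpha(\phi)$; by the uniqueness of the extension of an element of $\Aut(G)$ to $\Aut_\R(\mathbf{A})$, $\Phi$ coincides with the canonical extension of $\alpha(\phi)$. Hence $\beta(\alpha(\phi)) = \Phi|_H = \phi$, and symmetrically $\alpha \circ \beta = \id$. Thus $\alpha$ is the claimed isomorphism, and it is natural because it is realised by the functorial extension-restriction procedure through $\Aut_\R(\mathbf{A})$. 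The main obstacle is the bookkeeping for continuity and naturality of the extension $\Aut(H) \hookrightarrow \Aut_\R(\mathbf{A})$, which is a morphism of real algebraic groups with closed image; everything else is a direct consequence of Proposition \ref{pro:Hinv_implies_Ginv} applied in both directions, combined with the Zariski-density of $G$ and $H$ in $\mathbf{A}$ to ensure that the extensions referenced are unique.
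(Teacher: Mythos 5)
Your proposal is correct and follows essentially the same route as the paper, which derives the homomorphism $\Aut(H)^\circ\to\Aut(G)^\circ$ from Proposition~\ref{pro:Hinv_implies_Ginv} together with Lemma~\ref{lem:autoinaut1} and the uniqueness of extensions to the common hull $\mathbf{A}$. Your explicit check that the two restriction maps are mutually inverse (via uniqueness of the $\R$-defined extension) is a clean completion of the injectivity/symmetry argument the paper only sketches.
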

 

\section{Description of the space $\mathcal{G}_{\Gamma,G}$}
\label{sect:GGammaG} We keep in place the notational conventions of
Section~\ref{sect:GGamma}.  In particular, $\Gamma$ is a torsion-free
polycyclic group whose algebraic hull $\mathbf{A} = \mathbf{A}_\Gamma$
is connected, and $A =\mathbf{A}_\R$ denotes the group of real points.
Throughout we suppose that $\mathcal{G}(\Gamma) \ne \varnothing$.

\smallskip

In addition, we fix $G \in \mathcal{G}(\Gamma)$ and define
$$
\mathcal{G}_{\Gamma,G} = \bigcup \{ \mathcal{G}(\Delta) \mid
\text{$\Delta \subseteq G$ a lattice with $\Delta \cong \Gamma$} \} \; .
$$
This collection of Lie subgroups of $A$ relates to the space
$\mathcal{X}(\Gamma,G)$ via the map
\begin{equation} \label{equ:nat_map} \mathcal{X}(\Gamma,G) \rightarrow
  \mathcal{G}_{\Gamma,G}, \quad \phi \mapsto \Phi(G),
\end{equation}
where $\Phi \in \Aut_\R(\mathbf{A})$ is the unique extension of
$\phi$; see Corollary~\ref{cor:hull_unique}. We note that $\Phi(G) \in
\mathcal{G}(\Delta)$, where $\Delta = \phi(\Gamma)$.

Natural actions of $\Aut(G)$ on $\mathcal{X}(\Gamma,G)$ and on
$\mathcal{G}_{\Gamma,G}$ are given by
$$
\theta \cdot \phi = \theta \circ \phi \quad \text{and} \quad \theta
\cdot H = \Theta(H),
$$
where $\theta \in \Aut(G)$ with unique extension $\Theta \in
\Aut_\R(\mathbf{A})$, $\phi \in \mathcal{X}(\Gamma,G)$ and $H \in
\mathcal{G}_{\Gamma,G}$.  The map~\eqref{equ:nat_map} is
$\Aut(G)$-equivariant with respect to these actions. 

By Lemma \ref{lem:autoinaut1},
$ \Aut(G)^1 = \ker \left( \, \Aut(G) \rightarrow \Aut(G/\Nil(G)) \, \right)$ is of 
finite index in $\Aut(G)$ and define $$ c(G) = \lvert \Aut(G) : \Aut(G)^1 \rvert . $$

\begin{pro} \label{pro:finfibers} Suppose that $G$ is unipotently
  connected.  Then \begin{enumerate}
  \item $\Aut(G)^1$ acts trivially on the image
    $\widetilde{\mathcal{G}}_{\Gamma,G}$ of the map
    \eqref{equ:nat_map}.
  \item the fibres of the induced map
    \begin{equation} \label{equ:nat_map_mod_Auto} \Aut(G)^1
      \backslash \mathcal{X}(\Gamma,G) \rightarrow
      \mathcal{G}_{\Gamma,G}, \quad [\phi]_{ \Aut(G)^1} \mapsto
      \Phi(G) \; .
    \end{equation}
    are finite and bounded in size by $c(G)$.
  \end{enumerate}
\end{pro}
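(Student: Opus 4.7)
The plan is to embed the fibre $\alpha^{-1}(H)$ of the natural map $\alpha\colon \phi \mapsto \Phi(G)$ over a given $H$ into the set $\textup{Iso}_{\textup{Lie}}(G,H)$, turning the $\Aut(G)^1$-action there into the restriction of a free action of $\Aut(H)^1$, and then to read off the bound from the identity $c(G)=c(H)$. Along the way one has to identify $\Aut(G)^1$ and $\Aut(H)^1$ as one and the same subgroup of $\Aut_\R(\mathbf{A})$.

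For~(1): Given $\phi \in \mathcal{X}(\Gamma,G)$ with extension $\Phi \in \Aut_\R(\mathbf{A})$, set $\Delta = \phi(\Gamma) \subseteq G$ and $H=\Phi(G)$. Since $\Phi$ commutes with $\up$ and with taking identity components, $H$ is again unipotently connected. Both $G$ and $H$ lie in $\mathcal{G}(\Delta)$, and by Corollary~\ref{cor:nil_and_fitt} one has $\Nil(G)=\Nil(H)=\ac{\Fitt(\Delta)}=:F$. For $\theta \in \Aut(G)^1$, the fact that $\theta$ acts trivially on $G/\Nil(G)=G/F$, combined with the Zariski-density of $G$ in~$\mathbf{A}$, forces the extension $\tilde\theta$ to act trivially on $\mathbf{A}/F$; since $F\subseteq H$, this yields $\tilde\theta(H) \subseteq HF=H$, and the same argument applied to $\theta^{-1}$ gives $\tilde\theta(H)=H$. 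This is exactly the argument of Proposition~\ref{pro:Hinv_implies_Ginv} with the roles of $G$ and $H$ swapped, and establishes~(1).

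For~(2): Fix $H$ in the image of $\alpha$ and consider the map $\iota \colon \alpha^{-1}(H) \hookrightarrow \textup{Iso}_{\textup{Lie}}(G,H)$, $\phi \mapsto \Phi|_G$. It is injective because $\phi$ determines $\Phi$ (uniqueness of the algebraic extension) and $\Phi|_G$ in turn determines $\Phi$ (Zariski-density of $G$ in $\mathbf{A}$). Define
\[
\Aut^{*} = \{\psi \in \Aut_\R(\mathbf{A}) : \psi \textup{ acts trivially on } \mathbf{A}/F\}.
\]
The computation in~(1) identifies $\Aut^{*}$, via restriction, both with $\Aut(G)^1$ and with $\Aut(H)^1$. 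Under $\iota$, the $\Aut(G)^1$-action on $\alpha^{-1}(H)$ is transported to left composition by $\tilde\theta|_H \in \Aut(H)^1$ on $\textup{Iso}_{\textup{Lie}}(G,H)$, because $(\theta\circ\phi)^{\textup{ext}}=\tilde\theta\Phi$ and hence $\tilde\theta\Phi|_G = \tilde\theta|_H \circ \Phi|_G$. Thus $\iota$ descends to an injection
\[
\bar\iota \colon \Aut(G)^1 \backslash \alpha^{-1}(H) \hookrightarrow \Aut(H)^1 \backslash \textup{Iso}_{\textup{Lie}}(G,H),
\]
and the right-hand side has cardinality $c(H)$ since $\Aut(H)$ acts freely and transitively on $\textup{Iso}_{\textup{Lie}}(G,H)$ from the left.

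It remains to verify that $c(G)=c(H)$. Pick any $\Phi_0 \in \Aut_\R(\mathbf{A})$ with $\Phi_0(G)=H$; one may take the extension of any element of $\alpha^{-1}(H)$. Then $\Phi_0(F)=\Phi_0(\Nil(G))=\Nil(H)=F$, so conjugation by $\Phi_0$ preserves $\Aut^{*}$ while carrying the stabiliser $\Aut(G)=\mathrm{Stab}_{\Aut_\R(\mathbf{A})}(G)$ to $\Aut(H)$. This induces a group isomorphism $\Aut(G)/\Aut^{*} \cong \Aut(H)/\Aut^{*}$ and hence $c(G)=c(H)$, completing the proof. The main obstacle is precisely this last identification: recognising that $\Aut(G)^1$ and $\Aut(H)^1$ coincide as the common subgroup $\Aut^{*}$ inside $\Aut_\R(\mathbf{A})$, and exploiting the preservation of $F$ by the specific $\Phi_0$ sending $G$ to $H$ to transport the index $c(G)$ to $c(H)$.
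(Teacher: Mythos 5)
Your proof is correct and follows essentially the same route as the paper: part (1) is the role-swapped application of Proposition~\ref{pro:Hinv_implies_Ginv} to $G,H\in\mathcal{G}(\phi(\Gamma))$, and part (2) rests on the simply transitive $\Aut(H)$-action on the fibre (the paper phrases it via lattice embeddings $\Gamma\hookrightarrow H$, you via $\mathrm{Iso}_{\mathrm{Lie}}(G,H)$, which are identified by restriction/extension) together with the matching of $\Aut(G)^1$ and $\Aut(H)^1$ inside $\Aut_\R(\mathbf{A})$. Your explicit verification that $c(G)=c(H)$ fills in a step the paper leaves implicit (it concludes with the bound $c(H)$ and tacitly uses $G\cong H$).
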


%


\begin{proof}
  Let $H = \Phi(G)$ be in the image of the map \eqref{equ:nat_map},
  where $\Phi$ is the extension of $\phi\colon\Gamma \rightarrow G$.
  Since $G$ is unipotently connected, so is~$H$.  Let $\Theta \in
  \Aut_{\R}(\mathbf{A})$ be the extension of $\theta \in
  \Aut(G)^1$.  Both $G$ and $H$ are elements of $
  \mathcal{G}(\phi(\Gamma))$, and $H$ is unipotently connected.  Thus
  Proposition~\ref{pro:Hinv_implies_Ginv} implies that $\theta \cdot H =
  \Theta(H) = H$, showing~(1).

  Observe next that composition of maps provides a natural action of
  $\Aut(H)$ on the set
  $$
  \{ \phi \colon \Gamma \hookrightarrow H \mid \phi(\Gamma) \text{ a
    lattice in $H$ and } \Phi(G) = H \},
  $$
  where as before $\Phi \in \Aut_\R(\mathbf{A})$ denotes the unique
  extension of~$\phi$.  This action is transitive: given $\phi, \psi
  \colon \Gamma \rightarrow H$ with extensions $\Phi, \Psi$ such that
  $\Phi(G) = \Psi(G) = H$, the map $\Phi \circ \Psi^{-1}$ restricts to
  an automorphism $\theta \in \Aut(H)$ such that $\phi = \theta \circ
  \psi$.  Furthermore, if $\phi,\psi \in \mathcal{X}(\Gamma,G)$, then
  $G,H \in \mathcal{G}_{\textup{uc}}(\phi(\Gamma))$ and hence
  Proposition~\ref{pro:Hinv_implies_Ginv} shows that, if $\phi$ and $\psi$ lie
  in the same $\Aut(H)^1$-orbit, then $[\phi]_{ \Aut(G)^1} =
  [\psi] _{ \Aut(G)^1}$.  Therefore each fibre of the map
  \eqref{equ:nat_map_mod_Auto} has size bounded by 
  $c(H)$.
\end{proof}

The following consequence is a key ingredient in the proof of our main
result Theorem~\ref{thm:MainA}.

\begin{cor} \label{cor:HomsToGroups} Let $G$ be a simply connected,
  solvable Lie group which is unipotently connected. Let $\Gamma$ be a
  Zariski-dense lattice in~$G$.  Then the map
  \begin{equation} \label{equ:nat_map_mod_Aut} \Aut(G)\backslash
    \mathcal{X}(\Gamma,G) \rightarrow \Aut(G) \backslash
    \mathcal{G}_{\Gamma,G}, \quad [\phi] \mapsto [\Phi(G) ] \; ,
  \end{equation}
  induced by \eqref{equ:nat_map}, has finite fibres, bounded in size
  by $c(G)$.
\end{cor}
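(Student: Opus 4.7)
The plan is to deduce Corollary~\ref{cor:HomsToGroups} directly from Proposition~\ref{pro:finfibers}(2) by a short bookkeeping argument that converts the bound on fibres of the map $\Aut(G)^1 \backslash \mathcal{X}(\Gamma,G) \to \mathcal{G}_{\Gamma,G}$ into the required bound on fibres after passing to the coarser quotient by $\Aut(G)$ on both sides. Since the constant $c(G) = \lvert \Aut(G) : \Aut(G)^1 \rvert$ is precisely the index that separates these two quotients, the bound should transfer without any loss.

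First I would fix $[H] \in \Aut(G) \backslash \mathcal{G}_{\Gamma,G}$ and suppose $[\phi_1], \dots, [\phi_k] \in \Aut(G) \backslash \mathcal{X}(\Gamma,G)$ are pairwise distinct classes mapping to $[H]$ under \eqref{equ:nat_map_mod_Aut}. Using the $\Aut(G)$-equivariance of the map $\phi \mapsto \Phi(G)$, I would select, for each $i$, an element $\theta_i \in \Aut(G)$ with extension $\Theta_i$ such that $\Theta_i(\Phi_i(G)) = H$; replacing $\phi_i$ by the representative $\theta_i \circ \phi_i$ in its $\Aut(G)$-orbit then normalises the situation so that $\Phi_i(G) = H$ holds on the nose for every~$i$.

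Next I would pass to the finer $\Aut(G)^1$-quotient. Because $\Aut(G)^1 \leq \Aut(G)$, every $\Aut(G)$-orbit is a disjoint union of $\Aut(G)^1$-orbits, so the $\Aut(G)^1$-classes $[\phi_1]_{\Aut(G)^1}, \dots, [\phi_k]_{\Aut(G)^1}$ remain pairwise distinct. After the normalisation of the previous step, all of them lie in the fibre over $H$ of the induced map $\Aut(G)^1 \backslash \mathcal{X}(\Gamma,G) \to \mathcal{G}_{\Gamma,G}$ furnished by Proposition~\ref{pro:finfibers}, whose cardinality is bounded by $c(G)$. Thus $k \leq c(G)$, which proves the corollary.

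I do not expect any substantial obstacle: the hard content — namely that unipotent connectedness forces $\Aut(G)^1$ to stabilise the tight Lie subgroup $H = \Phi(G)$ (via Proposition~\ref{pro:Hinv_implies_Ginv}) and that the resulting $\Aut(G)^1$-fibres have uniformly bounded size $c(G)$ — has already been extracted in Proposition~\ref{pro:finfibers}. The only minor point requiring care is verifying that the normalisation $\phi_i \leadsto \theta_i \circ \phi_i$ does not merge distinct $\Aut(G)$-orbits, which is immediate since composition with $\theta_i \in \Aut(G)$ stays inside the $\Aut(G)$-orbit of $\phi_i$.
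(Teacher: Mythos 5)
Your argument is correct and is exactly the intended one: the paper's own proof simply cites Proposition~\ref{pro:finfibers} together with the $\Aut(G)$-equivariance of the map \eqref{equ:nat_map}, and your normalisation $\phi_i \leadsto \theta_i \circ \phi_i$ followed by the injection of the resulting distinct $\Aut(G)^1$-classes into the fibre of \eqref{equ:nat_map_mod_Auto} over $H$ is precisely the bookkeeping that makes this "immediate consequence" precise. No gaps.
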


\begin{proof} As an immediate consequence of
  Proposition~\ref{pro:finfibers} and the fact the the map
  \eqref{equ:nat_map} is $\Aut(G)$-equivariant we obtain that the
  fibres of the map \eqref{equ:nat_map_mod_Aut} are bounded in size by
  $c(G)$.
\end{proof}


\section{Proofs of Theorem~\ref{thm:MainA}, Theorem~\ref{thm:MainE} and their corollaries} \label{sec:proof_main_results}

In this section we prove all the results from Theorem~\ref{thm:MainA}
up to Corollary~\ref{cor:ToMainA3}, which were stated in the
introduction.  The proofs of Theorem~\ref{thm:MainH} and its
Corollary~\ref{cor:toMainH1} are given in Section~\ref{sec:Chabauty}.


\subsection{Proof of Theorem \ref{thm:MainA}} \label{sec:proof_of_A}
Throughout, let $\Gamma$ be a Zariski-dense lattice in a simply
connected, solvable Lie group~$G$.  We fix an algebraic hull
$\mathbf{A} = \mathbf{A}_{\Gamma}$ of $\Gamma$ such that $\Gamma
\subseteq G \subseteq \mathbf{A}$ and, in this construction,
$\mathbf{A}$ is also an algebraic hull for $G$; see Corollary
\ref{cor:AGamisAG}.  Thus $\mathbf{A}$ is connected and we write
$\mathbf{A} = \mathbf{U} \rtimes \mathbf{T}$, where $\mathbf{U} =
\Rad_\textup{u}(\mathbf{A})$ and the maximal algebraic torus
$\mathbf{T}$ are defined over~$\R$.  Put $A = \mathbf{A}_\R$, $U =
\mathbf{U}_\R$ and $T = \mathbf{T}_\R$. Since $\mathbf{T} = \mathbf{A}/ \mathbf{U}$, there is a natural induced action of $ \Aut_{\R}(\mathbf{A})$ on $\mathbf{T}$. Let 
$$  \Aut_{\R}(\mathbf{A})^1 = \, \ker \, \left( \, 
\Aut_\R(\mathbf{A}) \longrightarrow \Aut_\R(\mathbf{T}) \, \right)$$ be the kernel of the resulting map.
Put 
$$
c(\mathbf{A})  = \lvert \Aut_{\R}(\mathbf{A}) : \Aut_{\R}(\mathbf{A})^1
\rvert
$$
for  the index of  $\Aut_{\R}(\mathbf{A})^1$. 
By the rigidity of algebraic tori (see
  \cite[III.8]{Bo91}), the identity component
  $\Aut_\R(\mathbf{A})^\circ$ is contained in 
  $\Aut_{\R}(\mathbf{A})^1$. Therefore $c(\mathbf{A}) $ is
  finite and bounded by the number of connected components of the real algebraic
group~$\Aut_\R(\mathbf{A})$.  

We also consider the sets
\begin{align*} 
  \mathcal{U}_{\Gamma} & = \{ \Delta U \mid \Delta \text{ a
    Zariski-dense subgroup of $A$ isomorphic to $\Gamma$} \}, \\
  \mathcal{N}_{\Gamma,G} & = \{ \Delta \Nil(G) \mid \Delta \text{ a
    lattice in $G$ isomorphic to $\Gamma$} \}.
\end{align*}

\begin{lem} \label{lem:setGamU} The set $\mathcal{U}_\Gamma$ has
  finite size, bounded by~$c(\mathbf{A}) $.
\end{lem}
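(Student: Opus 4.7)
The plan is to exhibit a transitive action of $\Aut_\R(\mathbf{A})$ on $\mathcal{U}_\Gamma$ which factors through the finite quotient $\Aut_\R(\mathbf{A})/\Aut_\R(\mathbf{A})^1$.

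First, I would observe that $\Gamma U \in \mathcal{U}_\Gamma$, so the set is non-empty. Next, given any Zariski-dense subgroup $\Delta \subseteq A$ with $\Delta \cong \Gamma$, I would apply the Extension Lemma (Lemma~\ref{lem:extension}) to any isomorphism $\Gamma \xrightarrow{\sim} \Delta \hookrightarrow \mathbf{A}$: since $\mathbf{A}$ is an $\R$-defined algebraic hull of $\Gamma$, has a strong unipotent radical, and $\Delta$ is Zariski-dense in $\mathbf{A}$, this isomorphism extends to an $\R$-defined morphism $\Psi\colon \mathbf{A}\to \mathbf{A}$, and (arguing as in Corollary~\ref{cor:hull_unique}, using $\dim\Rad_\textup{u}(\mathbf{A}) = \rank\Gamma = \rank\Delta$) $\Psi$ is an automorphism of $\mathbf{A}$ with $\Psi(\Gamma) = \Delta$.

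Second, because $U = \Rad_\textup{u}(\mathbf{A})$ is characteristic in $\mathbf{A}$, the group $\Aut_\R(\mathbf{A})$ acts on $\mathcal{U}_\Gamma$ via $\Psi \cdot (\Delta U) = \Psi(\Delta) U$, and the preceding paragraph shows that this action is transitive on $\mathcal{U}_\Gamma$.

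Third, I would show that the action factors through $\Aut_\R(\mathbf{A})/\Aut_\R(\mathbf{A})^1$. Indeed, if $\Psi \in \Aut_\R(\mathbf{A})^1$, then by definition $\Psi$ induces the identity on $\mathbf{T} = \mathbf{A}/\mathbf{U}$. Hence, for every $\delta \in \Delta$, $\Psi(\delta) U = \delta U$ in $A/U$, which gives $\Psi(\Delta) U = \Delta U$. Combined with transitivity, this bounds $|\mathcal{U}_\Gamma|$ by the index $c(\mathbf{A}) = |\Aut_\R(\mathbf{A}) : \Aut_\R(\mathbf{A})^1|$, which is finite by the rigidity of tori remark preceding the lemma.

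The only mildly delicate point is the application of the Extension Lemma, where one must observe that $\mathbf{A}$ serves equally well as the algebraic hull of any Zariski-dense subgroup $\Delta \cong \Gamma$ of $A$ — but this is immediate once one checks that $\Delta$ is Zariski-dense in $\mathbf{A}$ and that $\dim\Rad_\textup{u}(\mathbf{A}) = \rank\Delta$, both of which follow from the hypotheses. Everything else is a bookkeeping computation.
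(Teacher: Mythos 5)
Your proof is correct and follows essentially the same route as the paper's: extend an isomorphism $\Gamma \to \Delta$ to an automorphism of $\mathbf{A}$ using that $\mathbf{A}$ is also an algebraic hull of $\Delta$ (uniqueness of hulls), then observe that the assignment $\Phi \mapsto \Phi(\Gamma)U$ is constant on cosets of $\Aut_\R(\mathbf{A})^1$, giving the bound $c(\mathbf{A})$. The paper phrases the last step as constancy on cosets rather than as a transitive action factoring through the quotient, but this is only a cosmetic difference.
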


\begin{proof} Let $\Delta$ be a Zariski-dense subgroup of $A$
  isomorphic to $\Gamma$, and choose an isomorphism $\phi \colon
  \Gamma \rightarrow \Delta$.  Note that $A$ is also a real algebraic
  hull for~$\Delta$.  By the uniqueness of algebraic hulls, $\phi$
  extends to an $\R$-defined algebraic automorphism $\Phi \colon
  \mathbf{A} \rightarrow \mathbf{A}$ so that $\Phi(\Gamma) = \Delta$.

  Every automorphism $\Phi \in \Aut_\R(\mathbf{A})$ induces an
  automorphism of the $\R$-defined torus $\mathbf{A}/\mathbf{U} \cong
  \mathbf{T}$.    Hence, if $\Phi, \Psi \in \Aut_\R(\mathbf{A})$ belong to the same
  coset of $\Aut_\R(\mathbf{A})^1$, then $\Phi (\Gamma) U = \Psi
  (\Gamma) U$.  Hence $\lvert \mathcal{U}_\Gamma \rvert \leq c(\mathbf{A}) $.
\end{proof} 

According to Proposition~\ref{pro:Zdense} and
Corollary~\ref{cor:all_Zdense}, every lattice $\Delta$ in $G$ with
$\Delta \cong \Gamma$ is Zariski-dense in~$A$.  Since $\Nil(G)
\subseteq U$, we obtain a map
\begin{equation}\label{equ:NtoU}
  \mathcal{N}_{\Gamma,G} \rightarrow \mathcal{U}_\Gamma, \quad \Delta
  \Nil(G) \mapsto \Delta U.
\end{equation}

\begin{lem} \label{lem:setGamN} If $G$ is unipotently connected, then
  the map~\eqref{equ:NtoU} embeds $\mathcal{N}_{\Gamma,G}$ into
  $\mathcal{U}_\Gamma$.  In particular, $\mathcal{N}_{\Gamma,G}$ has
  finite size, bounded by~$c(\mathbf{A}) $.
\end{lem}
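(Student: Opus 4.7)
The plan is to verify injectivity of the map~\eqref{equ:NtoU} by recovering $\Delta\Nil(G)$ from $\Delta U$ via intersection with~$G$, and then invoke Lemma~\ref{lem:setGamU} for the size bound.

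First, I would record the key consequence of unipotent connectedness: by Corollary~\ref{cor:nil_and_fitt} and the identification $\Fitt(\Delta) = \up(\Delta) = \Delta \cap U$ from the beginning of Section~\ref{sect:Fitting}, we have
\[
\Nil(G) \;=\; \up(G) \;=\; G \cap U,
\]
for any lattice $\Delta$ in $G$ isomorphic to $\Gamma$ (the last equality requires $G$ unipotently connected but is otherwise independent of~$\Delta$).

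Next I would compute, for any such lattice $\Delta \subseteq G$,
\[
\Delta U \cap G \;=\; \Delta\,(U \cap G) \;=\; \Delta\,\Nil(G).
\]
The inclusion ``$\supseteq$'' is trivial since $\Delta \subseteq G$; for ``$\subseteq$'', if $\delta u = g$ with $\delta \in \Delta$, $u \in U$, $g \in G$, then $u = \delta^{-1}g \in G \cap U = \Nil(G)$. Consequently, if $\Delta_1, \Delta_2$ are lattices in $G$ isomorphic to $\Gamma$ with $\Delta_1 U = \Delta_2 U$, then intersecting with $G$ yields $\Delta_1 \Nil(G) = \Delta_2 \Nil(G)$. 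This shows that the map $\Delta \Nil(G) \mapsto \Delta U$ in~\eqref{equ:NtoU} is well defined and injective.

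Finally, since any lattice $\Delta$ in $G$ with $\Delta \cong \Gamma$ is Zariski-dense in~$A$ by Proposition~\ref{pro:Zdense} and Corollary~\ref{cor:all_Zdense}, the image of~\eqref{equ:NtoU} lies in $\mathcal{U}_\Gamma$. Lemma~\ref{lem:setGamU} then provides the bound $\lvert \mathcal{N}_{\Gamma,G} \rvert \leq \lvert \mathcal{U}_\Gamma \rvert \leq c(\mathbf{A})$. There is no real obstacle beyond carefully using unipotent connectedness in the form $\Nil(G) = G \cap U$; this is precisely what allows the modular-law style argument $\Delta U \cap G = \Delta(U \cap G)$ to recover $\Delta\Nil(G)$ from~$\Delta U$.
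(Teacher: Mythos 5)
Your proof is correct and takes essentially the same route as the paper's: both arguments come down to the identity $\Nil(G) = \up(G) = G \cap U$ supplied by unipotent connectedness, which the paper phrases as injectivity of the induced homomorphism $G/\Nil(G) \to A/U \cong T$ and you phrase as the equivalent intersection formula $\Delta U \cap G = \Delta\,\Nil(G)$. The finiteness bound is then read off from Lemma~\ref{lem:setGamU} in both cases.
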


\begin{proof} 
  The projection $\mathbf{A} \rightarrow \mathbf{T}$ associated to the
  decomposition $\mathbf{A} = \mathbf{U} \rtimes \mathbf{T}$ restricts
  to a homomorphism $\tau \colon G \rightarrow T$ with $\Nil(G)
  \subseteq \ker \tau$.  If $G$ is unipotently connected, then $\ker
  \tau = \Nil(G)$ by Lemma~\ref{lem:Gtau_properties}, and hence the
  induced homomorphism
  $$ 
  G/\Nil(G) \rightarrow T \cong A/U
  $$
  is injective.  Therefore, the map~\eqref{equ:NtoU} is injective, and
  Lemma~\ref{lem:setGamU} can be applied.
\end{proof} 

\begin{lem} \label{lem:cGA} If $G$ is unipotently connected, then $c(G) \leq c(\mathbf{A}) $.
\end{lem}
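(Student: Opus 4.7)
The plan is to exhibit a natural injection
\[
\Aut(G)/\Aut(G)^1 \;\hookrightarrow\; \Aut_\R(\mathbf{A})/\Aut_\R(\mathbf{A})^1,
\]
from which the bound $c(G) \leq c(\mathbf{A})$ follows immediately. To do this I would use the extension-of-automorphisms machinery already established in the excerpt together with the characterisation of unipotent connectedness.

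First, since $\Gamma \subseteq G$ is a Zariski-dense lattice and $\mathbf{A} = \mathbf{A}_G = \mathbf{A}_\Gamma$ (Corollary~\ref{cor:AGamisAG}), the Extension Lemma~\ref{lem:extension} together with Proposition~\ref{pro:ext_unique} yields a well-defined injective homomorphism
\[
\textup{ext}\colon \Aut(G) \longrightarrow \Aut_\R(\mathbf{A}), \qquad \phi \longmapsto \Phi.
\]
The core claim is that, under this embedding,
\[
\Aut(G)^1 \;=\; \Aut(G) \cap \Aut_\R(\mathbf{A})^1.
\]
Once this equality is in place, the injection of quotients follows by the standard isomorphism theorem, and then $c(G) = |\Aut(G):\Aut(G)^1| \leq |\Aut_\R(\mathbf{A}):\Aut_\R(\mathbf{A})^1| = c(\mathbf{A})$.

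To prove the equality I would argue both inclusions using unipotent connectedness. For $\Aut(G)^1 \subseteq \Aut(G) \cap \Aut_\R(\mathbf{A})^1$: let $\phi \in \Aut(G)^1$ with extension $\Phi$. As in the proof of Lemma~\ref{lem:setGamN}, the projection $\mathbf{A} \to \mathbf{A}/\mathbf{U} \cong \mathbf{T}$ restricts on $G$ to a homomorphism with kernel $G \cap U = \up(G)$, and since $G$ is unipotently connected, $\up(G) = \Nil(G)$. Thus $G/\Nil(G)$ embeds into $\mathbf{T}$, and, since $G$ is Zariski-dense in $\mathbf{A}$, its image is Zariski-dense in $\mathbf{T}$. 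The extension $\Phi$ induces an $\R$-defined automorphism of $\mathbf{T}$; because $\phi$ acts trivially on $G/\Nil(G)$, this induced automorphism fixes a Zariski-dense subset of $\mathbf{T}$, hence equals the identity. So $\Phi \in \Aut_\R(\mathbf{A})^1$.

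For the reverse inclusion, suppose $\phi \in \Aut(G)$ has extension $\Phi \in \Aut_\R(\mathbf{A})^1$, so $\Phi$ acts trivially on $\mathbf{T} = \mathbf{A}/\mathbf{U}$. Then $\Phi$ acts trivially on the subgroup $G\mathbf{U}/\mathbf{U} \cong G/(G\cap \mathbf{U}) = G/\up(G)$, which equals $G/\Nil(G)$ by unipotent connectedness. Therefore $\phi$ acts trivially on $G/\Nil(G)$, i.e., $\phi \in \Aut(G)^1$. No step here is a serious obstacle; the only point requiring care is the identification $\up(G) = \Nil(G)$, which is precisely the hypothesis of unipotent connectedness and is used twice.
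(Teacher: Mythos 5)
Your proof is correct and follows essentially the same route as the paper: both arguments rest on the fact that, by unipotent connectedness, $G/\Nil(G)\to T\cong A/U$ is injective with Zariski-dense image, which makes the induced map $\Aut(G)/\Aut(G)^1\to\Aut_\R(\mathbf{A})/\Aut_\R(\mathbf{A})^1$ well defined (Zariski-density) and injective (injectivity of $G/\Nil(G)\to T$). You have merely spelled out the details that the paper's one-line proof leaves implicit.
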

\begin{proof} 
  Since the homomorphism
  $
  G/\Nil(G)   \rightarrow T \cong A/U
  $
  is injective and has Zariski-dense image
  extension of automorphisms   $$ \Aut(G) /\Aut(G)^1 \, \longrightarrow \, \Aut(A)/ \Aut(A)^1$$ is well defined and an  injective map.
\end{proof}

We reformulate these results in a slightly different way.  Recall that
every lattice $\Delta$ in~$G$ maps to a lattice $ \Delta
\Nil(G)/\Nil(G)$ in the vector group $V= G/\Nil(G)$. Two lattices in
$V$ are called commensurable if they have a common finite index
subgroup.  Equivalently, they are commensurable if and only if they
span the same $\Q$-vector space in~$V$.

\begin{cor} Let $V = G/\Nil(G)$ and let $\eta \colon G \rightarrow V$
  denote the natural projection.  Let $\pi \colon \mathbf{A}
  \rightarrow \mathbf{T}$ denote the projection associated to the
  decomposition $\mathbf{A} = \mathbf{U} \rtimes \mathbf{T}$.  Then
  the following hold.
  \begin{enumerate}
  \item Let $\Delta_{1}, \Delta_{2}$ be lattices in $G$ which are
    isomorphic to~$\Gamma$.  Then their images $\eta(\Delta_{1}),
    \eta(\Delta_{2})$ in $V$ are commensurable if their images
    $\pi(\Delta_1), \pi(\Delta_2)$ in $\mathbf{T}$ are equal.
    \item The set $\mathcal{V}_{\Gamma,G} = \{ \eta(\Delta) \leq V \mid
    \Delta \subseteq G \text{ a lattice with } \Delta \cong \Gamma \}$
    falls into finitely many commensurability classes; in particular,
    it is countable.
  \item If $G$ is unipotently connected, then the set
    $\mathcal{V}_{\Gamma,G}$ is finite.
  \end{enumerate}
\end{cor}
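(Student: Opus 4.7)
The plan is to derive (1), (2), and (3) in order, exploiting the chain of subgroups $U \supseteq \up(G) \supseteq \Nil(G)$ together with the finiteness statements already proved for $\mathcal{U}_\Gamma$ and $\mathcal{N}_{\Gamma,G}$.

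For (1), I would start from the hypothesis $\pi(\Delta_1) = \pi(\Delta_2)$, which translates to $\Delta_1 U = \Delta_2 U$ because $\ker \pi = U$. Intersecting with $G$ and using $U \cap G = \up(G)$ then gives $\Delta_1 \up(G) = \Delta_2 \up(G)$, so both images $\eta(\Delta_i)$ lie inside the common subgroup $W := \Delta_i \up(G)/\Nil(G)$ of $V$. Proposition~\ref{pro:fittislattice} together with Mostow's theorem (applied to $\Delta_i \cap \Nil(G)$) shows that each $\eta(\Delta_i)$ is a full lattice in $V$ of rank $\dim G - \dim \Nil(G) = \dim V$. Since $\up(G)/\Nil(G)$ is a finitely generated discrete subgroup of $V$, so is $W$; being torsion-free of $\Q$-rank $\dim V$, it contains each $\eta(\Delta_i)$ as a subgroup of finite index, which gives commensurability.

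For (2), part (1) implies that the map $\Delta \mapsto \pi(\Delta)$ factors through commensurability classes of $\mathcal{V}_{\Gamma,G}$, so the set of classes injects into the set of distinct $\pi(\Delta)$, which is in bijection with $\{\Delta U \mid \Delta \cong \Gamma \text{ a lattice in } G\} \subseteq \mathcal{U}_\Gamma$. By Lemma~\ref{lem:setGamU} this set is finite and bounded by $c(\mathbf{A})$. Each commensurability class of full lattices in the finite-dimensional real vector space $V$ is countable (every lattice commensurable with a fixed $L$ lies in the countable $\Q$-subspace $L \otimes \Q$ and is determined by finitely many generators), so $\mathcal{V}_{\Gamma,G}$ itself is countable. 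For (3), the assumption that $G$ is unipotently connected forces $\Nil(G) = \up(G)$, whereupon the map $\mathcal{N}_{\Gamma,G} \to \mathcal{V}_{\Gamma,G}$ defined by $\Delta \Nil(G) \mapsto \eta(\Delta)$ is a tautological bijection; Lemma~\ref{lem:setGamN} then delivers the desired finiteness.

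The main obstacle is the careful bookkeeping in (1), where one must smoothly pass between the three related subgroups $U$, $\up(G)$, and $\Nil(G)$ and verify that the intermediate group $W$ really is a finitely generated abelian group of the correct rank in order to conclude the finite-index statement. The remaining ingredients are essentially repackagings of results already at hand.
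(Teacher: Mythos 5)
Your overall strategy coincides with the paper's: part (1) is reduced to the equality $\Delta_1\up(G)=\Delta_2\up(G)$ and to comparing $\eta(\Delta_i)$ with $\eta(\Delta_i\up(G))$, part (2) combines (1) with Lemma~\ref{lem:setGamU}, and part (3) identifies $\mathcal{V}_{\Gamma,G}$ with $\mathcal{N}_{\Gamma,G}$ and quotes Lemma~\ref{lem:setGamN}. Parts (2) and (3) are correct as written (and your explicit remark that each commensurability class of lattices in $V$ is countable is a useful addition the paper leaves implicit).

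There is, however, a gap in your treatment of (1). You assert that $W=\eta(\Delta_i)\cdot\eta(\up(G))$ is a finitely generated \emph{discrete} subgroup of $V$ ``since $\up(G)/\Nil(G)$ is'', and that $W$ has $\Q$-rank $\dim V$. Neither follows formally: the product of two discrete subgroups of a vector group need not be discrete (compare $\Z+\sqrt{2}\,\Z$ in $\R$), and a finitely generated torsion-free subgroup of $V$ can have rank strictly larger than $\dim V$; the rank claim is exactly what must be proved. The missing input is part (3) of Proposition~\ref{pro:fittislattice}: $\Fitt(\Delta_i)$ is a lattice in $\up(G)$, so $\eta(\Fitt(\Delta_i))$ has finite index in the discrete group $\eta(\up(G))$; since $\eta(\Fitt(\Delta_i))\subseteq\eta(\Delta_i)$, the group $W$ is a union of finitely many cosets of the lattice $\eta(\Delta_i)$, hence is itself a lattice containing each $\eta(\Delta_i)$ with finite index. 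This is precisely how the paper argues (phrased there as: $\eta(\Delta)$ and $\eta(\Delta\up(G))$ are commensurable because $\eta(\Fitt(\Delta))$ and $\eta(\up(G))$ are). You do cite Proposition~\ref{pro:fittislattice}, but only to conclude that $\eta(\Delta_i)$ is a lattice in $V$ --- which already follows from Mostow's theorem --- so the one place where that proposition is genuinely needed is left unsupported. With that sentence repaired, your proof is complete and agrees with the paper's.
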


\begin{proof} 
  Let $\Delta$ be a lattice in $G$ which is isomorphic to~$\Gamma$.
  Proposition~\ref{pro:fittislattice} shows that $\Fitt(\Delta)$ is
  cocompact in $\up(G)$, hence $\eta(\Fitt(\Delta))$ and
  $\eta(\up(G))$ are commensurable subgroups of~$V$.  Therefore
  $\eta(\Delta)$ and $\eta(\Delta \up(G))$ are commensurable.  Since
  $\up(G) = G \cap \ker \pi$, this implies~(1).

  By Lemma~\ref{lem:setGamU}, the set $\{ \pi(\Delta) \mid \Delta
  \subseteq G \text{ a lattice with } \Delta \cong \Gamma \}$ which
  naturally embeds into $\mathcal{U}_\Gamma$ is finite.  Therefore, by
  (1), the set $\mathcal{V}_{\Gamma,G}$ consists of finitely many
  commensurability classes.  Hence (2) holds.

  Clearly, the set $\mathcal{V}_{\Gamma,G}$ admits a one-to-one
  correspondence to the set~$\mathcal{N}_{\Gamma,G}$. Thus (3) is a
  direct consequence of Lemma~\ref{lem:setGamN}.
\end{proof}

Next we consider the set
\begin{equation*}
  \widetilde{\mathcal{G}}_{\Gamma,G} = \{ \Phi(G) \mid \Phi \in
  \Aut_\R(\mathbf{A}) \text{ such that } \Phi(\Gamma) \subseteq G \}
\end{equation*}
of subgroups of $A$, viz.\ the image of the map described
in~\eqref{equ:nat_map}.  We are also interested in its subset
$$
\widetilde{\mathcal{G}}_{\Gamma,G}^\textup{Nil} = \{ H \in
\widetilde{\mathcal{G}}_{\Gamma,G} \mid \Nil(H) = \Nil(G) \} \; .
$$

\begin{pro} \label{pro:setofPhiG_N} If $G$ is unipotently connected,
  then $\widetilde{\mathcal{G}}_{\Gamma,G}^\textup{Nil}$ is finite,
  bounded in size by~$c(\mathbf{A}) ^2$.
\end{pro}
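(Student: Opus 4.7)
The plan is to exhibit an injection
\[
  \widetilde{\mathcal{G}}_{\Gamma,G}^\textup{Nil} \hookrightarrow \mathcal{U}_{\Gamma} \times \mathcal{N}_{\Gamma,G},
\]
so that the bound $c(\mathbf{A})^2$ follows at once from Lemma~\ref{lem:setGamU} and Lemma~\ref{lem:setGamN}, which together give $|\mathcal{U}_{\Gamma}| \cdot |\mathcal{N}_{\Gamma,G}| \leq c(\mathbf{A})^2$.

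To set the map up, introduce the auxiliary set
\[
  \mathcal{S}^\textup{Nil} = \{\Phi \in \Aut_\R(\mathbf{A}) : \Phi(\Gamma) \subseteq G,\ \Phi(N) = N\},
\]
where $N = \Nil(G)$. Every element of $\widetilde{\mathcal{G}}_{\Gamma,G}^\textup{Nil}$ is of the form $\Phi(G)$ for some $\Phi \in \mathcal{S}^\textup{Nil}$: indeed $H = \Phi(G)$ has $\Nil(H) = \Phi(\Nil(G)) = \Phi(N)$, which equals $N$ precisely under the second condition. For such $\Phi$, the image $\Delta := \Phi(\Gamma)$ is isomorphic to $\Gamma$, Zariski-dense in $A$ (by Corollary~\ref{cor:all_Zdense} and Proposition~\ref{pro:Zdense}), and of rank $\dim G$, so by Lemma~\ref{lem:criterion_lattice} it is a lattice in $G$. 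Consequently $\Delta U \in \mathcal{U}_\Gamma$ and $\Delta N \in \mathcal{N}_{\Gamma,G}$, giving a natural candidate map $\Phi \mapsto (\Phi(\Gamma) U, \Phi(\Gamma) N)$.

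The key step is to verify that this map descends to a well-defined injection on $\widetilde{\mathcal{G}}_{\Gamma,G}^\textup{Nil}$; i.e., that for $\Phi_1, \Phi_2 \in \mathcal{S}^\textup{Nil}$,
\[
  \Phi_1(\Gamma)U = \Phi_2(\Gamma)U \quad \text{and} \quad \Phi_1(\Gamma)N = \Phi_2(\Gamma)N \; \Longrightarrow \; \Phi_1(G) = \Phi_2(G).
\]
To approach this, I would set $\alpha = \Phi_2^{-1}\Phi_1 \in \Aut_\R(\mathbf{A})$ and translate the hypotheses into $\alpha(\Gamma) U = \Gamma U$, $\alpha(\Gamma) N = \Gamma N$, and $\alpha(N) = N$. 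The goal reduces to showing $\alpha(G) = G$. Here I would exploit unipotent connectedness: by Corollary~\ref{cor:nil_and_fitt}, $N = \ac{\Fitt(\Gamma)}$ depends only on $\Gamma$ and is preserved, and by Lemma~\ref{lem:desG} the group $G$ is the tight Lie subgroup $G_{\sigma_G}$ for a Lie homomorphism $\sigma_G : U \to T$ with $\ker \sigma_G = N$ (using part~(4) of Lemma~\ref{lem:Gtau_properties}). The two conditions on $\alpha(\Gamma)$ force $\sigma_G$ and $\sigma_{\alpha(G)}$ to agree on the Zariski-dense image of $\Gamma$ in $U/N$ modulo suitable identifications, and by Lemma~\ref{lem:udense} together with Lemma~\ref{lem:extension} this agreement propagates to all of $U$, yielding $\alpha(G) = G_{\sigma_G} = G$.

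The main obstacle will be this last inference: passing cleanly from the conditions on $\alpha(\Gamma) U$ and $\alpha(\Gamma) N$ to the identity $\alpha(G) = G$. The delicate point is that $\alpha$ need not lie in $\Aut_\R(\mathbf{A})^1$, so $\bar\alpha \in \Aut_\R(\mathbf{T})$ may be nontrivial; it only has to preserve the lattice $\pi(\Gamma)$ in $T$, not fix it pointwise. The unipotent connectedness of $G$ is decisive precisely because it makes $\pi$ restrict to an injection $G/N \hookrightarrow T$ (Lemma~\ref{lem:setGamN}), so that the pair $(\Gamma U, \Gamma N)$ determines $\Gamma$ modulo $N$ inside $G$, and hence, via the tight-subgroup description and the Zariski-density of $\Gamma$ in $N$ (Proposition~\ref{pro:fittislattice}), determines $G$ itself inside $A$.
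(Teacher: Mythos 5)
The key implication on which your injection rests is false, and with it the whole approach. Take Example~\ref{exm:factorial} with $n=1$: $G = G_0\times G_1$ and $\Gamma=\Gamma_0\times\Gamma_1$ with $\Gamma_0\cong\Gamma_1$ but $G_0\not\cong G_1$. Let $\Phi_1=\id$ and let $\Phi_2=\Phi_{(01)}$ be the extension to $\mathbf{A}$ of the factor-swapping embedding $\phi_{(01)}\in\mathcal{X}(\Gamma,G)$. Then $\Phi_2(\Gamma)=\Gamma$ and $\Phi_2(N)=N$ (it interchanges $V_0$ and $V_1$), so both $\Phi_i$ lie in your set $\mathcal{S}^{\textup{Nil}}$ and produce the \emph{same} pair $(\Gamma U,\Gamma N)$; yet $\Phi_2(G)\neq G$, since otherwise $\Phi_2\vert_G$ would be an automorphism of $G$ extending $\phi_{(01)}$, contradicting $\lvert\mathcal{D}(\Gamma,G)\rvert=2$. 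So the pair $(\Phi(\Gamma)U,\Phi(\Gamma)N)$ does not determine $\Phi(G)$. There is also a structural reason why passing to the product $\mathcal{U}_\Gamma\times\mathcal{N}_{\Gamma,G}$ cannot help: for $G$ unipotently connected, Lemma~\ref{lem:setGamN} says precisely that $\Delta N\mapsto\Delta U$ is well defined and injective, so your first coordinate is a function of the second, and the proposed injection would in fact prove the sharper bound $c(\mathbf{A})$ rather than $c(\mathbf{A})^2$ --- a bound the example shows this invariant cannot deliver.

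Your closing paragraph correctly isolates the obstruction --- $\alpha=\Phi_2^{-1}\Phi_1$ need not lie in $\Aut_\R(\mathbf{A})^1$, so $\bar\alpha$ only permutes $\pi(\Gamma)$ rather than fixing it pointwise --- but unipotent connectedness does not remove it: in the example $\bar\alpha$ swaps the two torus factors and genuinely moves $G$ while preserving $\Gamma$, $U$, $N$, $\Gamma U$ and $\Gamma N$. That residual ambiguity is exactly where the second factor of $c(\mathbf{A})$ must come from. The paper's proof therefore uses only the map $\Phi(G)\mapsto\Phi(\Gamma)F$ into $\mathcal{N}_{\Gamma,G}$ (image of size at most $c(\mathbf{A})$ by Lemma~\ref{lem:setGamN}) and then bounds each \emph{fibre} by $c(\mathbf{A})$: the automorphisms $\Phi$ with $\Phi(\Gamma F)=\Gamma F$ and $\Phi(F)=F$ form a subgroup of $\Aut_\R(\mathbf{A})$, and its intersection with $\Aut_\R(\mathbf{A})^1$, of index at most $c(\mathbf{A})$, stabilises $G$ via the $\Phi$-equivariant identification $\Gamma U/U\cong\Gamma F/F$ and Zariski-density of $\Gamma$. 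Replacing your injectivity claim by this stabiliser--orbit count is the missing step.
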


\begin{proof}
  Suppose that $G$ is unipotently connected.  By
  Corollary~\ref{cor:nil_and_fitt}, the groups $\Fitt(\Gamma)$ and
  $\Nil(G)$ have the same Zariski-closure $\mathbf{F} =
  \ac{\Fitt(\Gamma)} = \ac{\Nil(G)}$ in $\mathbf{A}$, and $F =
  \mathbf{F}_\R = \Nil(G)$.  By Lemma~\ref{lem:setGamN} the set
  $\mathcal{N}_{\Gamma,G}$ is finite, of size bounded by~$c(\mathbf{A}) $.
  Hence it suffices to show that the set 
   \begin{equation} \label{eqn:orbit1}
    \{ \Phi(G) \mid \Phi \in \Aut_\R(\mathbf{A}) \text{ such that }
    \Phi(\Gamma F) = \Gamma F \text{ and } \Phi(F) = F \}
  \end{equation}
  is finite, of size bounded by~$c(\mathbf{A}) $.

  The collection of all $\Phi \in \Aut_\R(\mathbf{A})$ with
  $\Phi(\Gamma F) = \Gamma F$ and $\Phi(F) = F$ forms a subgroup
  $\Aut_{\R}(\mathbf{A},\Gamma F)$ of~$\Aut_\R(\mathbf{A})$.  Let
  $\Phi \in \Aut_{\R}(\mathbf{A}, \Gamma F) \cap
  \Aut_{\R}(\mathbf{A})^1$.  Then $\Phi$
  induces the identity morphism on~$\mathbf{A}/\mathbf{U}$.  We
  observe that
  $$
  \Gamma U/U \cong \Gamma/(\Gamma \cap U) = \Gamma/\Fitt(\Gamma) =
  \Gamma/(\Gamma \cap F) \cong \Gamma F/F
  $$
  and that this chain is $\Phi$-equivariant.  Since $\Phi$ acts
  trivially on $\Gamma U / U \subseteq A/U$, it acts trivially
  on~$\Gamma F/F$.  Since $\Gamma$ is Zariski-dense in $\mathbf{A}$,
  this shows that $\Phi$ induces the identity morphism
  on~$\mathbf{A}/\mathbf{F}$.   We infer that $\Phi(G) \subseteq G F =G$.  This shows
  that the set \eqref{eqn:orbit1} is finite and bounded in size
  by~$c(\mathbf{A}) $.
\end{proof}

\begin{lem} \label{lem:N=G_with_tilde} If $G$ is unipotently
  connected, then $\widetilde{\mathcal{G}}_{\Gamma,G}^\textup{Nil} =
  \widetilde{\mathcal{G}}_{\Gamma,G}$.
\end{lem}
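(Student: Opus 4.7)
The plan is to show that for every $\Phi \in \Aut_\R(\mathbf{A})$ with $\Phi(\Gamma) \subseteq G$ the subgroup $H := \Phi(G)$ satisfies $\Nil(H) = \Nil(G)$; this is the non-trivial inclusion $\widetilde{\mathcal{G}}_{\Gamma,G} \subseteq \widetilde{\mathcal{G}}_{\Gamma,G}^\textup{Nil}$, while the reverse inclusion holds by definition. The main tool is Corollary~\ref{cor:nil_and_fitt}, which, since $G$ is unipotently connected, identifies $\Nil(G)$ with $F := \ac{\Fitt(\Gamma)}$, the Zariski-closure in $\mathbf{A}$ of the Fitting subgroup of $\Gamma$. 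Once I show $\Phi(F) = F$, the conclusion follows at once: because $\Phi|_G \colon G \to H$ is a Lie group isomorphism and the nilradical is a characteristic Lie subgroup, $\Nil(H) = \Phi(\Nil(G)) = \Phi(F) = F = \Nil(G)$.

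To establish the $\Phi$-invariance of $F$, put $\Delta := \Phi(\Gamma) \subseteq G$. As $\Delta$ is a discrete subgroup of rank $\rank\Gamma = \dim G$, Lemma~\ref{lem:criterion_lattice} shows that $\Delta$ is a lattice in $G$, and Corollary~\ref{cor:all_Zdense} then forces $\Delta$ to be Zariski-dense in $G$, since $\Gamma$ is. A second application of Corollary~\ref{cor:nil_and_fitt}, this time to the pair $(\Delta,G)$, yields $\ac{\Fitt(\Delta)} = \Nil(G) = F$ as well. Since the Fitting subgroup is characteristic in any abstract group and $\Phi|_\Gamma \colon \Gamma \to \Delta$ is an isomorphism, $\Phi(\Fitt(\Gamma)) = \Fitt(\Delta)$. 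Because algebraic automorphisms commute with taking Zariski-closures, one concludes
\[
\Phi(F) \; = \; \Phi(\ac{\Fitt(\Gamma)}) \; = \; \ac{\Phi(\Fitt(\Gamma))} \; = \; \ac{\Fitt(\Delta)} \; = \; F.
\]

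There is no deep obstacle here; the only point that deserves a moment of care is the appeal to Corollary~\ref{cor:all_Zdense}, which transports Zariski-density from $\Gamma$ to $\Delta$ and is precisely what legitimises the second use of Corollary~\ref{cor:nil_and_fitt}. The rest is formal, relying only on the functoriality of $\Fitt$ under group isomorphisms and of Zariski-closure under algebraic automorphisms.
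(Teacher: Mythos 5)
Your proof is correct. It rests on the same underlying fact as the paper's argument, namely the identification (Corollary~\ref{cor:nil_and_fitt}) of $\Nil(G)$ with the Zariski-closure of the Fitting subgroup of any lattice when $G$ is unipotently connected, but you package it differently: the paper first observes that $H=\Phi(G)$ is again unipotently connected (via $\Phi(G\cap U)=H\cap U$) and then invokes Proposition~\ref{pro:nil=nil} for the pair $G,H\in\mathcal{G}(\Delta)$ in both directions, whereas you bypass Proposition~\ref{pro:nil=nil} entirely and instead prove directly that $\Phi$ stabilises $F=\ac{\Fitt(\Gamma)}=\Nil(G)$, using that $\Fitt$ is characteristic and that $\R$-defined algebraic automorphisms commute with Zariski-closure. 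Your route has the small advantages of never needing to know that $H$ is unipotently connected and of making explicit the point (glossed over in the paper's definition of $\widetilde{\mathcal{G}}_{\Gamma,G}$) that $\Phi(\Gamma)\subseteq G$ already forces $\Phi(\Gamma)$ to be a Zariski-dense lattice in $G$, via Lemma~\ref{lem:criterion_lattice} and Corollary~\ref{cor:all_Zdense}; the paper's route, by contrast, isolates the asymmetric statement $\Nil(H)\subseteq\Nil(G)$ as a reusable proposition. Both are sound, and the equality $\Phi(F)=F$ you establish is in effect the same mechanism that powers the paper's Proposition~\ref{pro:setofPhiG_N} immediately before this lemma.
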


\begin{proof}
  Suppose that $G$ is unipotently connected.  We need to show that
  $\widetilde{\mathcal{G}}_{\Gamma,G}^\textup{Nil} \supseteq
  \widetilde{\mathcal{G}}_{\Gamma,G}$.  Let $H \in
  \widetilde{\mathcal{G}}_{\Gamma,G}$.  Then $H = \Phi(G)$ for $\Phi
  \in \Aut_\R(\mathbf{A})$ such that $\Delta = \Phi(\Gamma)$ is a
  lattice in $G$ and in~$H$.  Recall that $\mathbf{A}$ is an algebraic
  hull of~$G$.  Thus $\mathbf{A}$ is an algebraic hull also of
  $\Delta$ and of~$H$.  We observe that $\Phi(G \cap U) = H \cap U$.
  Since $G$ is unipotently connected, so is~$H$.  Thus
  Proposition~\ref{pro:nil=nil}, applied to $G,H \in
  \mathcal{G}(\Delta)$, yields $\Nil(H) = \Nil(G)$.
\end{proof}
 
From Proposition~\ref{pro:setofPhiG_N} and
Lemma~\ref{lem:N=G_with_tilde} we deduce the following.
 
\begin{cor} \label{cor:setofPhiG} If $G$ is unipotently connected,
  then $\widetilde{\mathcal{G}}_{\Gamma,G}$ is finite, and bounded in size
  by~$c(\mathbf{A}) ^2$.
\end{cor}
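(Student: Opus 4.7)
The plan is simply to chain together the two immediately preceding results, since Corollary~\ref{cor:setofPhiG} is by design the final combination. First I would invoke Lemma~\ref{lem:N=G_with_tilde}: the hypothesis that $G$ is unipotently connected yields the equality $\widetilde{\mathcal{G}}_{\Gamma,G} = \widetilde{\mathcal{G}}_{\Gamma,G}^{\textup{Nil}}$. Then I would apply Proposition~\ref{pro:setofPhiG_N}, which asserts that under the same hypothesis $\widetilde{\mathcal{G}}_{\Gamma,G}^{\textup{Nil}}$ has cardinality bounded by $c(\mathbf{A})^{2}$. Chaining these two statements gives precisely the claimed bound on $\lvert \widetilde{\mathcal{G}}_{\Gamma,G} \rvert$.

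There is no genuine obstacle remaining at this stage: all substantive content has been absorbed into the two preceding results, so no further computation is needed. The strategic point worth emphasising in the write-up is the clean division of labour between them. Lemma~\ref{lem:N=G_with_tilde} uses Proposition~\ref{pro:nil=nil} (applied to the pair $G, H = \Phi(G)$, both lying in $\mathcal{G}(\Phi(\Gamma))$ and both unipotently connected) to conclude that every image $\Phi(G)$ arising from an $\R$-defined algebraic automorphism with $\Phi(\Gamma) \subseteq G$ automatically satisfies $\Nil(\Phi(G)) = \Nil(G)$. Proposition~\ref{pro:setofPhiG_N} in turn counts such images by factoring through $\mathcal{N}_{\Gamma,G}$, whose size is controlled by $c(\mathbf{A})$ via Lemma~\ref{lem:setGamN}, and through the stabiliser $\Aut_\R(\mathbf{A},\Gamma F) \cap \Aut_\R(\mathbf{A})^{1}$, whose contribution is again controlled by $c(\mathbf{A})$.

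Therefore the proof reduces to one line: by Lemma~\ref{lem:N=G_with_tilde} and Proposition~\ref{pro:setofPhiG_N},
\[
\lvert \widetilde{\mathcal{G}}_{\Gamma,G} \rvert = \lvert \widetilde{\mathcal{G}}_{\Gamma,G}^{\textup{Nil}} \rvert \leq c(\mathbf{A})^{2},
\]
which is the desired conclusion.
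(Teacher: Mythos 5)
Your proposal is correct and coincides exactly with the paper's own argument: the corollary is stated there as a direct consequence of Proposition~\ref{pro:setofPhiG_N} and Lemma~\ref{lem:N=G_with_tilde}, chained precisely as you describe.
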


\medskip 

\begin{proof}[Proof of Theorem~\ref{thm:MainA}]
  By Corollary~\ref{cor:HomsToGroups}, the natural map
  \eqref{equ:nat_map_mod_Auto} from $ \Aut(G)^1 \backslash
  \mathcal{X}(\Gamma,G)$ onto $\widetilde{\mathcal{G}}_{\Gamma,G}$ has
  finite fibres, bounded in size by~$c(G)$.  By
  Corollary~\ref{cor:setofPhiG}, the image of
  \eqref{equ:nat_map_mod_Auto} has at most $c(\mathbf{A}) ^2$ elements.  Hence,
  $\Aut(G)^1\backslash \mathcal{X}(\Gamma,G)$ and therefore
  also its quotient $\Aut(G) \backslash \mathcal{X}(\Gamma,G)$ have at
  most $c(\mathbf{A}) ^2 c(G)$ elements. 
\end{proof}

\begin{pro} The constants $c(G)$ and $c(\mathbf{A}) $ are bounded by a constant which depends
  only on~$\dim \Nil(G)$.
\end{pro}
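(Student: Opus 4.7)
The plan is to first bound $c(\mathbf{A})$ and then deduce the bound on $c(G)$ via Lemma~\ref{lem:cGA}, using the unipotent connectedness of $G$ carried through the proof of Theorem~\ref{thm:MainA}. Since $c(\mathbf{A})$ equals the size of the image of the natural homomorphism $\Aut_\R(\mathbf{A}) \to \Aut_\R(\mathbf{T})$ induced on the quotient torus $\mathbf{T} = \mathbf{A}/\mathbf{U}$, I will bound this image by showing that it acts faithfully on a controlled finite set of characters.

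Let $\mathfrak{u}$, $\mathfrak{t}$, $\mathfrak{n}$ denote the Lie algebras of $\mathbf{U}$, $\mathbf{T}$, and $\Nil(G)$, respectively, and consider the weight decomposition $\mathfrak{u} = \bigoplus_{\chi \in \Xi} \mathfrak{u}_\chi$ under the adjoint $\mathbf{T}$-action, where $\Xi \subset X^*(\mathbf{T})$ is the set of weights that occur. A direct compatibility check shows that any $\Phi \in \Aut_\R(\mathbf{A})$ satisfies $d\Phi_\mathbf{U}(\mathfrak{u}_\chi) = \mathfrak{u}_{\chi \circ \Phi_\mathbf{T}^{-1}}$, so the induced automorphism $\Phi_\mathbf{T}$ of $\mathbf{T}$ permutes $\Xi$ and fixes the trivial character. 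The strong unipotent radical assumption on $\mathbf{A}$ forces the $\mathbf{T}$-action on $\mathfrak{u}$ to be faithful, so the nontrivial weights $\Xi_{\neq 0} = \Xi \setminus \{0\}$ must span $X^*(\mathbf{T}) \otimes \Q$. Since $X^*(\mathbf{T})$ is torsion-free, $\Phi_\mathbf{T}$ is determined by its permutation action on $\Xi_{\neq 0}$, and one obtains an injection of the image of $\Aut_\R(\mathbf{A}) \to \Aut_\R(\mathbf{T})$ into $\mathrm{Sym}(\Xi_{\neq 0})$.

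The crucial step is to bound $|\Xi_{\neq 0}|$ in terms of $\dim \Nil(G)$. Using Lemma~\ref{lem:desG}, I would write $G = G_\sigma$ for a Lie homomorphism $\sigma \colon U \to T$ with $[U,T] \subseteq \ker \sigma$. Differentiating translates this to $[\mathfrak{t},\mathfrak{u}] \subseteq \ker d\sigma$, and a short weight space computation shows $[\mathfrak{t},\mathfrak{u}] = \bigoplus_{\chi \neq 0} \mathfrak{u}_\chi$. Since $\ker d\sigma$ is precisely the Lie algebra of the closed subgroup $\ker \sigma = \up(G)$, which coincides with $\mathfrak{n}$ because $\Nil(G) = \up(G)_\circ$, we obtain the key inclusion $\bigoplus_{\chi \neq 0} \mathfrak{u}_\chi \subseteq \mathfrak{n}$. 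Consequently $|\Xi_{\neq 0}| \leq \sum_{\chi \neq 0} \dim \mathfrak{u}_\chi \leq \dim \mathfrak{n} = \dim \Nil(G)$, giving $c(\mathbf{A}) \leq (\dim \Nil(G))!$ and hence $c(G) \leq c(\mathbf{A}) \leq (\dim \Nil(G))!$. The delicate part of the argument is pinning down the inclusion $\bigoplus_{\chi \neq 0} \mathfrak{u}_\chi \subseteq \mathfrak{n}$, which expresses that the Lie algebra of the nilradical of $G$ absorbs all nontrivial $\mathbf{T}$-weight spaces of $\mathfrak{u}$.
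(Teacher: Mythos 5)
Your argument is correct, but it bounds $c(\mathbf{A})$ by a genuinely different route than the paper. The paper's proof is shorter: it notes that $c(\mathbf{A})$ is the order of a finite subgroup of $\Aut_\R(\mathbf{T})\cong\GL(n,\Z)$ with $n=\dim\mathbf{T}$, invokes Minkowski's classical bound on finite subgroups of $\GL(n,\Z)$, and then bounds $n$ by $\dim\Nil(G)$ using the fact that $\mathbf{T}$ acts faithfully on the complexified Lie algebra of the nilradical of $G$. Your weight-space argument replaces the appeal to Minkowski by an explicit embedding of the image of $\Aut_\R(\mathbf{A})\rightarrow\Aut_\R(\mathbf{T})$ into $\mathrm{Sym}(\Xi_{\neq 0})$, and your key inclusion $\bigoplus_{\chi\neq 0}\mathfrak{u}_\chi\subseteq\mathfrak{n}\otimes\C$ is exactly the infinitesimal form of the faithfulness statement the paper uses (indeed, faithfulness of $\mathbf{T}$ on $\mathfrak{n}\otimes\C$ follows from that inclusion together with the strong unipotent radical property). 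So the two proofs rest on the same structural fact; yours is more self-contained and yields the explicit bound $(\dim\Nil(G))!$, while the paper's delegates the group-theoretic finiteness to Minkowski and is correspondingly shorter. Two small points to tighten: first, your ``direct compatibility check'' that $d\Phi_{\mathbf{U}}(\mathfrak{u}_\chi)=\mathfrak{u}_{\chi\circ\Phi_{\mathbf{T}}^{-1}}$ requires adjusting $\Phi$ by an inner automorphism coming from $\mathbf{U}$ so that $\Phi(\mathbf{T})=\mathbf{T}$ (all maximal tori of $\mathbf{A}$ are $\mathbf{U}$-conjugate, and this adjustment changes neither the induced map on $\mathbf{A}/\mathbf{U}$ nor the resulting permutation of weights); second, as in the paper, the final step $c(G)\leq c(\mathbf{A})$ via Lemma~\ref{lem:cGA} uses that $G$ is unipotently connected, which is the standing hypothesis of Theorem~\ref{thm:MainA} and which you correctly flag.
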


\begin{proof} By Lemma \ref{lem:cGA}, $c(G) \leq c(\mathbf{A}) $.
By definition, $c(\mathbf{A}) $  is the cardinality of a finite subgroup of the
  algebraic automorphism group of a maximal torus $\mathbf{T}$
  in~$\mathbf{A}$.  The algebraic automorphism group of $\mathbf{T}$
  is isomorphic to the group of integral matrices $\GL(n,\Z)$, where
  $n = \dim T$, see~\cite[Chapter~III.8]{Bo91}.  The cardinality of a
  finite subgroup in $\GL(n,\Z)$ is bounded by a constant depending on
  $n$ only, as is known classically~\cite{Min1887}.  Observe further
  that $\mathbf{T}$ admits a faithful representation on the
  complexification of the Lie algebra of the nilradical of~$G$.
  Therefore, $n= \dim \mathbf{T}$ can be bounded in terms of~$\dim
  \Nil(G)$.
\end{proof}


\subsection{Proofs of Corollaries \ref{cor:MainB}, \ref{cor:toMainA1}
  and \ref{cor:toMainA2}} \label{sect:proofcorB} Throughout, let
$\Gamma$ be a torsion-free polycyclic group with algebraic hull
$\mathbf{A} = \mathbf{A}_\Gamma$.  The Fitting subgroup
$\Fitt(\Gamma)$ is a characteristic subgroup of the polycyclic
group~$\Gamma$.  We set
$$ 
\Aut^\circ(\Gamma) = \Cen_{\Aut(\Gamma)}(\Gamma/\Fitt(\Gamma)),
$$
i.e., $\Aut^\circ(\Gamma)$ is the group of all automorphisms of
$\Gamma$ which induce the identity on the Fitting quotient $\Gamma /
\Fitt(\Gamma)$.

\begin{lem} \label{lem:AutNull} Let $\Gamma$ be a lattice in a simply
  connected, solvable Lie group~$G$.  Then the group
  $\Aut^\circ(\Gamma)$ has finite index in~$\Aut(\Gamma)$.
\end{lem}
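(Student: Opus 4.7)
The plan is to embed $\Aut(\Gamma)$ into the automorphism group of the algebraic hull of $\Gamma$ and to exploit the fact that this automorphism group is a real algebraic group with only finitely many connected components. The induced action on the reductive quotient of the hull will then be forced, for topological reasons, to factor through a finite group.

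First I would note that $\Gamma$ is a torsion-free polycyclic group, since $G$ is simply connected and solvable and hence contains no non-trivial compact subgroup. In particular $\Fitt(\Gamma)$ is torsion-free, and so $\Gamma$ admits a $\Q$-defined algebraic hull $\mathbf{A} = \mathbf{A}_\Gamma$. By the Extension Lemma (Lemma~\ref{lem:extension}), applied to each $\phi \in \Aut(\Gamma)$ and to $\phi^{-1}$, together with the uniqueness of the extension and the Zariski-density of $\Gamma$ in $\mathbf{A}$, one obtains an injective homomorphism
\[
\Aut(\Gamma) \hookrightarrow \Aut_\R(\mathbf{A}).
\]

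Next, since $\mathbf{A}$ has a strong unipotent radical, $\mathbf{U} = \Rad_\textup{u}(\mathbf{A})$ is characteristic in $\mathbf{A}$, so every element of $\Aut_\R(\mathbf{A})$ induces an $\R$-defined automorphism of the quotient torus $\mathbf{T} := \mathbf{A}/\mathbf{U}$. The group $\Aut_\R(\mathbf{T})$ is a discrete group, isomorphic to a subgroup of $\GL(n,\Z)$ with $n = \dim \mathbf{T}$. On the other hand, since $\mathbf{A}$ has a strong unipotent radical, $\Aut_\R(\mathbf{A})$ carries the structure of a real algebraic group (see \cite{BaGr06}) and therefore has only finitely many connected components. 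A continuous homomorphism from $\Aut_\R(\mathbf{A})$ into the discrete group $\Aut_\R(\mathbf{T})$ must kill the identity component, so the composition $\Aut(\Gamma) \to \Aut_\R(\mathbf{A}) \to \Aut_\R(\mathbf{T})$ has finite image, of cardinality bounded by the number of components of $\Aut_\R(\mathbf{A})$.

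To conclude, I would identify $\Fitt(\Gamma) = \Gamma \cap \mathbf{U}$: the inclusion $\Fitt(\Gamma) \subseteq \mathbf{U}$ holds because $\ac{\Fitt(\Gamma)}$ is a normal nilpotent algebraic subgroup of $\mathbf{A}$ and hence contained in $\mathbf{U}$, while $\Gamma \cap \mathbf{U}$ is a unipotent, thus nilpotent, normal subgroup of $\Gamma$ and therefore contained in $\Fitt(\Gamma)$. Consequently $\Gamma/\Fitt(\Gamma)$ embeds into $\mathbf{T}$, and by naturality the action of $\Aut(\Gamma) \subseteq \Aut_\R(\mathbf{A})$ on $\Gamma/\Fitt(\Gamma)$ is simply the restriction of the induced action on $\mathbf{T}$. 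Hence $\Aut(\Gamma) \to \Aut(\Gamma/\Fitt(\Gamma))$ factors through the finite image constructed in the previous paragraph, and its kernel $\Aut^\circ(\Gamma)$ has finite index in $\Aut(\Gamma)$. The only delicate input is the fact that $\mathbf{A}$ has a strong unipotent radical, which ensures simultaneously that $\mathbf{U}$ is characteristic, that $\Fitt(\Gamma) = \Gamma \cap \mathbf{U}$, and that $\Aut_\R(\mathbf{A})$ has only finitely many components; all of this is part of the algebraic hull package.
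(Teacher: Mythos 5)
Your argument is correct and follows essentially the same route as the paper's proof: embed $\Aut(\Gamma)$ into $\Aut_\R(\mathbf{A})$, use that this real algebraic group has finitely many components together with the rigidity of the torus $\mathbf{A}/\mathbf{U}$, and identify $\Fitt(\Gamma)=\Gamma\cap\mathbf{U}$ to conclude that the induced action on $\Gamma/\Fitt(\Gamma)$ has finite image. The only cosmetic difference is that you spell out the discreteness of $\Aut_\R(\mathbf{T})$ and the identification of the Fitting subgroup, which the paper delegates to its Section on the Fitting subgroup and to the phrase ``rigidity of tori.''
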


\begin{proof} Since every $\phi \in \Aut(\Gamma)$ extends to an
  automorphism $\Phi \in \Aut_\R(\mathbf{A})$, we obtain an embedding
  $\Aut(\Gamma) \hookrightarrow \Aut_\R(\mathbf{A})$.  Since $\lvert
  \Aut_\R(\mathbf{A}) : \Aut_\R(\mathbf{A})^\circ \rvert < \infty$, it
  will be enough to show that
  $$
  \Aut^\circ(\Gamma) \supseteq \{ \phi \in \Aut(\Gamma) \mid \Phi \in
  \Aut_\R(\mathbf{A})^\circ \}.
  $$
  
  Let $\phi \in \Aut(\Gamma)$ with extension $\Phi \in
  \Aut_\R(\mathbf{A})^\circ$.  By the rigidity of tori, $\Phi$ induces
  the identity on $\mathbf{A}/\mathbf{U}$, and by
  Proposition~\ref{pro:fittislattice} we have $\Fitt(\Gamma) = \Gamma
  \cap \mathbf{U}$.  Hence $\phi$ induces the identity on $\Gamma /
  \Fitt(\Gamma)$.

  Alternatively, the claim can be derived as follows.  Let ${\rm
    Inn}(\Gamma)$ denote the group of inner automorphisms of~$\Gamma$.
  By~\cite[Theorem~1.3]{BaGr06}, the group ${\rm Inn}(\Gamma)
  \Aut^\circ(\Gamma)$ is of finite index in $\Aut(\Gamma)$, for any
  polycyclic group~$\Gamma$. Now since $\Gamma$ is a lattice in $G$,
  we deduce that $[\Gamma, \Gamma] \leq \Gamma \cap [G,G] \leq \Gamma
  \cap \Nil(G) \leq \Fitt(\Gamma)$.  This implies ${\rm Inn}(\Gamma)$
  is contained in~$\Aut^\circ(\Gamma)$.
\end{proof}

\begin{lem} \label{lem:ext_Aut0} Every $\phi \in \Aut^\circ(\Gamma)$
  extends uniquely to a $\Q$-defined automorphism $\Phi$ of the
  algebraic group $\mathbf{A}$, and this extension satisfies
  $\Phi_{\mathbf{A}/\mathbf{F}} = \id_{\mathbf{A}/\mathbf{F}}$, where
  $\mathbf{F} = \ac{\Fitt(\Gamma)}$ is the Zariski-closure
  in~$\mathbf{A}$.
\end{lem}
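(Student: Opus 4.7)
The plan is to obtain both the existence/uniqueness of $\Phi$ and the triviality of its action on $\mathbf{A}/\mathbf{F}$ directly from the Extension Lemma together with Zariski-density.

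First, I would apply Lemma~\ref{lem:extension} with $k = \Q$ and $\mathbf{B} = \mathbf{A}$ to the composition $\Gamma \xrightarrow{\phi} \Gamma \subseteq \mathbf{A}$. The hypotheses are satisfied: $\mathbf{A}$ is the $\Q$-defined algebraic hull of $\Gamma$ and therefore has a strong unipotent radical, and $\phi(\Gamma) = \Gamma$ is Zariski-dense in $\mathbf{A}$. This yields a unique $\Q$-defined morphism $\Phi \colon \mathbf{A} \to \mathbf{A}$ extending $\phi$. Applying the same construction to $\phi^{-1}$ gives $\Psi$ with $\Psi \circ \Phi$ restricting to $\id_\Gamma$; arguing as in Corollary~\ref{cor:hull_unique}, Zariski-density of $\Gamma$ forces $\Psi \circ \Phi = \id_\mathbf{A}$ and similarly $\Phi \circ \Psi = \id_\mathbf{A}$, so $\Phi$ is a $\Q$-defined automorphism of $\mathbf{A}$.

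Next, I would verify that $\Phi$ preserves $\mathbf{F}$ and induces the identity on the quotient. Since $\Fitt(\Gamma)$ is characteristic in $\Gamma$, we have $\phi(\Fitt(\Gamma)) = \Fitt(\Gamma)$; as $\Phi$ is an automorphism of algebraic groups, it commutes with taking Zariski-closure, so $\Phi(\mathbf{F}) = \Phi(\ac{\Fitt(\Gamma)}) = \ac{\phi(\Fitt(\Gamma))} = \mathbf{F}$. Hence $\Phi$ descends to a $\Q$-defined automorphism $\overline{\Phi}$ of the algebraic quotient $\mathbf{A}/\mathbf{F}$. Writing $\pi \colon \mathbf{A} \to \mathbf{A}/\mathbf{F}$ for the projection, the assumption $\phi \in \Aut^\circ(\Gamma)$ gives $\phi(g) \in g\Fitt(\Gamma)$ for all $g \in \Gamma$, and therefore $\overline{\Phi}(\pi(g)) = \pi(\phi(g)) = \pi(g)$; so $\overline{\Phi}$ fixes $\pi(\Gamma)$ pointwise. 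Zariski-density of $\Gamma$ in $\mathbf{A}$ transfers to Zariski-density of $\pi(\Gamma)$ in $\mathbf{A}/\mathbf{F}$, and since the fixed-point locus of a morphism is Zariski-closed, this forces $\overline{\Phi} = \id_{\mathbf{A}/\mathbf{F}}$.

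There is no serious technical obstacle: the argument is a packaging of the Extension Lemma with the universal property of the algebraic hull. The only point requiring a moment's care is to avoid confusing the notation $\Aut^\circ(\Gamma)$ (the kernel of the action on $\Gamma/\Fitt(\Gamma)$) with the identity component $\Aut_\R(\mathbf{A})^\circ$ used elsewhere; these are linked, but only indirectly, through Lemma~\ref{lem:AutNull} and the rigidity of tori.
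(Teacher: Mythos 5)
Your argument is correct and follows essentially the same route as the paper: existence and uniqueness of $\Phi$ come from the hull's universal property (the Extension Lemma applied to $\phi$ and $\phi^{-1}$, exactly as in Corollary~\ref{cor:hull_unique}), and the triviality of $\Phi_{\mathbf{A}/\mathbf{F}}$ follows because $\Fitt(\Gamma) = \Gamma \cap \mathbf{F}$ forces $\overline{\Phi}$ to fix the Zariski-dense subgroup $\pi(\Gamma)$ pointwise. Your explicit remarks that $\Phi(\mathbf{F}) = \mathbf{F}$ (since $\Fitt(\Gamma)$ is characteristic and Zariski-closure is preserved) and that the fixed-point locus is Zariski-closed merely spell out steps the paper leaves implicit.
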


\begin{proof}
  Let $\phi \in \Aut^\circ(\Gamma)$.  Since $\mathbf{A}$ is an
  algebraic hull of $\Gamma$, there is a unique extension $\Phi \in
  \Aut(\mathbf{A})$ which is $\Q$-defined.  Consider the natural
  projection $\mathbf{A} \rightarrow \mathbf{A}/\mathbf{F}$.  Since
  $\Gamma$ is Zariski-dense in $\mathbf{A}$, its image $\Gamma
  \mathbf{F}/\mathbf{F}$ is Zariski-dense in~$\mathbf{A}/\mathbf{F}$.
  Clearly, we have $\Fitt(\Gamma) = \Gamma \cap \mathbf{F}$.  Since
  $\phi \in \Aut^\circ(\Gamma)$, this implies that
  $\Phi_{\mathbf{A}/\mathbf{F}}$ acts as the identity on $\Gamma
  \mathbf{F}/\mathbf{F}$ and hence $\Phi_{\mathbf{A}/\mathbf{F}} =
  \id_{\mathbf{A}/\mathbf{F}}$.
\end{proof}

\begin{proof}[Proof of Corollary~\ref{cor:MainB}]
  Let $\Gamma$ be a Zariski-dense lattice in $G$, where $G$ is
  unipotently connected.  We claim that every $\phi \in
  \Aut^\circ(\Gamma)$ extends to an automorphism of~$G$.

  Let $\phi \in \Aut^\circ(\Gamma)$.  By Corollary~\ref{cor:AGamisAG}
  we may assume that $G$ is contained in~$\mathbf{A}$.  By Lemma
  \ref{lem:ext_Aut0}, the extension $\Phi \in \Aut_{\R}(\mathbf{A})$
  induces the identity on $\mathbf{A}/\mathbf{F}$, where $\mathbf{F} =
  \ac{\Fitt(\Gamma)}$.  Since $G$ is unipotently connected, we have
  $\mathbf{F}_\R = \Nil(G)$ according to Corollary
  \ref{cor:nil_and_fitt}.  This implies that $\Phi$ induces the
  identity on~$\mathbf{A}/\ac{\Nil(G)}$.  Therefore, we 
  have $\Phi(G) \subseteq G \Nil(G) = G$.
\end{proof}

As explained in the introduction, Corollaries~\ref{cor:toMainA1}
and~\ref{cor:toMainA2} are direct consequences of
Corollary~\ref{cor:MainB},
Proposition~\ref{pro:exists_unipotently_connected} and
Proposition~\ref{pro:all_are_uc}.


\subsection{One-to-one correspondence between
  $\mathcal{S}^\mathrm{Z}(\Gamma)$ and $\mathcal{G}(\Gamma)$}
\label{sect:Structure_set}
Let $\Gamma$ be a Zariski-dense lattice in a simply connected,
solvable Lie group.  Recall from the introduction that the
\emph{structure set} $\mathcal{S}^\textup{Z}(\Gamma)$ consists of
equivalence classes $[\phi]_{{\mathcal{S}^\textup{Z}(\Gamma)}}$ of
embeddings
$$ 
\phi\colon \Gamma \hookrightarrow G_\phi
$$ 
of $\Gamma$ as a Zariski-dense lattice into simply connected, solvable
Lie groups.  Two embeddings $\phi \colon \Gamma \hookrightarrow
G_\phi$ and $\psi \colon \Gamma \hookrightarrow G_\psi$ represent the
same element in $\mathcal{S}^\textup{Z}(\Gamma)$, if there exists an
isomorphism of Lie groups $\theta \colon G_\phi \rightarrow G_\psi$
such that $\theta \circ \phi = \psi$.

By Corollary~\ref{cor:AGamisAG}, every $\phi \colon \Gamma
\hookrightarrow G_\phi$ as above admits a unique extension $\Phi
\colon \mathbf{A}_\Gamma \rightarrow \mathbf{A}_{G_\phi}$ to the level
of algebraic hulls, yielding an isomorphism of algebraic groups.  It
is easy to verify that this yields a map $\phi \mapsto
\Phi^{-1}(G_\phi)$ into $\mathcal{G}(\Gamma)$ which is constant on
equivalence classes $[\phi]_{{\mathcal{S}^\textup{Z}(\Gamma)}}$.  Thus
we obtain the following \emph{structure map} for Zariski-dense lattice
embeddings of~$\Gamma$:
\begin{equation*} \label{equ:structure_map} \epsilon \colon
  \mathcal{S}^\textup{Z}(\Gamma) \rightarrow \mathcal{G}(\Gamma),
  \quad [\phi]_{{\mathcal{S}^\textup{Z}(\Gamma)}} \mapsto
  \Phi^{-1}(G_\phi) \; .
\end{equation*}

Let $G$ be a simply connected, solvable Lie group which contains
$\Gamma$ as a Zariski-dense lattice.  Then the deformation space
$\mathcal{D}(\Gamma,G) = \Aut(G) \backslash \mathcal{X}(\Gamma,G)$
admits a natural embedding
$$
\mathcal{D}(\Gamma,G) \hookrightarrow \mathcal{S}^\textup{Z}(\Gamma),
\quad [\phi]_{\Aut(G)} \mapsto [\phi]_{\mathcal{S}^\textup{Z}(\Gamma)}
$$
into the structure set~$\mathcal{S}^\textup{Z}(\Gamma)$.  We observe
that the image of the embedded $\mathcal{D}(\Gamma,G)$ under
$\epsilon$ is the set
$$ 
\mathcal{G}(\Gamma)_G = \{ H \in \mathcal{G}(\Gamma) \mid H
\cong G \};
$$
under the name $S(G,\Gamma)$ the latter set plays a central role
in~\cite{St94}.

We summarise these facts as follows.

\begin{pro} \label{pro:real_structureset} Let $\Gamma$ be a
  Zariski-dense lattice in a simply connected, solvable Lie group~$G$.
  Then the structure map 
  $$ 
  \epsilon \colon \mathcal{S}^\textup{Z}(\Gamma) \rightarrow
  \mathcal{G}(\Gamma)
  $$ 
  is a bijection which maps the deformation space
  $\mathcal{D}(\Gamma,G) \subseteq \mathcal{S}^\textup{Z}(\Gamma)$
  onto the subset $\mathcal{G}(\Gamma)_G \subseteq
  \mathcal{G}(\Gamma)$.
\end{pro}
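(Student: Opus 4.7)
The plan is to verify in turn that $\epsilon$ is well-defined, surjective, and injective, and then to identify the image of $\mathcal{D}(\Gamma,G)$ under $\epsilon$. The central tool throughout is Corollary~\ref{cor:AGamisAG}: for every Zariski-dense lattice embedding $\phi\colon \Gamma \hookrightarrow G_\phi$ appearing in $\mathcal{S}^\textup{Z}(\Gamma)$, the unique $\R$-defined extension $\Phi\colon \mathbf{A}_\Gamma \to \mathbf{A}_{G_\phi}$ is an isomorphism of algebraic hulls. This is what makes the assignment $[\phi] \mapsto \Phi^{-1}(G_\phi)$ meaningful, and it immediately yields well-definedness on equivalence classes, for if $\theta\colon G_{\phi_1} \to G_{\phi_2}$ is a Lie isomorphism with $\theta \circ \phi_1 = \phi_2$, then its algebraic extension intertwines $\Phi_1$ and $\Phi_2$. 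The image lies in $\mathcal{G}(\Gamma)$: $\Phi^{-1}(G_\phi)$ contains $\Gamma$, has the same dimension as $\mathrm{Rad}_{\textup{u}}(\mathbf{A}_\Gamma)$, and being a closed Lie subgroup of $\mathbf{A}_\Gamma$ normal in it and Zariski-dense, satisfies the requirements of Definition~\ref{def:tight_subgroup}.

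For surjectivity I would observe that, given $H \in \mathcal{G}(\Gamma)$, Lemma~\ref{lem:Gamma_is_lattice} tells us that $\Gamma$ is a Zariski-dense lattice in $H$ and that $\mathbf{A}_H$ coincides with $\mathbf{A}_\Gamma$. The inclusion $\iota\colon \Gamma \hookrightarrow H$ then represents a class in $\mathcal{S}^\textup{Z}(\Gamma)$ whose algebraic extension is the identity, giving $\epsilon([\iota]) = H$. For injectivity, suppose $\epsilon([\phi_1]) = \epsilon([\phi_2]) = H$. The $\R$-defined algebraic isomorphisms $\Phi_i\colon \mathbf{A}_\Gamma \to \mathbf{A}_{G_{\phi_i}}$ then send $H$ onto $G_{\phi_i}$; since tight subgroups are simply connected closed Lie subgroups (Corollary~\ref{cor:tightissc} and Definition~\ref{def:tight_subgroup}), each $\Phi_i$ restricts to a Lie isomorphism $H \to G_{\phi_i}$. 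Setting $\theta := \Phi_2 \circ \Phi_1^{-1}$, one gets a Lie isomorphism $G_{\phi_1} \to G_{\phi_2}$, and $\Phi_i|_\Gamma = \phi_i$ gives $\theta \circ \phi_1 = \phi_2$, so that $[\phi_1] = [\phi_2]$ in $\mathcal{S}^\textup{Z}(\Gamma)$.

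For the final assertion, if $\phi \in \mathcal{X}(\Gamma,G)$ then $G_\phi = G$ and, by Corollary~\ref{cor:all_Zdense}, $\phi(\Gamma)$ is automatically Zariski-dense in $G$; hence $\Phi \in \Aut_\R(\mathbf{A}_G) = \Aut_\R(\mathbf{A}_\Gamma)$ and $\epsilon([\phi]) = \Phi^{-1}(G)$ is Lie-isomorphic to $G$ via $\Phi$, giving an element of $\mathcal{G}(\Gamma)_G$. Conversely, for $H \in \mathcal{G}(\Gamma)_G$ I would pick a Lie isomorphism $\psi\colon H \to G$ and consider $\psi|_\Gamma$: this lies in $\mathcal{X}(\Gamma,G)$ (by Corollary~\ref{cor:all_Zdense} again), and the algebraic extension of $\psi|_\Gamma$ equals $\psi$ under the identification $\mathbf{A}_H = \mathbf{A}_\Gamma$, so $\epsilon([\psi|_\Gamma]) = \psi^{-1}(G) = H$. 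The only delicacy worth care is that algebraic isomorphisms of the real hull restrict to Lie isomorphisms between the corresponding tight subgroups, but this is immediate from tightness; otherwise, the entire proposition is a bookkeeping exercise once the algebraic hull machinery of Section~\ref{sec:preliminaries} is available.
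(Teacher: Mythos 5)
Your proposal is correct and follows essentially the same route as the paper: the paper simply exhibits the inverse map $\delta(H) = [\iota_H]_{\mathcal{S}^\textup{Z}(\Gamma)}$ given by the inclusion $\iota_H \colon \Gamma \hookrightarrow H$ and leaves the verification that $\epsilon$ and $\delta$ are mutually inverse (and respect $\mathcal{D}(\Gamma,G)$ versus $\mathcal{G}(\Gamma)_G$) as a straightforward check. Your surjectivity step is exactly this $\delta$, and your well-definedness, injectivity and image arguments are precisely the omitted checks, carried out correctly via Corollaries~\ref{cor:AGamisAG} and~\ref{cor:all_Zdense} and the uniqueness of extensions to the algebraic hull.
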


\begin{proof} 
  For every $H \in \mathcal{G}(\Gamma)$ the associated inclusion map
  $\iota \colon \Gamma \hookrightarrow H$ defines an element
  $\delta(H) = [\iota]_{\mathcal{S}^\textup{Z}(\Gamma)}$.  It is
  straightforward to check that the resulting map $\delta \colon
  \mathcal{G}(\Gamma) \rightarrow \mathcal{S}^\textup{Z}(\Gamma)$ and
  $\epsilon$ are mutually inverse to each other.  One also verifies
  easily that $\delta$ maps $\mathcal{G}(\Gamma)_G$ to the subspace
  $\mathcal{D}(\Gamma,G)$ of $\mathcal{S}^\textup{Z}(\Gamma)$.
\end{proof}


\subsection{Proofs of Theorem \ref{thm:MainE} and Corollaries
  \ref{cor:toMainE1}, \ref{cor:at_most_countable},
  \ref{cor:ToMainA3}.}
Let $\Gamma$ be a Zariski-dense lattice in a simply connected,
solvable Lie group.  By Proposition~\ref{pro:real_structureset}, the
structure set $\mathcal{S}^\textup{Z}(\Gamma)$ can be represented by
the set $\mathcal{G}(\Gamma)$ of subgroups of~$A_\Gamma$.  Hence the
results of Section~\ref{sect:GGamma} lead to several applications
on~$\mathcal{S}^\textup{Z}(\Gamma)$.

\begin{proof}[Proof of Theorem~\ref{thm:MainE}]
  By Proposition~\ref{pro:real_structureset} the cardinality of
  $\mathcal{S}^\textup{Z}(\Gamma)$ is the same as the cardinality
  of~$\mathcal{G}(\Gamma)$.  Therefore, Theorem \ref{thm:MainE}
  follows directly from Proposition~\ref{pro:description_G_Gamma} and
  Corollary~\ref{cor:charac_real_type}.
\end{proof}

Corollary~\ref{cor:at_most_countable} is a direct consequence of
Theorem~\ref{thm:MainE} and Proposition~\ref{pro:real_structureset}.

\begin{proof}[Proof of Corollary~\ref{cor:toMainE1}]
  Suppose that $\Gamma$ is a Zariski-dense lattice in $G$ which is not
  strongly rigid.  Then $\mathcal{S}^\textup{Z}(\Gamma)$ has more than
  one element. Moreover, since $G$ is unipotently connected,
  Proposition~\ref{pro:description_u-connected_G_Gamma} shows that the
  subset $\mathcal{G}_{\text{uc}}(\Gamma)$ of unipotent subgroups in $
  \mathcal{G}(\Gamma)$ is countably infinite.  Using the
  identification of $\mathcal{D}(H,G)$ with $\mathcal{G}(\Gamma)_H
  \subseteq \mathcal{G}(\Gamma)$, Theorem~\ref{thm:MainA} shows that,
  for each $H \in \mathcal{G}_{\text{uc}}(\Gamma)$, the set
  $\mathcal{G}(\Gamma)_H$ is finite.  Since the subset
  $\mathcal{G}_{\text{uc}}(\Gamma)$ is infinite, we conclude that
  there are infinitely many pairwise non-isomorphic unipotently
  connected groups which are contained
  in~$\mathcal{G}_{\text{uc}}(\Gamma)$.  In particular, $\Gamma$ is a
  Zariski-dense lattice in countably infinitely many, pairwise
  non-isomorphic, unipotently connected groups.
\end{proof} 

Corollary~\ref{cor:ToMainA3} is a direct consequence of
Theorem~\ref{thm:MainA}.


\section{The topologies on $\mathcal{S}^\textup{Z}(\Gamma)$ and
  $\mathcal{D}(\Gamma,G)$} \label{sec:Chabauty}

The purpose of this section is to prove Theorem~\ref{thm:MainH} and
its Corollary~\ref{cor:toMainH1}.  Let $\Gamma$ be a Zariski-dense
lattice in a simply connected, solvable Lie group.  We want to use the
bijection $\epsilon \colon \mathcal{S}^\textup{Z}(\Gamma) \rightarrow
\mathcal{G}(\Gamma)$, described in
Proposition~\ref{pro:real_structureset}, to define a topology
on~$\mathcal{S}^\textup{Z}(\Gamma)$.

For this we briefly recall the definition of two natural topologies on
the collection $\mathfrak{C}_X$ of non-empty closed subsets of a
topological space~$X$.  For any subset $S \subseteq X$ we
put $$
\mathcal{B}_S = \{ C \in \mathfrak{C}_X \mid C \subseteq S \} \quad
\text{and} \quad \mathcal{B}'_S = \{ C \in \mathfrak{C}_X \mid C \cap
S \neq \varnothing \}.
$$
A base for the Vietoris topology on $\mathfrak{C}_X$ is given by all
finite intersections of sets taking the form $\mathcal{B}_U$ or
$\mathcal{B}'_U$, where $U$ is any open subset of~$X$.  The Chabauty
topology on $\mathfrak{C}_X$ is given by all finite intersections of
sets taking the form $\mathcal{B}_{X \setminus K}$ or
$\mathcal{B}'_U$, where $K$ is any compact subset $K$ and $U$ any open
subset of~$X$.
Clearly, if $X$ is Hausdorff, then every Chabauty-open subset of
$\mathfrak{C}_X$ is also Vietoris-open.  If $X$ is compact, then the
Vietoris and the Chabauty topology coincide.  The Chabauty topology
plays an important role in investigating the set $\mathcal{C}(G)$ of
closed subgroups of a locally compact group~$G$; for instance,
see~\cite{MaSi75} for a general discussion.

Returning attention to the lattice $\Gamma$ and its structure set
$\mathcal{S}^\textup{Z}(\Gamma)$, let $A = A_\Gamma$ be the real
algebraic hull of~$\Gamma$.  We recall from Section~\ref{sect:GGamma}
that $\mathcal{G}(\Gamma)$ consists of all tight Lie subgroups of $A$
containing~$\Gamma$.  Thus $\mathcal{G}(\Gamma) \subseteq
\mathcal{C}(A)$ can be equipped with the Vietoris or the Chabauty
subspace topology.  It is natural to use the bijection $\epsilon
\colon \mathcal{S}^\textup{Z}(\Gamma) \rightarrow \mathcal{G}(G)$ to
transfer these topologies from $\mathcal{G}(G)$ to
$\mathcal{S}^\textup{Z}(\Gamma)$, and it turns out that both yield the
discrete topology.

\begin{pro}\label{pro:chabauty=discrete} 
  Let $\Gamma$ be a Zariski-dense lattice in a simply connected,
  solvable Lie group.  Then the structure set
  $\mathcal{S}^\textup{Z}(\Gamma)$ is discrete with respect to the
  Chabauty topology inherited from~$\mathcal{G}(\Gamma)$.  The same
  holds for the Vietoris topology.
\end{pro}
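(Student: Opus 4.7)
My plan is to use the identification $\epsilon \colon \mathcal{S}^{\textup{Z}}(\Gamma) \to \mathcal{G}(\Gamma)$ of Proposition~\ref{pro:real_structureset} in order to transfer the Chabauty topology on $\mathcal{C}(A)$, $A = A_\Gamma$, to $\mathcal{S}^{\textup{Z}}(\Gamma)$, and then to isolate each $H \in \mathcal{G}(\Gamma)$ by a single Chabauty subbasic open set of the form $\mathcal{B}_{A \setminus K}$, for a suitably chosen compact $K \subseteq A$. Since $A$ is Hausdorff, the Vietoris topology refines the Chabauty one, so Vietoris discreteness follows for free from the Chabauty case.

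Fix $H \in \mathcal{G}(\Gamma)$ and write $A = U \rtimes T$ with $T = T_\text{s} \times T_\text{c}$ as in Section~\ref{sect:Fitting}. By Proposition~\ref{pro:tight_Lie_subs_relative}, every $G \in \mathcal{G}(\Gamma)$ has the form $G = \{h\,\sigma_G(h) : h \in H\}$ for a Lie homomorphism $\sigma_G \colon H \to T$ vanishing on $\Gamma[H,H]$. Since $\Gamma[H,H]$ is closed in $H$ by Lemma~\ref{lem:commutators}, $\sigma_G$ descends to a continuous homomorphism $\bar\sigma_G$ from the compact torus $\bar H := H/\Gamma[H,H]$ to $T$, and its image is forced to lie in the maximal compact connected subgroup $T_\text{c}^\circ$ of $T$. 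The key technical input I would establish is a uniform separation bound: after choosing a translation-invariant metric $d_T$ on $T_\text{c}^\circ$, there exists $\epsilon > 0$ such that
\[
 \sup_{\bar h \in \bar H}\, d_T\!\bigl(\bar\sigma(\bar h), 1\bigr) \geq \epsilon \quad \text{for every nonzero } \bar\sigma \in \Hom_{\textup{Lie}}(\bar H, T_\text{c}^\circ).
\]
This reduces, via isomorphisms $\bar H \cong \mathbb{T}^d$ and $T_\text{c}^\circ \cong \mathbb{T}^{t_\text{c}}$, to the elementary fact that any nonzero integer matrix, viewed as a homomorphism of real tori, achieves $\mathbb{T}$-distance $1/2$ from the identity on some coordinate circle; $\epsilon = 1/2$ works with the max-coordinate metric.

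Granted the uniform bound, I would pick a compact $K_0 \subseteq H$ surjecting onto $\bar H$ and set
\[
 K := K_0 \cdot \{t \in T_\text{c}^\circ : d_T(t, 1) \geq \epsilon/2\},
\]
a compact subset of $A$. First, $K \cap H = \varnothing$: by Lemma~\ref{lem:desG}, $H = \{u\,\sigma_H(u) : u \in U\}$ meets $T$ only at $1$, so no $h_0 t_0 \in K$ can lie in $H$ unless $t_0 = 1$, contradicting $d_T(t_0, 1) \geq \epsilon/2$. Second, for every $G \in \mathcal{G}(\Gamma)$ with $G \neq H$ the hom $\bar\sigma_G$ is nonzero, so some $\tilde h \in K_0$ satisfies $d_T(\sigma_G(\tilde h), 1) \geq \epsilon$, and then $\tilde h \cdot \sigma_G(\tilde h) \in G \cap K$. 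Hence $\mathcal{B}_{A \setminus K}$ is a Chabauty neighborhood of $H$ whose intersection with $\mathcal{G}(\Gamma)$ is exactly $\{H\}$, which yields the desired discreteness.

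The principal obstacle is the uniform separation bound in the second paragraph, though it reduces quickly to an explicit computation on $\mathbb{T}^d \to \mathbb{T}^{t_\text{c}}$. All remaining steps are direct consequences of the parametrization developed in Section~\ref{sect:GGamma} together with the decomposition $A = U \rtimes T$.
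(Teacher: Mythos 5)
Your proof is correct, and it isolates each $H\in\mathcal{G}(\Gamma)$ by a genuinely different kind of Chabauty-open set than the paper does. Both arguments rest on the same structural input, namely the splitting $A=H\rtimes T$ and the parametrisation of Proposition~\ref{pro:tight_Lie_subs_relative}, under which $G\neq H$ corresponds to a nonzero $\sigma_{G\mid H}\in\Hom_{\textup{Lie}}(H/\Gamma[H,H],T)$. The paper then invokes the no-small-subgroups property of $T$ and works with the ``lower'' subbasic sets $\mathcal{B}'_U$ ($U$ open); you instead use the ``upper'' subbasic sets $\mathcal{B}_{A\setminus K}$ ($K$ compact), producing one explicit compact set that $H$ avoids but every $G\neq H$ meets. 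The price is your uniform separation bound, but that is elementary (homomorphisms $(\R/\Z)^d\to(\R/\Z)^t$ are integer matrices, and a nonzero one hits the antipode on some coordinate circle), and its uniformity in $G$ is exactly what lets a single $K$ work. Your route also sidesteps a delicate feature of the ``meets an open set'' approach: every $G\in\mathcal{G}(\Gamma)$ contains $\Gamma[H,H]$, and $G\cap H=\ker\sigma_{G\mid H}$ is a positive-dimensional subgroup of $H$, so any open set that $H$ meets along this common part is met by all the other $G$ as well --- in particular the set $U=\tau^{-1}(V)\setminus\{1\}$ appearing in the paper's proof is met by every $G\in\mathcal{G}(\Gamma)$, since $\Gamma\setminus\{1\}\subseteq G\cap\ker\tau\subseteq U$, so the paper's argument needs repair at precisely the point your compact set handles cleanly, by living over the part of $T_\text{c}$ bounded away from the identity, where $H$ has no points at all ($H\cap T=\{1\}$). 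In short: same parametrisation, the other half of the Chabauty subbasis, and a small explicit torus computation bought in exchange for a more robust separation; the deduction of the Vietoris case from the Chabauty case is the same in both arguments.
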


\begin{proof}
  Since $A = A_\Gamma$ is Hausdorff, it suffices to consider the
  Chabauty topology.  In fact we will be working with the
  topology on $\mathcal{C}(A)$ which admits as a base the
  collection of finite intersections of sets of the form
  $\mathcal{B}'_U$, where $U$ is an open subset of~$A$.

  Fix a point $H \in \mathcal{G}(\Gamma)$.  We are to show that $H$ is
  an isolated point.  The group $A$ decomposes into a semidirect
  product $A = H \rtimes T$, as described in
  Lemma~\ref{lem:Gtau_properties}.  Let $\tau \colon A \rightarrow T$
  be the associated projection with kernel~$H$.
  Proposition~\ref{pro:tight_Lie_subs_relative} shows how $\hat \tau
  \colon \mathcal{C}(A) \rightarrow \mathcal{C}(T)$, $G \mapsto
  \tau(G)$ maps the set of tight Lie subgroups of $A$
  to~$\mathcal{C}(T)$.  Moreover, a tight Lie subgroup $G$ satisfies
  $\hat \tau(G) = \{1\}$ if and only if $G = H$.  Recall that a Lie
  group does not have small subgroups, i.e.\ there is an open
  neighbourhood of the identity which contains no Lie subgroups except
  the trivial group.  Let $V$ be such a neighbourhood in the Lie
  group~$T$.  Then $U = \tau^{-1}(V) \setminus \{1\}$ is an open
  subset of $A$ and $\mathcal{B}'_U \cap \mathcal{G}(\Gamma) = \{ H
  \}$.  Hence $H$ is isolated in~$\mathcal{G}(\Gamma)$.
\end{proof}

Let $G$ be a simply connected, solvable Lie group which contains
$\Gamma$ as a Zariski-dense lattice.  As described in
Section~\ref{sect:Structure_set} the deformation space
$\mathcal{D}(\Gamma,G) = \Aut(G) \backslash \mathcal{X}(\Gamma,G)$
admits a natural embedding into the structure
set~$\mathcal{S}^\textup{Z}(\Gamma)$.  The space
$\mathcal{X}(\Gamma,G)$ carries the topology of pointwise convergence
and induces the quotient topology on~$\mathcal{D}(\Gamma,G)$.  For
$\mathcal{D}(\Gamma,G) \hookrightarrow \mathcal{S}^\textup{Z}(\Gamma)$
to be continuous, we require that $\mathcal{D}(\Gamma,G)$ is discrete.
We use a result of Wang to show that this is indeed the case.

\begin{pro} \label{pro:Deform_discrete} Let $\Gamma$ be a
  Zariski-dense lattice in a simply connected, solvable Lie group~$G$.
  Then the deformation space $\mathcal{D}(\Gamma,G)$ is discrete with
  respect to the quotient topology inherited
  from~$\mathcal{X}(\Gamma,G)$.
\end{pro}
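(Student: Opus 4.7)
The plan is to prove Proposition~\ref{pro:Deform_discrete} by showing that every $\Aut(G)$-orbit in $\mathcal{X}(\Gamma,G)$ is open, which immediately yields discreteness of the quotient $\mathcal{D}(\Gamma,G)$. The argument is organised around the natural map
\[
\pi \colon \mathcal{X}(\Gamma,G) \to \mathcal{G}(\Gamma), \qquad \phi \mapsto \Phi^{-1}(G),
\]
where $\Phi \colon \mathbf{A}_\Gamma \to \mathbf{A}_G$ is the unique isomorphism of algebraic hulls extending $\phi$, provided by Corollary~\ref{cor:AGamisAG}. A direct check shows that the fibres of $\pi$ coincide with the $\Aut(G)$-orbits: if $\Phi_1^{-1}(G) = \Phi_2^{-1}(G)$, then $\Phi_2 \circ \Phi_1^{-1}$ is an $\R$-defined automorphism of $\mathbf{A}_G$ preserving $G$, whose restriction $\theta \in \Aut(G)$ satisfies $\theta \circ \phi_1 = \phi_2$; the converse is immediate.

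By Proposition~\ref{pro:chabauty=discrete}, the space $\mathcal{G}(\Gamma)$ is discrete, so it suffices to prove that $\pi$ is continuous, equivalently locally constant. To this end, I would fix $\phi \in \mathcal{X}(\Gamma,G)$ and invoke the local rigidity theorem of Wang~\cite{Wa63}: there exists a neighbourhood $W$ of $\phi$ in $\mathcal{X}(\Gamma,G)$ such that every $\phi' \in W$ is of the form $\phi' = \theta \circ \phi$ for some $\theta \in \Aut(G)_\circ$ close to the identity. Then the extension of $\phi'$ is $\Phi' = \Theta \circ \Phi$, where $\Theta \in \Aut_\R(\mathbf{A}_G)$ extends $\theta$. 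Since $\Theta(G) = G$, we obtain
\[
\pi(\phi') = (\Phi')^{-1}(G) = \Phi^{-1}(\Theta^{-1}(G)) = \Phi^{-1}(G) = \pi(\phi),
\]
so $\pi$ is locally constant on $W$ and hence the fibre $\Aut(G) \cdot \phi$ is open in $\mathcal{X}(\Gamma,G)$.

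The main obstacle is the appeal to Wang's local rigidity result. Wang's theorem endows the connected components of $\mathcal{X}(\Gamma,G)$ with a real-analytic manifold structure; at $\phi$ the tangent space is identified with the cocycle space $Z^1(\Gamma,\mathfrak{g}_{\Ad \circ \phi})$. Zariski-denseness of $\phi(\Gamma)$ in $G$, combined with the extension property of the algebraic hull in the spirit of Lemma~\ref{lem:extension}, ensures that every such cocycle arises from a derivation of $\mathfrak{g}$, so the derivative of the orbit map $\Aut(G)_\circ \to \mathcal{X}(\Gamma,G)$ at $\phi$ is surjective. The implicit function theorem then produces the parametrisation $\phi' = \theta \circ \phi$ required above. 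As a by-product, the same argument will also establish Corollary~\ref{cor:toMainH1}, since the local form exhibits $\Aut(G)_\circ$ as acting transitively near $\phi$ on its component in $\mathcal{X}(\Gamma,G)$.
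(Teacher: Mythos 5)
Your overall strategy --- show that each $\Aut(G)$-orbit in $\mathcal{X}(\Gamma,G)$ is open --- is the right one, and your reduction through the map $\phi \mapsto \Phi^{-1}(G)$ and the discreteness of $\mathcal{G}(\Gamma)$ is correct but ultimately redundant: you end up proving local constancy of $\pi$ by showing directly that a neighbourhood of $\phi$ lies in its orbit. The genuine gap is in that key step. What you call ``the local rigidity theorem of Wang'' --- that every $\phi'$ near $\phi$ equals $\theta\circ\phi$ for some $\theta\in\Aut(G)_\circ$ --- is not what Wang proves; it is essentially the conclusion you are trying to establish (local rigidity of $\Gamma$ in $G$, cf.\ Theorem~\ref{thm:MainH}). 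Wang's theorem identifies the connected component of $\mathcal{X}(\Gamma,G)$ containing $\phi_0=\id_\Gamma$ with $\Aut(\Gamma N)_\circ$, where $N=\Nil(G)$: nearby embeddings are restrictions of automorphisms of the closed subgroup $\Gamma N$, not of~$G$. The entire content of the proposition is the passage from $\Aut(\Gamma N)_\circ$ to $\Aut(G)$, and this is exactly where Zariski-denseness must enter.

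Your fallback argument via $Z^1(\Gamma,\mathfrak{g}_{\Ad\circ\phi})$ does not close this gap. The surjectivity of the differential $\Der(\mathfrak{g})\to Z^1(\Gamma,\mathfrak{g})$ of the orbit map is asserted ``in the spirit of Lemma~\ref{lem:extension}'' but is itself a nontrivial infinitesimal superrigidity statement which the paper neither proves nor uses; and even granting it, applying the implicit function theorem requires control of the local manifold structure of $\mathcal{X}(\Gamma,G)$ at $\phi$ and a comparison of its tangent space with~$Z^1$. The paper's proof sidesteps all of this with a purely algebraic step: an element $\theta$ of the connected group $D=\Aut(\Gamma N)_\circ$ acts trivially on the discrete quotient $\Gamma N/N$; since $\Gamma$ is Zariski-dense in $\mathbf{A}$, the hull extension $\Theta$ of $\theta\vert_\Gamma$ then acts trivially on $\mathbf{A}/\mathbf{N}$, whence $\Theta(G)\subseteq GN=G$, so $\theta\vert_\Gamma$ does extend to an automorphism of~$G$. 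Combined with Wang's homeomorphism $D\cong\mathcal{X}(\Gamma,G)_0$ and the openness of components coming from Weil's theorem, this shows the whole component lies in a single $\Aut(G)$-orbit. You should replace the appeal to ``local rigidity'' by an argument of this kind.
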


\begin{proof}
  Let $\phi_0 \in \mathcal{X}(\Gamma,G)$.  We have to show that the
  $\Aut(G)$-orbit of $\phi_0$ in $\mathcal{X}(\Gamma,G)$ is open.
  Without loss of generality we may assume that $\phi_{0} =
  \id_{\Gamma}\colon \Gamma \rightarrow G$ is the identity map
  on~$\Gamma$.  We have $\Gamma \subseteq G \subseteq A =
  \mathbf{A}_\R$, where $\mathbf{A} = \mathbf{A}_G =
  \mathbf{A}_\Gamma$ denotes a common algebraic hull for the groups
  $\Gamma$ and~$G$.  Setting $N = \Nil(G)$, let $D = \Aut(\Gamma
  N)_\circ$ denote the identity component of the group of all Lie
  automorphisms of the group~$\Gamma N$.  In~\cite{Wa63} Wang proved
  that the restriction map
  $$
  D \rightarrow \mathcal{X}(\Gamma,G), \quad \theta \mapsto \theta
  \vert_\Gamma = \theta \circ \phi_{0}
  $$
  yields a homeomorphism between $D$ and $\mathcal{X}(\Gamma,G)_0$,
  the connected component of $\mathcal{X}(\Gamma,G)$
  containing~$\phi_{0}$.

  Every $\phi \in \mathcal{X}(\Gamma,G)$ extends uniquely to an
  element $\Phi \in \Aut_\R(\mathbf{A})$.  Hence we obtain a map $D
  \rightarrow \Aut_\R(\mathbf{A})$ given by $\theta \mapsto \Theta$,
  where $\Theta$ extends~$\theta \vert_\Gamma$.  We contend that every
  $\Theta$ arising in this way restricts to an automorphism of~$G$.
  Let $\theta \in D$ and consider the extension $\Theta$ of~$\theta
  \vert_\Gamma$.  Since $D$ is connected, $\theta$ acts trivially on
  the discrete space~$\Gamma N/N$.  Since $\Gamma$ is Zariski-dense in
  $\mathbf{A}$, this implies that $\Theta$ acts trivially on
  $\mathbf{A}/\mathbf{N}$, where $\mathbf{N}$ denotes the
  Zariski-closure of $N$ in~$\mathbf{A}$.  Thus $\Theta(G) \subset G N =G$.

  This shows that the $\Aut(G)$-orbit of $\phi_0$ in
  $\mathcal{X}(\Gamma,G)$ contains the entire
  component~$\mathcal{X}(\Gamma,G)_0$.  Since $\mathcal{X}(\Gamma,G)$
  is locally path connected, $\mathcal{X}(\Gamma,G)_0$ is open
  in~$\mathcal{X}(\Gamma,G)$.  Hence we conclude that the
  $\Aut(G)$-orbit of $\phi_{0}$ in $\mathcal{X}(\Gamma,G)$ is open.
\end{proof}

Theorem~\ref{thm:MainH} is a direct consequence of
Propositions~\ref{pro:chabauty=discrete}
and~\ref{pro:Deform_discrete}.  We also record the following
conclusion from the proof of Proposition~\ref{pro:Deform_discrete}.

\begin{cor} \label{cor:defdiscrete} Under the same assumptions as in
  Proposition~\ref{pro:Deform_discrete}, the $\Aut(G)$-orbits on
  $\mathcal{X}(\Gamma,G)$ are unions of connected components of
  $\mathcal{X}(\Gamma,G)$.  In particular, they are open.
\end{cor}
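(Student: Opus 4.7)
The plan is to extend the argument from the proof of Proposition~\ref{pro:Deform_discrete} from the distinguished base point $\phi_0 = \id_\Gamma$ to an arbitrary $\phi \in \mathcal{X}(\Gamma, G)$, and to show that the connected component $\mathcal{X}(\Gamma, G)_\phi$ through $\phi$ lies entirely in the $\Aut(G)$-orbit of $\phi$. Write $\Gamma' = \phi(\Gamma)$, which by Corollary~\ref{cor:all_Zdense} is again a Zariski-dense lattice in $G$, and let $\mathbf{A} = \mathbf{A}_G$; by Corollary~\ref{cor:AGamisAG} this serves simultaneously as an algebraic hull for $\Gamma$ and for $\Gamma'$.

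First I would apply Wang's theorem~\cite{Wa63} with $\Gamma'$ in place of $\Gamma$: setting $N = \Nil(G)$ and $D_\phi = \Aut(\Gamma' N)_\circ$, the restriction map $D_\phi \to \mathcal{X}(\Gamma', G)$, $\theta \mapsto \theta|_{\Gamma'}$, is a homeomorphism onto the connected component of $\id_{\Gamma'}$. Right composition with $\phi$ is a self-homeomorphism $\mathcal{X}(\Gamma', G) \to \mathcal{X}(\Gamma, G)$, $\psi \mapsto \psi \circ \phi$, for the topology of pointwise convergence, and it sends $\id_{\Gamma'}$ to $\phi$. Composing with the restriction map yields a homeomorphism $D_\phi \to \mathcal{X}(\Gamma, G)_\phi$, $\theta \mapsto \theta \circ \phi$.

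Next I would transcribe the algebraic-hull step of the proof of Proposition~\ref{pro:Deform_discrete}. For $\theta \in D_\phi$, the embedding $\theta \circ \phi \colon \Gamma \hookrightarrow G$ extends uniquely to some $\Theta \in \Aut_\R(\mathbf{A})$. Connectedness of $D_\phi$ forces $\theta$ to act trivially on the discrete quotient $\Gamma' N/N$, and by Zariski-density of $\Gamma'$ in $\mathbf{A}$ the induced action of $\Theta$ on $\mathbf{A}/\mathbf{N}$ is trivial, where $\mathbf{N}$ denotes the Zariski-closure of $N$ in $\mathbf{A}$. Consequently $\Theta(G) \subseteq G N = G$, so $\Theta$ restricts to an automorphism of $G$ carrying $\phi$ to $\theta \circ \phi$. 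Hence $\mathcal{X}(\Gamma, G)_\phi \subseteq \Aut(G) \cdot \phi$, and since $\phi$ was arbitrary, every $\Aut(G)$-orbit is a union of connected components of $\mathcal{X}(\Gamma, G)$. The openness assertion then follows at once: by Weil's theorem $\mathcal{X}(\Gamma, G)$ is an open subset of the real algebraic variety $\Hom(\Gamma, G)$ and hence locally path connected, so its connected components, and any union of them, are open. The only point that requires attention is the verification that right composition by $\phi$ is a homeomorphism in the pointwise-convergence topology, which is immediate from the definitions; I do not expect any genuine obstacle, as the statement is essentially a reinterpretation of the mechanism already used to prove Proposition~\ref{pro:Deform_discrete}.
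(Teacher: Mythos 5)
Your overall strategy is the one the paper intends: the corollary is extracted from the proof of Proposition~\ref{pro:Deform_discrete}, and your transport of the base point from $\id_\Gamma$ to an arbitrary $\phi$ by precomposition is exactly what the paper's ``without loss of generality'' reduction amounts to; the topological endgame (local path connectedness via Weil, hence openness of components) is also identical. However, one step fails as written. You take $\Theta$ to be the unique extension in $\Aut_\R(\mathbf{A})$ of the embedding $\theta\circ\phi\colon\Gamma\hookrightarrow G$, so that $\Theta\vert_\Gamma=\theta\circ\phi$, and then claim that $\Theta$ acts trivially on $\mathbf{A}/\mathbf{N}$ because $\theta$ acts trivially on $\Gamma' N/N$. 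That inference is valid only for an automorphism whose restriction to $\Gamma'$ equals $\theta\vert_{\Gamma'}$, which your $\Theta$ is not: writing $\Phi$ for the extension of $\phi$ and $\Theta'$ for the extension of $\theta\vert_{\Gamma'}\colon\Gamma'\hookrightarrow G$ (which exists since $\mathbf{A}$ is also a hull for $\Gamma'$, by Corollaries~\ref{cor:all_Zdense} and~\ref{cor:AGamisAG}), uniqueness forces $\Theta=\Theta'\circ\Phi$. Already for $\theta=\id$ this gives $\Theta=\Phi$, which in general neither acts trivially on $\mathbf{A}/\mathbf{N}$ nor satisfies $\Phi(G)=G$ --- indeed $\Phi(G)\neq G$ is precisely the situation in which $\mathcal{D}(\Gamma,G)$ is nontrivial. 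Moreover, even if one had $\Theta(G)=G$, the automorphism $\Theta\vert_G$ sends $\phi$ to $\Theta\circ\phi$, and the condition $\Theta\vert_\Gamma=\theta\circ\phi$ says nothing about the values $\Theta(\phi(\gamma))$, so it would not exhibit $\theta\circ\phi$ as lying in the orbit of $\phi$.

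The repair is immediate and restores the paper's argument: work with $\Theta'$ instead of $\Theta$. Since $\Theta'\vert_{\Gamma'}=\theta\vert_{\Gamma'}$ and $\theta$ is trivial on $\Gamma' N/N$, Zariski-density of $\Gamma'$ in $\mathbf{A}$ shows that $\Theta'$ induces the identity on $\mathbf{A}/\mathbf{N}$, whence $\Theta'(G)\subseteq G\,\Nil(G)=G$ and $\Theta'\vert_G\circ\phi=\theta\vert_{\Gamma'}\circ\phi=\theta\circ\phi$. With this substitution your proof is complete and coincides with the paper's.
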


The work of Wang~\cite{Wa63} implies that, for any lattice $\Gamma$ in a simply
connected, solvable Lie group the connected components of
$\mathcal{X}(\Gamma,G)$ are manifolds.  If a Lie group $H$ acts
transitively on a connected manifold $Y$ then its identity component
$H_\circ$ also acts transitively on~$Y$. Thus
Corollary~\ref{cor:toMainH1} follows from Corollary~\ref{cor:defdiscrete}.

\end{document}